\documentclass[a4paper,twoside,10pt]{article}
\usepackage[a4paper,left=3cm,right=3cm, top=3cm, bottom=3cm]{geometry}
\usepackage{soul}
\usepackage{mathrsfs}
\usepackage{amsmath}
\usepackage{amsthm}
\usepackage{amssymb}
\usepackage{cite}
\usepackage{color}
\usepackage{cancel}
\usepackage{appendix}
\usepackage{tikz}
\usepackage{soul}
\usepackage{dsfont}
\usepackage[author={Lorenzo}]{pdfcomment}
\usepackage{caption}
\usepackage{caption}
\usepackage{subcaption}
\usepackage[normalem]{ulem}
\theoremstyle{plain}
\newtheorem{theorem}{Theorem}[section]
\newtheorem{corollary}[theorem]{Corollary}
\newtheorem{lemma}[theorem]{Lemma}
\newtheorem{proposition}[theorem]{Proposition}
\theoremstyle{remark}
\newtheorem{remark}{Remark}

\newcommand{\eremk}{\hbox{}\hfill\rule{0.8ex}{0.8ex}}

\DeclareTextCompositeCommand{\u}{T1}{i}{\u\imath}

\newcommand{\Norm}[1]{{\left\|{#1} \right\|}}
\newcommand{\SemiNorm}[1]{{\left|{#1} \right|}}
\newcommand{\jump}[1]{\left[\!\left[#1\right]\!\right]}

\newcommand{\Normth}[1]{{\left\vert\kern-0.25ex\left\vert\kern-0.25ex\left\vert #1 
\right\vert\kern-0.25ex\right\vert\kern-0.25ex\right\vert}}
\newcommand{\Normf}[1]{{\left\vert\kern-0.25ex\left\vert\kern-0.25ex\left\vert\kern-0.25ex\left\vert #1 
\right\vert\kern-0.25ex\right\vert\kern-0.25ex\right\vert\kern-0.25ex\right\vert}}

\DeclareMathOperator{\nablabold}{\boldsymbol \nabla}
\DeclareMathOperator{\nablaboldh}{\nablabold_{\h}}
\DeclareMathOperator{\h}{h}
\newcommand{\ubf}{\mathbf{u}}
\DeclareMathOperator{\qh}{q_{\h}}
\newcommand{\vbf}{\mathbf{v}}
\newcommand{\vbfh}{\vbf_{\h}}
\newcommand{\wbfh}{\mathbf{w}_{\h}}

\newcommand{\E}{K}
\newcommand{\Tauh}{\mathcal T_{\h}}
\newcommand{\p}{p}
\newcommand{\pstar}{\p^*}
\newcommand{\psharp}{\p^\sharp}
\newcommand{\pprime}{\p'}
\DeclareMathOperator{\pop}{p}
\DeclareMathOperator{\qop}{q}

\newcommand{\boldalpha}{\boldsymbol{\alpha}}
\DeclareMathOperator{\W}{W}
\let\H\relax
\DeclareMathOperator{\H}{H}
\newcommand{\Rfrak}{\mathfrak R}
\newcommand{\BRfrak}{B_{\Rfrak}}
\newcommand{\hOmega}{\h_\Omega}
\newcommand{\CPS}{C_{\rm PS}}
\newcommand{\GammaN}{\Gamma_N}
\newcommand{\GammaD}{\Gamma_D}
\newcommand{\Gammatilde}{\widetilde\Gamma}

\newcommand{\CSob}{C_{\rm{Sob}}}
\newcommand{\CTR}{C_{\rm TR}}
\newcommand{\CT}{C_{\rm Trc}}
\newcommand{\CTRsharp}{\CTR^\sharp}

\newcommand{\hE}{\h_\E}
\newcommand{\Fcalh}{\mathcal F_{\h}}
\newcommand{\FcalE}{\mathcal F^\E}
\newcommand{\FcalEI}{\mathcal F^{\E I}}
\newcommand{\FcalED}{\mathcal F^{\E D}}
\newcommand{\FcalhI}{\mathcal F_{\h}^I}
\newcommand{\FcalhB}{\mathcal F_{\h}^B}
\newcommand{\FcalhID}{\mathcal F_{\h}^{ID}}
\newcommand{\FcalhIN}{\mathcal F_{\h}^{IN}}
\newcommand{\F}{F}
\newcommand{\hF}{\h_\F}

\newcommand{\nbf}{\mathbf n}
\newcommand{\nbfOmega}{\nbf_\Omega}
\newcommand{\nbfE}{\nbf_\E}
\newcommand{\nbfF}{\mathbf n_\F}
\newcommand{\q}{q}
\newcommand{\qprime}{\q'}

\let\L\relax
\DeclareMathOperator{\L}{L}
\newcommand{\Lbf}{\mathbf L}
\newcommand{\Wbf}{\mathbf W}

\newcommand{\Rbb}{\mathbb R}
\newcommand{\Nbb}{\mathbf{\underline N}}

\newcommand{\Kbb}{\underline{{\mathbf K}}}

\newcommand{\CSP}{C_{\rm SP}}

\newcommand{\CST}{C_{\rm ST}}

\newcommand{\CSTsharp}{\CST^\sharp}

\newcommand{\CSPstar}{C_{\rm SP}^*}

\newcommand{\CtildeSPstar}{\widetilde C_{\rm SP}^*}
\newcommand{\CtildeSTsharp}{\widetilde C_{\rm ST}^\sharp}

\newcommand{\dive}{\nablabold\cdot}
\newcommand{\gN}{g_N}
\newcommand{\gD}{{g}_D}
\newcommand{\dxbf}{\,{\rm{d}}\mathbf{x}}
\newcommand{\ds}{\,{\rm{d}}s}
\newcommand{\dint}[1]{\,{\rm{d}}{#1}}

\newcommand{\EucNorm}[1]{{\left|{#1} \right|}}
\let\P\relax
\DeclareMathOperator{\P}{P}
\DeclareMathOperator{\Pbf}{\bf P}
\newcommand{\Pk}{\P_k}
\newcommand{\qk}{q_{k}}
\newcommand{\qkmoF}{q_{k-1}^\F}
\newcommand{\vD}{v_D}
\DeclareMathOperator{\CR}{CR}
\DeclareMathOperator{\ph}{p_{\h}}

\newcommand{\DGkmo}{\Vbfcal_{\h}^{k-1}}

\newcommand{\ubfh}{\ubf_{\h}}
\newcommand{\ubfhl}{\ubf_{\h,\ell}}
\newcommand{\alphapt}{\alpha+2}

\newcommand{\alphaprime}{{\alpha'}}

\newcommand{\Kbbmo}{\underline{\mathbf{K}}^{-1}}
\newcommand{\Acal}{\boldsymbol{\mathcal{A}}}
\newcommand{\wbf}{\mathbf{w}}
\newcommand{\lambdamin}{\lambda_{\text{min}}}
\newcommand{\ubfl}{\mathbf{u}_{\ell}}

\newcommand{\Ical}{\mathcal{I}}

\newcommand{\Jcal}{\mathcal{J}}
\newcommand{\zbf}{\mathbf{z}}
\newcommand{\ubftilde}{\widetilde{\ubf}}
\newcommand{\vbftilde}{\widetilde{\vbf}}
\newcommand{\wbftilde}{\widetilde{\wbf}}
\newcommand{\zerobf}{\mathbf{0}}
\newcommand{\ubfzero}{\mathbf{u}_0}
\newcommand{\ubfhzero}{\mathbf{u}_{\h,0}}
\newcommand{\cbfh}{\mathbf{c}_{\h}}

\newcommand{\ubfhn}{\ubf_{\h}^{\rm{n}}}
\newcommand{\ubfhntilde}{\widetilde{\ubf}_{\h}^{\rm{n}}}
\newcommand{\ubfhnmotilde}{\widetilde{\ubf}_{\h}^{\rm{n-1}}}
\newcommand{\ubfhzerotilde}{\widetilde{\ubf}_{\h}^{0}}
\newcommand{\ubfhnmo}{\ubf_{\h}^{\rm{n-1}}}

\newcommand{\phn}{\pop_{\h}^{\rm{n}}}
\newcommand{\phntilde}{\widetilde\pop_{\h}^{\rm{n}}}
\newcommand{\phnmo}{\pop_{\h}^{\rm{n-1}}}
\newcommand{\phnmotilde}{\widetilde\pop_{\h}^{\rm{n-1}}}
\newcommand{\phzero}{\pop_{\h}^0}
\newcommand{\phzerotilde}{\widetilde{\pop}_{\h}^0}
\newcommand{\PikmthreeE}{\Pi_{k-3}^{0,\Tauh}}
\newcommand{\Icalk}{\Ical_k}
\newcommand{\Ltwo}{\L^2}
\newcommand{\CP}{C_P}
\newcommand{\CinvK}{C_{\text{inv}}^\E}
\newcommand{\hK}{\h_\E}

\newcommand{\Vbfcal}{\boldsymbol{\mathcal{V}}}
\newcommand{\Qcal}{\mathcal{Q}}
\newcommand{\Hbfcal}{\boldsymbol{\mathcal{H}}}
\newcommand{\Bcal}{\mathcal{B}}
\newcommand{\GcalN}{\mathcal{G}_N}
\newcommand{\GcalD}{\mathcal{G}_D}
\newcommand{\Dbfcal}{\boldsymbol{\mathcal{D}}}
\newcommand{\Dbfcalhkmo}{\boldsymbol{\mathcal{D}}_{\h}^{k-1}}

\newcommand{\Dbfcalhzero}{\boldsymbol{\mathcal{D}}_{\h}^{0}}
\newcommand{\Dbfcalhperp}{\boldsymbol{\mathcal{D}}_{\h}^{k-1,\perp}}
\DeclareMathOperator{\Pbfh}{\mathbf P_{\h}}
\newcommand{\diveh}{\nablaboldh\cdot}
\newcommand{\LtwoOmegad}{\Lbf^2(\Omega)}

\newcommand{\LalphaOmegad}{\Lbf^{\alpha}(\Omega)}

\newcommand{\LafrakOmegad}{\Lbf^{\afrak}(\Omega)}
\newcommand{\LafrakprimeOmegad}{\Lbf^{\afrak'}(\Omega)}
\newcommand{\LalphaprimeOmegad}{\Lbf^{\alphaprime}(\Omega)}

\newcommand{\RfrakE}{\Rfrak_\E}
\newcommand{\Nfrak}{\mathfrak N}
\newcommand{\PizFcalhz}{\Pi^{0,\Fcalh}_0}
\newcommand{\PizTauhz}{\Pi^{0,\Tauh}_0}

\newcommand{\ybf}{\mathbf{y}}
\newcommand{\LcalFE}{\mathcal{L}^\E_\F}
\newcommand{\LcalFEF}{\mathcal{L}^{\E_\F}_\F}

\newcommand{\alphamt}{{\alpha-2}}
\newcommand{\alphamo}{{\alpha-1}}

\newcommand{\alphafracalphamt}{\frac{\alpha}{\alphamt}}
\newcommand{\Qcalg}{\Qcal_g}
\newcommand{\Qcalzero}{\Qcal_0}
\newcommand{\QcalgD}{\Qcal_{\gD}}
\newcommand{\Qcalhgk}{\Qcal_{h,g}^k}
\newcommand{\QcalhgDk}{\Qcal_{h,\gD}^k}
\newcommand{\Qcalhzerok}{\Qcal_{h,0}^k}
\newcommand{\Vbfcalhkmo}{\Vbfcal_{\h}^{k-1}}
\newcommand{\Lbu}{\underline{\mathbf{L}}}
\newcommand{\Cell}{C_{\ell}}
\newcommand{\boldbeta}{\boldsymbol{\beta}}

\DeclareMathOperator{\Span}{span}
\newcommand{\bkF}{b_k^\F}
\newcommand{\VcalE}{\mathcal V^\E}
\newcommand{\Vcalh}{\mathcal V_{\h}}
\newcommand{\lambdaEF}{\lambda_{\E,\F}}

\newcommand{\Dbfcalhkmoperp}{\Dbfcal_{\h}^{k-1,\perp}}
\newcommand{\nfrak}{\mathfrak{n}}

\newcommand{\PizFcalhkmo}{\Pi^{0,\Fcalh}_{k-1}}

\newcommand{\PizFcalhs}{\Pi^{0,\Fcalh}_s}
\newcommand{\PiboldzFcalhs}{\boldsymbol\Pi^{0,\Fcalh}_s}
\newcommand{\qs}{q_s}

\newcommand{\PizTauhs}{\Pi^{0,\Tauh}_{s}}
\newcommand{\PiboldizTauhs}{\boldsymbol\Pi^{0,\Tauh}_{s}}
\newcommand{\PiboldzTauhkmo}{\boldsymbol\Pi^{0,\Tauh}_{k-1}}
\DeclareMathOperator{\RT}{\mathbf{RT}}
\newcommand{\PiboldRTTauhkmo}{\boldsymbol\Pi^{\RT,\Tauh}_{k-1}}

\newcommand{\PizTauhkmo}{\Pi^{0,\Tauh}_{k-1}}
\newcommand{\omegaF}{\omega_\F}

\newcommand{\FcalhD}{\mathcal F_{\h}^{D}}
\newcommand{\FcalhN}{\mathcal F_{\h}^{N}}
\newcommand{\CLcal}{C_{\mathcal{L}}}

\newcommand{\Shat}{\widehat S}
\newcommand{\Shatk}{\Shat_k}

\newcommand{\SFj}{S_j^\F}

\newcommand{\Ctildeell}{\widetilde{C}_\ell}
\newcommand{\bfrakh}{\mathfrak{b}_{\h}}

\newcommand{\varphibold}{\boldsymbol{\varphi}}
\newcommand{\ubfzerotilde}{\widetilde{\ubf}_0}
\newcommand{\ubfltilde}{\widetilde{\ubf}_{\ell}}
\newcommand{\qbfk}{\mathbf{q}_k}

\newcommand{\qbfkmtE}{\mathbf{q}_{k-2}^\E}
\newcommand{\afrak}{\mathfrak{a}}
\newcommand{\afrakprime}{\mathfrak{a}'}

\newcommand{\tzero}{t_0}
\newcommand{\ubfhzeroup}{\ubfh^0}
\newcommand{\sfrak}{\mathfrak{s}}
\newcommand{\Ibb}{\mathbf{\underline I}}
\newcommand{\obbF}{\mathds{1}_\F}

\DeclareMathOperator{\CBL}{C_{BL}}
\newcommand{\nrm}{{\rm{n}}}
\newcommand{\Abbn}{\underline{{\mathbf{A}}}^{\nrm}}
\newcommand{\Abb}{\underline{{\mathbf{A}}}}

\newcommand{\Mbb}{\underline{{\mathbf{M}}}}
\renewcommand{\Bbb}{\underline{{\mathbf{B}}}}
\newcommand{\Nbbnmo}{\underline{{\mathbf{N}}}^{{\rm{n}}-1}}

\newcommand{\pbfhn}{\mathbf{p}_{\h}^{\rm{n}}}
\newcommand{\sbfhn}{\mathbf{s}_{\h}^{\rm{n}}}
\newcommand{\fbf}{\mathbf{f}}
\newcommand{\bbf}{\mathbf{b}}
\newcommand{\gbf}{\mathbf{g}}
\newcommand{\rbf}{\mathbf{r}}
\newcommand{\nmax}{n_{\max}}

\title{\scriptsize A nonconforming method for a generalized Darcy--Forchheimer
model}
\author{\scriptsize{Michele Botti\thanks{MOX, Department of Mathematics, Politecnico di Milano, 20133 Milano, Italy (michele.botti@polimi.it)},   
Lorenzo Mascotto\thanks{Department of Mathematics and Applications, University of Milano-Bicocca, 20125 Milan, Italy (lorenzo.mascotto@unimib.it); 
Faculty of Mathematics, University of Vienna, 1090 Vienna, Austria;
IMATI-CNR, 27100 Pavia, Italy},
Marialetizia Mosconi \thanks{Department of Mathematics and Applications, University of Milano-Bicocca, 20125 Milan, Italy (m.mosconi@campus.unimib.it)}}}
\date{}

\begin{document}
\maketitle

\begin{abstract}
\noindent We analyze a dual mixed nonconforming discretization
of a generalized Darcy--Forchheimer model.
Compared to the analogous scheme proposed in~\cite{Girault-Wheeler:2008},
we consider general, i.e., nonquadratic, Forchheimer nonlinearities;
we admit mixed, inhomogeneous boundary conditions;
we allow for more general, i.e.,
with lower Lebesgue regularity, permeability tensors;
we construct general-order schemes;
we prove convergence to the exact solution under
low regularity assumptions, based on novel
Sobolev-trace inequalities for broken spaces;
we derive error estimates of general-order
assuming extra regularity of the exact solution
and data;
we present numerical results assessing the performance
of the proposed schemes for different types
of nonlinearity and nonlinear solvers.

\medskip\noindent
\textbf{AMS subject classification}: 76S05; 65N12; 65N30.

\medskip\noindent
\textbf{Keywords}: generalized Darcy--Forchheimer law;
Sobolev-trace inequalities;
Crouzeix-Raviart elements;
high-order schemes.

\end{abstract}

\section{Introduction}

\paragraph*{State-of-the-art.}

The Darcy--Forchheimer's law models the fluid flow
in porous media~\cite{Forchheimer:1901}.
It is a nonlinear extension of Darcy's law
and provides a more accurate description
of high-velocity flow behaviours.
Numerical discretizations of 
Darcy--Forchheimer equations pose extra theoretical and computational
challenges compared to their linear counterpart;
as such, they have received increasing attention
over the last three decades.
In~\cite{Douglas-PaesLeme-Giorgi:1993}, existence and uniqueness
are established for a semi-discrete formulation
of the standard (quadratic) Darcy--Forchheimer model.
Well-posedness of the fully discrete scheme and corresponding error estimates
are provided in~\cite{Park:2005}; see also \cite{Rui-Pan:2012};
more general nonlinearities are considered in \cite{Kim-Park:1999}.
In these works, the problem is formulated in the standard mixed setting,
and Raviart-Thomas/piecewise-polynomial pairs are employed.
A different approach is undertaken in~\cite{Girault-Wheeler:2008},
where a lowest order mixed dual formulation
based on piecewise-constant/lowest order Crouzeix-Raviart (CR in what follows)
pairs is introduced:
convergence to the exact solution
of discrete solution sequences
under mesh refinements is proved
and the analysis of a nonlinear solver is also provided.
Numerical results for this scheme
are presented in~\cite{Lopez-Molina-Salas:2008-2009};
see also~\cite{Salas-Lopez-Molina:2013}
for results concerning the standard mixed formulation.
We also mention~\cite{Wang-Rui:2015}, where, compared to~\cite{Girault-Wheeler:2008},
additional stabilization in the normal direction for the fluxes are considered,
following ideas from~\cite{Burman-Hansbo:2005} for the linear case.
More recent contributions include
\cite{Zhang-Rui:2017,
Rui-Pan:2017,
Almonacid-Diaz-Gatica-Marquez:2020,
Caucao-Gatica-Sandoval:2021,
Caucao-Gatica-Gatica:2023,
Triki-Sayah-Semaan:2024,
Sayah-Semaan-Triki:2021,
Zhao-Chung-Park-Zhou:2021,
Badia-Carstensen-Martin-RuizBaier-VillaFuentes:2025,
Amigo-Lepe-Otarola-Rivera:2025,
Antonietti-Bonetti-Botti:2025}.

\paragraph{Goals of the paper.}
The main objective of this work is to design a general-order
dual mixed formulation for a generalized Darcy--Forchheimer problem.
The starting point is the work~\cite{Girault-Wheeler:2008}
of Girault and Wheeler,
which we extend along several directions.
\begin{itemize}
\item We consider more general nonlinearities, replacing
the quadratic Darcy--Forchheimer nonlinearity  with an $(\alpha-2)$-type law,
which boils down to the classical one when $\alpha=3$;
see problem~\eqref{Darcy--Forchheimer-strong} below.
\item We admit inhomogeneous, mixed boundary conditions.
\item We allow for permeability tensors~$\Kbb$
with lower Lebesgue regularity; cf. \eqref{data-spaces} below.
\item We introduce a general-order scheme
employing piecewise-polynomial spaces of order~$p-1$ for the fluxes
and Crouzeix-Raviart spaces of order~$p$ for the potentials.
\item We prove convergence under low-regularity assumptions
on the solution and data,
based on novel Sobolev-trace inequalities in broken spaces,
with constants independent of the type of discretization spaces
(in particular, independent of the polynomial degree in our case).
\item We derive general-order error estimates
under additional regularity assumptions on the solution and data.
\item We present numerical experiments assessing the performance of the scheme for different values of the nonlinearity parameter~$\alpha$
and for different nonlinear solvers.
\end{itemize}

The use of high-order schemes for nonlinear problems
may appear counterintuitive, as solutions
can exhibit singular behavior even for regular data and smooth domains;
our motivation is the prospective development of $hp$-adaptive strategies,
which are particularly well suited for
the efficient discretization of such problems.
As in~\cite{Girault-Wheeler:2008}, we adopt a dual mixed formulation
and discretize the potential with nonconforming
(Crouzeix-Raviart) elements.
Using nonconforming methods in the discretization of more complex
nonlinear problems is important, as standard conforming discretization
might not converge to the exact solution; cf.~\cite{Balci-Ortner-Storn}.

\paragraph{Outline of the remainder of the paper.}
While in Section~\ref{subsection:functional-setting}
we set some standard spaces, their properties,
and assumptions on the problem domain,
Section~\ref{section:continuous-setting} is devoted to introduce
the general nonlinear Darcy--Forchheimer model problem
and prove its well-posedness.
In Section~\ref{section:FE-spaces}, we introduce admissible sequences of meshes, broken Sobolev spaces,
and finite element (piecewise-polynomial
and Crouzeix-Raviart) spaces;
we also discuss known and novel Sobolev-Poincar\'e
and Sobolev-trace inequalities
for functions in piecewise Sobolev spaces.
The method is introduced in Section~\ref{section:standard-method};
its convergence for low-regularity solution and data
is detailed in Section~\ref{section:convergence},
while error estimates under extra regularity assumptions
are derived in Section~\ref{section:error-estimates}.
Numerical experiments are given in Section~\ref{section:numerical-experiments},
while some conclusions are drawn in Section~\ref{section:conclusions}.
Appendix~\ref{appendix:properties-Acal}
is concerned with the proof
of technical results needed for the well-posedness
of the continuous problem.

\subsection{Functional setting} \label{subsection:functional-setting}
We consider a domain~$\Omega$ in $\Rbb^d$, $d=2,3$,
with boundary $\Gamma$, which we split into
$\Gamma=\Gamma_{\rm D}\cup\Gamma_{\rm N}$.
We assume that either
\begin{equation} \label{Omega-ass-1}
\text{$\Omega$ is star-shaped with respect
to a ball $\BRfrak$ of radius~$\Rfrak$}
\end{equation}
or
\begin{equation} \label{Omega-ass-2}
\text{$\Omega$ admits a shape-regular decomposition into $\Nfrak$ simplices.}
\end{equation}
These two options are relevant to establish some inequalities
in Section~\ref{subsection:semi-norms-technical} below,
with constants that are explicit
with respect to the geometric parameters
in~\eqref{Omega-ass-1} and~\eqref{Omega-ass-2}.
The simplicial subdivision in~\eqref{Omega-ass-2}
is not related to the discretization meshes
in Section~\ref{subsection:meshes-broken-finite} below.

The diameter of~$\Omega$ is~$\hOmega$;
$\nbfOmega$ is the outward unit normal vector to~$\Gamma$.
We denote scalars in standard font;
vector fields in boldface font;
tensors in underlined boldface font.
Greek letters in boldface font
denote multi-indices in $\mathbb{N}^d$;
the lengths are $|\boldbeta| :=\sum_{i=1}^d\boldbeta_i$
for $\boldbeta$ in $\mathbb N^d$.
For a generic subset~$X$ of~$\Omega$ with diameter~$\h_X$,
and for all~$\p$ in~$[1,\infty)$
and~$k$ in~$\mathbb N$,
we consider the Lebesgue and Sobolev spaces~$\L^\p(X)$ and~$\W^{k,\p}(X)$,
which we endow with norm, and seminorm and norm, respectively,
\[
\Norm{v}_{\L^\p(X)}^\p
:= \int_X \vert v \vert^\p ,
\]
and
\[
\SemiNorm{v}_{\W^{k,\p}(X)}
:= \big(\sum_{\vert \boldalpha \vert=k} 
        \Norm{D^{\boldalpha} v}_{\L^\p(X)}\big)^{\frac1\p},
\qquad\qquad
\Norm{v}_{\W^{k,\p}(X)}
:= \Big( \sum_{\ell=0}^k \big(\h_X^{\ell-k}
            \SemiNorm{v}_{\W^{\ell,\p}(X)}\big)^\p \Big)^\frac1\p.
\]
Sobolev spaces of noninteger order are constructed
by interpolation.
The subspace of functions in~$\L^\p(X)$
with zero average over $X$ is~$\L^\p_0(X)$.
Sobolev norms and seminorms applied to vector fields
are denoted by
$\Norm{\cdot}_{\Wbf^{k,\p}(X)}$
and~$\SemiNorm{\cdot}_{\Wbf^{k,\p}(X)}$;
Lebesgue norms applied to tensors are denoted by
$\Norm{\cdot}_{\Lbu^{\p}(X)}$.
We recall the following Sobolev embeddings \cite[Sect.~2.3]{Ern-Guermond:2021}:
\begin{itemize}
\item if $k \p <d$,
$\W^{k,\p}(X) \hookrightarrow \L^{q}(X)$
for all $q$ in $[\p,\frac{\p d}{d-k\p}]$;
\item if $k \p = d$,
$\W^{k,\p}(X) \hookrightarrow \L^{q}(X)$
for all $q$ in $[\p,\infty)$.
\end{itemize}
We spell out the generic Sobolev embedding bound:
for $k$, $p$, and~$q$ as above,
there exists a positive constant~$\CSob$
depending on $q$, $k$, $\p$, and $X$, such that
\begin{equation} \label{Sobolev-embedding}
\Norm{v}_{\L^{q}(X)}
\le \CSob \h_X^{\frac{d}{q}- \frac{d}{\p} + k} \Norm{v}_{\W^{k,\p}(X)}
\qquad\qquad\qquad
\forall v \in \W^{k,\p}(X).
\end{equation}
Given an index~$\p$ in $[1,\infty)$, we define
(with $1\le\p\le\psharp\le\pstar$)
\begin{equation} \label{indices}
\begin{split}
& \pprime :=  \frac{\p}{\p-1};
\qquad\qquad
\psharp := 
\begin{cases}
\frac{\p(d-1)}{d-\p}    & \text{if } \p < d \\
\infty                  & \text{otherwise};
\end{cases}
\qquad\qquad
\pstar := 
\begin{cases}
\frac{\p d}{d-\p}   & \text{if } \p < d  \\
\infty              & \text{otherwise}.
\end{cases}
\end{split}
\end{equation}
We have the following trace theorem \cite[Ch. 3.2]{Ern-Guermond:2021}:
given $\widetilde\Gamma$ a subset of the boundary of~$X$,
there exists a positive constant~$\CT$
such that
\begin{equation} \label{global-trace}
\Norm{v}_{\L^{\psharp}(\widetilde\Gamma)}
\le \Norm{v}_{\L^{\psharp}(\partial X)}
\overset{\eqref{Sobolev-embedding}}{\le}
\CSob \Norm{v}_{\W^{\frac1{\pprime},\p}(\partial X)}
\le \CT \Norm{v}_{\W^{1,\p}(X)}
\qquad\qquad\forall v\in\W^{1,\p}(X).
\end{equation}
The trace operator is surjective
from~$\W^{1,\p}(X)$ to~$\W^{\frac1{\pprime},\p}(\Gamma)$.
We consider the subspace of functions in~$\W^{1,\p}(X)$
with trace~$g$
in $\W^{\frac{1}{\pprime},\p}(\widetilde\Gamma)$
\begin{equation*}
\W^{1,\p}_{\Gammatilde,g}(X)
:= \{ v \in \W^{1,\p}(X) \mid v_{|\widetilde\Gamma} = g \}.
\end{equation*}
A Poincar\'e-Steklov inequality holds true \cite[Ch.~3.3]{Ern-Guermond:2021}:
there exists a positive constant $\CPS$
depending only on~$\p$ and~$X$ such that
\begin{equation} \label{Poincare-Steklov}
\Norm{v}_{\L^\p(X)}
\le \CPS \h_X \SemiNorm{v}_{\W^{1,\p}(X)}
\qquad\qquad
\forall v \in
\begin{cases}
\W^{1,\p}_{\Gammatilde}(X)  & \Gammatilde \subseteq \partial X \\
\W^{1,\p}(X)\cap\L^\p_0(\Omega).
\end{cases}
\end{equation}
Inequality~\eqref{Poincare-Steklov} holds true
also for functions with zero average
over a subset of~$X$ and/or $\partial X$
with nonzero measure.
We denote the Euclidean vector norm
by $\SemiNorm{\vbf} = \sqrt{\vbf^T\cdot\vbf}$.
For generic Banach spaces $X$ and $Y$, $Y \subset X$,
we consider the quotient space $X / Y$ 
endowed with quotient norm
\begin{equation*}
    \Norm{x}_{X/Y} := \inf_{y \in Y} \Norm{x-y}_X.
\end{equation*}
Moreover, $X^*$ denotes the dual space of~$X$,
which we endow with the norm
$\Norm{z}_{X^*} = \sup_{x\in X} \frac{\langle z, x \rangle}{\Norm{x}_X}$,
where $\langle z, x \rangle$ denotes the action of the functional~$z$
on the elements~$x$ in~$X$.

\section{The model problem} \label{section:continuous-setting}
Given $\alpha > 2$,
$b : \Omega \to \Rbb$,
$\gN: \GammaN \to \Rbb$,
and $\gD: \GammaD \to \Rbb$,
we consider the generalized Darcy--Forchheimer problem:
find a flux $\ubf : \Omega \to \Rbb^d$
and a potential $\pop: \Omega \to \Rbb$
such that
\begin{equation} \label{Darcy--Forchheimer-strong}
\begin{cases}
\nablabold \pop + \frac{\mu}{\rho}\Kbbmo \ubf
            + \frac{\beta}{\rho} \SemiNorm{\ubf}^{\alphamt} \ubf 
            = 0                                                 &\text{ in } \Omega \\
\dive \ubf = b                                                  &\text{ in } \Omega \\
\ubf\cdot \nbf = \gN                                            &\text{ on } \GammaN \\
\pop  = \gD                                                      &\text{ on } \GammaD.
\end{cases}
\end{equation}
If $\GammaN = \Gamma$,
then~$b$ and~$\gN$ must satisfy
the compatibility conditions
\begin{equation} \label{compatibility-condition}
    \int_{\Omega}b\dxbf = \int_{\Gamma}\gN \ds.
\end{equation}
The permeability tensor $\Kbb^{-1}$ is positive definite;
let $\lambdamin > 0$ be its smallest eigenvalue
over~$\Omega$. The density~$\rho$,
the viscosity~$\mu$,
and the Darcy--Forchheimer coefficient~$\beta$
are positive constants.
With the notation as in~\eqref{indices},
we assume that
\begin{equation} \label{data-spaces}
\begin{split}
&\Kbbmo \in \Lbu^{\frac{\alpha}{\alphamt}}(\Omega),
\qquad\qquad\qquad
b \in \Bcal := \L^{((\alphaprime)^*)'}(\Omega)
    = \L^{\frac{\alpha d}{d+\alpha}}(\Omega),\\
&\gN \in \GcalN := \L^{((\alphaprime)^\sharp)'}(\GammaN) 
    =  \L^{\alpha\frac{d-1}{d}}(\GammaN),
    \qquad\qquad\qquad
\gD \in \GcalD := \W^{\frac{1}{\alpha},\alpha'}(\GammaD),
\end{split}
\end{equation}
and, for a generic~$g$ in~$\GcalD$,
we define the spaces
\begin{equation*}
    \Vbfcal :=  \LalphaOmegad,
    \qquad\qquad
    \Qcalg :=
        \begin{cases}
            \W^{1,\alphaprime}(\Omega) \cap \L^2_0(\Omega) &\text{ if } \GammaN = \Gamma \\
            \W^{1,\alphaprime}_{\GammaD,g}(\Omega)            &\text{ if } \GammaN \neq \Gamma.
        \end{cases}
\end{equation*}
On the continuous level, we may also take
$\GcalN$ equal to $[\W^{\frac1\alpha , \alphaprime}(\GammaN)]^*$;
however, this choice would not be fine on the discrete level,
since we shall be using
nonconforming spaces.

We consider the following variational formulation:
find $(\ubf,\pop) \in \Vbfcal \times \QcalgD$ such that
\begin{subequations} \label{variational-formulation}
\begin{align}
    \frac{\mu}{\rho} \int_{\Omega}(\Kbbmo) \ubf \cdot \vbf \dxbf
        + \frac{\beta}{\rho} \int_{\Omega}\EucNorm{\ubf}^{\alphamt} \ubf \cdot \vbf \dxbf
        + \int_{\Omega}\nablabold \pop \cdot \vbf \dxbf
        = 0& \label{variational-formulation:first-eq}
        \qquad\qquad \forall\, \vbf \in \Vbfcal  \\
    \int_{\Omega} \ubf \cdot \nablabold \qop \dxbf
        = - \int_{\Omega} b\, \qop \dxbf
        + \int_{\GammaN} \gN \, \qop \ds&
        \qquad\qquad \forall\, \qop \in \Qcalzero. \label{continuous-constraint}
\end{align}
\end{subequations}
Introduce
\begin{equation*} 
    \Hbfcal := \{ \vbf \in \Vbfcal \, | \, \dive \vbf \in \L^{((\alphaprime)^*)'}(\Omega)\}
\end{equation*}
and
\begin{equation} \label{Dbfcal}
\Dbfcal := \{ \vbf \in \Hbfcal \, | \,
                \vbf \cdot \nbf_{|\GammaN} = 0,
                \, \dive \vbf = 0 \text{ in } \Omega \}.
\end{equation}
An integration by parts reveals that $\vbf$ in $\Vbfcal$
belongs to $\Dbfcal$ if and only if
$\vbf$ satisfies
\begin{equation} \label{Dbfcal:orth-property}
    \int_{\Omega}\nablabold \qop \cdot \vbf \dxbf = 0
    \qquad\qquad \forall \qop \in \Qcalzero.
\end{equation}

\begin{lemma}
If~$\GammaN\ne\Gamma$,
for any~$\afrak$ in $(1,\infty)$
and $g$ in $\GcalD$,
we have the inf-sup condition
\begin{equation} \label{inf-sup}
    \inf_{\qop\in \W^{1,\afrakprime}_{\GammaD,g}(\Omega)} \sup_{\vbf \in \Lbf^{\afrak}(\Omega)}
        \frac{\int_{\Omega} \vbf \cdot \nablabold \qop \dxbf}
            {\Norm{\nablabold \qop}_{\LafrakprimeOmegad} \Norm{\vbf}_{\LafrakOmegad}}
            = 1.
\end{equation}
If~$\GammaN=\Gamma$, then
the inf-sup condition holds true by replacing
$\W^{1,\afrakprime}_{\GammaD,g}(\Omega)$
with $\W^{1,\afrakprime}(\Omega)_{}\cap\L^2_0(\Omega)$.
\end{lemma}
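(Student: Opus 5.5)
The plan is to prove the two inequalities ``$\le 1$'' and ``$\ge 1$'' separately, for each fixed $\qop$ with $\nablabold\qop\neq\zerobf$; the infimum is then over such $\qop$. When $g$ is nonzero this holds automatically. When $g=0$, or when $\GammaN=\Gamma$, it holds because of the Poincar\'e--Steklov inequality~\eqref{Poincare-Steklov}: if $\nablabold\qop=\zerobf$, then $\qop$ is constant, and the boundary condition or the zero average forces $\qop=0$. The upper bound is immediate from H\"older's inequality with conjugate exponents $\afrak$ and $\afrakprime$:
\[
\int_\Omega \vbf\cdot\nablabold\qop\dxbf \le \Norm{\vbf}_{\LafrakOmegad}\Norm{\nablabold\qop}_{\LafrakprimeOmegad}.
\]
Hence each quotient, and therefore each supremum and the infimum, is at most $1$.

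For the lower bound we exhibit, for a given $\qop$, an explicit field that attains equality in H\"older's inequality. We take
\[
\vbf := \EucNorm{\nablabold\qop}^{\afrakprime-2}\nablabold\qop ,
\]
with the convention $\vbf=\zerobf$ where $\nablabold\qop=\zerobf$. Then $\EucNorm{\vbf}^{\afrak} = \EucNorm{\nablabold\qop}^{(\afrakprime-1)\afrak} = \EucNorm{\nablabold\qop}^{\afrakprime}$, using $(\afrakprime-1)\afrak=\afrakprime$. So $\vbf$ belongs to $\Lbf^{\afrak}(\Omega)$ with $\Norm{\vbf}_{\LafrakOmegad}=\Norm{\nablabold\qop}_{\LafrakprimeOmegad}^{\afrakprime/\afrak}$. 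Moreover,
\[
\int_\Omega\vbf\cdot\nablabold\qop\dxbf=\Norm{\nablabold\qop}_{\LafrakprimeOmegad}^{\afrakprime}.
\]
Dividing, and using $\afrakprime-\afrakprime/\afrak = 1$, the quotient equals exactly $1$. Hence the supremum is at least $1$ for every admissible $\qop$, and so is the infimum.

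The case $\GammaN=\Gamma$ is handled by the same two steps verbatim, since neither step uses the boundary condition. The only role of the space $\W^{1,\afrakprime}(\Omega)\cap\L^2_0(\Omega)$, as with the Dirichlet condition before, is to exclude nonzero constants, for which the quotient is undefined. There is no real obstacle. The one point needing care is the measurability and integrability of the chosen $\vbf$, which follows from the pointwise identity $\EucNorm{\vbf}^{\afrak}=\EucNorm{\nablabold\qop}^{\afrakprime}$. If $\afrakprime<2$, the map $\xi\mapsto\EucNorm{\xi}^{\afrakprime-2}\xi$ is singular at $\xi=\zerobf$, so the convention $\vbf=\zerobf$ on the zero set of $\nablabold\qop$ must be used explicitly there.
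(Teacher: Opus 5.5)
Your argument is correct and is essentially the paper's own: the paper's ``duality argument'' is the identity $\Norm{\nablabold\qop}_{\LafrakprimeOmegad}=\sup_{\vbf\in\Lbf^{\afrak}(\Omega)}\int_\Omega\vbf\cdot\nablabold\qop\dxbf/\Norm{\vbf}_{\LafrakOmegad}$, which you justify explicitly via H\"older's inequality and the extremal field $\EucNorm{\nablabold\qop}^{\afrakprime-2}\nablabold\qop$; the paper uses this same field for the discrete inf-sup~\eqref{discrete-inf-sup}. One small slip is in your side remark: if $g$ is a nonzero constant on $\GammaD$, then $\qop\equiv g$ lies in $\W^{1,\afrakprime}_{\GammaD,g}(\Omega)$ with $\nablabold\qop=\zerobf$, so $\nablabold\qop\neq\zerobf$ is not automatic for nonzero $g$, and such degenerate $\qop$ must simply be excluded from the infimum, as the paper tacitly does.
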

\begin{proof}
A duality argument reveals
\begin{equation*}
    1
        = \frac{\Norm{\nablabold \qop}_{\LafrakprimeOmegad}}{\Norm{\nablabold \qop}_{\LafrakprimeOmegad}}
        =
        \sup_{\vbf \in \Lbf^{\afrak}(\Omega)}
        \frac{\int_\Omega \vbf \cdot \nablabold \qop \dxbf}{\Norm{\nablabold \qop}_{\LafrakprimeOmegad}\Norm{\vbf}_{\LafrakOmegad}}
        \qquad\qquad \forall \qop \in \W^{1,\afrakprime}_{\GammaD,g}(\Omega).
\end{equation*}
Taking the infimum over all~$\qop$ on both sides
yields \eqref{inf-sup}.
\end{proof}

Next,
we show that
there exists solution to \eqref{continuous-constraint}
for given $b$ and $\gN$.
\begin{proposition} \label{prop:existence-ubfl}
Given~$b$ and~$\gN$ as in~\eqref{data-spaces},
there exist~$\ubfl$ in $\Vbfcal / \Dbfcal$
satisfying
\begin{equation} \label{ubfl-problem}
 \int_{\Omega} \ubfl \cdot \nablabold \qop \dxbf
        = - \int_{\Omega} b\,\qop\dxbf
        + \int_{\GammaN} \gN \, \qop \ds
        \qquad\qquad \forall\, \qop \in \Qcalzero
\end{equation}
and a positive constant $\Cell$
only depending on~$\alpha$ and~$\Omega$ such that
\begin{equation} \label{continuity-data}
\Norm{\ubfl}_{\LalphaOmegad}
\leq \Cell \big(\Norm{b}_{\L^{((\alphaprime)^*)'}(\Omega)} 
    + \Norm{\gN}_{\L^{((\alphaprime)^\sharp)'}(\GammaN)} \big).
\end{equation}
\end{proposition}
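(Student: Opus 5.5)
The idea is to view the right-hand side of~\eqref{ubfl-problem} as a bounded linear functional on the space of gradients $\nablabold\Qcalzero\subset\LalphaprimeOmegad$. We then extend it by Hahn--Banach and represent it through the duality $(\LalphaprimeOmegad)^*=\LalphaOmegad$. Let $F(\qop):=-\int_\Omega b\,\qop\dxbf+\int_{\GammaN}\gN\,\qop\ds$ for $\qop\in\Qcalzero$.

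\emph{Step 1: boundedness of~$F$ in terms of the gradient.} By H\"older's inequality,
\[
|F(\qop)|\le \Norm{b}_{\L^{((\alphaprime)^*)'}(\Omega)}\Norm{\qop}_{\L^{(\alphaprime)^*}(\Omega)}+\Norm{\gN}_{\L^{((\alphaprime)^\sharp)'}(\GammaN)}\Norm{\qop}_{\L^{(\alphaprime)^\sharp}(\GammaN)}.
\]
The Sobolev embedding~\eqref{Sobolev-embedding} with $k=1$, $\p=\alphaprime$, $q=(\alphaprime)^*$ controls the volume term; if $\alphaprime\ge d$ one uses any finite exponent instead. The trace inequality~\eqref{global-trace} controls the boundary term. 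Both bounds are by $\Norm{\qop}_{\W^{1,\alphaprime}(\Omega)}$. The Poincar\'e--Steklov inequality~\eqref{Poincare-Steklov} applies in both cases: for zero trace on $\GammaD$ when $\GammaN\ne\Gamma$, and for zero average when $\GammaN=\Gamma$. It reduces the full norm to the seminorm, giving
\[
|F(\qop)|\le C\big(\Norm{b}_{\L^{((\alphaprime)^*)'}(\Omega)}+\Norm{\gN}_{\L^{((\alphaprime)^\sharp)'}(\GammaN)}\big)\Norm{\nablabold\qop}_{\LalphaprimeOmegad},
\]
with $C$ depending only on $\alpha$ and $\Omega$.

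Note that in the pure Neumann case $\Qcalzero$ is imposed through $\L^2_0$. Averages are what matter, since $\L^{\alphaprime}$ functions with gradient in $\L^{\alphaprime}$ lie in $\L^2$ by Sobolev in $d\le3$; I would check this exponent bookkeeping. The compatibility condition~\eqref{compatibility-condition} ensures that $F$ vanishes on constants, so $F$ is indeed a function of $\nablabold\qop$ alone. This is the main obstacle: one must make sure $F$ is well defined on the image $\nablabold\Qcalzero$. This holds because $\nablabold\qop=0$ forces $\qop=0$ in $\Qcalzero$, by Poincar\'e.

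\emph{Step 2: extension and representation.} Define $G(\nablabold\qop):=F(\qop)$ on the subspace $\nablabold\Qcalzero\subset\LalphaprimeOmegad$. By Step~1, $G$ is linear and bounded. By Hahn--Banach, extend $G$ to the whole of $\LalphaprimeOmegad$ with the same norm. Since $1<\alphaprime<\infty$, Riesz representation in $\L^{\alphaprime}$ yields $\ubfl\in\LalphaOmegad$ with
\[
\int_\Omega\ubfl\cdot\wbf\dxbf=G(\wbf)\quad\text{for all }\wbf\in\LalphaprimeOmegad,\qquad \Norm{\ubfl}_{\LalphaOmegad}=\Norm{G}.
\]
Testing with $\wbf=\nablabold\qop$ gives~\eqref{ubfl-problem}, and the norm identity gives~\eqref{continuity-data}.

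\emph{Step 3: the quotient space.} By~\eqref{Dbfcal:orth-property}, any two solutions of~\eqref{ubfl-problem} differ by an element of $\Dbfcal$. Hence the class of $\ubfl$ in $\Vbfcal/\Dbfcal$ is well defined. The quotient norm is bounded by the norm of the representative, so~\eqref{continuity-data} also holds in the quotient norm.
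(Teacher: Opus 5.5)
Your proposal is correct and is essentially the paper's proof: the paper also bounds the right-hand side functional by $\Norm{\nablabold \qop}_{\LalphaprimeOmegad}$ using H\"older, Sobolev, trace and Poincar\'e--Steklov inequalities, and then invokes the inf-sup condition~\eqref{inf-sup}, whose existence consequence is exactly your Hahn--Banach extension plus Riesz representation (with constant~$1$, matching your norm identity). Two harmless remarks: in the pure Neumann case the compatibility condition, which the paper uses to subtract constants, is not needed, because elements of $\Qcalzero$ already have zero mean and well-definedness of $G$ follows directly from your Step~1 bound; and your side claim that $\W^{1,\alphaprime}(\Omega)\hookrightarrow\L^2(\Omega)$ for $d\le 3$ fails when $d=3$ and $\alpha>6$, which does not matter since only the mean of~$\qop$ is used.
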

\begin{proof}
For $\alpha = 3$ and pure Neumann
boundary conditions,
the proof
can be found in \cite[Prop.~1]{Girault-Wheeler:2008}.
Here,
we consider general $\alpha > 2$
and mixed boundary conditions.
We consider two cases.

\paragraph*{Case 1: mixed/Dirichlet boundary conditions.}
Triangle's inequality,
Cauchy-Schwarz' inequality,
the continuous Sobolev embedding theorem \eqref{Sobolev-embedding},
and the continuous trace theorem \eqref{global-trace}
yield,
for a hidden constant only depending on~$\alpha$ and~$\Omega$,
\begin{equation*}
\begin{split}
   \left| -\int_{\Omega}b\,\qop\dxbf + \int_{\GammaN} g\,\qop\ds \right|
     \lesssim \left( \Norm{b}_{\L^{((\alphaprime)^*)'}(\Omega)} + \Norm{\gN}_{\L^{((\alphaprime)^\sharp)'}(\GammaN)}\right)\SemiNorm{\qop}_{\W^{1,\alphaprime}(\Omega)}
    \qquad \forall \qop \in \Qcalzero,
\end{split}
\end{equation*}
i.e., the mapping $\qop \to -\int_{\Omega}b\,\qop\dxbf + \int_{\GammaN} g\,\qop\ds$
belongs to $(\Qcalzero)^*$.
The assertion follows from the inf-sup condition \eqref{inf-sup}.

\paragraph*{Case 2: pure Neumann boundary conditions.}
The compatibility condition~\eqref{compatibility-condition},
triangle's inequality,
Cauchy-Schwarz' inequality,
the continuous Sobolev embedding theorem \eqref{Sobolev-embedding},
the continuous trace theorem \eqref{global-trace},
and Poincar\'e-Steklov's inequality yield,
for a hidden constant only depending on~$\alpha$ and~$\Omega$,
\small
\begin{equation*}
\begin{split}
& \left| -\int_{\Omega}b\,\qop\dxbf + \int_{\GammaN} g\,\qop\ds \right|
\leq \inf_{c\in\Rbb} \Big(\left|\int_{\Omega}b\,(\qop-c)\dxbf \right|
    + \left|\int_{\GammaN} g\,(\qop-c)\ds \right| \Big) \\
& \leq \left( \Norm{b}_{\L^{((\alphaprime)^*)'}(\Omega)} 
    + \Norm{\gN}_{\L^{((\alphaprime)^\sharp)'}(\GammaN)}\right)
    \inf_{c\in\Rbb}
    \left( \Norm{\qop-c}_{\L^{(\alphaprime)^*}(\Omega)}
    + \Norm{\qop-c}_{\L^{(\alphaprime)^\sharp}(\GammaN)} \right) \\
& \lesssim \left( \Norm{b}_{\L^{((\alphaprime)^*)'}(\Omega)} + \Norm{\gN}_{\L^{((\alphaprime)^\sharp)'}(\GammaN)}\right)\SemiNorm{\qop}_{\W^{1,\alphaprime}(\Omega)}.
\end{split}
\end{equation*}
\normalsize
The mapping $\qop \to -\int_{\Omega}b\,\qop\dxbf + \int_{\GammaN} g\,\qop\ds$
belongs to $(\Qcalzero)^*$.
The assertion follows from the inf-sup condition \eqref{inf-sup}.
\end{proof}

We consider the reduced problem on the kernel $\Dbfcal$:
given $\ubfl$ as in \eqref{ubfl-problem},
find $\ubfzero$ in $\Dbfcal$ such that
\begin{equation} \label{splitted-variational-formulation}
    \frac{\mu}{\rho}\int_{\Omega}(\Kbbmo)(\ubfzero + \ubfl)\cdot \vbf \dxbf
        + \frac{\beta}{\rho}\int_{\Omega}|\ubfzero + \ubfl|^{\alphamt}((\ubfzero + \ubfl)\cdot \vbf) \dxbf
        = 0
        \qquad\qquad \forall \vbf \in \Dbfcal.
\end{equation}
We denote by
$\Acal(\cdot) : \LalphaOmegad \to \Lbf^{\alphaprime}(\Omega)$
the mapping
\begin{equation} \label{Acal}
    \Acal(\vbf) := \frac{\mu}{\rho}\Kbbmo\vbf + \frac{\beta}{\rho}\EucNorm{\vbf}^\alphamt\vbf.
\end{equation}
Since
\begin{equation*} 
\begin{split}
    \Norm{\EucNorm{\vbf}^{\alphamt}\vbf}_{\Lbf^{\alphaprime}(\Omega)}
        &= \Big( \int_{\Omega} |\EucNorm{\vbf}^{\alpha-2}\vbf|^{\alphaprime}\dxbf\Big)^{\frac{1}{\alphaprime}} \\
        &=  \Big( \int_{\Omega} |\vbf|^{(\alphamo)\alphaprime}\dxbf\Big)^{\frac{1}{\alphaprime}}
        =  \Big( \int_{\Omega} |\vbf|^{\alpha}\dxbf\Big)^{\frac{\alphamo}{\alpha}}
        =  \Norm{\vbf}_{\LalphaOmegad}^{\alphamo},
\end{split}
\end{equation*}
we have
\begin{equation} \label{bound-Acal-Lalphaptprime}
    \Norm{\Acal(\vbf)}_{\Lbf^{\alphaprime}(\Omega)}
        \leq \left[ \frac{\mu}{\rho} \Norm{\Kbbmo}_{\Lbu^{\frac{\alpha}{\alphamt}}(\Omega)}
             + \frac{\beta}{\rho}\right]
                \Norm{\vbf}_{\LalphaOmegad}^{\alphamo}
\end{equation}
The following result extends
\cite[Prop.~2]{Girault-Wheeler:2008}
from $\alpha=3$ to $\alpha>2$
and mixed boundary conditions.
The proof is omitted for brevity
and is essentially based on the inf-sup condition~\eqref{inf-sup}.
\begin{proposition} \label{prop:equivalence-formulations}
The solution~$\ubf = \ubfzero + \ubfl$
to~\eqref{splitted-variational-formulation}
is also solution to~\eqref{variational-formulation}
and vice-versa. Moreover,
given~$\ubf$ solution to~\eqref{splitted-variational-formulation},
there exists a unique~$\pop$ in~$\QcalgD$
solution to~\eqref{continuous-constraint}, i.e.,
\begin{equation} \label{nablap-Acal}
    \Acal(\ubf) = \nablabold \pop.
\end{equation}
\end{proposition}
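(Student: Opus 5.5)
The argument follows the classical Babu\v{s}ka--Brezzi logic for saddle-point problems, adapted to the $\L^\alpha$--$\W^{1,\alphaprime}$ setting. I treat the mixed/Dirichlet case; in the pure Neumann case, $\Qcalzero$ is replaced by $\W^{1,\alphaprime}(\Omega)\cap\L^2_0(\Omega)$ and the compatibility condition~\eqref{compatibility-condition} makes all test functionals well defined modulo constants.

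\emph{Direction~\eqref{variational-formulation} $\Rightarrow$~\eqref{splitted-variational-formulation}.} Let $(\ubf,\pop)$ solve~\eqref{variational-formulation}. By~\eqref{continuous-constraint} and~\eqref{ubfl-problem}, the difference $\ubf-\ubfl$ satisfies $\int_\Omega(\ubf-\ubfl)\cdot\nablabold\qop\dxbf=0$ for all $\qop\in\Qcalzero$. By~\eqref{Dbfcal:orth-property}, $\ubfzero:=\ubf-\ubfl\in\Dbfcal$. Now test~\eqref{variational-formulation:first-eq} with $\vbf\in\Dbfcal$. Write $\pop=\pop_D+\pop_0$ with a fixed lifting $\pop_D\in\QcalgD$ and $\pop_0\in\Qcalzero$. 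The term $\int_\Omega\nablabold\pop_0\cdot\vbf\dxbf$ vanishes by~\eqref{Dbfcal:orth-property}. The term $\int_\Omega\nablabold\pop_D\cdot\vbf\dxbf$ equals $\int_{\GammaD}\gD\,\vbf\cdot\nbf\ds$, since $\dive\vbf=0$ and $\vbf\cdot\nbf=0$ on $\GammaN$. This yields~\eqref{splitted-variational-formulation}, possibly with that boundary term on the right-hand side.

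\textbf{Main obstacle: the Dirichlet datum.} For the kernel characterization to be exactly as stated, $\gD$ must enter the reduced problem. I would either incorporate it into the right-hand side of~\eqref{splitted-variational-formulation}, or work with $\pop-\pop_D$ and absorb the resulting bounded functional, as in the pure Neumann case. I expect this bookkeeping to be the only delicate point.

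\emph{Direction~\eqref{splitted-variational-formulation} $\Rightarrow$~\eqref{variational-formulation}, and recovery of the potential.} Let $\ubf=\ubfzero+\ubfl$ solve the reduced problem. Then~\eqref{continuous-constraint} holds automatically, by~\eqref{ubfl-problem} and $\ubfzero\in\Dbfcal$.

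By~\eqref{bound-Acal-Lalphaptprime}, $\Acal(\ubf)\in\Lbf^{\alphaprime}(\Omega)$. Define the functional $\ell(\vbf):=\int_\Omega(\Acal(\ubf)+\nablabold\pop_D)\cdot\vbf\dxbf$ on $\LalphaOmegad$. It vanishes on $\Dbfcal$, i.e., on the annihilator of $\nablabold\Qcalzero$ by~\eqref{Dbfcal:orth-property}.

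The inf-sup condition~\eqref{inf-sup} with $\afrak=\alpha$ shows that $\nablabold:\Qcalzero\to\Lbf^{\alphaprime}(\Omega)$ is an isometry, so its range is closed. The closed range theorem, in the reflexive spaces $\LalphaOmegad$ and $\Lbf^{\alphaprime}(\Omega)$, gives $(\Dbfcal)^\perp=\nablabold\Qcalzero$. Hence there exists $\pop_0\in\Qcalzero$ with $\Acal(\ubf)+\nablabold\pop_D=-\nablabold\pop_0$. Setting $\pop:=\pop_D+\pop_0$, and using the sign convention of~\eqref{variational-formulation:first-eq}, gives~\eqref{nablap-Acal}, i.e., $\pop\in\QcalgD$ and~\eqref{variational-formulation:first-eq} holds.

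\emph{Uniqueness of $\pop$.} If $\pop_1,\pop_2\in\QcalgD$ both satisfy~\eqref{variational-formulation:first-eq}, then $\pop_1-\pop_2\in\Qcalzero$ and $\int_\Omega\nablabold(\pop_1-\pop_2)\cdot\vbf\dxbf=0$ for all $\vbf\in\Vbfcal$. By~\eqref{inf-sup}, $\Norm{\nablabold(\pop_1-\pop_2)}_{\Lbf^{\alphaprime}(\Omega)}=0$. Poincar\'e--Steklov~\eqref{Poincare-Steklov} then gives $\pop_1=\pop_2$; in the Neumann case, the zero-average condition plays this role.
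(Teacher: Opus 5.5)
Your route is the one the paper has in mind. The paper omits its proof and says it rests on \eqref{inf-sup}: show $\ubf-\ubfl\in\Dbfcal$ via \eqref{Dbfcal:orth-property}, then recover the potential from the annihilator of $\Dbfcal$ in $\Lbf^{\alphaprime}(\Omega)$, which is $\nablabold\Qcalzero$.

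Your diagnosis of the Dirichlet datum is also right, and it is more than bookkeeping. For a lifting $\pop_D\in\QcalgD$, the functional $\vbf\mapsto\int_\Omega\nablabold\pop_D\cdot\vbf\dxbf$ vanishes identically on $\Dbfcal$ only if $\gD$ is constant. So for nonconstant $\gD$ the printed reduced problem \eqref{splitted-variational-formulation} is \emph{not} equivalent to \eqref{variational-formulation}. For example, take $b=0$, $\gN=0$, $\ubfl=\zerobf$: testing with $\vbf=\ubfzero$ forces $\ubf=\zerobf$, hence $\nablabold\pop=\zerobf$, which is incompatible with a nonconstant trace $\gD$. Consistently, \eqref{bound-u-Lalphapt} contains no $\gD$.

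The gap in your write-up is that you leave this open while your reverse direction silently uses the corrected problem. For a solution of the printed equation, your $\ell$ restricted to $\Dbfcal$ equals $\vbf\mapsto\int_\Omega\nablabold\pop_D\cdot\vbf\dxbf$, so the claim that $\ell$ vanishes on $\Dbfcal$ fails. Commit to
\[
\int_\Omega\Acal(\ubfzero+\ubfl)\cdot\vbf\dxbf=-\int_\Omega\nablabold\pop_D\cdot\vbf\dxbf\qquad\forall\vbf\in\Dbfcal.
\]
Its right-hand side is independent of the lifting, since two liftings differ by an element of $\Qcalzero$. With it, both of your directions go through verbatim, and the printed version is recovered when $\gD$ is constant or $\GammaN=\Gamma$. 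If you prefer to absorb the term, replace $\Acal$ by $\Acal(\cdot)+\nablabold\pop_D$, which keeps \eqref{monotonicity}, \eqref{coercivity}, and \eqref{hemi-continuity}. The analogy with the pure Neumann case is misplaced, since no such term arises there.

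Two smaller points.
\begin{itemize}
\item Your construction yields $\Acal(\ubf)=-\nablabold\pop$, as dictated by \eqref{variational-formulation:first-eq}. The printed \eqref{nablap-Acal} has the opposite sign, and the statement's reference to \eqref{continuous-constraint} should be to \eqref{variational-formulation:first-eq}. Say this explicitly instead of claiming that your identity yields \eqref{nablap-Acal}.
\item Closedness of $\nablabold\Qcalzero$ in $\Lbf^{\alphaprime}(\Omega)$ comes from \eqref{Poincare-Steklov}, which makes $\Norm{\nablabold\qop}_{\Lbf^{\alphaprime}(\Omega)}$ equivalent to the $\W^{1,\alphaprime}(\Omega)$-norm on $\Qcalzero$. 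It does not come from \eqref{inf-sup}, which merely restates $\Lbf^{\alphaprime}(\Omega)=(\LalphaOmegad)^*$. Once the range is closed, $(\Dbfcal)^\perp=\nablabold\Qcalzero$ is the bipolar theorem in reflexive spaces.
\end{itemize}
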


We now show an algebraic inequality
involving $\Acal(\cdot)$. 
\begin{lemma}
For all $\vbf$ and $\wbf$ in $\Vbfcal$,
the following inequality holds true:
there exists a positive constant $\CBL$
depending on $\alpha$ such that
\begin{equation} \label{estimate-Acal}
|\Acal(\vbf)-\Acal(\wbf)|
    \leq \frac{\mu}{\rho}| \Kbbmo| \ |\vbf-\wbf|
         + \CBL \frac{\beta}{\rho}
         |\vbf-\wbf|(|\vbf|^{\alphamt}+|\wbf|^{\alphamt})
\qquad \forall \alpha > 2.
\end{equation}
\end{lemma}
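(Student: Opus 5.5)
The inequality is pointwise in $\Omega$, so I will fix a point and treat $\vbf$, $\wbf$ as vectors in $\Rbb^d$. By the definition~\eqref{Acal} of $\Acal(\cdot)$ and the triangle inequality,
\[
|\Acal(\vbf)-\Acal(\wbf)|
\le \frac{\mu}{\rho}\,|\Kbbmo(\vbf-\wbf)|
+ \frac{\beta}{\rho}\,\big||\vbf|^{\alphamt}\vbf-|\wbf|^{\alphamt}\wbf\big| .
\]
The linear term is bounded by $\frac{\mu}{\rho}|\Kbbmo|\,|\vbf-\wbf|$, where $|\Kbbmo|$ is the matrix norm induced by the Euclidean norm (or any larger matrix norm, e.g.\ Frobenius). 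It therefore remains to show the purely algebraic bound
\[
\big||\vbf|^{\alphamt}\vbf-|\wbf|^{\alphamt}\wbf\big|
\le \CBL\,|\vbf-\wbf|\,\big(|\vbf|^{\alphamt}+|\wbf|^{\alphamt}\big)
\qquad \forall \vbf,\wbf\in\Rbb^d .
\]

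For the nonlinear term I will use a mean-value argument. Set $F(\mathbf{x}):=|\mathbf{x}|^{\alphamt}\mathbf{x}$. Since $\alpha>2$, $F$ is $C^1$ away from the origin and continuous at the origin. Its Jacobian is
\[
DF(\mathbf{x})=|\mathbf{x}|^{\alphamt}\Ibb+(\alpha-2)|\mathbf{x}|^{\alpha-4}\mathbf{x}\mathbf{x}^T,
\]
whose operator norm is $(\alpha-1)|\mathbf{x}|^{\alphamt}$. Writing
\[
F(\vbf)-F(\wbf)=\int_0^1 DF\big(\wbf+t(\vbf-\wbf)\big)(\vbf-\wbf)\,dt
\]
gives
\[
|F(\vbf)-F(\wbf)|\le(\alpha-1)\,|\vbf-\wbf|\int_0^1|\wbf+t(\vbf-\wbf)|^{\alphamt}\,dt .
\]
The segment may pass through the origin, where $DF$ could be singular if $\alpha<3$. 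However, $|DF(\mathbf{x})|$ is locally integrable along any line because the exponent $\alpha-2$ is positive, so the fundamental theorem of calculus still applies; $F$ is locally Lipschitz, hence absolutely continuous along segments.

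Finally, I bound the integrand using $|\wbf+t(\vbf-\wbf)|\le\max(|\vbf|,|\wbf|)$. This gives
\[
|\wbf+t(\vbf-\wbf)|^{\alphamt}\le \max(|\vbf|,|\wbf|)^{\alphamt}\le |\vbf|^{\alphamt}+|\wbf|^{\alphamt}
\]
for every $\alpha>2$, so the claim holds with $\CBL=\alpha-1$. No case distinction between $\alpha<3$ and $\alpha\ge3$ is needed, because only the convexity bound of the segment norm by the maximum is used.

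The main obstacle is justifying the mean-value formula when the segment crosses $\mathbf{0}$ and $2<\alpha<3$. If that step needs a cleaner treatment, I will instead perturb $\wbf$ slightly so that the segment avoids the origin, and then pass to the limit using the continuity of both sides of the inequality.
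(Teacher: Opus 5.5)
Your proof is correct. It differs from the paper only in how the algebraic bound is obtained: the paper cites it, while you prove it.

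The structure is the paper's: the triangle inequality splits off the linear Darcy term, and what remains is the algebraic estimate for $|\vbf|^{\alpha-2}\vbf-|\wbf|^{\alpha-2}\wbf$. The paper takes that estimate from \cite[Lemma~2.1]{Barrett-Liu:1993}. You derive it from the integral mean-value formula along the segment $[\wbf,\vbf]$, using that the Jacobian has operator norm $(\alpha-1)|\mathbf{x}|^{\alpha-2}$ and that $|(1-t)\wbf+t\vbf|\le\max(|\vbf|,|\wbf|)$.

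Your route gives a self-contained argument with the explicit constant $\CBL=\alpha-1$, valid for every $\alpha>2$ without case distinctions. This is the factor $(\alpha-1)$ that appears, without comment, in the statement of the flux bound \eqref{ltwo-estimate-flux}, although its proof writes $\CBL$. Your Jacobian is also exactly the nonlinear part (up to $\beta/\rho$) of $\Jcal''$ computed in the appendix, so your argument uses the paper's own tools. The paper's citation is shorter but leaves $\CBL$ unspecified.

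One remark of yours is inaccurate, though harmless. $DF$ is not singular at the origin for any $\alpha>2$. Indeed $|DF(\mathbf{x})|=(\alpha-1)|\mathbf{x}|^{\alpha-2}\to0$ and $|F(\mathbf{x})|=|\mathbf{x}|^{\alpha-1}=o(|\mathbf{x}|)$, so $F\in C^1(\Rbb^d)$ with $DF(\mathbf{0})=\mathbf{0}$. Only the second derivative blows up when $\alpha<3$. The fundamental theorem of calculus therefore applies directly on every segment, including those through the origin, and neither the local-integrability discussion nor the perturbation fallback is needed.
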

\begin{proof}
Triangle's inequality yields
\[
|\Acal(\vbf)-\Acal(\wbf)|
\leq \frac{\mu}{\rho}| \Kbbmo| \ |\vbf-\wbf|
    + \frac{\beta}{\rho}\EucNorm{|\vbf|^\alphamt \vbf - |\wbf|^\alphamt \wbf}.
\]
Using \cite[Lemma~2.1]{Barrett-Liu:1993},
there exists a positive constant $\CBL$ such that
\[
\left| |\vbf|^\alphamt \vbf - |\wbf|^\alphamt \wbf\right|
\le \CBL \left| \vbf-\wbf \right|
        \left( \vert\vbf\vert^{\alpha-2} + \vert \wbf \vert^{\alpha-2} \right).
\]
The assertion follows combining the two bounds above.
\end{proof}
Based on \cite[Sect.~2]{Girault-Wheeler:2008},
in order to show the well-posedness
of~\eqref{splitted-variational-formulation},
it suffices to prove the following three properties
of~$\Acal(\cdot)$:
\begin{subequations}
\begin{enumerate}
    \item monotonicity, i.e.,
    recalling that $\lambdamin$ is the smallest eigenvalue of $\Kbb^{-1}$,
    we have
    \begin{equation} \label{monotonicity}
    \frac{\mu}{\rho}\lambdamin\Norm{\wbf-\vbf}_{\LtwoOmegad}^2
        \leq \int_{\Omega}(\Acal(\wbf+\ybf)-\Acal(\vbf+\ybf))\cdot(\wbf-\vbf)\dxbf
        \qquad
        \forall\ \ybf,\vbf,\wbf \in \Vbfcal;
    \end{equation}
    \item coercivity, i.e.,
    we have
    \begin{equation} \label{coercivity}
    \lim_{\Norm{\vbf}_{\LalphaOmegad}\to +\infty}
        \Big( \frac{1}{\Norm{\vbf}_{\LalphaOmegad}} \int_{\Omega} \Acal(\vbf + \ybf) \cdot \vbf \Big)
        = + \infty
    \qquad \forall\ybf\in\Vbfcal;
    \end{equation}
    \item hemi-continuity, i.e.,
    given $\ybf$, $\vbf$, and $\wbf$ in $\Vbfcal$,
    the mapping
    \begin{equation} \label{hemi-continuity}
	       t \to \int_\Omega \Acal(\ybf + \vbf + t\,\wbf) \cdot \wbf \dxbf
    \end{equation}
    is continuous from $\Rbb$ into $\Rbb$.  
\end{enumerate}
The proofs of these properties are postponed to Appendix \ref{appendix:properties-Acal}.
\end{subequations}
We are in a position to show the well-posedness of \eqref{splitted-variational-formulation}
and hence,
due to Proposition~\ref{prop:equivalence-formulations},
of~\eqref{variational-formulation}.
\begin{proposition}  \label{proposition:well-posedness}
Given $b$ and $\gN$
as in \eqref{data-spaces},
problem \eqref{variational-formulation}
admits a unique solution $(\ubf,\pop)$ in $\Vbfcal \times \QcalgD$.
Moreover, given~$\Cell$ as in~\eqref{continuity-data},
we have
\begin{subequations}
\small\begin{equation} \label{bound-u-Lalphapt}
\begin{split}
\Norm{\ubf}_{\LalphaOmegad}
&\leq \Big( \frac{2^{\alphaprime}(\alpha-1)\mu^{\alphaprime}}{\beta^{\alphaprime}}
    \Norm{\Kbb^{-1}}_{\Lbu^{\frac{\alpha}{\alpha-2}}}
    ^{\alphaprime-1} +2 \Big)
    ^{\frac1\alpha}
    \Cell^{\alphaprime-1}
    \big( \Norm{b}_{\L^{((\alphaprime)^*)'}(\Omega)}
    + \Norm{\gN}_{\L^{((\alphaprime)^\sharp)'}(\GammaN)}
    \big)^{\alphaprime-1}
\end{split}
\end{equation}\normalsize
and
\small\begin{equation} \label{bound-p-Lalphaptprime}
\begin{split}
\Norm{\nablabold \pop}_{\LalphaprimeOmegad}
&   \leq \Big(\frac{\mu}{\rho}
    \Norm{\Kbbmo}_{\Lbu^\frac{\alpha}{\alphamt}(\Omega)}
    + \frac{\beta}{\rho} \Big)
    \Big( \frac{2^{\alphaprime}(\alpha-1)\mu^{\alphaprime}}{\beta^{\alphaprime}}
    \Norm{\Kbb^{-1}}_{\Lbu^{\frac{\alpha}{\alpha-2}}}
    ^{\alphaprime-1} +2 \Big)
    ^{\frac1{\alphaprime}} \cdot \\
&   \qquad\qquad
    \cdot \Cell
    \big(\Norm{b}_{\L^{((\alphaprime)^*)'}(\Omega)}
    + \Norm{\gN}_{\L^{((\alphaprime)^\sharp)'}(\GammaN)}\big).
\end{split}
\end{equation}\normalsize
\end{subequations}
\end{proposition}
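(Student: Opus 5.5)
Existence and uniqueness follow from the theory of monotone operators (Browder--Minty) applied to the reduced problem~\eqref{splitted-variational-formulation} on the reflexive Banach space~$\Dbfcal$, a closed subspace of~$\LalphaOmegad$ by~\eqref{Dbfcal:orth-property}. First I fix~$\ubfl$ from Proposition~\ref{prop:existence-ubfl}. Then I consider the operator $\vbf\mapsto \Acal(\vbf+\ubfl)$, restricted by testing only against~$\Dbfcal$. By~\eqref{monotonicity} with $\ybf=\ubfl$ it is strictly monotone, by~\eqref{coercivity} it is coercive, and by~\eqref{hemi-continuity} it is hemicontinuous. It is also bounded, thanks to~\eqref{bound-Acal-Lalphaptprime}. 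Browder--Minty then gives a unique~$\ubfzero$ in~$\Dbfcal$, and strict monotonicity in the $\Lbf^2$-norm gives uniqueness. Uniqueness of~$\ubf$ is also independent of the choice of representative~$\ubfl$, since two choices differ by an element of~$\Dbfcal$. Proposition~\ref{prop:equivalence-formulations} then provides the unique~$\pop$ in~$\QcalgD$ with $\Acal(\ubf)=\nablabold\pop$, so $(\ubf,\pop)$ solves~\eqref{variational-formulation}.

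For the bound~\eqref{bound-u-Lalphapt}, I test~\eqref{splitted-variational-formulation} with $\vbf=\ubfzero=\ubf-\ubfl$, which gives
\[
\frac{\beta}{\rho}\Norm{\ubf}^\alpha_{\LalphaOmegad}+\frac{\mu}{\rho}\int_\Omega \Kbbmo\ubf\cdot\ubf
=\int_\Omega \Acal(\ubf)\cdot\ubfl .
\]
The right-hand side is at most
\[
\frac{\mu}{\rho}\int_\Omega|\Kbbmo||\ubf||\ubfl| +\frac{\beta}{\rho}\Norm{\ubf}^{\alpha-1}_{\LalphaOmegad}\Norm{\ubfl}_{\LalphaOmegad}.
\]
I apply H\"older with exponents $\frac{\alpha}{\alpha-2},\alpha,\alpha$ to the Darcy term and Young's inequality with exponents $\alpha',\alpha$ to the Forchheimer term, absorbing half of $\frac{\beta}{\rho}\Norm{\ubf}^\alpha$. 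The Darcy term, bounded by $\frac{\mu}{\rho}\Norm{\Kbbmo}\Norm{\ubf}\Norm{\ubfl}$, is handled in two ways. If I choose to keep the positive Darcy term on the left, I use Cauchy--Schwarz in the $\Kbbmo$-weighted inner product. Otherwise I apply Young again to absorb into $\frac{\beta}{2\rho}\Norm{\ubf}^\alpha$, producing a term of the form $\mu^{\alpha'}\beta^{-\alpha'}\Norm{\Kbbmo}^{\alpha'}\Norm{\ubfl}^{\alpha'}$. Comparing the result with the stated constant suggests the exact bookkeeping: the factor $2^{\alpha'}(\alpha-1)$ comes from Young with $\varepsilon=1/2$, and the $+2$ comes from the Forchheimer term. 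Finally I insert~\eqref{continuity-data} for~$\Norm{\ubfl}$.

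The pressure estimate~\eqref{bound-p-Lalphaptprime} then follows directly from~\eqref{nablap-Acal} and~\eqref{bound-Acal-Lalphaptprime}, since $\Norm{\nablabold\pop}_{\LalphaprimeOmegad}\le(\ldots)\Norm{\ubf}^{\alpha-1}_{\LalphaOmegad}$. Inserting the velocity bound and using $(\alpha'-1)(\alpha-1)=1$ gives exactly the first power of $\Cell(\Norm{b}+\Norm{\gN})$, and the bracket's power $\frac{\alpha-1}{\alpha}=\frac1{\alpha'}$ matches the statement.

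The main obstacle is the Young's-inequality bookkeeping: producing a single homogeneous power $\Norm{\ubfl}^{\alpha'}$ on the right, which raised to $1/\alpha$ gives the exponent $\alpha'-1$. The Darcy term naturally yields $\Norm{\ubfl}^{\alpha'}$ but the Forchheimer term yields $\Norm{\ubfl}^\alpha$, which is inhomogeneous. I would try to avoid $\Norm{\ubfl}^\alpha$ by handling the Forchheimer cross term through the monotonicity structure instead: estimate $\int\Acal(\ubf)\cdot\ubfl$ by $\Norm{\nablabold\pop}\Norm{\ubfl}$. If that also fails to give homogeneity, I would use the freedom in choosing $\ubfl$, taking it of minimal norm via the inf-sup condition~\eqref{inf-sup}, and accept a constant of the stated form up to the exact powers.
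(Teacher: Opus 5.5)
Your existence and uniqueness argument (Browder--Minty on $\Dbfcal$ from \eqref{monotonicity}, \eqref{coercivity}, \eqref{hemi-continuity}, then Proposition~\ref{prop:equivalence-formulations}) is the paper's proof, and so is your energy estimate (test \eqref{splitted-variational-formulation} with $\ubfzero$, H\"older on the Darcy cross term, Young twice). The obstacle you flag at the end is genuine. The paper takes $\varepsilon=\beta/(2\mu\Norm{\Kbbmo}_{\Lbu^{\alphafracalphamt}(\Omega)})$ in the Darcy term, and with that choice the argument gives exactly
\[
\Norm{\ubf}_{\LalphaOmegad}^{\alpha}
\le c_1\Norm{\ubfl}_{\LalphaOmegad}^{\alphaprime}
+ 2\Norm{\ubfl}_{\LalphaOmegad}^{\alpha},
\qquad
c_1:=2^{\alphaprime}(\alpha-1)\frac{\mu^{\alphaprime}}{\beta^{\alphaprime}}
\Norm{\Kbbmo}_{\Lbu^{\alphafracalphamt}(\Omega)}^{\alphaprime}.
\]
Note that the power of $\Norm{\Kbbmo}$ is $\alphaprime$, not $\alphaprime-1$. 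The paper then inserts \eqref{continuity-data} into both terms as if they carried the same power. That step is valid only when $X:=\Cell\big(\Norm{b}_{\L^{((\alphaprime)^*)'}(\Omega)}+\Norm{\gN}_{\L^{((\alphaprime)^\sharp)'}(\GammaN)}\big)\le1$. In that case $\Norm{\ubfl}^{\alpha}\le\Norm{\ubfl}^{\alphaprime}$, because $\alpha>2>\alphaprime$.

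Neither of your fallbacks can close this gap, because \eqref{bound-u-Lalphapt} is false for large data. By \eqref{continuous-constraint}, $\big|\int_\Omega b\,\qop\dxbf-\int_{\GammaN}\gN\,\qop\ds\big|\le\Norm{\ubf}_{\LalphaOmegad}\Norm{\nablabold\qop}_{\LalphaprimeOmegad}$ for all $\qop$ in $\Qcalzero$. Hence $\Norm{\ubf}_{\LalphaOmegad}$ dominates the dual norm of the data functional, which is linear in $(b,\gN)$. Replace fixed nonzero data by $(tb,t\gN)$. The left-hand side of \eqref{bound-u-Lalphapt} grows at least like $t$, while the right-hand side grows like $t^{\alphaprime-1}=t^{1/(\alpha-1)}$, and $1/(\alpha-1)<1$.

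This is also why your rescue ideas fail.
\begin{itemize}
\item Bounding $\int_\Omega\Acal(\ubf)\cdot\ubfl\dxbf$ by $\Norm{\nablabold\pop}_{\LalphaprimeOmegad}\Norm{\ubfl}_{\LalphaOmegad}$ gains nothing. Testing $\Acal(\ubf)$ against $\ubf$ gives $\Norm{\nablabold\pop}_{\LalphaprimeOmegad}=\Norm{\Acal(\ubf)}_{\LalphaprimeOmegad}\ge\frac{\beta}{\rho}\Norm{\ubf}_{\LalphaOmegad}^{\alpha-1}$, so the cross term $\Norm{\ubf}^{\alpha-1}\Norm{\ubfl}$ is intrinsic.
\item A minimal-norm $\ubfl$ satisfies $\Norm{\ubfl}_{\LalphaOmegad}\le\Norm{\ubf}_{\LalphaOmegad}$, which exhibits the obstruction rather than removing it.
\end{itemize}

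The pressure step you take from \eqref{bound-Acal-Lalphaptprime} has the same slip. The Darcy part only satisfies $\Norm{\Kbbmo\vbf}_{\LalphaprimeOmegad}\le\Norm{\Kbbmo}_{\Lbu^{\alphafracalphamt}(\Omega)}\Norm{\vbf}_{\LalphaOmegad}$, to the first power, so \eqref{bound-Acal-Lalphaptprime} needs $\Norm{\vbf}_{\LalphaOmegad}\ge1$. Moreover, \eqref{bound-p-Lalphaptprime} also fails for large data. The lower bound just quoted makes $\Norm{\nablabold\pop}_{\LalphaprimeOmegad}$ grow at least like $t^{\alpha-1}$, whereas the claimed bound is linear in $t$.

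What the argument actually proves, and what you should state, is
\[
\Norm{\ubf}_{\LalphaOmegad}\le(c_1+2)^{\frac1\alpha}\max\{X^{\alphaprime-1},X\},
\qquad
\Norm{\nablabold\pop}_{\LalphaprimeOmegad}\le\frac{\mu}{\rho}\Norm{\Kbbmo}_{\Lbu^{\alphafracalphamt}(\Omega)}\Norm{\ubf}_{\LalphaOmegad}+\frac{\beta}{\rho}\Norm{\ubf}_{\LalphaOmegad}^{\alpha-1}.
\]
The stated velocity bound, with the corrected constant $c_1$, is recovered only in the small-data regime $X\le1$.
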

\begin{proof}
Based on \cite[Chapt. I]{Showalter:1997},
properties~\eqref{monotonicity},
\eqref{coercivity}, and~\eqref{hemi-continuity}
yield the existence and uniqueness
of a solution~$\ubf$
to~\eqref{splitted-variational-formulation}.
Proposition~\ref{prop:equivalence-formulations}
yields the existence and uniqueness
of~$\pop$ solution to~\eqref{variational-formulation}.
As in Proposition \ref{prop:equivalence-formulations},
we split $\ubf$ as $\ubfzero + \ubfl$.
Since $\ubfzero$ belongs to $\Dbfcal$,
we have
\begin{equation*}
    0 \overset{\eqref{Dbfcal:orth-property}}{=} \int_{\Omega} \nablabold \pop \cdot \ubfzero
      \overset{\eqref{nablap-Acal}}{=}\int_{\Omega}  \Acal(\ubf)\cdot\ubfzero \dxbf.
\end{equation*}
Estimates~\eqref{Jcalprime-utilde}
with~$\vbftilde$ equal to~$\ubf$,
and~\eqref{Acalutilde-u}
with~$\vbftilde$ and~$\ybf$
equal to~$\ubf$ and~$\ubfl$ entail
\begin{equation*}
\begin{split}
0  = \int_{\Omega} \Acal(\ubf)\cdot\ubf \dxbf
       & - \int_{\Omega} \Acal(\ubf)\cdot\ubfl \dxbf 
    \geq \frac{\mu}{\rho}\lambdamin\Norm{\ubf}^2_{\LtwoOmegad}
        + \frac{\beta}{\rho}\Norm{\ubf}^\alpha_{\LalphaOmegad}\\
        &-\frac{\mu}{\rho}   \Norm{\Kbbmo}_{\Lbu^{\alphafracalphamt}(\Omega)}
                            \Norm{\ubf}_{\LalphaOmegad}
                            \Norm{\ubfl}_{\LalphaOmegad} 
        - \frac{\beta}{\rho}\Norm{\ubf}^{\alphamo}_{\LalphaOmegad}
                            \Norm{\ubfl}_{\LalphaOmegad}.
\end{split}
\end{equation*}
Applying Young's inequality twice
gives
\begin{equation*}
\begin{split}
\frac{\mu}{\rho}\lambdamin\Norm{\ubf}^2_{\LtwoOmegad}
    + \frac{\beta}{\rho}\Norm{\ubf}^\alpha_{\LalphaOmegad} 
&\leq \frac{\mu}{\rho}\Norm{\Kbbmo}_{\Lbu^{\frac{\alpha}{\alphamt}}(\Omega)}
            \Big( \frac{\varepsilon}{\alpha}\Norm{\ubf}_{\LalphaOmegad}^{\alpha}
        + \frac{1}{\varepsilon^{\alphaprime-1} \alphaprime}\Norm{\ubfl}_{\LalphaOmegad}^{\alphaprime} \Big) \\
&\quad+ \frac{\beta}{\rho}
            \left(\frac{1}{\alphaprime}\Norm{\ubf}^{\alpha}_{\LalphaOmegad}
                + \frac{1}{\alpha}\Norm{\ubfl}_{\LalphaOmegad}^{\alpha} \right),
\end{split}
\end{equation*}
whence
\begin{equation*}
\begin{split}
\Big(\frac{\beta}{\rho}-\frac{\beta}{\alphaprime\rho}
        -\frac{\varepsilon\mu}{\alpha \rho}
&\Norm{\Kbbmo}_{\Lbu^{\frac{\alpha}{\alphamt}}}\Big)\Norm{\ubf}^\alpha_{\LalphaOmegad}
    =\frac{1}{\alpha}\Big(\frac{\beta}{\rho}
        -\frac{\varepsilon\mu}{\rho}\Norm{\Kbbmo}_{\Lbu^{\frac{\alpha}{\alphamt}}}\Big)\Norm{\ubf}^\alpha_{\LalphaOmegad} \\
    &\leq -\frac{\mu}{\rho}\lambdamin\Norm{\ubf}^2_{\LtwoOmegad} 
         +  \frac{\mu}{\varepsilon^{\alphaprime-1}}\alphaprime\rho
         \Norm{\Kbbmo}_{\Lbu^{\frac{\alpha}{\alphamt}}(\Omega)}\Norm{\ubfl}_{\LalphaOmegad}^{\alphaprime} 
         + \frac{\beta}{\alpha\rho}
            \Norm{\ubfl}_{\LalphaOmegad}^{\alpha}.
\end{split}
\end{equation*}
Estimate \eqref{bound-u-Lalphapt}
follows from applying \eqref{continuity-data} and taking
\begin{equation*}
    \varepsilon = \frac{\beta}{2\mu}\Norm{\Kbbmo}^{-1}_{\Lbu^{\frac{\alpha}{\alphamt}}(\Omega)}.
\end{equation*}
Estimate \eqref{bound-p-Lalphaptprime}
follows from combining \eqref{nablap-Acal},
\eqref{bound-Acal-Lalphaptprime},
and \eqref{bound-u-Lalphapt}.

\end{proof}

\section{Finite element spaces and functional inequalities} \label{section:FE-spaces}
This section is devoted to introduce the basic setting
for a nonconforming discretization of problem~\eqref{variational-formulation}.
More precisely, sequences of regular meshes,
and corresponding broken Sobolev and finite element spaces
are introduced in Section~\ref{subsection:meshes-broken-finite};
important inequalities (Poincar\'e- and trace-type)
for broken Sobolev and Crouzeix-Raviart spaces
are exhibited in Section~\ref{subsection:semi-norms-technical}.
While the tools developed in Section~\ref{subsection:semi-norms-technical} below
remain valid in arbitrary dimension,
henceforth we fix $d=2$.
This choice allows us to characterize the Crouzeix-Raviart space in \eqref{CR}
in two equivalent ways; see Section \ref{subsection:meshes-broken-finite} below for more details.

\subsection{Meshes, and broken Sobolev and finite element spaces} \label{subsection:meshes-broken-finite}

\paragraph*{Meshes.}
We consider mesh sequences $\{\Tauh\}$,
where each $\Tauh$ is a finite collection of disjoint, closed,
simplicial elements such that
$\overline{\Omega}=\bigcup_{\E\in\Tauh}\E$.
For each $\E$ in $\Tauh$, $\partial \E$, $\hE$,
and~$\RfrakE$
denote the boundary of, the diameter of, and the radius
of the largest ball contained in~$\E$, respectively.
We assume that the sequence of meshes is shape-regular,
i.e., there exists a positive constant~$\gamma$
such that $\hE \le \gamma \RfrakE$
for all $\E$ in $\Tauh$, for all meshes~$\Tauh$.
The piecewise $\L^2$-projector~$\PizTauhs$
onto the space of polynomials of order~$s$,
$s$ in~$\mathbb N$, is given by
\begin{equation} \label{L2-projector-mesh}
\int_\E\PizTauhs v \ \qs \dxbf
:= \int_\E v \ \qs \dxbf
    \qquad\qquad
    \forall v \in \L^2(\Omega), \quad
    \forall \qs \in \P_{s}(\E),
    \quad \forall \E\in\Tauh;
\end{equation}
the vector version of this projection operator is~$\PiboldizTauhs$.

We associate each~$\Tauh$ with the set~$\Fcalh$
of facets.
With each facet~$\F$ in~$\Fcalh$,
we associate  its diameter~$\hF$
and a unit normal vector~$\nbfF$, and
\begin{itemize}
\item either there exist distinct $\E_{1,F}$ and $\E_{2,F}$ in $\Tauh$
such that $F = \partial \E_{1,F}\cap\partial \E_{2,F}$
and $\F$ is called an \emph{internal facet},
\item or there exists $\E_F$ in $\Tauh$ such that
$F\subseteq\partial \E_F\cap\Gamma$
and $\F$ is called a \emph{boundary facet}.
\end{itemize}
Interfaces and boundary facets are collected
in the subsets~$\FcalhI$ and~$\FcalhB$, respectively;
we assume that it is possible to split
the boundary facets
into Dirichlet~$\FcalhD$ and Neumann~$\FcalhN$
boundary facets, i.e., $\F$ is in~$\FcalhD$
if it belongs to~$\FcalhB$ and $\F$ is contained
in $\GammaD$ (similar for the Neumann case);
we also introduce $\FcalhID:=\FcalhI\cup\FcalhD$
and $\FcalhIN:=\FcalhI\cup\FcalhN$.
The set of facets of an element~$\E$ is~$\FcalE$;
we also define
$\FcalED:= \FcalE\cap\FcalhD$ and $\FcalEI:=\FcalE\cap \FcalhI$.
For~$\F$ in~$\Fcalh$, we define the facet patches
\begin{equation*} \label{facet-patch}
\omegaF :=  \bigcup \{ \E \in \Tauh \mid \F \in \FcalE \}.
\end{equation*}
The piecewise $\L^2$-projector $\PizFcalhs$ onto the space
of facet polynomials of order~$s$, $s$ in~$\mathbb N$, reads
\begin{equation*}
    \int_\F\PizFcalhs v \ \qs \ds := \int_\F v \ \qs \ds
    \qquad\qquad
    \forall v\in\W^{1,1}(\Omega),\quad
    \forall \qs \in \P_{s}(\F)
    \quad\forall\F\in\Fcalh;
\end{equation*}
the vector version of this projection operator is~$\PiboldzFcalhs$.

We associate each~$\Tauh$ with
the set of its vertices~$\Vcalh$;
the set of vertices of a given element~$\E$ is~$\VcalE$.
The vertices of~$\E$ are denoted by~$\nu_{i,K}$, $i=1,2,3$,
and the corresponding barycentric coordinates by $\lambda_{\E,i}$;
when convenient, we shall replace $\lambda_{\E,i}$
with~$\lambdaEF$ where~$\F$ is the facet
opposite to the $i$-th vertex
and we shall omit the subscript~$\E$
when no confusion occurs.

Consider now an element~$\E$ in~$\Tauh$.
A local continuous trace inequality holds true
\cite[Corollary~1.3]{Botti-Mascotto:2025-B}:
if~$\p$ is in~$[1,d)$, there exists a positive constant $\CTRsharp$
depending only on~$\p$ and~$\gamma$ such that
\begin{equation} \label{Sobolev-trace-sharp-elemental}
\Norm{v}_{\L^{\psharp}(\partial \E)} 
\le \CTRsharp \big( \Norm{v}_{\L^{\pstar}(\E)}
        + \Norm{\nablabold v}_{\Lbf^\p(\E)} \big).
\end{equation}

\paragraph*{Broken Sobolev spaces.}
For~$\p$ larger than or equal to~$1$, we define
\[
\W^{1,\p}(\Tauh):=
\left\{u\in \L^\p(\Omega)\,\mid\, {u}_{|\E}\in \W^{1,\p}(\E)
                        \qquad \forall \E\in\Tauh\right\}.
\]
For every~$v$ in~$\W^{1,\p}(\Tauh)$ and $\F$ in $\Fcalh$,
the jump operator on~$\F$ is well-defined
thanks to the trace theorem
and is given by
$$
\jump{v}_F:=
\begin{cases}
v_{|\E_{1,\F}} \nbf_{\E_{1,\F}} \cdot\nbfF
    + v_{|\E_{2,\F}} \nbf_{\E_{2,\F}} \cdot\nbfF 
    &\text{if } F \subset \FcalhI,\; \F = \partial \E_{1,\F}\cap \partial \E_{2,\F}\\
v_{|\F} \nbf_{\E_{\F}} \cdot\nbfOmega
    &\text{if } F\in\FcalhB,\; \F \subset \partial\E_\F \cap \Gamma.
\end{cases}
$$
We further define the normal jump operator on~$\F$
of a vector field~$\vbf$ in $\Wbf^{1,\p}(\Tauh)$ as
\begin{equation*}
\jump{\vbf}_{F,\nfrak}:=
\begin{cases}
\vbf_{|\E_{1,\F}} \cdot \nbf_{\E_{1,\F}}
    + \vbf_{|\E_{2,\F}} \cdot \nbf_{\E_{2,\F}} 
    &\text{if } F \subset \FcalhI,\; \F = \partial \E_{1,\F}\cap \partial \E_{2,\F}\\
\vbf_{|\F} \cdot \nbf_{\E_{\F}}
    &\text{if } F\in\FcalhB,\; \F \subset \partial\E_\F \cap \Gamma .
\end{cases}
\end{equation*}
We omit the subscript $\F$ whenever it is clear
from the context.
For $k$ in $\mathbb N$, we introduce
\begin{equation*}
\W^{1,\p}_k(\Tauh)
:= \big\{ v \in \W^{1,\p}(\Tauh) \, \big| \,
            ( \jump{v},\qkmoF )_{0,\F} = 0 \quad \forall \qkmoF \in \P_{k-1}(\F),
            \, \forall \F \in \FcalhI \big\},
\end{equation*}
and, for~$\vD$ in~$\L^1(\Gammatilde)$,
$\Gammatilde\subseteq\Gamma$
such that each $\F$ in $\FcalhB$
is either fully contained in $\Gammatilde$ or not,
\begin{equation*}
\begin{split}
\W^{1,\p}_{k,\vD} (\Tauh,\Gammatilde)
:= \big\{ v \in \W^{1,\p}_k(\Tauh) \, \big|
    & \, ( \jump{v} - \vD, \qkmoF )_{0,\F} = 0 \\
    & \quad \quad \forall \qkmoF \in \P_{k-1}(\F),
    \, \forall \F \in \FcalhB \text{ with }
        \F \subset\Gammatilde \big\}.
\end{split}
\end{equation*}
Vector valued and tensor valued broken Sobolev spaces
are denoted using the boldface and underlined-boldface
fonts, respectively.

\paragraph*{Finite element spaces.}
We denote the space of polynomials
of order smaller than or equal to a nonnegative integer~$k$
over an element~$\E$ and an edge~$\F$
by~$\P_k(\E)$ and~$\P_k(\F)$, respectively.
If $k$ is a negative integer,
we set $\P_k(\E)=\P_k(\F)=\emptyset$.
We introduce the Lagrangian finite element space of order~$k$:
\begin{equation*} \label{Xh}
\mathcal{L}_k(\Tauh)
:= \{ \qh \in \mathcal C^{0}(\overline\Omega) 
    \mid {\qh}_{|\E} \in \P_{k}(\E) \quad \forall \E \in \Tauh \}.
\end{equation*}
Given an element $\E$ in $\Tauh$,
basis functions of each local space
can be split into
\begin{subequations} \label{lagrangian}
\begin{align}
    &\text{\textbullet}\quad  \text{vertex basis functions
    denoted by } \{\varphi^{\nu_i}\}_{i=1}^{3}; \label{lagrangian:vertices}\\
    &\text{\textbullet}\quad \text{facet modal basis functions associated with each facet~$\F$ of $\FcalE$,
    denoted by } \{\varphi^\F_j\}_{j=1}^{k-1}; \label{lagrangian:facets}\\
    &\text{\textbullet}\quad \text{bulk modal basis functions
    denoted by }\{\varphi^\E_\ell\}_{\ell=1}^{(k-1)(k-2)/2} \label{lagrangian:bulk}.
\end{align}
\end{subequations}

We introduce the piecewise-polynomial space of order~$k$
\[
\Pk(\Tauh) 
:= \big\{ \qk \in \L^2(\Omega) \,\big \vert\,
    \qk{}_{|\E} \in \P_k (\E) \quad \forall K\in\Tauh \big\};
\]
vector valued piecewise-polynomial spaces are denoted by~$\Pbf_k(\Tauh)$.
Given $g$ in $\GcalD$,
we further define the Crouzeix-Raviart (CR) spaces
\cite{Crouzeix-Raviart:1973, Raviart-Thomas:1977}
\small\begin{subequations} \label{CR}
\begin{align}
\CR_k(\Tauh)
& := \{ \qh \in \Pk(\Tauh)  \,|\,
        ( \jump{\qh}, \qkmoF )_{0,\F} = 0 \quad \forall \qkmoF \in \P_{k-1}(\F),
            \, \forall \F \in \FcalhI \}, \label{implicit-CR}\\
\CR_{k,g}(\Tauh, \FcalhD)
& := \{ \qh \in \CR_k(\Tauh) \, \big| \,
    ( \jump{\qh} - g, \qkmoF )_{0,\F} = 0\;
    \forall \qkmoF \in \P_{k-1}(\F),  \,
    \forall \F \in \FcalhD \} \label{implicit-CR-vD}.
\end{align}
\end{subequations} \normalsize
We have the inclusions
\[
\CR_k(\Tauh) \subset   \W^{1,\p}_k(\Tauh),
\qquad\qquad\qquad
\CR_{k,g}(\Tauh, \FcalhD) \subset \W^{1,\p}_{k,g} (\Tauh,\GammaD).
\]
Next, we introduce a basis for the Crouzeix-Raviart spaces.
We denote the univariate Legendre polynomials of degree $k$ over~$\hat{I}:= [-1,1]$
by $\Shatk(\cdot)$.
With each facet~$\F$ in~$\Fcalh$,
we associate the facet nonconforming bubble function~\cite[Def. 3.2]{Carstensen-Sauter:2022}
(which is extended by zero outside~$\omegaF$)
\begin{equation}\label{nc-edge:functions}
\bkF :=
\begin{cases}
      \Shatk \left( 1-2 \lambdaEF\right)
      \qquad\qquad
      &\forall \E\in\Tauh,\; \E \subset \omegaF  \\
      0 
      &\text{otherwise}.
\end{cases}
\end{equation}
We first detail a set of unisolvent degrees of freedom
for the \emph{\bf odd degree} Crouzeix-Raviart spaces
$\CR_{k,g}(\Tauh, \FcalhD)$.
Given an element $\E$,
consider for all $\F$ in $\FcalE$ the mapped Legendre polynomials~$\SFj$ on $\F$,
$j$ non-negative integer,
and a basis
$\{m_{{\boldsymbol{\beta}}}^K\}_{\vert\boldsymbol{\beta}\vert=0}^{k-3}$ of $\P_{k-3}(\E)$
consisting of elements such that $\Norm{m_{{\boldsymbol{\beta}}}^K}_{\L^\infty(\Omega)} = 1$.
We introduce a set of linear functionals:
given $v$ in $\W^{1,1}(\Omega)$,
\begin{subequations}  \label{dof-CRk-odd}
\begin{align}
    &|\F|^{-1} \int_{F} v \, \SFj \ds
    & \forall j = 0, \dots, k-1, \quad\forall \F \in \FcalhIN; \label{dof-moment-edge}\\
    & |\E|^{-1} \int_{\E} v\, m_{\boldsymbol{\beta}}^K \dxbf
    &     \forall \vert \boldsymbol{\beta}\vert = 0, \dots, k-3,\quad \forall\E\in\Tauh \label{dof-moment-bulk}.
\end{align}
\end{subequations}
The functionals in \eqref{dof-CRk-odd}
are a set of unisolvent degrees of freedom (DoFs)
for $\CR_{k,g}(\Tauh, \FcalhD)$;
see, e.g., \cite[Lemma 2.1]{Ainsworth-Rankin:2008}.

We detail next a set of unisolvent degrees of freedom
for \emph{\bf even degree} Crouzeix-Raviart spaces.
With each element $\E$ in $\Tauh$,
we associate the bulk nonconforming bubble function \cite[Rem. 2]{Stoyan-Baran:2006}
(which is extended by zero outside~$\E$)
\begin{equation} \label{nc-elemental-bubble-function}
b^\E_k := 
\begin{cases}
	\frac12 ( -1 + \sum_{i=1}^3 \Shatk(1-2\lambda_i)) & \text{on } \E \\
	0								    & \text{otherwise},
\end{cases}
\end{equation}
and we consider the linear functionals
\begin{equation} \label{dof-CRk-even}
|\E|^{-1} \int_{\E} v \,b^\E_k\ds.
\end{equation}
The functionals in~\eqref{dof-CRk-odd}
and~\eqref{dof-CRk-even}
are a set of unisolvent DoFs
for $\CR_{k,g}(\Tauh, \FcalhD)$
if and only if, cf.~\cite[Lemma 2.2]{Ainsworth-Rankin:2008},
$v$ satisfies the compatibility condition
\[
\sum_{\F \in \partial \E }
    \int_F v \sum_{\substack{ 1\leq j \leq k-1 \\ j \text{ odd }}}
    \frac{{\SFj}}{\Norm{{\SFj}}^2_{\L^2(F)}} \ds = 0
    \qquad\qquad\qquad \forall \E \in \Tauh.
\]
The Crouzeix-Raviart spaces in~\eqref{CR}
are spanned by modal and nonconforming bubble functions~\cite{Baran-Stoyan:2007}.
This characterization depends on the order of the scheme:
\begin{itemize}
    \item \emph{odd order} Crouzeix-Raviart elements
    are spanned by the modal functions~\eqref{lagrangian:facets} and~\eqref{lagrangian:bulk}
    and the facet nonconforming bubbles~\eqref{nc-edge:functions}, i.e.,
    \begin{equation}\label{explicit-CR:odd}
    \CR_k(\Tauh) = \Span\left(                
                \bkF \text{ and }  \{\varphi_j^\F\}_{j=1}^{k-1}\quad
                \forall \F \subset \Fcalh ;\quad
                \{\varphi_{\ell}^\E\}_{\ell=1}^{(k-1)(k-2)/2} 
                \quad \forall \E \in \Tauh   \right);
    \end{equation}
    \item \emph{even degree} Crouzeix-Raviart elements
    are spanned by the modal functions~\eqref{lagrangian}
    and the bulk nonconforming bubbles, i.e.,
    \small
    \begin{equation} \label{explicit-CR:even}
    \CR_k(\Tauh) = \Span\left(
                \varphi^\nu \ \forall \nu \subset \Vcalh ;
                \quad
                \{\varphi_j^\F\}_{j=1}^{k-1}\
                \forall \F \subset \Fcalh;
                \quad
                b_k^\E \text{ and } \{\varphi_{\ell}^\E\}_{\ell=1}^{(k-2)(k-1)/2} 
                \ \forall \E \in \Tauh \right).
    \end{equation}
    \normalsize
\end{itemize}
The spaces in~\eqref{explicit-CR:odd}-\eqref{explicit-CR:even} and~\eqref{CR} coincide;
see, e.g., \cite[Lem. 1.2, 1.3]{Bressan-Mascotto-Mosconi:2025}
and \cite{Stoyan-Baran:2006}.
For even~$k$, the functions in~\eqref{explicit-CR:even} are linearly dependent,
cf. \cite{Fortin-Soulie:1983} for the second order case,
since the piecewise nonconforming bubble function, which
on each element is given by~$b_k^\E$, is globally continuous.
As such, in order to obtain a basis, it is necessary
to remove one bulk nonconforming bubble function.

\paragraph*{Seminorms in broken Sobolev spaces.}
We add a subscript~$\h$ to all differential operators
to denote corresponding operators defined piecewise
over~$\Tauh$; for instance, $\nablaboldh$
is the broken gradient over~$\Tauh$.

Given~$\p$ and~$\q$ larger than or equal to~$1$,
we endow, whenever it makes sense for the above choices of~$\p$ and~$\q$,
the space~$\W^{1,\p}(\Tauh)$ with the seminorms, for $d=2$,
\begin{subequations} \label{functionals-broken}
\begin{align}
\Norm{v}_{\W^{1,\p;\q}_{\GammaD}(\Tauh)}
&   := \Norm{\nablaboldh v}_{\Lbf^\p(\Omega)}
    + \big( \sum_{\F\in\FcalhID} 
    \hF^{-\frac{\p}{\qprime}
    -d \frac{\p}{\pprime}
    +d\frac{\p}{\qprime}}
    \Norm{\jump{v}}^\p_{\L^{\q}(\F)}
    \big)^\frac1\p, \label{norm-broken}\\
\Normth{v}_{\W^{1,\p;\q}_{\GammaD}(\Tauh)}
&   := \Norm{\nablaboldh v}_{\Lbf^\p(\Omega)}
    + \big( \sum_{\F\in\FcalhID} 
    \hF^{-\frac{\p}{\qprime}
    -d \frac{\p}{\pprime}
    +d\frac{\p}{\qprime}}
    \Norm{\PizFcalhz \jump{v}}^\p_{\L^{\q}(\F)}
    \big)^\frac1\p. \label{norm-broken-average}
\end{align}
\end{subequations}
Both seminorms are stronger than the seminorm $\Norm{\nablaboldh v}_{\Lbf^\p(\Omega)}$.

\subsection{Inequalities in broken Sobolev
and Crouzeix-Raviart spaces}
\label{subsection:semi-norms-technical}
We recall the following technical, preliminary result,
cf.~\cite[Theorem~1.7]{Botti-Mascotto:2025-B}
and \cite[Theorem~3.4]{Botti-Mascotto:2025-C}.
Amongst other things, it states that the seminorms
in~\eqref{functionals-broken} are norms.
Recall that the spatial dimension $d$ is~$2$.

\begin{lemma}[Sobolev-Poincar\'e and -trace inequalities in broken Sobolev spaces]
    \label{lemma:trace-broken-W1P}
Let~$\Omega$ satisfy either~\eqref{Omega-ass-1} or~\eqref{Omega-ass-2}.
Let~$\{\Tauh\}$ be a family of meshes
as in Section~\ref{subsection:meshes-broken-finite}
and $\p$ be in $[1,d)$.
There exist positive constants~$\CSPstar$
and $\CSTsharp$ depending on~$\p$, $\GammaD$,
$\Omega$ through either~$\Rfrak$ or~$\Nfrak$
in~\eqref{Omega-ass-1} and~\eqref{Omega-ass-2}, $d$,
and the shape-regularity parameter~$\gamma$ such that,
for all $v \in \W^{1,\p}_{\GammaD}(\Tauh)$,
\begin{subequations} \label{inequalities-broken}
\begin{align}
\Norm{v}_{\L^{\q}(\Omega)}
& \le \CSPstar \Norm{v}_{\W^{1,\p;\psharp}_{\GammaD}(\Tauh)}
& \forall \qop\in [1,\pstar], \label{eq:broken-Sobolev-sharp}\\
\Norm{v}_{\L^{\q}(\Gamma)} 
& \le \CSTsharp \Norm{v}_{\W^{1,\p;\psharp}_{\GammaD}(\Tauh)}
& \forall \q\in[1, \psharp].
     \label{eq:sob-theorem-2b}
\end{align}
\end{subequations}
The explicit dependence of $\CSPstar$ and~$\CSTsharp$
on the parameters highlighted above
is discussed in \cite[Theorem 1.7]{Botti-Mascotto:2025-B}
and \cite[Theorem~3.4]{Botti-Mascotto:2025-C}.
\end{lemma}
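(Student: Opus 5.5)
Since both estimates in~\eqref{inequalities-broken} are bounds by the same broken norm, the plan is to reduce them to the endpoint exponents $\q=\pstar$ in~\eqref{eq:broken-Sobolev-sharp} and $\q=\psharp$ in~\eqref{eq:sob-theorem-2b}. The intermediate values of~$\q$ then follow from H\"older's inequality on the bounded sets~$\Omega$ and~$\Gamma$, with constants depending on~$|\Omega|$, $|\Gamma|$, and hence on~$\hOmega$.

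For the bulk endpoint estimate I would use a reconstruction (averaging) argument. I introduce a conforming lift $\mathcal E_h v\in\W^{1,\p}(\Omega)$, built by averaging local polynomial projections at Lagrange nodes (an Oswald/Karakashian--Pascal type operator), or alternatively by local mollification on shape-regular patches. I would then prove two local estimates:
\[
\Norm{\nablabold\mathcal E_h v}_{\Lbf^\p(\Omega)}+\Big(\sum_\E \hE^{-\p}\Norm{v-\mathcal E_hv}^\p_{\L^\p(\E)}\Big)^{1/\p}
\lesssim \Norm{v}_{\W^{1,\p;\psharp}_{\GammaD}(\Tauh)}.
\]
To bound the jump contributions, I would use H\"older's inequality on each facet, passing from $\L^{\psharp}(\F)$ to the $\L^{\p}$-type scaling; the weight $\hF^{-\p/\qprime-d\p/\pprime+d\p/\qprime}$ with $\q=\psharp$ is designed exactly so that this scaling is dimensionally consistent. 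Facets on~$\GammaD$ are handled by imposing the homogeneous trace in the lift, so that $\mathcal E_hv\in\W^{1,\p}_{\GammaD}(\Omega)$. Next, I would apply the continuous Sobolev embedding~\eqref{Sobolev-embedding} together with the Poincar\'e--Steklov inequality~\eqref{Poincare-Steklov} to $\mathcal E_hv$. The dependence on~$\Rfrak$ or~$\Nfrak$ enters at this step: for~\eqref{Omega-ass-1} through explicit Sobolev constants on star-shaped domains, and for~\eqref{Omega-ass-2} by gluing simplices. Finally, the remainder $v-\mathcal E_hv$ would be bounded in $\L^{\pstar}$ elementwise by the discrete Sobolev estimate $\Norm{w}_{\L^{\pstar}(\E)}\lesssim \hE^{-1}\Norm{w}_{\L^\p(\E)}+\Norm{\nablabold w}_{\Lbf^\p(\E)}$. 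Summing over elements is then legitimate because $\pstar\ge\p$ and $\ell^\p\hookrightarrow\ell^{\pstar}$.

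For the trace endpoint estimate, I would write $\Norm{v}_{\L^{\psharp}(\Gamma)}\le\Norm{v-\mathcal E_hv}_{\L^{\psharp}(\Gamma)}+\Norm{\mathcal E_hv}_{\L^{\psharp}(\Gamma)}$. The second term is controlled by the global trace inequality~\eqref{global-trace} combined with the bulk estimate. For the first term I would apply the elemental Sobolev-trace inequality~\eqref{Sobolev-trace-sharp-elemental} on the boundary elements to $w=v-\mathcal E_hv$, and then sum, again using $\ell^\p\subset\ell^{\psharp}$.

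I expect the main obstacle to be the construction of $\mathcal E_h$ for general broken $\W^{1,\p}$ functions rather than for polynomials. With that construction, the jump control must hold with the $\L^{\psharp}$ facet norm and the specific $h$-weights, and the resulting constants must not depend on any polynomial degree. My first attempt would be to apply the averaging operator to the elementwise $\L^2$-projection $\PizTauhs$ with $s=1$. The error of this projection, $v-\PizTauhs v$, is controlled directly by the broken gradient via a local Poincar\'e inequality, so only the lift of the piecewise affine part needs jump-based control, and for that part the degree is fixed and equivalence of norms is available.
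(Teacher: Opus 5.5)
Your argument is correct, but it is not how the paper obtains the statement. The paper does not prove the lemma; it imports it from \cite[Theorem~1.7]{Botti-Mascotto:2025-B} and \cite[Theorem~3.4]{Botti-Mascotto:2025-C}. As Remark~\ref{remark:broken-inequalities} stresses, those proofs use no averaging or reconstruction operator, hold on general polytopic meshes, and track the dependence on $\Omega$ explicitly through $\Rfrak$ or $\Nfrak$.

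Your route rests instead on two observations. First, for $q=p^\sharp$ the weight in \eqref{norm-broken} has exponent $d-p-(d-1)p/p^\sharp=0$. Second, H\"older's inequality on each edge, with $|F|=h_F$ for $d=2$, gives $h_F^{1-p}\Norm{\jump{v}}^p_{L^p(F)}\le\Norm{\jump{v}}^p_{L^{p^\sharp}(F)}$. Hence the right-hand side of \eqref{inequalities-broken} dominates the classical DG seminorm with $h_F^{1-p}$-weighted $L^p$ jumps, and the lemma reduces to the reconstruction argument you describe:
\begin{itemize}
\item Oswald averaging of $\Pi^{0,\mathcal T_h}_1 v$, with zero nodal values on $\overline{\Gamma}_D$;
\item the continuous Sobolev--Poincar\'e and trace inequalities for the lift;
\item the local Sobolev inequality, \eqref{Sobolev-trace-sharp-elemental}, and $\ell^p\subset\ell^{p^*},\ell^{p^\sharp}$ for the remainder.
\end{itemize}

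Your approach gives a short, self-contained proof on the conforming shape-regular simplicial meshes used in this paper. Its constants are free of any polynomial degree, since only a $\mathrm{P}_1$ function is lifted. It even yields a slightly stronger result, with the weaker weighted-$L^p$ jump seminorm on the right-hand side.

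What it does not deliver is the generality and explicitness of the cited theorems. Vertex-patch averaging needs conforming simplices compatible with $\GammaD$. Moreover, $\Omega$ enters only through the continuous Sobolev--Poincar\'e and trace constants of $W^{1,p}_{\Gamma_D}(\Omega)$, so the asserted dependence through $\Rfrak$ or $\Nfrak$ would still have to be proved separately. Your remarks on star-shaped domains and gluing simplices only sketch that step. Finally, the continuous step requires $\GammaD$ to have positive measure, which is implicit in the statement.
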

Now, we establish a variant of Lemma~\ref{lemma:trace-broken-W1P},
where weaker norms on the right-hand sides
of~\eqref{inequalities-broken} are employed,
notably, we replace $\Norm{v}_{\W^{1,\p;\q}_{\GammaD}(\Tauh)}$
by $\Normth{\cdot}_{\W^{1,\p;\q}_{\GammaD}(\Tauh)}$.


\begin{proposition}[Improved Sobolev-Poincar\'e and -trace inequalities in broken Sobolev spaces] \label{proposition:Sobolev-inequalities}
Let~$\Omega$ satisfy either~\eqref{Omega-ass-1} or~\eqref{Omega-ass-2}.
Let~$\{\Tauh\}$ be a family of meshes
as in Section~\ref{subsection:meshes-broken-finite}
and $\p$ be in $[1,d)$.
There exist positive constants~$\CtildeSPstar$
and~$\CtildeSTsharp$ depending on~$\p$,
$\GammaD$, $\Omega$
through either~$\Rfrak$ or~$\Nfrak$
in~\eqref{Omega-ass-1} and~\eqref{Omega-ass-2}, $d$,
and the shape-regularity parameter~$\gamma$ such that,
for all $v \in \W^{1,\p}_{\GammaD}(\Tauh)$,
\begin{subequations} \label{inequalities-broken-weak}
\begin{align}
\Norm{v}_{\L^{\q}(\Omega)}
& \le \CtildeSPstar \Normth{v}_{\W^{1,\p;\psharp}_{\GammaD}(\Tauh)} 
    & \forall \q\in[1, \pstar], 
    \label{eq:broken-Sobolev-sharp-weak}\\
\Norm{v}_{\L^{\q}(\Gamma)} 
& \le \CtildeSTsharp \Normth{v}_{\W^{1,\p;\psharp}_{\GammaD}(\Tauh)}
    & \forall \q\in[1, \psharp].
     \label{eq:sob-theorem-2b-weak}
\end{align}
\end{subequations}
\end{proposition}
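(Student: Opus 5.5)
We reduce Proposition~\ref{proposition:Sobolev-inequalities} to Lemma~\ref{lemma:trace-broken-W1P} by showing that the two seminorms in~\eqref{functionals-broken} are equivalent on $\W^{1,\p}_{\GammaD}(\Tauh)$. One direction is trivial: since $\PizFcalhz$ is stable in $\L^{\q}(\F)$, we have $\Normth{v}_{\W^{1,\p;\psharp}_{\GammaD}(\Tauh)}\le \Norm{v}_{\W^{1,\p;\psharp}_{\GammaD}(\Tauh)}$. The task is the converse: a bound
\[
\Norm{v}_{\W^{1,\p;\psharp}_{\GammaD}(\Tauh)} \le C\, \Normth{v}_{\W^{1,\p;\psharp}_{\GammaD}(\Tauh)},
\]
with $C$ depending only on $\p$, $d$ and $\gamma$. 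Once this holds, \eqref{eq:broken-Sobolev-sharp-weak} and \eqref{eq:sob-theorem-2b-weak} follow from \eqref{eq:broken-Sobolev-sharp} and \eqref{eq:sob-theorem-2b} with $\CtildeSPstar=C\,\CSPstar$ and $\CtildeSTsharp=C\,\CSTsharp$.

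To obtain the converse bound, on each $\F$ in $\FcalhID$ we split $\jump{v}=\PizFcalhz\jump{v}+(\jump{v}-\PizFcalhz\jump{v})$. We then only need to control the weighted $\L^{\psharp}(\F)$ norm of $\jump{v}-\PizFcalhz\jump{v}$ by the broken gradient on $\omegaF$.

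On an interior facet, write the jump as the difference of the two traces $v_1$ and $v_2$ from $\E_{1,\F}$ and $\E_{2,\F}$. Since subtracting constants commutes with the facet projection, we have
\[
\jump{v}-\PizFcalhz\jump{v}=(v_1-c_1)-(v_2-c_2)-\PizFcalhz\big((v_1-c_1)-(v_2-c_2)\big)
\]
for any constants $c_1,c_2$. We choose $c_i$ as the average of $v$ over $\E_{i,\F}$. Stability of $\PizFcalhz$ in $\L^{\psharp}(\F)$ then gives
\[
\Norm{\jump{v}-\PizFcalhz\jump{v}}_{\L^{\psharp}(\F)}\lesssim\sum_{i}\Norm{v-c_i}_{\L^{\psharp}(\partial\E_{i,\F})}.
\]
We next apply the local Sobolev-trace inequality~\eqref{Sobolev-trace-sharp-elemental} on each $\E_{i,\F}$, followed by a local Sobolev-Poincar\'e inequality $\Norm{v-c_i}_{\L^{\pstar}(\E)}\lesssim\Norm{\nablabold v}_{\Lbf^\p(\E)}$. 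The latter is scale invariant for $\p<d$, since $\pstar$ is the critical exponent, and its constant depends only on $\gamma$ (for instance via \eqref{Sobolev-embedding} combined with \eqref{Poincare-Steklov} on a reference element). This yields $\Norm{\jump{v}-\PizFcalhz\jump{v}}_{\L^{\psharp}(\F)}\lesssim\Norm{\nablaboldh v}_{\Lbf^\p(\omegaF)}$. Dirichlet facets in $\FcalhD$ are handled the same way using the single element $\E_\F$.

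The main point to check is the facet weight. With $\q=\psharp$ and $d=2$, the exponent $-\frac{\p}{\qprime}-d\frac{\p}{\pprime}+d\frac{\p}{\q'}$ should vanish exactly when $\q=\psharp$. This is the scale-invariant choice: $\frac{d-1}{\psharp}=\frac{d}{\pstar}=\frac{d}{\p}-1$, which rewrites to the stated combination. The weight is therefore $\hF^0$, so the chain of estimates above, whose constants are all $h$-independent, closes without a power of $\hF$; I will verify this exponent arithmetic carefully. Raising to the power $\p$, summing over facets and using the finite overlap of the patches $\omegaF$ (which follows from shape regularity) gives
\[
\Big(\sum_{\F}\Norm{\jump{v}-\PizFcalhz\jump{v}}^\p_{\L^{\psharp}(\F)}\Big)^{1/\p}\lesssim\Norm{\nablaboldh v}_{\Lbf^\p(\Omega)}.
\]
Combining this with the splitting yields the equivalence of seminorms and hence the proposition.
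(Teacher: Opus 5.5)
Your proof is correct, but it is organized differently from the paper's.

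\emph{Your route.} You establish the uniform equivalence $\Normth{v}_{\W^{1,\p;\psharp}_{\GammaD}(\Tauh)} \le \Norm{v}_{\W^{1,\p;\psharp}_{\GammaD}(\Tauh)} \lesssim \Normth{v}_{\W^{1,\p;\psharp}_{\GammaD}(\Tauh)}$. You do this by bounding the facet oscillation $\jump{v}-\PizFcalhz\jump{v}$ through~\eqref{Sobolev-trace-sharp-elemental} and a local Sobolev-Poincar\'e inequality. You then apply Lemma~\ref{lemma:trace-broken-W1P} to $v$ itself, which gives \eqref{eq:broken-Sobolev-sharp-weak} and \eqref{eq:sob-theorem-2b-weak} in one stroke.

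\emph{The paper's route.} The paper cites \cite[Corollary~1.10]{Botti-Mascotto:2025-B} for \eqref{eq:broken-Sobolev-sharp-weak}. For the trace bound it splits $v=(v-\PizTauhz v)+\PizTauhz v$. The first piece is estimated on $\Gamma$ by elementwise trace, Sobolev and Poincar\'e-Steklov inequalities. Lemma~\ref{lemma:trace-broken-W1P} is applied only to the piecewise constant $\PizTauhz v$, so the broken-gradient term vanishes. Finally, $\jump{\PizTauhz v}$ is compared with $\PizFcalhz\jump{v}$ via $\jump{v-\PizTauhz v}$.

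\emph{Comparison.} The ingredients are the same. With $c_i$ the element averages, you are using $\jump{v}-\PizFcalhz\jump{v}=\jump{v-\PizTauhz v}-\PizFcalhz\jump{v-\PizTauhz v}$ and the same local bound on $\jump{v-\PizTauhz v}$. So the difference is bookkeeping. Your route produces a cleaner intermediate result: the two seminorms in~\eqref{functionals-broken} are equivalent at $\q=\psharp$, uniformly in $\h$. It also needs no separate boundary term. The paper's route has the minor advantage of invoking Lemma~\ref{lemma:trace-broken-W1P} only for piecewise constants.

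Your exponent check is also right: the facet weight is $\hF^{\p(d/\p-1-(d-1)/\q)}$, which equals $1$ for $\q=\psharp$. This is consistent with the unweighted sums in the paper's proof.
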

\begin{proof}
A proof of~\eqref{eq:broken-Sobolev-sharp-weak} is given
in \cite[Corollary 1.10]{Botti-Mascotto:2025-B}.
Therefore, we focus on~\eqref{eq:sob-theorem-2b-weak}.
We assume that~$\q=\psharp$,
since the assertion for smaller values of~$\qop$
follows from H\"older's inequality.

Recall that $\PizTauhz$ denotes the piecewise average operator over~$\Tauh$.
Triangle's inequality gives
\begin{equation} \label{eq:cor2-0}
\Norm{v}_{\L^{\psharp}(\Gamma)}
\le \Norm{v-\PizTauhz v}_{\L^{\psharp}(\Gamma)} 
    + \Norm{\PizTauhz v}_{\L^{\psharp}(\Gamma)}.
\end{equation}
As for the first term on the right-hand side,
an elementwise trace inequality on the elements abutting $\Gamma$
(constant~$\CTRsharp$ depending
on the shape-regularity parameter~$\gamma$),
cf.~\eqref{Sobolev-trace-sharp-elemental},
an elementwise Sobolev embedding inequality
(constant~$\CSob$ depending on the
shape-regularity parameter~$\gamma$),
cf.~\eqref{Sobolev-embedding},
and an elementwise Poincar\'e-Steklov inequality
(constant~$\CPS$ depending
on the shape-regularity parameter~$\gamma$),
cf.~\eqref{Poincare-Steklov},
lead to
\[
\Norm{v-\PizTauhz v}_{\L^{\psharp}(\Gamma)} 
\le \CTRsharp (1+\CPS\CSob) \Norm{\nablaboldh v}_{\Lbf^\p(\Omega)}.
\]
Inserting this inequality in~\eqref{eq:cor2-0} yields
\begin{equation} \label{eq:cor1-0}
\Norm{v}_{\L^{\psharp}(\Gamma)}
\le \CTRsharp (1+\CPS\CSob) \Norm{\nablaboldh v}_{\Lbf^\p(\Omega)}
    + \Norm{\PizTauhz v}_{\L^{\psharp}(\Gamma)}.
\end{equation}
The assertion follows by estimating the second term on the right-hand side.
We apply Lemma~\ref{lemma:trace-broken-W1P} and get,
for~$\CSTsharp$ as in~\eqref{eq:sob-theorem-2b},
\begin{equation} \label{eq:cor1-1}
\Norm{\PizTauhz v}_{\L^{\psharp}(\Omega)}
\le \CSTsharp \Big( \sum_{\F\in\FcalhID}
        \Norm{\jump{\PizTauhz v}}_{\L^{\psharp}(\F)}^\p \Big)^\frac1\p .
\end{equation}
Recall that $\PizFcalhz$ is the piecewise average operator over~$\Fcalh$.
We estimate each jump term separately:
\begin{equation} \label{eq:cor1-2}
\Norm{\jump{\PizTauhz v}}_{\L^{\psharp}(\F)}
\le  \Norm{\jump{\PizTauhz v} - \PizFcalhz \jump{v}}_{\L^{\psharp}(\F)}
        +  \Norm{\PizFcalhz \jump{v}}_{\L^{\psharp}(\F)}.
\end{equation}
The second term on the right-hand side is good to go.
As for the first one,
proceeding exactly as in the proof of \cite[Corollary~1.10]{Botti-Mascotto:2025-B},
standard manipulations give the existence
of a positive constant~$C$ depending only on~$\gamma$ and $\p$ such that
\begin{equation*} \label{eq:cor1-3}
\Big( \sum_{\F\in\FcalhID}
        \Norm{\jump{v-\PizTauhz v}}_{\L^{\psharp}(\F)}^\p \Big)^\frac1\p
    \le C  \Norm{\nablaboldh v}_{\Lbf^\p(\Omega)}.
\end{equation*}
Combining the above display,
\eqref{eq:cor1-0}, \eqref{eq:cor1-1}, \eqref{eq:cor1-2}
entails the assertion.
\end{proof}

Due to definitions~\eqref{CR}
and~\eqref{norm-broken-average},
we note that
\begin{equation*}
\begin{split}
\Normth{\qh}_{\W^{1,\p;\q}_{\GammaD}(\Tauh)}  =
    & \Norm{\nablaboldh \qh}_{\Lbf^\p(\Omega)} \\
    &   + \Big( \sum_{\F\in\FcalhD} \hF^{-\frac{\p}{\qprime}
            -d \frac{\p}{\pprime} +d\frac{\p}{\qprime}}
    \Norm{\PizFcalhz g}^\p_{\L^{\q}(\F)} \big)^\frac1\p
\quad
\forall \qh \in \CR_{k,g}(\Tauh, \FcalhD).
\end{split}
\end{equation*}
Thus, we have the following
immediate consequence of Proposition~\ref{proposition:Sobolev-inequalities}.
\begin{corollary}[Improved Sobolev-Poincar\'e and -trace inequalities in Crouzeix-Raviart spaces] \label{corollary:Sobolev-inequalities-CR}
Let~$\Omega$ satisfy either~\eqref{Omega-ass-1} or~\eqref{Omega-ass-2}.
Let~$\{\Tauh\}$ be a family of meshes
as in Section~\ref{subsection:meshes-broken-finite}
and $\p$ be in $[1,d)$.
Given~$\CtildeSPstar$ and $\CtildeSTsharp$
as in~\eqref{inequalities-broken-weak},
for all $\qh$ in $\CR_{k,\gD}(\Tauh, \FcalhD)$,
$\gD$ in~$\GcalD$,
\begin{subequations} \label{inequalities-CR-weak}
\begin{align}
\Norm{\qh}_{\L^{\q}(\Omega)}
& \le \CtildeSPstar \Big[\Norm{\nablaboldh \qh}_{\Lbf^\p(\Omega)}
    + \big( \sum_{\F\in\FcalhD}
    \Norm{\PizFcalhz \gD}^\p_{\L^{\psharp}(\F)} \big)^\frac1\p \Big]
    & \forall \q\in[1, \pstar], 
    \label{eq:broken-CR-sharp-weak}\\
\Norm{\qh}_{\L^{\q}(\Gamma)} 
& \le \CtildeSTsharp \Big[ \Norm{\nablaboldh \qh}_{\Lbf^\p(\Omega)}
    + \Big( \sum_{\F\in\FcalhD}
    \Norm{\PizFcalhz \gD}^\p_{\L^{\psharp}(\F)} \big)^\frac1\p \Big]
    & \forall \q\in[1, \psharp].
     \label{eq:sob-CR-2b-weak}
\end{align}
\end{subequations}
\end{corollary}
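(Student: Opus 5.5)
The plan is to obtain the corollary as a direct specialization of Proposition~\ref{proposition:Sobolev-inequalities}. Fix $\q_h$ in $\CR_{k,\gD}(\Tauh,\FcalhD)$. Since $\q_h$ is piecewise polynomial, it lies in $\W^{1,\p}(\Tauh)$ for every $\p$. The constraints in~\eqref{implicit-CR} and~\eqref{implicit-CR-vD} imply that its interior jumps have zero moments against $\P_{k-1}(\F)$, $k\ge1$, on every $\F$ in $\FcalhI$, and that on every $\F$ in $\FcalhD$ the moments of $\jump{\q_h}$ coincide with those of $\gD$. So $\q_h$ is an admissible argument in~\eqref{inequalities-broken-weak}. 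Applying~\eqref{eq:broken-Sobolev-sharp-weak} and~\eqref{eq:sob-theorem-2b-weak} with $\q$ in $[1,\pstar]$ and $[1,\psharp]$ respectively, it then suffices to evaluate $\Normth{\q_h}_{\W^{1,\p;\psharp}_{\GammaD}(\Tauh)}$.

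Next I compute the jump part of~\eqref{norm-broken-average}. Taking the constant test function ($\qkmoF=1$) in~\eqref{implicit-CR} gives $\PizFcalhz\jump{\q_h}=0$ on each interior facet, so those summands vanish. Taking it in~\eqref{implicit-CR-vD} gives $\PizFcalhz\jump{\q_h}=\PizFcalhz\gD$ on each Dirichlet facet. The boundary jump is $\q_h$ times $\nbf_{\E_\F}\cdot\nbfOmega=1$, so no sign issue arises. This yields exactly the identity displayed just before the corollary.

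It remains to handle the weight $\hF^{-\frac{\p}{\qprime}-d\frac{\p}{\pprime}+d\frac{\p}{\qprime}}$ with $\q=\psharp$. This is the only step needing care, because the statement of the corollary carries no $\hF$ weight. With $d=2$, write $\frac{1}{\qprime}=1-\frac{1}{\psharp}$ and $\frac1{\pprime}=1-\frac1\p$. The exponent divided by $\p$ is
\[
(d-1)\Big(1-\frac{1}{\psharp}\Big)-d\Big(1-\frac1\p\Big),
\]
and, using $\psharp=\frac{\p(d-1)}{d-\p}$, this reduces to $0$. So the weight is identically one, the scaling being designed so that the jump term is dimensionally consistent with $\Norm{\nablaboldh v}_{\Lbf^\p}$.

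Substituting this into~\eqref{inequalities-broken-weak} gives~\eqref{eq:broken-CR-sharp-weak} and~\eqref{eq:sob-CR-2b-weak} with the same constants $\CtildeSPstar$ and $\CtildeSTsharp$. I expect no genuine obstacle: the only points to verify are the exponent cancellation and that constants are contained in $\P_{k-1}(\F)$, which requires $k\ge1$.
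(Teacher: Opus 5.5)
Your proposal is correct and follows the paper's own route: the paper likewise notes that for $\qh$ in $\CR_{k,\gD}(\Tauh,\FcalhD)$ the facet averages of the jumps vanish on interior facets and reduce to $\PizFcalhz\gD$ on Dirichlet facets, and then obtains the corollary directly from Proposition~\ref{proposition:Sobolev-inequalities}. You also check explicitly that the $\hF$-weight exponent vanishes for $\q=\psharp$, which is what allows the weight to be dropped as in the statement; the paper leaves this step implicit, and your computation is correct.
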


\begin{remark} \label{remark:broken-inequalities}
Corollary~\ref{corollary:Sobolev-inequalities-CR}
extends \cite[Propositions~4 and~5]{Girault-Wheeler:2008}
to the general-order case.
The tools employed in the proof
of Corollary~\ref{corollary:Sobolev-inequalities-CR}
do not rely on any averaging operator.
As such, the constants in~\eqref{inequalities-CR-weak}
do not depend on the structure of the discretization space,
with the exception of the zero moments up to order~$\p-1$
coming from the definition of the CR spaces in~\eqref{CR};
this might be helpful in the proof of the convergence of the scheme,
e.g., on a fixed mesh
and increasing the polynomial degree.
\eremk
\end{remark}

\section{A general-order method}
    \label{section:standard-method}
For a generic $g$ in~$\GcalD$, we introduce the finite element spaces,
\begin{equation*} \label{standard:FE-spaces}
    \Vbfcalhkmo:= \Pbf_{k-1}(\Tauh),
        \qquad\qquad
    \Qcalhgk :=
    \begin{cases}
        \CR_k(\Tauh) \cap \L^2_0(\Omega)          & \text{ if } \GammaN = \Gamma \\
        \CR_{k,g}(\Tauh, \FcalhD)                 & \text{ if } \GammaN \neq \Gamma.        
    \end{cases}
\end{equation*}
We consider the
method:
find $(\ubfh,\ph) \in \Vbfcalhkmo \times \QcalhgDk$ such that
\begin{subequations} \label{discrete-formulation}
\begin{align}
    \frac{\mu}{\rho} \int_{\Omega}(\Kbbmo)\ubfh \cdot \vbfh \dxbf
        + \frac{\beta}{\rho} \int_{\Omega}\EucNorm{\ubfh}^{\alphamt} \ubfh \cdot \vbfh \dxbf
        + \int_{\Omega}\nablaboldh \ph \cdot \vbfh \dxbf
        = 0 &
        \qquad \forall\, \vbfh \in \DGkmo \label{discrete-formulation:first-eq} \\
    \int_{\Omega}\ubfh \cdot \nablaboldh \qh \dxbf
        = - \int_{\Omega} b \,\qh\dxbf
        + \int_{\GammaN} \gN \, \qh \ds&
        \qquad \forall\, \qh \in \Qcalhzerok. \label{constraint}
\end{align}
\end{subequations}
The method \eqref{discrete-formulation}
with $k=1$, $\alpha=3$, and pure Neumann boundary conditions
was analyzed in \cite{Girault-Wheeler:2008}.
This section is devoted to investigate the well-posedness of
(Section~\ref{subsection:well-posedness-discrete})
and error estimate for
(Section~\ref{section:error-estimates})
method~\eqref{discrete-formulation}.

\subsection{Well-posedness}\label{subsection:well-posedness-discrete}
We define the discrete counterpart
of the space $\Dbfcal$ in \eqref{Dbfcal} as
\begin{equation*} \label{Dbfcalh}
    \Dbfcalhkmo := \Big\{ \vbfh \in \DGkmo \, \Big| \,
        \int_\Omega \nablaboldh \qh \cdot \vbfh \dxbf = 0
        \quad \forall \qh \in \Qcalhzerok \Big\}
\end{equation*}
and its $\L^2$-orthogonal in~$\DGkmo$ as
\begin{equation*} \label{Dbfcalh-perp}
     \Dbfcalhperp := \Big\{ \wbfh \in \DGkmo \, \Big| \, 
     \int_{\Omega} \wbfh \cdot \vbfh \dxbf = 0
     \quad \forall \vbfh \in \Dbfcalhkmo   \Big\}.
\end{equation*}

We prove a discrete inf-sup condition.
\begin{lemma}
For all $\afrak$ in $(1,\infty)$,
there exists a positive constant~$\bfrakh$
only depending on~$\afrak$,
the shape-regularity parameter~$\gamma$, and the order~$k$,
such that
\begin{equation} \label{discrete-inf-sup}
\inf_{\qh \in \QcalhgDk}\sup_{\vbfh \in \Vbfcalhkmo} \frac{\int_\Omega \vbfh\cdot\nablaboldh \qh}{\Norm{\nablaboldh \qh}_{\LafrakprimeOmegad}\Norm{\vbfh}_{\LafrakOmegad}} \ge \bfrakh.
\end{equation}
\end{lemma}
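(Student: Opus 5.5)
The key observation is that for every $\qh$ in $\QcalhgDk$ the broken gradient $\nablaboldh \qh$ is a piecewise polynomial of degree $k-1$, so it lies in the flux space $\Vbfcalhkmo=\Pbf_{k-1}(\Tauh)$ itself. I would therefore mimic the continuous duality argument behind \eqref{inf-sup}, replacing the exact dual element by a discrete one. Given $\qh$, set $\wbf:=|\nablaboldh\qh|^{\afrakprime-2}\nablaboldh\qh$. This is the Lebesgue duality element, satisfying
\[
\int_\Omega \wbf\cdot\nablaboldh\qh=\Norm{\nablaboldh\qh}^{\afrakprime}_{\LafrakprimeOmegad},
\qquad
\Norm{\wbf}_{\LafrakOmegad}=\Norm{\nablaboldh\qh}^{\afrakprime-1}_{\LafrakprimeOmegad}.
\]
However, $\wbf$ is not polynomial unless $\afrakprime=2$. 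I would take $\vbfh:=\PiboldzTauhkmo\wbf$, the elementwise $\L^2$-projection onto $\Pbf_{k-1}(\Tauh)$.

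Since $\nablaboldh\qh|_\E\in\Pbf_{k-1}(\E)$, the defining property \eqref{L2-projector-mesh} of the projector gives $\int_\Omega\vbfh\cdot\nablaboldh\qh=\int_\Omega\wbf\cdot\nablaboldh\qh=\Norm{\nablaboldh\qh}^{\afrakprime}_{\LafrakprimeOmegad}$. The numerator is thus unchanged. It then remains to bound the denominator, which requires $\L^{\afrak}$-stability of the elementwise $\L^2$-projector:
\[
\Norm{\PiboldzTauhkmo\wbf}_{\Lbf^{\afrak}(\E)}\le C_{\rm st}\Norm{\wbf}_{\Lbf^{\afrak}(\E)},
\]
with $C_{\rm st}$ depending only on $\afrak$, $k$ and $\gamma$. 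I would prove this by mapping to the reference simplex. There the projector is a fixed linear map on a finite-dimensional space, and it is bounded from $\L^1$ into $\L^\infty$. The scaling of the $\L^{\afrak}$ norms under affine maps is compatible on both sides, so no power of $\hE$ survives. Summing the elementwise bounds over $\E$ then gives $\Norm{\vbfh}_{\LafrakOmegad}\le C_{\rm st}\Norm{\nablaboldh\qh}^{\afrakprime-1}_{\LafrakprimeOmegad}$.

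Combining the two facts, the ratio in \eqref{discrete-inf-sup} evaluated at this $\vbfh$ is at least $C_{\rm st}^{-1}$. Taking the infimum over $\qh$ gives the claim with $\bfrakh=C_{\rm st}^{-1}$. Nothing in the argument depends on the boundary conditions, so the pure Neumann space $\CR_k(\Tauh)\cap\L^2_0(\Omega)$ is handled identically. If $\nablaboldh\qh=0$ the quotient is undefined, and such $\qh$ are excluded as in the continuous case.

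The main obstacle is the $\L^{\afrak}$-stability of $\PiboldzTauhkmo$ with a constant uniform in $h$. Its dependence on $k$ is unavoidable, and this is consistent with the statement, which lets $\bfrakh$ depend on $k$. The only delicate part is checking the affine scaling carefully: the reference bound $\Norm{\widehat\Pi \hat w}_{\L^{\afrak}}\lesssim\Norm{\hat w}_{\L^1}\lesssim\Norm{\hat w}_{\L^{\afrak}}$ holds on the reference simplex, and its constant transfers unchanged to shape-regular elements.
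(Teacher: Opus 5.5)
Your proposal is correct and follows the paper's proof. Both arguments test with $\vbfh=\PiboldzTauhkmo\big(|\nablaboldh\qh|^{\afrakprime-2}\nablaboldh\qh\big)$, use that $\nablaboldh\qh$ is piecewise in $\Pbf_{k-1}$ so the numerator is unchanged, and reduce the claim to the $\Lbf^{\afrak}$-stability of the elementwise $\L^2$-projector. The paper invokes that stability in one line as a polynomial inverse inequality, whereas your affine reference-element argument justifies it explicitly.
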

\begin{proof}
Given $\qh$ in $\QcalhgDk$
and~$\PiboldzTauhkmo$ as in~\eqref{L2-projector-mesh},
an $\Lbf^{\afrak}-\Lbf^2$ polynomial inverse inequality gives
\begin{equation} \label{vh-infsup}
\wbfh
:= \vert \nablaboldh \qh \vert^{\afrakprime-2}    \nablaboldh \qh,
\qquad
\Norm{\PiboldzTauhkmo \wbfh}_{\Lbf^\afrak(\Omega)}
\le \bfrakh^{-1}
\Norm{\wbfh}_{\Lbf^\afrak(\Omega)}
= \bfrakh^{-1}
\Norm{\nablaboldh \qh}_{\Lbf^{\afrakprime}(\Omega)}^{\alphaprime-1}.
\end{equation}
A duality argument reveals
\begin{equation*} \label{discrete-inf-sup:leq-proof}
\begin{split}
 \Norm{\nablaboldh \qh}_{\Lbf^{\afrakprime}(\Omega)}
&= \frac{\Norm{\nablaboldh \qh}_{\Lbf^{\afrakprime}(\Omega)}^{\afrakprime}}
    {\Norm{\nablaboldh \qh}_{\Lbf^{\afrakprime}(\Omega)}^{\afrakprime-1}}
    = \int_{\Omega} \nablaboldh \qh \cdot
    \frac{\vert \nablaboldh \qh \vert^{\afrakprime-2}  \nablaboldh \qh}
    {\Norm{\nablaboldh \qh}_{\Lbf^{\afrakprime}(\Omega)}^{\afrakprime-1}} \\
&\overset{\eqref{vh-infsup}}{\le}
    \bfrakh^{-1} \int_{\Omega} \nablaboldh \qh \cdot
    \frac{\PiboldzTauhkmo \wbfh}
    {\Norm{\PiboldzTauhkmo \wbfh}_{\Lbf^{\afrak}(\Omega)}}
    \leq \bfrakh^{-1}
    \sup_{\vbfh \in \Vbfcalhkmo} \int_{\Omega} \nablaboldh \qh \cdot
    \frac{\vbfh}{\Norm{\vbfh}_{\Lbf^{\afrak}(\Omega)}}.
\end{split}
\end{equation*}
Taking the infimum over~$\qh$ in~$\QcalgD$ on both sides
of the above display
yields~\eqref{discrete-inf-sup}.

\end{proof}

We prove the discrete version of Proposition \ref{prop:existence-ubfl}.
\begin{lemma} \label{lemma:existence-ubfhl}
Given $b$ and  $\gN$ as in \eqref{data-spaces},
there exists~$\ubfhl$ in~$\Dbfcalhperp$ satisfying
\[
 \int_{\Omega} \ubfhl \cdot \diveh \qh \dxbf
        = - \int_{\Omega} b\,\qh\dxbf
        + \int_{\GammaN} \gN \, \qh \ds
        \qquad\qquad \forall\, \qh \in \Qcal^k_{h,0}
\]
and a positive constant~$\Ctildeell$
depending on the order~$k$ through~$\bfrakh$
in~\eqref{discrete-inf-sup}, $\GammaD$,
$\Omega$ through either~$\Rfrak$ or~$\Nfrak$
in~\eqref{Omega-ass-1} and~\eqref{Omega-ass-2},
and the shape-regularity parameter~$\gamma$ such that
\begin{equation} \label{ubfhl-estimate}
\Norm{\ubfhl}_{\LalphaOmegad}
\leq \Ctildeell \Big(\Norm{b}_{\L^{((\alphaprime)^*)'}(\Omega)} 
    + \Norm{\gN}_{\L^{((\alphaprime)^\sharp)'}(\GammaN)}\Big).
\end{equation}
\end{lemma}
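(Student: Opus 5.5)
The plan is to mirror the proof of Proposition~\ref{prop:existence-ubfl} on the discrete level, with the continuous inf-sup condition~\eqref{inf-sup} replaced by the discrete one~\eqref{discrete-inf-sup}, and the continuous Sobolev and trace inequalities replaced by their broken Crouzeix--Raviart versions in Corollary~\ref{corollary:Sobolev-inequalities-CR}. (We read $\diveh \qh$ in the statement as $\nablaboldh \qh$.)

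\textbf{Step 1: the data functional is bounded on the discrete space.} Define on $\Qcalhzerok$ the linear functional
\[
\ell_h(\qh) := -\int_\Omega b\,\qh\dxbf + \int_{\GammaN}\gN\,\qh\ds .
\]
Apply H\"older's inequality with the exponent pairs $(((\alphaprime)^*)', (\alphaprime)^*)$ on $\Omega$ and $(((\alphaprime)^\sharp)', (\alphaprime)^\sharp)$ on $\GammaN$. Then use Corollary~\ref{corollary:Sobolev-inequalities-CR} with $\p=\alphaprime$, which lies in $(1,2)=(1,d)$ because $\alpha>2$, and with $\gD=0$, so that the Dirichlet data term vanishes. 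This gives
\[
|\ell_h(\qh)| \le \max(\CtildeSPstar,\CtildeSTsharp)\,\big(\Norm{b}_{\L^{((\alphaprime)^*)'}(\Omega)}+\Norm{\gN}_{\L^{((\alphaprime)^\sharp)'}(\GammaN)}\big)\,\Norm{\nablaboldh\qh}_{\LalphaprimeOmegad}.
\]
In the pure Neumann case, $\Qcalhzerok$ contains a zero-average constraint rather than Dirichlet conditions, so the corollary does not apply verbatim. There we instead use the compatibility condition~\eqref{compatibility-condition}: $\ell_h(c)=0$ for constants $c$, so we may subtract the constant. We then need a broken Sobolev--Poincar\'e/trace inequality for functions in $\CR_k(\Tauh)$ modulo constants. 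I would obtain it from Proposition~\ref{proposition:Sobolev-inequalities} in its pure-Neumann variant, the one with zero mean instead of a Dirichlet portion, as in \cite{Botti-Mascotto:2025-B}. This case is the step I expect to require the most care.

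\textbf{Step 2: existence via the discrete inf-sup.} Since $\Vbfcalhkmo$ and $\Qcalhzerok$ are finite-dimensional, consider the operator
\[
B_h:\Vbfcalhkmo\to(\Qcalhzerok)^*,\qquad \langle B_h\vbfh,\qh\rangle=\int_\Omega\vbfh\cdot\nablaboldh\qh\dxbf .
\]
Its kernel is exactly $\Dbfcalhkmo$. The inf-sup condition~\eqref{discrete-inf-sup} with $\afrak=\alpha$ says the transpose is injective with bounded-below norm, so $B_h$ is surjective. Therefore the restriction of $B_h$ to $\Dbfcalhperp$ is a bijection onto $(\Qcalhzerok)^*$, and we set $\ubfhl := (B_h|_{\Dbfcalhperp})^{-1}\ell_h$.

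\textbf{Step 3: the bound.} The standard Banach--Ne\v{c}as--Babu\v{s}ka argument gives $\inf_{\mathbf z_h\in\Dbfcalhkmo}\Norm{\ubfhl-\mathbf z_h}_{\LalphaOmegad}\le\bfrakh^{-1}\Norm{\ell_h}_*$. The subtlety is that $\Dbfcalhperp$ is an $\L^2$-orthogonal complement, whereas the norm is $\L^\alpha$, so we must pass from the quotient norm to $\Norm{\ubfhl}_{\LalphaOmegad}$ itself. I would handle this in one of two ways. The first is to note that the $\L^2$-orthogonal projection onto $\Dbfcalhperp$ is stable in $\L^\alpha$ on the fixed polynomial space, with a constant depending on $k$ and $\gamma$; this is consistent with $\Ctildeell$ being allowed to depend on $k$. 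Locality of the projection is unclear, though, so this route is not guaranteed. The fallback is to characterize $\Dbfcalhperp=\nablaboldh\Qcalhzerok$, since the gradients of discrete potentials lie in $\Pbf_{k-1}(\Tauh)$. We then write $\ubfhl=\nablaboldh r_h$ and bound it directly by choosing the test function $\qh=r_h$. That choice gives only an $\L^2$-type control, which is why I would prefer the first route combined with the inf-sup constant $\bfrakh$. Collecting constants then yields~\eqref{ubfhl-estimate}.
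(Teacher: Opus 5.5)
Your Steps~1 and~2 are exactly the paper's proof. The paper bounds the data functional on $\Qcalhzerok$ in the two cases via H\"older's inequality and Corollary~\ref{corollary:Sobolev-inequalities-CR}, using the compatibility condition~\eqref{compatibility-condition} in the pure Neumann case; your remark that a zero-mean variant is what is really needed there is well taken. It then states that the assertion follows from the discrete inf-sup condition~\eqref{discrete-inf-sup}.

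The genuine gap is your Step~3, which you located correctly but did not close. The inf-sup argument only yields $\inf_{\mathbf z_h\in\Dbfcalhkmo}\Norm{\ubfhl-\mathbf z_h}_{\LalphaOmegad}\le\bfrakh^{-1}\Norm{\ell_h}_{(\Qcalhzerok)^*}$. That is a bound for \emph{some} solution of the constraint, not for the $\Lbf^2$-orthogonal one. Route~1 does not repair this. On a fixed mesh, norm equivalence gives only a mesh-dependent constant: of order $\h^{\frac2\alpha-1}$ on quasi-uniform meshes, via an $\Lbf^\alpha$--$\Lbf^2$ inverse inequality. The real issue is uniformity in~$\h$, not dependence on~$k$, and nothing in $k$ and $\gamma$ alone makes this global projection uniformly $\Lbf^\alpha$-stable.

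In fact, no argument can give the bound for the representative in $\Dbfcalhperp$ in this generality (arbitrary $\alpha>2$, mixed boundary conditions). As you observe, $\Dbfcalhperp=\nablaboldh\Qcalhzerok$. So that representative is $\nablaboldh r_h$, where $r_h$ is the Crouzeix--Raviart approximation of the mixed Poisson problem $\Delta r=b$ in $\Omega$, $\nablabold r\cdot\nbfOmega=\gN$ on $\GammaN$, $r=0$ on $\GammaD$. An $\h$-uniform bound on $\Norm{\nablaboldh r_h}_{\LalphaOmegad}$ is a discrete $\W^{1,\alpha}$-stability estimate, and such estimates hinge on elliptic regularity.

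Here is a counterexample. Take a point $\mathbf x_0$ where $\GammaD$ and $\GammaN$ meet along a straight edge, and let $r$ be a radial cut-off of the leading singular function; this gives $\gN=0$ and smooth $b$. Then $\SemiNorm{\nablabold r}\sim\SemiNorm{\mathbf x-\mathbf x_0}^{-1/2}$, so $\nablabold r\notin\LalphaOmegad$ for $\alpha\ge4$. But $\nablaboldh r_h\to\nablabold r$ in $\Lbf^2(\Omega)$, so a uniform $\Lbf^\alpha$ bound would force $\nablabold r\in\LalphaOmegad$ by weak compactness.

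The correct conclusion comes from dropping the requirement $\ubfhl\in\Dbfcalhperp$. Since $B_h$ is onto, the solutions of the constraint form a nonempty translate of $\Dbfcalhkmo$. Its element of minimal $\Lbf^\alpha$-norm exists in finite dimension and satisfies~\eqref{ubfhl-estimate}, with $\Ctildeell$ equal to $\bfrakh^{-1}$ times the constant of your Step~1. This is the discrete analogue of the quotient statement $\ubfl\in\Vbfcal/\Dbfcal$ in Proposition~\ref{prop:existence-ubfl}. It is also all the subsequent well-posedness argument needs, since any particular solution of the constraint can play the role of $\ubfhl$ in~\eqref{splitted-discrete-formulation}.
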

\begin{proof}
We consider two cases.

\paragraph*{Case 1: mixed/Dirichlet boundary conditions.}
Triangle's inequality,
Cauchy-Schwarz' inequality,
the broken Sobolev-Poincar\'e inequality \eqref{eq:broken-CR-sharp-weak},
and the broken trace inequality \eqref{eq:sob-CR-2b-weak}
yield,
for a hidden constant only depending on~$k$, $\GammaD$, $\Omega$ through either~$\Rfrak$ or~$\Nfrak$
in~\eqref{Omega-ass-1} and~\eqref{Omega-ass-2},
and~$\gamma$,
\small
\begin{equation*}
\begin{split}
\left| -\int_{\Omega}\right.& \left.b\,\qh\dxbf + \int_{\GammaN} g\,\qh\ds \right|
     \lesssim \left( \Norm{b}_{\L^{((\alphaprime)^*)'}(\Omega)} + \Norm{\gN}_{\L^{((\alphaprime)^\sharp)'}(\GammaN)}\right)
    \Norm{\nablaboldh \qh}_{\Lbf^\alphaprime(\Omega)}
    \qquad \forall \qh \in \Qcalhzerok,
\end{split}
\end{equation*} \normalsize
where the mapping $\qh \to -\int_{\Omega}b\,\qh\dxbf + \int_{\GammaN} g\,\qh\ds$
belongs to $(\Qcalhzerok)^*$.
The assertion follows from the discrete inf-sup condition \eqref{discrete-inf-sup}.

\paragraph*{Case 2: pure Neumann boundary conditions.}
For all $c$ in $\Rbb$,
triangle's inequality,
Cauchy-Schwarz' inequality,
the compatibility condition \eqref{compatibility-condition},
the broken Sobolev-Poincar\'e
inequality~\eqref{eq:broken-CR-sharp-weak},
and the discrete broken trace
inequality~\eqref{eq:sob-CR-2b-weak} yield,
for a hidden constant only depending on~$k$, $\GammaD$, $\Omega$
through either~$\Rfrak$ or~$\Nfrak$
in~\eqref{Omega-ass-1} and~\eqref{Omega-ass-2},
and~$\gamma$,
\begin{equation*}
\begin{split}
&   \left| -\int_{\Omega}b\,\qh\dxbf + \int_{\GammaN} g\,\qh\ds \right|
\leq \left|\int_{\Omega}b\,(\qh-c)\dxbf \right|
    + \left|\int_{\GammaN} g\,(\qh-c)\ds \right| \\
&   \leq \left( \Norm{b}_{\L^{((\alphaprime)^*)'}(\Omega)}
    + \Norm{\gN}_{\L^{((\alphaprime)^\sharp)'}(\GammaN)}\right)
    \left( \Norm{\qh-c}_{\L^{(\alphaprime)^*}(\Omega)}
    + \Norm{\qh-c}_{\L^{(\alphaprime)^\sharp}(\GammaN)} \right) \\
&   \lesssim \left( \Norm{b}_{\L^{((\alphaprime)^*)'}(\Omega)} + \Norm{\gN}_{\L^{((\alphaprime)^\sharp)'}(\GammaN)}\right) \Norm{\nablaboldh \qh}_{\Lbf^\alphaprime(\Omega)}
    \quad \forall \qh \in \Qcalhzerok.
\end{split}
\end{equation*}
The mapping $\qh \to -\int_{\Omega}b\,\qh\dxbf + \int_{\GammaN} g\,\qh\ds$
belongs to $(\Qcalhzerok)^*$.
The assertion follows from the discrete
inf-sup condition \eqref{discrete-inf-sup}.
\end{proof}

We now consider the discrete version of \eqref{splitted-variational-formulation}:
find $\ubfhzero$ in $\Dbfcalhkmo$
such that
\begin{equation} \label{splitted-discrete-formulation}
    \frac{\mu}{\rho}\int_{\Omega}(\Kbbmo)(\ubfhzero + \ubfhl)\cdot \vbfh \dxbf
        + \frac{\beta}{\rho}\int_{\Omega}|\ubfhzero + \ubfhl|^{\alphamt}(\ubfhzero + \ubfhl)\cdot \vbfh \dxbf
        = 0
    \quad \forall \vbfh \in \Dbfcalhkmo.
\end{equation}
A finite-dimensional modification
of the arguments in Proposition \ref{proposition:well-posedness}
and the discrete inf-sup condition~\eqref{discrete-inf-sup}
entail the following result.

\begin{proposition}  \label{proposition:discrete-well-posedness-enriched}
Given $b$ and $\gN$
as in \eqref{data-spaces},
problem \eqref{discrete-formulation}
admits a unique solution $(\ubfh,\ph)$
in $\Vbfcalhkmo \times \QcalhgDk$.
Moreover, given~$\Ctildeell$ as in~\eqref{ubfhl-estimate},
we have
\begin{subequations}
\small\begin{equation} \label{apriori-ubfh}
\begin{split}
\Norm{\ubfh}_{\LalphaOmegad}
&\leq \Big( \frac{2^{\alphaprime}(\alpha-1)\mu^{\alphaprime}}{\beta^{\alphaprime}}
    \Norm{\Kbb^{-1}}_{\Lbu^{\frac{\alpha}{\alpha-2}}}
    ^{\alphaprime-1} +2 \Big)
    ^{\frac1\alpha}
    \Ctildeell^{\alphaprime-1}
    \big( \Norm{b}_{\L^{((\alphaprime)^*)'}(\Omega)}
    + \Norm{\gN}_{\L^{((\alphaprime)^\sharp)'}(\GammaN)}
    \big)^{\alphaprime-1}
\end{split}
\end{equation}\normalsize
and
\begin{equation} \label{apriori-ph}
\begin{split}
\Norm{\nablaboldh\ph}_{\LalphaprimeOmegad}
&\leq \Big(\frac{\mu}{\rho}
    \Norm{\Kbbmo}_{\Lbu^\frac{\alpha}{\alphamt}(\Omega)}
    + \frac{\beta}{\rho} \Big)
    \Big( \frac{2^{\alphaprime}(\alpha-1)\mu^{\alphaprime}}{\beta^{\alphaprime}}
    \Norm{\Kbb^{-1}}_{\Lbu^{\frac{\alpha}{\alpha-2}}}
    ^{\alphaprime-1} +2 \Big)
    ^{\frac1{\alphaprime}} \cdot \\
&   \qquad\qquad
    \cdot \Ctildeell
    \big(\Norm{b}_{\L^{((\alphaprime)^*)'}(\Omega)}
    + \Norm{\gN}_{\L^{((\alphaprime)^\sharp)'}(\GammaN)}\big).
\end{split}
\end{equation}
\end{subequations}
\end{proposition}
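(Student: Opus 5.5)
We mimic the continuous argument of Proposition~\ref{proposition:well-posedness} on the finite-dimensional space $\Dbfcalhkmo$, with Lemma~\ref{lemma:existence-ubfhl} in place of Proposition~\ref{prop:existence-ubfl}.

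\textbf{Step 1: existence and uniqueness of the flux.} Fix $\ubfhl\in\Dbfcalhperp$ as in Lemma~\ref{lemma:existence-ubfhl}. Problem~\eqref{splitted-discrete-formulation} is a nonlinear equation on the finite-dimensional space $\Dbfcalhkmo\subset\LalphaOmegad$. The monotonicity~\eqref{monotonicity}, coercivity~\eqref{coercivity} and hemi-continuity~\eqref{hemi-continuity} of $\Acal(\cdot)$ hold for all arguments in $\Vbfcal$, hence for those restricted to $\Dbfcalhkmo$, with the shift $\ybf=\ubfhl$. So the same Minty--Browder argument from \cite{Showalter:1997} gives a unique $\ubfhzero$. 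In finite dimension, hemi-continuity is simply continuity, and one could use Brouwer's fixed point theorem instead. Uniqueness follows from strict monotonicity: the $\lambdamin\Norm{\cdot}_{\LtwoOmegad}^2$ term forces two solutions to coincide.

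\textbf{Step 2: equivalence with \eqref{discrete-formulation} and the potential.} Set $\ubfh=\ubfhzero+\ubfhl$. Then \eqref{constraint} holds, since $\ubfhzero\in\Dbfcalhkmo$ integrates to zero against $\nablaboldh\qh$ for $\qh\in\Qcalhzerok$. To recover $\ph$, consider the functional $\vbfh\mapsto-\int_\Omega\Acal(\ubfh)\cdot\vbfh$ on $\Vbfcalhkmo$. It vanishes on $\Dbfcalhkmo$ by~\eqref{splitted-discrete-formulation}. By finite-dimensional linear algebra, the annihilator of $\Dbfcalhkmo$ equals the range of $\qh\mapsto\int_\Omega\nablaboldh\qh\cdot(\cdot)$ over $\Qcalhzerok$. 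The discrete inf-sup condition~\eqref{discrete-inf-sup} makes this map injective, since $\Norm{\nablaboldh\cdot}$ is a norm on $\Qcalhzerok$ by Corollary~\ref{corollary:Sobolev-inequalities-CR}. Hence there is a unique $\ph_0\in\Qcalhzerok$ with $\int\nablaboldh\ph_0\cdot\vbfh=-\int\Acal(\ubfh)\cdot\vbfh$.

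For inhomogeneous $\gD$, fix one lifting $\ph^{\gD}\in\QcalhgDk$ and write $\ph=\ph^{\gD}+\ph_0$. We apply the argument above to the functional shifted by $\int\nablaboldh\ph^{\gD}\cdot\vbfh$. This requires checking that $\Dbfcalhkmo$ is the kernel defined with test space $\Qcalhzerok$, which holds by definition. Uniqueness of $\ph$ again follows from injectivity. Conversely, any solution of \eqref{discrete-formulation} satisfies \eqref{splitted-discrete-formulation} after subtracting $\ubfhl$ and projecting onto $\Dbfcalhkmo$.

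\textbf{Step 3: a priori bounds.} Test \eqref{discrete-formulation:first-eq} with $\vbfh=\ubfhzero\in\Dbfcalhkmo$. The pressure term then reduces to $\int\nablaboldh\ph^{\gD}\cdot\ubfhzero$. This term vanishes only if $\gD=0$, or if we argue as in the continuous case, where it was tacitly dropped. Following the paper's continuous proof, we obtain $0=\int\Acal(\ubfh)\cdot\ubfh-\int\Acal(\ubfh)\cdot\ubfhl$. We then repeat verbatim the estimates from Appendix~\ref{appendix:properties-Acal}, the two Young inequalities and the choice of $\varepsilon$, replacing $\Cell$ by $\Ctildeell$ from~\eqref{ubfhl-estimate}. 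This yields~\eqref{apriori-ubfh}.

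For~\eqref{apriori-ph}, use~\eqref{discrete-inf-sup}: $\Norm{\nablaboldh\ph}_{\LalphaprimeOmegad}$ is bounded by $\bfrakh^{-1}\sup_{\vbfh}\int\Acal(\ubfh)\cdot\vbfh/\Norm{\vbfh}_{\LalphaOmegad}$. Combining this with~\eqref{bound-Acal-Lalphaptprime} and~\eqref{apriori-ubfh} gives the bound. The factor $\bfrakh^{-1}$ can be absorbed into $\Ctildeell$, whose dependence on $\bfrakh$ is already allowed.

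\textbf{Main obstacle.} We expect the delicate point to be the inhomogeneous Dirichlet datum in Steps 2--3. The $\gD$-lifting contributes a pressure term that does not vanish when tested with $\ubfhzero$. We would first try to handle it by choosing the lifting $\ph^{\gD}$ discretely harmonic, in the sense that $\nablaboldh\ph^{\gD}$ is orthogonal to $\Dbfcalhkmo$. If that fails, we would accept an extra data term in the bounds.
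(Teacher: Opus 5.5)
Your plan is what the paper intends: its proof is a single sentence saying that a finite-dimensional version of the proof of Proposition~\ref{proposition:well-posedness}, together with~\eqref{discrete-inf-sup}, gives the result. Step~1 is sound, Step~2 is sound when $\gD=0$ or $\GammaN=\Gamma$, and absorbing $\bfrakh^{-1}\ge1$ into $\Ctildeell$ is harmless.

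The obstacle you flagged is a genuine gap, and it cannot be closed. For $\gD\neq0$ one has $\int_\Omega\nablaboldh\ph\cdot\ubfhzero\dxbf=\int_\Omega\nablaboldh\ph^{\gD}\cdot\ubfhzero\dxbf\neq0$ in general, so the identity $0=\int_\Omega\Acal(\ubfh)\cdot(\ubfh-\ubfhl)\dxbf$ fails. Step~2 fails for the same reason. If $\ubfhzero$ solves~\eqref{splitted-discrete-formulation}, the shifted functional $\vbfh\mapsto-\int_\Omega(\Acal(\ubfh)+\nablaboldh\ph^{\gD})\cdot\vbfh\dxbf$ does not vanish on $\Dbfcalhkmo$. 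That vanishing is what must be checked, not the definition of $\Dbfcalhkmo$, and your ``conversely'' claim is false.

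Your discretely harmonic lifting does not exist. The $\Lbf^2$-orthogonal complement of $\Dbfcalhkmo$ in $\Vbfcalhkmo$ is exactly $\nablaboldh\Qcalhzerok$. So $\nablaboldh\ph^{\gD}\perp\Dbfcalhkmo$ forces $\ph^{\gD}-\qh$ to be a global constant for some $\qh\in\Qcalhzerok$, i.e., $\PizFcalhkmo\gD$ would have to be the same constant on every Dirichlet facet.

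In fact the bounds are false here. Take $\Omega=(0,1)^2$, $\GammaD=\{x=0\}\cup\{x=1\}$, $\gD=1-x$, $b=0$, $\gN=0$, $\Kbb=\Ibb$, $\mu=\rho=1$. The pair $\ubfh=(c,0)$, $\ph=1-x$, with $c+\beta c^{\alpha-1}=1$, solves~\eqref{discrete-formulation} for every $k$ and every mesh. Yet the right-hand sides of~\eqref{apriori-ubfh} and~\eqref{apriori-ph} vanish. The correct reduced problem carries the right-hand side $-\int_\Omega\nablaboldh\ph^{\gD}\cdot\vbfh\dxbf$ on $\Dbfcalhkmo$; Minty--Browder still gives existence and uniqueness, but the bounds need a $\gD$-dependent term.

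A second gap affects Step~3 even for $\gD=0$: repeating the Young-inequality computation does not reproduce~\eqref{apriori-ubfh}. With $\varepsilon=\beta/(2\mu\Norm{\Kbbmo}_{\Lbu^{\frac{\alpha}{\alpha-2}}(\Omega)})$ it gives $\Norm{\ubfh}^\alpha_{\LalphaOmegad}\le A\Norm{\ubfhl}^{\alphaprime}_{\LalphaOmegad}+2\Norm{\ubfhl}^{\alpha}_{\LalphaOmegad}$ with a data-independent $A$. The two terms have different homogeneity, so they merge into the single power $\alphaprime-1=\frac{1}{\alpha-1}$ only when $\Norm{\ubfhl}_{\LalphaOmegad}\le1$. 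Likewise, \eqref{bound-Acal-Lalphaptprime}, which you use for the pressure, only holds with $\Norm{\vbf}_{\LalphaOmegad}$ to the first power in the Darcy term.

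This is not slack in the proof. From~\eqref{constraint} and H\"older's inequality, $\Norm{\ubfh}_{\LalphaOmegad}\Norm{\nablaboldh\qh}_{\LalphaprimeOmegad}\ge\big|\int_\Omega b\,\qh\dxbf-\int_{\GammaN}\gN\,\qh\ds\big|$. This lower bound is $1$-homogeneous in $(b,\gN)$, whereas the right-hand side of~\eqref{apriori-ubfh} is $\frac{1}{\alpha-1}$-homogeneous. Explicitly, take $\GammaN=\Gamma$, $\Omega=(0,1)^2$, $\Kbb=\Ibb$, $\mu=\rho=1$, $b=0$, and $\gN=-t$ on $\{x=0\}$, $\gN=t$ on $\{x=1\}$, $\gN=0$ elsewhere. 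The discrete solution is $\ubfh=(t,0)$, $\ph=-(t+\beta t^{\alpha-1})(x-\frac12)$. So $\Norm{\ubfh}_{\LalphaOmegad}=t$ and $\Norm{\nablaboldh\ph}_{\LalphaprimeOmegad}=t+\beta t^{\alpha-1}$, against right-hand sides of order $t^{\frac{1}{\alpha-1}}$ and $t$.

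For $\gD=0$, what your argument actually proves is the two-term estimate above together with $\Norm{\nablaboldh\ph}_{\LalphaprimeOmegad}\le\bfrakh^{-1}\big(\frac{\mu}{\rho}\Norm{\Kbbmo}_{\Lbu^{\frac{\alpha}{\alpha-2}}(\Omega)}\Norm{\ubfh}_{\LalphaOmegad}+\frac{\beta}{\rho}\Norm{\ubfh}^{\alpha-1}_{\LalphaOmegad}\big)$. The paper's one-line proof inherits both defects from its continuous argument, which applies~\eqref{Dbfcal:orth-property} to $\pop\notin\Qcalzero$ and performs the same Young step.
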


\begin{remark} \label{remark:hdiv-conformity}
Given $\vbfh$ in $\Dbfcalhzero$, we have
\begin{equation*} 
\begin{split}
     \int_{\Fcalh} \PizFcalhz \qh \ \jump{\vbfh}_{\nfrak} \dxbf 
        &\overset{( \vbfh \in \Vbfcal_{\h}^0)}{=}
        \int_{\Fcalh} \PizFcalhz \qh \ \jump{\vbfh}_{\nfrak} \dxbf 
            -  \int_\Omega\qh \diveh \vbfh \\
        &\quad \overset{\text{(IBP)}}{=}
            \int_\Omega \nablaboldh \qh \cdot \vbfh \dxbf
         \overset{( \vbfh \in \Dbfcalhzero)}{=} 0
         \qquad\qquad \forall \qh \in \Qcal_{h,0}^1,
\end{split}
\end{equation*}
whence $\jump{\vbfh}_\nfrak = 0$ on each $\F$ in $\Fcalh$.
As such, $\dive \vbfh$ belongs to $\LalphaOmegad$
and in particular we have
\begin{equation*} 
    \Dbfcalhzero = \{\vbfh \in \Vbfcal_{\h}^0 \, | \, \dive \vbfh = 0 \}.
\end{equation*}
The characterization above
is not valid for general $k$ larger than~$1$.
Indeed, the term $(\qh,\diveh\vbfh)$ does not vanish in general;
to see this, consider $\vbfh$ in $\Dbfcalhkmo$,
observe that~$\dive \vbfh{}_{|\E}$ belongs to~$\P_{k-2}(\E)$ on each $\E$ in $\Tauh$ and
the bulk moments of~$\qh$ in~\eqref{dof-moment-bulk} are up to order~$k-3$.
 On the one hand,
this fact does not undermine the well-posedness of the method
as well as the derivation of error estimates of optimal order
since in the dual-mixed formulation \eqref{discrete-formulation}
the inclusion $\Dbfcalhkmo \subset \Dbfcal$ is not necessary,
while it would be essential in the standard mixed counterpart.
\eremk
\end{remark}

\section{Convergence analysis} \label{section:convergence}
In this section,
we discuss the strong convergence in the natural norms
of sequences of solutions
$(\ubfh,\ph)$ to method~\eqref{discrete-formulation}.
To this aim, we proceed by showing
the following intermediate steps:
\begin{itemize}
\item introduce $\cbfh$ in $\Lbf^{\alpha}(\Omega)$,
cf. \eqref{def-cbfh}, which represents
a lifting of the discrete divergence of~$\ubf$,
and show its strong convergence in~$\Lbf^{\alpha}(\Omega)$
to $\zerobf$, see Theorem~\ref{theorem:estimate-ch};
\item show strong convergence of several interpolation, projection, \dots
operators, see Lemma~\ref{lemma:strong-convergence-Ph-Lalpha};
\item show weak convergence in $\Lbf^\alpha(\Omega)$ of the fluxes,
      see Lemma~\ref{lemma:weak-conv-flux-Lalpha};
\item show strong convergence in $\Lbf^2(\Omega)$ of the fluxes,
      see Lemma~\ref{lemma:strong-conv-flux-L2};
\item show weak convergence in $\Lbf^{\alphaprime}(\Omega)$
    of the gradient of the potentials,
      see Proposition~\ref{prop:weak-conv-pot-Lalphaprime};
\item show strong convergence in $\Lbf^{\alpha}$ of the fluxes,
      see Theorem~\ref{theorem:strong-conv-flux-Lalpha};
\item show strong convergence in $\Lbf^{\alphaprime}(\Omega)$
    of the gradient of the potentials,
      see Theorem~\ref{theorem:strong-conv-pot-Lalphaprime}.
\end{itemize}
Several of these results will be proven
under an additional assumption:
\begin{equation} \label{extra-ass-convergence}
\alpha\in(2,4).
\end{equation}
We preliminarily introduce the Crouzeix-Raviart interpolant~$\Icalk$;
see, e.g., \cite{Fortin-Soulie:1983,Ainsworth-Rankin:2008,Bressan-Mascotto-Mosconi:2025};
due to the intrinsic different nature of the spaces
depending on the parity of the order~$k$ of the scheme,
cf. Section~\ref{subsection:meshes-broken-finite},
we distinguish two cases.
For odd $k$ and given $\qop$ in $\W^{1,1}(\Omega)$,
we consider the interpolant $\Icalk$ defined such that
the DoFs in~\eqref{dof-CRk-odd}
of~$\qop$ and~$\Icalk \qop$ coincide.
Instead, for even~$k$ and~$\qop$ in $\W^{1,t}(\Omega)$,
$t>2$,
we consider the modal interpolant~$\Icalk$:
$\Icalk \qop$ is the unique function
in the Crouzeix-Raviart space such that
\begin{itemize}
\item $\Icalk \qop(\nu) := \qop(\nu)$
        for all~$\nu$ in~$\Vcalh$;
\item the facet~\eqref{dof-moment-edge}
        and bulk~\eqref{dof-moment-bulk} moments
        of~$\qop$ and~$\Icalk \qop$ coincide;
\item   the coefficients of the nonconforming bubbles
    in~\eqref{nc-elemental-bubble-function}
    in the expansion of~$\Icalk \qop$
    are set to~$0$.
\end{itemize}
For an arbitrary order (even or odd)~$k$,
the above definitions of~$\Icalk$ imply
\begin{subequations} \label{interpolant-CR}
\begin{align}
& \int_{F} (\qop-\Icalk \qop)  \, \SFj \ds = 0
& \forall j = 0, \dots, k-1, \quad\forall \F\in \FcalhI,
    \label{interpolant-moment-edge}\\
& \int_{\E} (\qop-\Icalk \qop)\, m_{\boldsymbol{\beta}}^K \dxbf = 0
&   \forall \vert \boldsymbol{\beta}\vert = 0, \dots, k-3, \ \forall \E \in \Tauh
    \label{interpolant-moment-bulk}.
\end{align}
\end{subequations}
For $k$, $r$, and $m$ larger than or equal to~$1$,
standard techniques~\cite{Brenner-Scott:2008} entail
\small
\begin{equation} \label{cr-interpolation:estimates}
\h^{-1}\Norm{\qop-\Icalk \qop}_{\L^m(\Omega)}
+ \Norm{\nablaboldh(\qop - \Icalk \qop)}_{\Lbf^m(\Omega)}
\lesssim \h^{s-1} \Norm{\qop}_{\W^{s,m}(\Omega)}
\quad \forall \qop \in \W^{r,m}(\Omega),
\, s  := \min\{r,k+1\}.
\end{equation} \normalsize

\subsection{Weak convergence in the natural norms}
    \label{subsection:weak-convergence}
Given~$\wbf$ in $\Lbf^{\afrak}(\Omega)$
with $\dive\wbf$ in $\L^{((\afrakprime)^*)'}(\Omega)$,
$\afrak$ larger than~$2$,
let $\cbfh = \cbfh(\wbf)$ in~$\Dbfcalhkmoperp$
solve
\begin{equation} \label{def-cbfh}
   \int_\Omega \cbfh \cdot \nablaboldh \qh \dxbf
    = -\int_\Omega \wbf \cdot \nablaboldh\qh \dxbf
      -\int_\Omega \dive \wbf \, \qh \dxbf
      +  \langle \wbf\cdot \nbfOmega, \qh\rangle_{\GammaN}
      \qquad \forall\qh\in \Qcalhzerok.
\end{equation}
The existence of a solution to~\eqref{def-cbfh}
follows from the discrete
inf-sup condition~\eqref{discrete-inf-sup}
together with the fact that the right-hand side is a linear functional on $\Qcalhzerok$,
as can be shown by arguments analogous to those
of the proof of Lemma~\ref{lemma:existence-ubfhl}.
We further set
\begin{equation} \label{def:Ph}
    \Pbfh(\wbf) := \PiboldzTauhkmo \wbf + \cbfh(\wbf).
\end{equation}
The operator~$\PiboldRTTauhkmo$ denotes
the Raviart-Thomas interpolant of order~$k-1$.
Given~$\obbF$ the indicator function of~$\F$ in~$\partial\E$
(for a $\E=\E_\F$ in $\Tauh$ with $\F$ in $\FcalE$),
$\PiboldRTTauhkmo$ in particular satisfies
\begin{align}
&   ( (\wbf-\PiboldRTTauhkmo\wbf), \qbfkmtE )_\E = 0
    & \forall \qbfkmtE \in \Pbf_{k-2}(\E),\;
        \forall\E\in\Tauh, \label{RT-dofs-bulk} \\
&   \dive(\PiboldRTTauhkmo \wbf) = \PizTauhkmo\dive\wbf
&   \text{on each } \E, \label{div-RT} \\
&   ((\PiboldRTTauhkmo \wbf)\cdot\nbfF),\qkmoF )_{0,\F}
    =   \langle \wbf\cdot\nbfF , \qkmoF \obbF \rangle_{\partial\E}
    & \forall \qkmoF\in\P_{k-1}(\F),\;
    \forall\F\in\Fcalh
    \label{trick-RT-indicator}.
\end{align}
The right-hand side in~\eqref{trick-RT-indicator}
is well-posed due to the
regularity of~$\wbf$ and~$\obbF$,
which belongs to
$\W^{\frac{1}{\afrak},\afrakprime}(\partial\E)$,
$\afrak$ larger than~$2$;
cf. \cite[Ch. 17]{Ern-Guermond:2021}.

\begin{theorem} \label{theorem:estimate-ch}
Let~$\afrak$ be larger than~$2$.
Let~$\cbfh$ in~$\Dbfcalhkmoperp$
be solution to~\eqref{def-cbfh}
for~$\wbf$ in $\Lbf^{\afrak}(\Omega)$
with $\dive\wbf$ in $\L^{((\afrakprime)^*)'}(\Omega)$.
Then, for hidden constants
only depending on the degree~$k$ and
the shape-regularity parameter~$\gamma$, we have
\begin{equation} \label{cbfh-bound}
\begin{split}
\Norm{\cbfh}_{\Lbf^{\afrak}(\Omega)}
&\lesssim 
\Norm{\wbf - \PiboldRTTauhkmo \wbf}_{\Lbf^{\afrak}(\Omega)}
+ \Norm{\dive\wbf - \PizTauhkmo \dive\wbf}_{\L^{((\afrakprime)^*)'}(\Omega)}.
\end{split}
\end{equation}
\end{theorem}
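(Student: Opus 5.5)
Since $\cbfh$ lies in $\Dbfcalhkmoperp$ and solves \eqref{def-cbfh}, the discrete inf-sup condition \eqref{discrete-inf-sup} with $\afrak$ in place of $\alpha$ should give
\[
\Norm{\cbfh}_{\Lbf^{\afrak}(\Omega)} \lesssim \sup_{\qh\in\Qcalhzerok} \frac{\int_\Omega \cbfh\cdot\nablaboldh\qh \dxbf}{\Norm{\nablaboldh\qh}_{\Lbf^{\afrakprime}(\Omega)}}.
\]
This requires the standard argument that the solution in $\Dbfcalhkmoperp$ is controlled by the dual norm of the right-hand side; the constant depends only on $k$ and $\gamma$. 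So the task reduces to bounding the right-hand side of \eqref{def-cbfh} by the right-hand side of \eqref{cbfh-bound} times $\Norm{\nablaboldh\qh}_{\Lbf^{\afrakprime}(\Omega)}$.

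The key step is to rewrite that right-hand side using the Raviart--Thomas interpolant. Elementwise integration by parts for $\wbf$ gives
\[
-\int_\Omega \wbf\cdot\nablaboldh\qh - \int_\Omega \dive\wbf\,\qh + \langle \wbf\cdot\nbfOmega,\qh\rangle_{\GammaN} = -\sum_{\F\in\FcalhID}\langle \wbf\cdot\nbfF, \jump{\qh}\rangle_\F,
\]
with the Neumann contributions cancelling. I will write this in the localized form via \eqref{trick-RT-indicator}. Since $\jump{\qh}$ has vanishing moments up to order $k-1$ on internal and Dirichlet facets, the same identity applied to $\PiboldRTTauhkmo\wbf$ (whose normal trace is in $\P_{k-1}(\F)$ and single valued) gives a zero facet term. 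Subtracting the two identities, the right-hand side equals
\[
-\int_\Omega (\wbf-\PiboldRTTauhkmo\wbf)\cdot\nablaboldh\qh - \int_\Omega (\dive\wbf-\dive\PiboldRTTauhkmo\wbf)\,\qh .
\]
By \eqref{div-RT}, the second integrand is $(\dive\wbf-\PizTauhkmo\dive\wbf)\qh$.

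Both terms are then bounded by H\"older's inequality. The first is at most $\Norm{\wbf-\PiboldRTTauhkmo\wbf}_{\Lbf^{\afrak}}\Norm{\nablaboldh\qh}_{\Lbf^{\afrakprime}}$. For the second, H\"older gives a factor $\Norm{\qh}_{\L^{(\afrakprime)^*}(\Omega)}$, and since $\afrakprime<2=d$, Corollary~\ref{corollary:Sobolev-inequalities-CR} with $\gD=0$ bounds this by $\Norm{\nablaboldh\qh}_{\Lbf^{\afrakprime}}$. In the pure Neumann case I would instead subtract the mean, using that $\dive\wbf-\PizTauhkmo\dive\wbf$ has zero average, and apply \eqref{eq:broken-Sobolev-sharp-weak}. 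This constant also depends on $\Omega$ and $\GammaD$, which I accept as hidden. Alternatively, one can avoid it altogether by subtracting $\PizTauhz\qh$ elementwise (valid since $k\ge1$) and using local Poincar\'e--Sobolev inequalities on each element, which gives a constant depending only on $\gamma$ and $k$. I prefer this variant because it matches the stated dependence.

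The main obstacle is the rigorous justification of the facet identity. The normal trace $\wbf\cdot\nbf$ is only a distribution in $\W^{-1/\afrak,\afrak}(\partial\E)$, and $\jump{\qh}$ is piecewise polynomial and discontinuous at the vertices, so the pairing must be understood elementwise as $\langle \wbf\cdot\nbf_\E, \qh|_\E\rangle_{\partial\E}$. I would handle this through the indicator-function trick \eqref{trick-RT-indicator}, which applies because $\obbF\qkmoF\in\W^{1/\afrak,\afrakprime}(\partial\E)$ for $\afrak>2$. This lets me write $\langle \wbf\cdot\nbf_\E, \qh\rangle_{\partial\E}$ as the sum over facets of the RT facet moments. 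The remainder of $\qh|_\F$ outside $\P_{k-1}(\F)$ is the top-degree Legendre component, and its coupling needs care. I would split $\qh|_\F=\PizFcalhkmo\qh+(\qh-\PizFcalhkmo\qh)$ and show that the second part is cancelled through the volume identity for $\PiboldRTTauhkmo$. This works because $\qh|_\E-\PizFcalhkmo$-type corrections are polynomials tested against \eqref{RT-dofs-bulk} and \eqref{div-RT}.
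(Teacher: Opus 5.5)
Your key identity fails in general as soon as $\GammaN\neq\emptyset$, and the theorem and \eqref{def-cbfh} explicitly include that case. Write $\ell_{\wbf}(\qh)$ for the right-hand side of \eqref{def-cbfh}. It is linear in $\wbf$ and contains the pairing $\langle\wbf\cdot\nbfOmega,\qh\rangle_{\GammaN}$. You correctly obtain $\ell_{\PiboldRTTauhkmo\wbf}(\qh)=0$, but subtracting then gives
\[
\ell_{\wbf}(\qh)
= -\int_\Omega (\wbf-\PiboldRTTauhkmo\wbf)\cdot\nablaboldh\qh\dxbf
- \int_\Omega (\dive\wbf-\PizTauhkmo\dive\wbf)\,\qh\dxbf
+ \sum_{\F\in\FcalhN}\langle (\wbf-\PiboldRTTauhkmo\wbf)\cdot\nbfOmega,\qh\rangle_\F .
\]
You dropped the last sum, and it is in general nonzero. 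The normal component $(\wbf-\PiboldRTTauhkmo\wbf)\cdot\nbfOmega$ is orthogonal only to $\P_{k-1}(\F)$, whereas $\qh{}_{|\F}\in\P_k(\F)$ is unconstrained on Neumann facets. For instance, for $k=1$ only the facet mean of the normal flux is matched, while $\qh$ is linear along $\F$. H\"older's inequality cannot control this term, because $(\wbf-\PiboldRTTauhkmo\wbf)\cdot\nbfOmega$ lives only in a negative-order trace space. The mechanism in your last paragraph does not remove it either. Properties \eqref{RT-dofs-bulk} and \eqref{div-RT} test only against $\Pbf_{k-2}(\E)$ and $\P_{k-1}(\E)$, so they say nothing about the degree-$k$ normal moments of $\wbf-\PiboldRTTauhkmo\wbf$ on $\F$. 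Nothing cancels the top Legendre component of $\qh{}_{|\F}$.

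This remainder is exactly what the paper's proof is built to control. The paper never returns to volume integrals over all of $\Omega$. It keeps the facet form $-\sum_{\F\in\FcalhID}\langle\wbf\cdot\nbfF,\jump{\qh}\obbF\rangle_{\partial\E_\F}$, in which the Neumann pairing has already cancelled. It then subtracts $\PiboldRTTauhkmo\wbf$ and $\jump{\PizTauhz\qh}$ through \eqref{trick-RT-indicator}. Finally, it bounds each facet dual norm of $(\wbf-\PiboldRTTauhkmo\wbf)\cdot\nbfE$ by lifting $(\varphi-\PizFcalhkmo\varphi)\obbF$ into the single element $\E_\F$ with $\LcalFEF$, which is where $\afrak>2$ is essential, and integrating by parts there. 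The lift vanishes on the other facets of $\E_\F$, so no new boundary term appears.

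To repair your proof, apply the same device on each $\F\in\FcalhN$:
\begin{itemize}
\item rewrite the remainder as $\langle(\wbf-\PiboldRTTauhkmo\wbf)\cdot\nbfOmega,(\qh-\PizFcalhkmo\qh)\obbF\rangle_{\partial\E_\F}$;
\item lift this facet function into $\E_\F$ and integrate by parts on $\E_\F$;
\item bound the lift by $\SemiNorm{\qh}_{\W^{1,\afrakprime}(\E_\F)}$ via scaling.
\end{itemize}

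When $\GammaN=\emptyset$, by contrast, your route is correct and simpler than the paper's. It needs only linearity, H\"older, and elementwise Sobolev--Poincar\'e after subtracting $\PizTauhz\qh$, followed by the embedding $\ell^{r}\subset\ell^{\afrak}$ with $r=((\afrakprime)^*)'<\afrak$, which the paper also uses. In that case the facet-splitting issue you call the main obstacle never arises.
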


\begin{proof}
For all $\qh$ in $\Qcalhzerok$, we have
\small\begin{equation} \label{ch-nablahqh}
\begin{split}
&   \int_\Omega\cbfh(\wbf) \cdot \nablaboldh\qh \dxbf
    \overset{\eqref{def-cbfh}}{=}
        - \int_\Omega \wbf \cdot \nablaboldh\qh \dxbf
          - \int_\Omega(\dive \wbf) \qh \dxbf
          + \int_{\GammaN}  (\wbf\cdot \nbfOmega) \qh \ds \\
    & \overset{\text{(IBP)}}{=}
        - \sum_{\F\in\Fcalh}
        \langle \wbf\cdot\nbfF, \jump{\qh}\obbF\rangle_{\partial\E_\F}\\
    & \overset{(\qh\in\Qcalhzerok),\eqref{trick-RT-indicator}}{=}
        - \sum_{\F\in\Fcalh}
        \Big\langle
        (\wbf - \PiboldRTTauhkmo\wbf)\cdot\nbfF),
        \Big(\jump{\qh} - \jump{\PizTauhz \qh}\Big)
        \obbF
        \Big\rangle_{\partial\E_\F} \\
     & \leq  \sum_{\E \in \Tauh}
     \Big( \sum_{\F\in\FcalE}
        \Norm{(\wbf
        - \PiboldRTTauhkmo\wbf)\cdot\nbfE}_{\big(\W^{\frac{1}{\afrak},\afrakprime}(\F)\big)^*}^{\afrak}\Big)^{\frac{1}{\afrak}}
     \Norm{\qh-\PizTauhz \qh}_{\W^{\frac{1}{\afrak},\afrakprime}(\partial\E)}.
\end{split}
\end{equation}\normalsize
The dual norms on the right-hand side are well-posed
since $\afrak$ is larger than~$2$.
We first derive a bound for the second term on the
right-hand side of~\eqref{ch-nablahqh}.
The continuous trace inequality~\eqref{global-trace}
and Poincar\'e-Steklov's inequality~\eqref{Poincare-Steklov} entail
\begin{equation} \label{estimate-term-qh}
\begin{split}
\Norm{\qh-\PizTauhz \qh}
_{\W^{\frac{1}{\afrak},\afrakprime}(\partial\E)}
\lesssim \Norm{\nablabold \qh}_{\Lbf^{\afrakprime}(\E)}.
\end{split}
\end{equation}
Next, we derive an upper bound
for the first term on the right-hand side
of~\eqref{ch-nablahqh}.
Given~$\F$ in~$\FcalE$, $\E$ in~$\Tauh$,
a duality argument gives
\begin{equation} \label{dual-norm-unb}
\begin{split}
&\Norm{(\wbf - \PiboldRTTauhkmo\wbf) \cdot\nbfE}
    _{(\W^{\frac{1}{\afrak},\afrakprime}(\F))^*}
    := \sup_{\varphi \in \W^{\frac{1}{\afrak},\afrakprime}(\F)}
    \frac{\langle (\wbf - \PiboldRTTauhkmo\wbf) \cdot\nbfE,
    \varphi\rangle}
    {\Norm{\varphi}_{\W^{\frac{1}{\afrak},\afrakprime}(\F)}}.
\end{split}
\end{equation}
There exists a stable facet-to-cell lifting operator
$\LcalFE: \W^{\frac{1}{\afrak},\afrakprime}(\F) \to \W^{1,\afrakprime}(\E)$,
cf. \cite[Lem. 17.1 and Eq. (17.8)]{Ern-Guermond:2021},
which requires~$\afrak$ larger than~$2$
and combines the zero-extension
from an~$\F$ in~$\FcalE$ to $\partial\E$
with the right-inverse of the trace operator over~$\E$.
In particular, it holds true that 
\begin{equation} \label{zero-extension-lifting-stability-lifting}
\LcalFE(\phi) = \phi \obbF
\quad \text{ on } \partial\E,
\qquad\quad
\SemiNorm{\LcalFE(\phi)}_{\W^{1,\afrakprime}(\E)}
\leq \CLcal \Norm{\phi}_{\W^{\frac{1}{\afrak},\afrakprime}(\F)}
\qquad\quad\forall \phi\in \W^{1,\afrakprime}(\E).
\end{equation}
For all~$\F$ and $\E_\F$ such that $\F$ belongs
to~$\FcalE$,
let~$\psi$ denote~$\LcalFEF(\varphi-\PizFcalhkmo\varphi)$.
H\"older's inequality entails
\begin{equation} \label{estimate-unbf-langle}
\begin{split}
&   \langle (\wbf - \PiboldRTTauhkmo\wbf)\cdot\nbfE,
    \varphi \rangle_\F\\
&   \overset{\eqref{trick-RT-indicator}}{=}
    \langle (\wbf - \PiboldRTTauhkmo\wbf)\cdot\nbfE,
    \varphi -\PizFcalhkmo\varphi \rangle_\F
    \overset{\eqref{zero-extension-lifting-stability-lifting}}{=} 
    \langle (\wbf - \PiboldRTTauhkmo \wbf) \cdot\nbfE,
    \psi \rangle_{\partial\E_\F}\\
&   \overset{\text{(IBP)}}{=}
    \int_{\E_\F}(\wbf - \PiboldRTTauhkmo \wbf) \cdot
    \nablabold \psi \dxbf
    + \int_{\E_\F} (\dive(\wbf - \PiboldRTTauhkmo\wbf))
            \psi\dxbf \\
&\overset{\eqref{Sobolev-embedding}, \eqref{Poincare-Steklov}, \eqref{div-RT}}{\lesssim}
    \Big(\Norm{\wbf - \PiboldRTTauhkmo \wbf}_{\Lbf^{\afrak}(\E_\F)}
    + \Norm{\dive\wbf - \PizTauhkmo \dive\wbf}_{\L^{((\afrakprime)^*)'}(\E_\F)}\Big)
    \SemiNorm{\psi}_{\W^{1,\afrakprime}(\E_\F)}\\
&\overset{\eqref{zero-extension-lifting-stability-lifting}}{\lesssim}
    \Big(\Norm{\wbf - \PiboldRTTauhkmo \wbf}_{\Lbf^{\afrak}(\E_\F)}
    + \Norm{\dive\wbf - \PizTauhkmo \dive\wbf}_{\L^{((\afrakprime)^*)'}(\E_\F)}\Big)
    \Norm{\varphi}_{\W^{\frac{1}{\afrak},\afrakprime}(\F)}.
\end{split}
\end{equation}
\normalsize
Combining~\eqref{dual-norm-unb} and~\eqref{estimate-unbf-langle}, we deduce
\begin{equation} \label{minchioni}
\begin{split}
&   \Norm{\wbf\cdot\nbfE - \PizFcalhkmo(\wbf\cdot\nbfE)}
    _{(\W^{\frac{1}{\afrak},\afrakprime}(\F))^*}\\
&   \lesssim 
    \Norm{\wbf - \PiboldRTTauhkmo \wbf}_{\Lbf^{\afrak}(\E_\F)}
    + \Norm{\dive\wbf - \PizTauhkmo \dive\wbf}
    _{\L^{((\afrakprime)^*)'}(\E_\F)}.
\end{split}
\end{equation}
Since
\small\begin{equation} \label{two-queen}
\frac{\afrak}{((\afrakprime)^*)'}
= \frac{d+\afrak}{d} >1,
\end{equation}\normalsize
Jensen's inequality for sequences entails
\[
\begin{split}
&   \big( \sum_{\E\in\Tauh}
    \Norm{\dive\wbf - \PizTauhkmo \dive\wbf}_{\L^{((\afrakprime)^*)'}(\E)}^\afrak   
    \big)^\frac1\afrak
    = \big( \sum_{\E\in\Tauh}
    \Norm{\dive\wbf - \PizTauhkmo \dive\wbf}_{\L^{((\afrakprime)^*)'}(\E)}
    ^{((\afrakprime)^*)' \frac{\afrak}{((\afrakprime)^*)'}}   
    \big)^\frac1\afrak \\
&   \overset{\eqref{two-queen}}{\le}
    \big( \sum_{\E\in\Tauh}
    \Norm{\dive\wbf - \PizTauhkmo \dive\wbf}_{\L^{((\afrakprime)^*)'}(\E)}
    ^{((\afrakprime)^*)'}   
    \big)^\frac{1}{((\afrakprime)^*)'}
    = \Norm{\dive\wbf - \PizTauhkmo \dive\wbf}_{\L^{((\afrakprime)^*)'}(\Omega)}.
\end{split}
\]
Combining the above display,
\eqref{minchioni}, \eqref{ch-nablahqh},
and~\eqref{estimate-term-qh} yields
\small\begin{equation} \label{int-chnablaqh-leq-unablaqh}
\begin{split}
\int_\Omega\cbfh\cdot \nablaboldh\qh \dxbf
&   \lesssim 
    \Big(\Norm{\wbf - \PiboldRTTauhkmo \wbf}_{\Lbf^{\afrak}(\E)}
    + \Norm{\dive\wbf - \PizTauhkmo \dive\wbf}_{\L^{((\afrakprime)^*)'}(\Omega)} \Big)
    \Norm{\nablaboldh \qh}_{\Lbf^{\afrakprime}(\Omega)}.
\end{split}
\end{equation}\normalsize
The assertion follows from
the inf-sup condition~\eqref{discrete-inf-sup}
and the fact that~$\cbfh$ is selected in~$\Dbfcalhperp$.
\end{proof}

We have strong convergence of several interpolation
and projection operators in certain norms.
\begin{lemma} \label{lemma:strong-convergence-Ph-Lalpha}
Let~$\wbf$ be in $\Lbf^{\afrak}(\Omega)$
with $\dive\wbf$ in $\L^{((\afrakprime)^*)'}(\Omega)$,
$\afrak$ larger than~$2$,
and $\qop$ in $\W^{1,s'}(\Omega)$,
$s'$ larger than~$1$.
Given $\PiboldzTauhkmo$ and~$\PizTauhkmo$,
$\PiboldRTTauhkmo$,
$\Pbfh$, and~$\Icalk$
as in~\eqref{L2-projector-mesh},
\eqref{RT-dofs-bulk}--\eqref{div-RT}--\eqref{trick-RT-indicator},
\eqref{def:Ph}, and~\eqref{interpolant-CR},
we have
\begin{align}
&   \lim_{\h\to 0} \PiboldzTauhkmo\wbf= \wbf,
    \qquad
    \lim_{\h\to 0} \PiboldRTTauhkmo\wbf= \wbf,
    \qquad
    \lim_{\h\to 0} \Pbfh(\wbf)= \wbf,
    & \text{ in } \Lbf^{\afrak}(\Omega)
    \label{eq:strong-convergence-Piboldkmo}\\
&   \lim_{\h\to 0} \PizTauhkmo \dive \wbf = \dive\wbf,
    & \text{ in } \Lbf^{((\afrakprime)^*)'}(\Omega),
    \label{eq:strong-convergence-Ph} \\
&   \lim_{\h\to 0} \nablaboldh \Icalk \qop= \nablabold \qop
    & \text{ in } \Lbf^{s'}(\Omega)
    \label{eq:strong-convergence-Icalk}. 
\end{align}
\end{lemma}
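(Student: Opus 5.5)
The plan is the classical density-plus-uniform-stability argument, applied to each operator in turn. Every limit in the statement has the form $\lim_{\h\to0}\Pi_\h z=z$ for a linear operator $\Pi_\h$. Two facts suffice:
\begin{itemize}
\item $\Pi_\h$ is uniformly bounded with respect to~$\h$ in the relevant norm;
\item $\Pi_\h z\to z$ on a dense subspace of smooth functions.
\end{itemize}
For smooth~$z$ the convergence comes from standard approximation estimates of Bramble--Hilbert type (for $\Icalk$ we use~\eqref{cr-interpolation:estimates}). Then, for general~$z$ and smooth~$z_\varepsilon$ with $\Norm{z-z_\varepsilon}\le\varepsilon$,
\[
\Norm{z-\Pi_\h z}\le (1+C)\varepsilon+\Norm{z_\varepsilon-\Pi_\h z_\varepsilon}.
\]
Taking $\limsup_{\h\to0}$ and then $\varepsilon\to0$ concludes.

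\textbf{Polynomial projections.} $\PiboldzTauhkmo$ and $\PizTauhkmo$ are $\L^t$-stable elementwise for every $t\in[1,\infty]$, uniformly in~$\h$. This follows by scaling to the reference element and using equivalence of norms on $\P_{k-1}$. Density of $\mathcal C^\infty(\overline\Omega)$ in $\Lbf^{\afrak}(\Omega)$ and in $\L^{((\afrakprime)^*)'}(\Omega)$ then gives the first limit in~\eqref{eq:strong-convergence-Piboldkmo} and the limit~\eqref{eq:strong-convergence-Ph}.

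\textbf{The operator $\Pbfh$.} Once $\PiboldRTTauhkmo\wbf\to\wbf$ is known, the convergence $\Pbfh(\wbf)\to\wbf$ is immediate. Indeed, $\Pbfh(\wbf)-\wbf=(\PiboldzTauhkmo\wbf-\wbf)+\cbfh(\wbf)$. Theorem~\ref{theorem:estimate-ch} bounds $\Norm{\cbfh}_{\Lbf^\afrak(\Omega)}$ by
\[
\Norm{\wbf-\PiboldRTTauhkmo\wbf}_{\Lbf^\afrak(\Omega)}+\Norm{\dive\wbf-\PizTauhkmo\dive\wbf}_{\L^{((\afrakprime)^*)'}(\Omega)},
\]
and both terms vanish by the previous steps.

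\textbf{The interpolant $\Icalk$.} For~\eqref{eq:strong-convergence-Icalk}, on each element we bound $\Norm{\nablabold\Icalk\qop}_{\Lbf^{s'}(\E)}\lesssim\SemiNorm{\qop}_{\W^{1,s'}(\E)}$. To do so, note that $\Icalk$ preserves $\P_k(\E)$ and in particular constants. Its DoFs (facet and bulk moments) are controlled by $\hE^{-1}\Norm{\qop-c}_{\L^{s'}(\E)}+\Norm{\nablabold\qop}_{\Lbf^{s'}(\E)}$ via a scaled trace inequality; Poincaré--Steklov~\eqref{Poincare-Steklov} then finishes. Combined with~\eqref{cr-interpolation:estimates} for smooth~$\qop$, density gives the claim. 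The even-$k$ interpolant uses vertex values and needs $\W^{1,t}$ with $t>2$. For that case I would instead compare with a quasi-interpolant or the odd-type moment interpolant, or simply argue by density in $\W^{1,t}$ if the statement is read with enough regularity. This step is a minor technical point.

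\textbf{Main obstacle: the Raviart--Thomas interpolant.} The hard part is the uniform stability of $\PiboldRTTauhkmo$ on the space
\[
\{\wbf\in\Lbf^\afrak(\Omega):\ \dive\wbf\in\L^{((\afrakprime)^*)'}(\Omega)\},
\]
whose facet DoFs~\eqref{trick-RT-indicator} are only defined in the dual sense. The approach is to bound each facet moment $\langle\wbf\cdot\nbfF,q\obbF\rangle_{\partial\E}$ through the lifting $\LcalFE$ and integration by parts, as in~\eqref{estimate-unbf-langle}. This gives
\[
\Norm{\PiboldRTTauhkmo\wbf}_{\Lbf^\afrak(\E)}\lesssim\Norm{\wbf}_{\Lbf^\afrak(\E)}+\hE^{\theta}\Norm{\dive\wbf}_{\L^{((\afrakprime)^*)'}(\E)}
\]
for a scaling power $\theta\ge0$, which is harmless since $\h\le\hOmega$. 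Summing with the Jensen argument of~\eqref{two-queen} yields a global bound uniform in~$\h$ with respect to the graph norm. Smooth fields are dense in this graph space (mollification after a local star-shaped dilation), and for smooth fields $\PiboldRTTauhkmo\wbf\to\wbf$ by standard estimates. The density argument then concludes the second limit in~\eqref{eq:strong-convergence-Piboldkmo}.
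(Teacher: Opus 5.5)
Your proposal is correct and follows the paper's route: density plus $\h$-uniform stability for $\PiboldzTauhkmo$, $\PizTauhkmo$, $\PiboldRTTauhkmo$ and $\Icalk$, with the convergence of $\Pbfh(\wbf)$ obtained from Theorem~\ref{theorem:estimate-ch}. You also fill in two points the paper's one-line proof leaves implicit. First, you prove graph-norm stability of $\PiboldRTTauhkmo$; your scaling exponent is in fact $\theta=0$, because the exponent $((\afrakprime)^*)'$ is exactly the scale-invariant partner of $\afrak$. Second, you correctly note that for even $k$ the interpolant $\Icalk$ is only defined for $\qop\in\W^{1,t}(\Omega)$ with $t>2$, so \eqref{eq:strong-convergence-Icalk} must be read under that extra regularity; of your suggested fixes only this reading works, since swapping $\Icalk$ for a quasi-interpolant would change the statement.
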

\begin{proof}
The validity of~\eqref{eq:strong-convergence-Piboldkmo}
and~\eqref{eq:strong-convergence-Icalk}
follows from a density argument
and the stability of~$\PiboldzTauhkmo$ and~$\Icalk$
in the correct norms.
The proof of~\eqref{eq:strong-convergence-Ph} follows
recalling that~$\Pbfh(\wbf)$ is
$\PiboldzTauhkmo(\wbf)+\cbfh(\wbf)$, and
applying~\eqref{eq:strong-convergence-Piboldkmo} to the first term,
and Theorem~\ref{theorem:estimate-ch}
and standard polynomial approximation properties
to the second term.
\end{proof}

\begin{remark} \label{remark:weak-convergences}
In light of~\eqref{apriori-ubfh} and~\eqref{apriori-ph},
$\ubfh$ and~$\nablaboldh\ph$ are uniformly bounded
in~$\Lbf^{\alpha}(\Omega)$ and~$\Lbf^{\alphaprime}(\Omega)$, respectively; hence,
we deduce the weak convergence
in~$\Lbf^{\alpha}(\Omega)$ of a subsequence of~$\ubfh$
and in~$\Lbf^{\alphaprime}(\Omega)$
of a subsequence of~$\nablaboldh\ph$.
With an abuse of notation,
we still denote these subsequences
by~$\ubfh$ and~$\nablaboldh\ph$.
\eremk
\end{remark}

We show the weak convergence in $\LalphaOmegad$
of $\ubfh$ to $\ubf$.

\begin{lemma} \label{lemma:weak-conv-flux-Lalpha}
Let $\ubf$ and $\ubfh$ for a mesh~$\Tauh$
be the solutions to~\eqref{splitted-variational-formulation}
and~\eqref{splitted-discrete-formulation}, respectively.
Then, we have
\begin{equation*}
    \lim_{\h\to 0} \ubfh= \ubf \quad \text{ weakly in } \LalphaOmegad.
\end{equation*}
\end{lemma}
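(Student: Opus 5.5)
By Remark~\ref{remark:weak-convergences}, a subsequence of $\ubfh$ converges weakly in $\LalphaOmegad$ to some $\ubftilde$. In the same way, since $\Acal(\ubfh)$ is bounded in $\Lbf^{\alphaprime}(\Omega)$ by~\eqref{bound-Acal-Lalphaptprime} and~\eqref{apriori-ubfh}, a further subsequence satisfies $\Acal(\ubfh)\rightharpoonup \boldsymbol{\chi}$ weakly in $\Lbf^{\alphaprime}(\Omega)$. The plan is to prove three things about the limits: (i) $\ubftilde$ satisfies the constraint~\eqref{continuous-constraint}; (ii) $\int_\Omega \boldsymbol{\chi}\cdot\vbf\dxbf=0$ for all $\vbf\in\Dbfcal$; (iii) $\boldsymbol{\chi}=\Acal(\ubftilde)$. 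Minty's trick gives (iii). Once these hold, $\ubftilde$ solves~\eqref{splitted-variational-formulation}, and Proposition~\ref{proposition:well-posedness} gives $\ubftilde=\ubf$. Uniqueness then shows that the whole sequence converges.

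For (i), fix $\qop\in\Qcalzero$ smooth (density will handle the general case) and test~\eqref{constraint} with $\qh=\Icalk\qop$. By~\eqref{interpolant-moment-edge}, the boundary moments vanish, so $\Icalk\qop\in\Qcalhzerok$. By~\eqref{eq:strong-convergence-Icalk} with $s'=\alphaprime$, we have $\nablaboldh\Icalk\qop\to\nablabold\qop$ strongly in $\Lbf^{\alphaprime}$. Combined with the weak convergence of $\ubfh$, this lets us pass to the limit on the left-hand side. On the right-hand side, $\Icalk\qop\to\qop$ in $\L^{(\alphaprime)^*}(\Omega)$ and in $\L^{(\alphaprime)^\sharp}(\GammaN)$ by~\eqref{cr-interpolation:estimates} and Corollary~\ref{corollary:Sobolev-inequalities-CR} applied to $\qop-\Icalk\qop$; the broken norm of this difference is controlled by $\Norm{\nablaboldh(\qop-\Icalk\qop)}_{\Lbf^{\alphaprime}}$. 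Hence the limit satisfies~\eqref{continuous-constraint}.

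For (ii), take $\vbf\in\Dbfcal$ and use $\Pbfh(\vbf)=\PiboldzTauhkmo\vbf+\cbfh(\vbf)$ from~\eqref{def:Ph}. By~\eqref{def-cbfh} with $\dive\vbf=0$ and $\vbf\cdot\nbf=0$ on $\GammaN$, $\Pbfh(\vbf)$ is discretely divergence free, i.e., $\Pbfh(\vbf)\in\Dbfcalhkmo$, since $\int\PiboldzTauhkmo\vbf\cdot\nablaboldh\qh=\int\vbf\cdot\nablaboldh\qh$. Testing~\eqref{discrete-formulation:first-eq} with $\Pbfh(\vbf)$ kills the pressure term, which gives $\int\Acal(\ubfh)\cdot\Pbfh(\vbf)=0$. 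Here we need $\afrak=\alpha>2$ in Theorem~\ref{theorem:estimate-ch}. Lemma~\ref{lemma:strong-convergence-Ph-Lalpha} gives strong convergence of $\Pbfh(\vbf)$ in $\LalphaOmegad$, so passing to the limit yields (ii).

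For (iii), the main obstacle, apply Minty to the monotone operator $\Acal$. Monotonicity~\eqref{monotonicity} gives, for any $\vbf\in\LalphaOmegad$,
\[
\int_\Omega (\Acal(\ubfh)-\Acal(\vbf))\cdot(\ubfh-\vbf)\dxbf\ge 0 .
\]
The delicate term is $\limsup\int\Acal(\ubfh)\cdot\ubfh$, which we want to bound by $\int\boldsymbol{\chi}\cdot\ubftilde$. Write $\ubfh=\ubfhzero+\ubfhl$. Then $\int\Acal(\ubfh)\cdot\ubfhzero=0$, so $\int\Acal(\ubfh)\cdot\ubfh=\int\Acal(\ubfh)\cdot\ubfhl=-\int\nablaboldh\ph\cdot\ubfhl$, which by~\eqref{constraint} equals a data functional of $\ph$ (after shifting with the Dirichlet lift for $\gD$). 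I would compare this with the continuous identity $\int\boldsymbol{\chi}\cdot\ubftilde=\int\boldsymbol{\chi}\cdot\ubfl$. That identity follows from (i), (ii), and the fact that $\ubftilde-\ubfl\in\Dbfcal$. Alternatively, replace $\ubfhl$ by $\Pbfh(\ubfl)$, which also satisfies~\eqref{constraint} and converges strongly by Lemma~\ref{lemma:strong-convergence-Ph-Lalpha}. Then $\ubfh-\Pbfh(\ubfl)\in\Dbfcalhkmo$, so $\int\Acal(\ubfh)\cdot\ubfh=\int\Acal(\ubfh)\cdot\Pbfh(\ubfl)\to\int\boldsymbol{\chi}\cdot\ubfl=\int\boldsymbol{\chi}\cdot\ubftilde$. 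Here $\ubfl$ needs enough divergence regularity, which holds since $\dive\ubfl=b$. The Dirichlet data require the analogous lift. With this limit in hand, the standard Minty argument with $\vbf=\ubftilde\pm t\wbf$, together with hemicontinuity~\eqref{hemi-continuity}, gives $\boldsymbol{\chi}=\Acal(\ubftilde)$.
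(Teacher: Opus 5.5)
Your proof is correct, but it is organized differently from the paper's.

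\textbf{The paper's route.} The paper never introduces a weak limit of $\Acal(\ubfh)$. It combines \eqref{monotonicity} with \eqref{splitted-discrete-formulation} into the discrete variational inequality $\int_\Omega \Acal(\vbfh+\ubfhl)\cdot(\ubfhzero-\vbfh)\dxbf\le 0$ for discretely divergence-free $\vbfh$. It then chooses $\vbfh=\Pbfh(\vbf)$ with $\vbf\in\Dbfcal$ and passes to the limit, using the strong convergence of $\Acal(\Pbfh(\vbf)+\ubfhl)$ in $\Lbf^{\alphaprime}(\Omega)$ (from \eqref{estimate-Acal}) against the weak convergence of $\ubfhzero-\Pbfh(\vbf)$. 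It concludes with Minty's trick inside $\Dbfcal$, taking $\vbf=\ubfzerotilde\pm\varepsilon\varphibold$.

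\textbf{Your route.} You run the classical Minty--Browder scheme instead: the weak limit $\boldsymbol{\chi}$ of $\Acal(\ubfh)$, the exact energy identity $\int_\Omega\Acal(\ubfh)\cdot\ubfh\dxbf=\int_\Omega\Acal(\ubfh)\cdot\Pbfh(\ubfl)\dxbf$, and Minty with test fields in all of $\LalphaOmegad$.

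\textbf{What each buys.} The paper's version is shorter and needs no energy identity. However, it tacitly uses $\ubfzerotilde\in\Dbfcal$, both to take $\vbf=\ubfzerotilde\pm\varepsilon\varphibold$ and to invoke uniqueness; that is precisely what your step (i) proves. Your version also gives more. Once $\boldsymbol{\chi}=\Acal(\ubf)$ and $\int_\Omega\Acal(\ubfh)\cdot\ubfh\dxbf\to\int_\Omega\Acal(\ubf)\cdot\ubf\dxbf$, you get $\int_\Omega(\Acal(\ubfh)-\Acal(\ubf))\cdot(\ubfh-\ubf)\dxbf\to 0$. The pointwise bound $(|\mathbf a|^{\alpha-2}\mathbf a-|\mathbf b|^{\alpha-2}\mathbf b)\cdot(\mathbf a-\mathbf b)\ge 2^{2-\alpha}|\mathbf a-\mathbf b|^{\alpha}$ then yields strong convergence in $\LalphaOmegad$ directly, for every $\alpha>2$. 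This avoids the detour through the pressure and the extra hypotheses of Theorem~\ref{theorem:strong-conv-flux-Lalpha}.

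\textbf{Two minor remarks.} In (iii), only your second variant (via $\Pbfh(\ubfl)$) closes the argument. The first variant needs the limit of $\ph$, which the paper identifies only later, and using this very lemma. In (ii), testing \eqref{discrete-formulation:first-eq} with $\Pbfh(\vbf)$ removes the pressure term only if $\ph\in\Qcalhzerok$, i.e., when $\gD=0$ or the boundary conditions are pure Neumann. For the lemma as stated, take $\int_\Omega\Acal(\ubfh)\cdot\vbfh\dxbf=0$ on $\Dbfcalhkmo$ directly from \eqref{splitted-discrete-formulation}, as the paper does.
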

\begin{proof}
Let $\ubftilde$ denote the weak limit in $\Lbf^\alpha(\Omega)$
of the subsequence~$\{\ubfh\}$
in Remark~\ref{remark:weak-convergences}.
The assertion follows if we prove~$\ubftilde = \ubf$.
Given $\ubfl$ as in~\eqref{ubfl-problem},
we consider the splittings
\begin{equation} \label{ubftilde-ubfh}
\ubftilde =(\ubftilde - \ubfl) + \ubfl
=: \ubfzerotilde + \ubfl.
\qquad\qquad
\ubfh =(\ubfh - \Pbfh(\ubfl)) + \Pbfh(\ubfl)
=: \ubfhzero + \ubfhl.
\end{equation}
We preliminarily observe
\begin{equation} \label{Acal-integral-leq-zero}
\begin{split}
0
&\overset{\eqref{monotonicity}}{\leq}
\int_\Omega \big( \Acal(\ubfhzero + \ubfhl)
                 -\Acal(\vbfh+\ubfhl)\big)
            \cdot (\ubfhzero - \vbfh) \dxbf \\
&\overset{\eqref{splitted-discrete-formulation}}{=}
\int_\Omega -\Acal(\vbfh+\ubfhl) \cdot (\ubfhzero - \vbfh) \dxbf
\qquad \forall \vbfh \in \Vbfcalhkmo.
\end{split}
\end{equation} \normalsize
Now,
let $\vbf$ be in $\Dbfcal$
and $\vbfh := \Pbfh(\vbf)$.
We have
\begin{equation*} 
\begin{split}
&\Norm{\Acal(\Pbfh(\vbf) + \ubfhl) - \Acal(\vbf + \ubfl)}_{\Lbf^{\alphaprime}(\Omega)}
\overset{\eqref{estimate-Acal}}{\leq}
    |\Omega|^{\frac{1}{\alpha}}
    \Big[ \frac{\mu}{\rho}\Norm{\Kbbmo}_{\Lbu^{\frac{\alpha}{\alphapt}}(\Omega)}
    + \CBL\frac{\beta}{\rho} \cdot \\
&   \qquad\qquad\cdot
    \big(\Norm{\Pbfh(\vbf)+\ubfhl}^\alphamt_{\Lbf^\alpha(\Omega)}
    + \Norm{\vbf + \ubfl}^\alphamt_{\Lbf^\alpha(\Omega)}\big)
    \Big] 
    \big(
    \Norm{ (\Pbfh(\vbf)-\vbf) + (\ubfhl-\ubfl)}_{\Lbf^\alpha(\Omega)}
    \big).
\end{split}
\end{equation*}
Using~\eqref{eq:strong-convergence-Ph}
and Remark~\ref{remark:weak-convergences} entails
\begin{equation} \label{strong-convergence-ph-ubfhl}
\lim_{\h\to 0} \Acal(\Pbfh(\vbf) + \ubfhl)
    = \Acal(\vbf + \ubfl)
    \quad \text{ in } \Lbf^{\alphaprime}(\Omega).
\end{equation}
The weak convergence of $\ubfhzero$ to $\ubfzerotilde$,
which is a consequence of the weak convergence
of $\ubfh$ to $\ubftilde$
and the strong convergence of $\ubfhl$ to $\ubfltilde$,
and the strong convergence of $\Pbfh(\vbf)$ to $\vbf$ in $\Lbf^\alpha(\Omega)$
entail
\begin{equation} \label{weak-convergence-uhz-Phv}
\lim_{\h\to 0}
(\ubfhzero - \Pbfh(\vbf))
= \ubfzerotilde - \vbf
\quad \text{ weakly in } \Lbf^{\alpha}(\Omega).
\end{equation}
Then,
\eqref{strong-convergence-ph-ubfhl} and \eqref{weak-convergence-uhz-Phv}
allow for passing to the limit in \eqref{Acal-integral-leq-zero}
and obtain
\begin{equation*}
\int_{\Omega}\Acal(\vbf + \ubfl) \cdot (\ubfzerotilde -\vbf) \dxbf \leq 0
\qquad\qquad
\forall \vbf \in \Dbfcal.
\end{equation*}
Given an arbitrary $\varphibold$ in $\Dbfcal$ and $\varepsilon >0$,
fixing $\vbf$ as
$\ubfzerotilde + \varepsilon\varphibold$
and as $\ubfzerotilde - \varepsilon\varphibold$,
the hemi-continuity~\eqref{hemi-continuity} yields
\begin{equation*}
\int_{\Omega}\Acal(\ubfzerotilde + \ubfl) \cdot \varphibold \dxbf = 0
\qquad\qquad
\forall \varphibold \in \Dbfcal.
\end{equation*}
The assertion follows from
the uniqueness of the solution
to~\eqref{splitted-variational-formulation}.
\end{proof}

We prove an auxiliary result,
which extends \cite[Eq.~(3.36)]{Girault-Wheeler:2008}
to the general-order case.

\begin{lemma} \label{lemma:false-Galerkin}
Let $(\ubf,\pop)$ and $(\ubfh,\ph)$
for a mesh~$\Tauh$ be the solutions
to~\eqref{variational-formulation}
and~\eqref{discrete-formulation},
and~$k>1$. Then, we have
\begin{equation} \label{false-Galerkin}
    \int_{\Omega}( \Acal(\ubf) - \Acal(\ubfh))\cdot \vbfh \dxbf
        = \int_{\Omega} (\pop-\Icalk \pop)
            (\diveh\vbfh-\PikmthreeE\diveh\vbfh)\dxbf
    \qquad \forall \vbfh \in \Dbfcalhkmo.
\end{equation}
If~$k=1$, then we recover \cite[Eq.~(3.36)]{Girault-Wheeler:2008},
i.e.,
\begin{equation} \label{true-Galerkin}
 \int_{\Omega}( \Acal(\ubf) - \Acal(\ubfh))\cdot \vbfh \dxbf
    = 0 \qquad\qquad \forall \vbfh \in \Dbfcalhzero.
\end{equation}
\end{lemma}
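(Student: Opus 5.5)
The plan is to test the continuous and discrete first equations against the same $\vbfh$ in $\Dbfcalhkmo$, subtract them, and identify the pressure mismatch using the defining properties of the Crouzeix--Raviart interpolant $\Icalk$ in~\eqref{interpolant-CR}. First, since $\Vbfcalhkmo\subset\Vbfcal$, we may take $\vbf=\vbfh$ in~\eqref{variational-formulation:first-eq}. Together with~\eqref{nablap-Acal}, this gives $\int_\Omega\Acal(\ubf)\cdot\vbfh=-\int_\Omega\nablabold\pop\cdot\vbfh$. From~\eqref{discrete-formulation:first-eq} we get $\int_\Omega\Acal(\ubfh)\cdot\vbfh=-\int_\Omega\nablaboldh\ph\cdot\vbfh$. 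Hence
\[
\int_\Omega(\Acal(\ubf)-\Acal(\ubfh))\cdot\vbfh\dxbf=-\int_\Omega\nablabold\pop\cdot\vbfh\dxbf+\int_\Omega\nablaboldh\ph\cdot\vbfh\dxbf .
\]
Next, add and subtract $\nablaboldh\Icalk\pop$. The function $\ph-\Icalk\pop$ lies in $\Qcalhzerok$, because the Dirichlet facet moments of $\Icalk\pop$ reproduce those of $\gD$ up to order $k-1$ (in the pure Neumann case one instead subtracts the mean, which does not change the gradient). Since $\vbfh\in\Dbfcalhkmo$, we obtain $\int_\Omega\nablaboldh(\ph-\Icalk\pop)\cdot\vbfh=0$. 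This leaves
\[
\int_\Omega(\Acal(\ubf)-\Acal(\ubfh))\cdot\vbfh\dxbf=-\int_\Omega\nablaboldh(\pop-\Icalk\pop)\cdot\vbfh\dxbf .
\]

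The key step is to integrate by parts elementwise:
\[
-\int_\Omega\nablaboldh(\pop-\Icalk\pop)\cdot\vbfh\dxbf=\int_\Omega(\pop-\Icalk\pop)\,\diveh\vbfh\dxbf-\sum_{\E\in\Tauh}\int_{\partial\E}(\pop-\Icalk\pop)\,\vbfh\cdot\nbfE\ds .
\]
The boundary term vanishes facet by facet. On each facet, $\vbfh\cdot\nbfE$ restricted to $\F$ belongs to $\P_{k-1}(\F)$, and by~\eqref{interpolant-moment-edge} $\pop-\Icalk\pop$ is $\L^2(\F)$-orthogonal to $\P_{k-1}(\F)$. On interior facets this holds from both sides, since $\pop$ is single valued and the moments of $\Icalk\pop$ match. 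On Dirichlet facets it holds by the same moment conditions. On Neumann facets I would check that the facet moments of the interpolant are also imposed there; the DoFs~\eqref{dof-moment-edge} are taken on $\FcalhIN$, so they are. I expect this boundary-facet bookkeeping to be the main point needing care, in particular for even $k$, where the vertex values enter but the facet moments are still matched.

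Finally, on each $\E$ the divergence $\diveh\vbfh$ lies in $\P_{k-2}(\E)$. By~\eqref{interpolant-moment-bulk}, $\pop-\Icalk\pop$ is orthogonal to $\P_{k-3}(\E)$, so we may subtract $\PikmthreeE\diveh\vbfh$ at no cost; this yields~\eqref{false-Galerkin}. For $k=1$, $\vbfh$ is piecewise constant, so $\diveh\vbfh=0$ and the projection onto $\P_{-2}$ is empty. The right-hand side therefore vanishes and gives~\eqref{true-Galerkin}.
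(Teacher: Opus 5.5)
Your proof is correct and follows the paper's proof. It uses the same ingredients: subtracting the two first equations, inserting $\nablaboldh(\ph-\Icalk\pop)$ (which integrates to zero against $\vbfh\in\Dbfcalhkmo$), elementwise integration by parts in which the boundary terms vanish by the facet moments~\eqref{interpolant-moment-edge}, and the bulk moments~\eqref{interpolant-moment-bulk} to insert $\PikmthreeE$. The only difference is that you start from the left-hand side, whereas~\eqref{steps-galerkin} starts from the right-hand side. Your handling of the mean shift in the pure Neumann case, of the Neumann and Dirichlet facets, and of $k=1$ spells out what the paper leaves implicit. One small point: $\int_\Omega\Acal(\ubf)\cdot\vbfh\dxbf=-\int_\Omega\nablabold\pop\cdot\vbfh\dxbf$ follows directly from~\eqref{variational-formulation:first-eq}, so you need not invoke~\eqref{nablap-Acal}, which the paper states with the opposite sign.
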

\begin{proof}
Given $\vbfh$ in $\Dbfcalhkmo$
and $\E$ in $\Tauh$,
we have
\begin{equation} \label{steps-galerkin}
\begin{split}
& \int_\E (\pop - \Icalk \pop) 
    (\dive\vbfh-\PikmthreeE\dive\vbfh)\dxbf
\overset{\eqref{dof-moment-bulk}}{=} 
    \int_{\E} (\pop - \Icalk \pop) \dive \vbfh \dxbf \\
&\overset{\eqref{dof-moment-edge}}{=}
    \int_{\partial \E} (\Icalk \pop - \pop)(\vbfh \cdot \nbfE) \ds
    -  \int_{\E} (\Icalk \pop - \pop) \dive \vbfh \dxbf 
= \int_\E\nablabold(\Icalk \pop - \pop)\cdot \vbfh \dxbf \\
&\overset{(\vbfh \in \Dbfcalhkmo)}{=} \int_\E\nablabold(\Icalk \pop - \pop)\cdot \vbfh \dxbf + \int_\E \nablabold(\ph -\Icalk \pop) \cdot \vbfh \dxbf
= \int_\E \nablabold (\ph - \pop) \cdot \vbfh \dxbf.
\end{split}
\end{equation}
The assertion follows from
summing over~$\E$ in~$\Tauh$,
subtracting~\eqref{variational-formulation}
to~\eqref{discrete-formulation},
and the above display.
\end{proof}

We now show the strong convergence in $\Lbf^{2}(\Omega)$
of $\ubfh$ to $\ubf$.
\begin{lemma} \label{lemma:strong-conv-flux-L2}
Let $\ubf$ and $\ubfh$ for a mesh~$\Tauh$
be the solutions
to~\eqref{splitted-variational-formulation}
and~\eqref{splitted-discrete-formulation}, respectively.
Then, we have
\begin{equation*}
    \lim_{\h\to 0} \ubfh= \ubf \quad \text{ in } \Lbf^{2}(\Omega).
\end{equation*}
\end{lemma}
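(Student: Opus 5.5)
The plan is to combine the monotonicity~\eqref{monotonicity} of~$\Acal(\cdot)$ with the quasi-Galerkin orthogonality of Lemma~\ref{lemma:false-Galerkin} and the weak convergence of Lemma~\ref{lemma:weak-conv-flux-Lalpha}. Monotonicity with $\ybf=\zerobf$ gives
\[
\frac{\mu}{\rho}\lambdamin\Norm{\ubf-\ubfh}_{\LtwoOmegad}^2
\le \int_\Omega (\Acal(\ubf)-\Acal(\ubfh))\cdot(\ubf-\ubfh)\dxbf .
\]
We split $\ubf-\ubfh = (\ubf-\Pbfh(\ubf)) + (\Pbfh(\ubf)-\ubfh)$, with~$\Pbfh$ as in~\eqref{def:Ph}. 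By construction of~$\cbfh$ in~\eqref{def-cbfh} and the discrete constraint~\eqref{constraint}, $\Pbfh(\ubf)$ satisfies the same discrete constraint as~$\ubfh$, since $\PiboldzTauhkmo\ubf$ tested against $\nablaboldh\qh$ reproduces $\ubf$ and the lifting accounts for the nonconformity. Hence $\Pbfh(\ubf)-\ubfh\in\Dbfcalhkmo$. This consistency check is the first thing we verify carefully, using the integration by parts in~\eqref{def-cbfh} with $\dive\ubf=b$ and $\ubf\cdot\nbfOmega=\gN$ on~$\GammaN$.

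For the first term, $\int_\Omega(\Acal(\ubf)-\Acal(\ubfh))\cdot(\ubf-\Pbfh(\ubf))$, Hölder's inequality and~\eqref{bound-Acal-Lalphaptprime} together with the uniform bounds~\eqref{bound-u-Lalphapt} and~\eqref{apriori-ubfh} bound it by $C\Norm{\ubf-\Pbfh(\ubf)}_{\LalphaOmegad}$. This tends to zero by~\eqref{eq:strong-convergence-Piboldkmo}, applied with $\afrak=\alpha>2$; this is admissible since $\dive\ubf=b\in\L^{((\alphaprime)^*)'}(\Omega)$.

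For the second term we take $\vbfh=\Pbfh(\ubf)-\ubfh$ in~\eqref{false-Galerkin}; when $k=1$ the term vanishes by~\eqref{true-Galerkin}. For $k>1$ it equals $\int_\Omega(\pop-\Icalk\pop)(\diveh\vbfh-\PikmthreeE\diveh\vbfh)$. We bound this elementwise by Hölder with exponents $\alphaprime$ and $\alpha$, and then apply a polynomial inverse inequality $\Norm{\diveh\vbfh}_{\L^\alpha(\E)}\lesssim \hE^{-1}\Norm{\vbfh}_{\Lbf^\alpha(\E)}$, which yields
\[
\lesssim \Norm{\h^{-1}(\pop-\Icalk\pop)}_{\L^{\alphaprime}(\Omega)}\,\Norm{\vbfh}_{\LalphaOmegad}.
\]
Here $\Norm{\vbfh}_{\LalphaOmegad}$ is uniformly bounded. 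Using the moment conditions~\eqref{interpolant-CR}, elementwise Poincaré, and a density argument as in~\eqref{eq:strong-convergence-Icalk}, $\h^{-1}\Norm{\pop-\Icalk\pop}_{\L^{\alphaprime}}\lesssim\Norm{\nablaboldh(\pop-\Icalk\pop)}_{\Lbf^{\alphaprime}}\to0$, because $\pop\in\W^{1,\alphaprime}(\Omega)$.

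The main obstacle is the definition of~$\Icalk$. For even~$k$ it requires point values, i.e.\ $\W^{1,t}$ with $t>2$, whereas $\alphaprime<2$. A uniform $\W^{1,\alphaprime}$-stability of~$\Icalk$ is therefore unavailable in general. We would first try to circumvent this: the right-hand side of~\eqref{false-Galerkin} only sees the part of $\pop-\Icalk\pop$ orthogonal to $\P_{k-3}$ and the facet moments. Hence we can replace $\Icalk\pop$ by any discrete function with matching moments, for instance a quasi-interpolant built from averaged moments. The identity~\eqref{steps-galerkin} uses only~\eqref{dof-moment-edge} and~\eqref{dof-moment-bulk}, so it remains valid for such a replacement. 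Alternatively, we would avoid~$\Icalk$ altogether by writing the residual as $\int_\Omega \nablaboldh(\ph-\pop)\cdot\vbfh$ and testing with a mollified $\pop_\varepsilon$.

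If neither route closes the argument without additional regularity, we would use the weak convergence of Lemma~\ref{lemma:weak-conv-flux-Lalpha} instead. The idea is to write $\int\Acal(\ubfh)\cdot\ubfh = -\int\nablaboldh\ph\cdot\ubfh$, identify it through~\eqref{constraint} with data terms converging to their continuous counterparts, and pass to the limit in the monotonicity inequality. This route needs only the weak convergence together with $\Pbfh$-consistency.

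Combining the two estimates shows that the right-hand side of the monotonicity inequality tends to zero, which proves the claim for the subsequence. Uniqueness of the limit $\ubf$ then upgrades this to the whole sequence.
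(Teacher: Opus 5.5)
Your main argument is correct and is essentially the paper's proof. It uses the same ingredients: consistency of $\Pbfh$ giving $\Pbfh(\ubf)-\ubfh\in\Dbfcalhkmo$, monotonicity, the identity of Lemma~\ref{lemma:false-Galerkin}, and $\nablaboldh\Icalk\pop\to\nablabold\pop$ in $\LalphaprimeOmegad$. There are two minor differences. First, anchoring the monotonicity at $\ubf$ instead of at $\Pbfh(\ubf)$ means you need neither~\eqref{estimate-Acal} nor Lemma~\ref{lemma:weak-conv-flux-Lalpha}. Second, the paper bounds the consistency term directly in its gradient form $\int_\Omega\nablaboldh(\Icalk\pop-\pop)\cdot\vbfh\dxbf$ from~\eqref{steps-galerkin}, which avoids your inverse-inequality/Poincar\'e detour.

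Your even-$k$ concern is legitimate: the paper applies~\eqref{eq:strong-convergence-Icalk} to $\pop\in\W^{1,\alphaprime}(\Omega)$ with $\alphaprime<2$, although for even $k$ the interpolant $\Icalk$ uses vertex values. The repair goes through the gradient form, not through moment matching. For $\vbfh\in\Dbfcalhkmo$ one has $\int_\Omega\nablaboldh(\ph-\pop)\cdot\vbfh\dxbf=\int_\Omega\nablaboldh(r_{\h}-\pop)\cdot\vbfh\dxbf$ for every $r_{\h}\in\QcalhgDk$. Hence any approximation of $\pop$ from $\QcalhgDk$ converging in the broken $\W^{1,\alphaprime}$-seminorm suffices. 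By contrast, an even-degree Crouzeix--Raviart function cannot in general reproduce all facet moments up to order $k-1$ of a generic $\pop$, because of the compatibility condition stated after~\eqref{dof-CRk-even}.
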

\begin{proof}
Since~$\ubfzero$ belongs to~$\Dbfcal$,
we write
\begin{equation*}
0 \overset{\eqref{Dbfcal}, \eqref{def-cbfh}}{=}
    \int_\Omega \cbfh(\ubfzero) \cdot \nablaboldh \qh \dxbf
    \overset{\eqref{def:Ph}}{=}
    \int_\Omega \Pbfh(\ubfzero) \cdot \nablaboldh \qh \dxbf
    -\int_\Omega \PiboldzTauhkmo\ubfzero \cdot \nablaboldh\qh \dxbf
    \qquad \forall\qh\in \Qcalhzerok,
\end{equation*}
i.e., thanks to~\eqref{ubftilde-ubfh},
$\ubfhzero$ belongs to $\Dbfcalhkmo$.
Then, for $k>1$, we write
\begin{equation} \label{Acal-uhz-uhl-Phuhz-uhl}
\begin{split}
&   \frac{\mu}{\rho}\lambdamin \Norm{\ubfh - \Pbfh(\ubf)}^2_{\Lbf^2(\Omega)}
\overset{\eqref{ubftilde-ubfh}}{=}
\frac{\mu}{\rho}\lambdamin \Norm{\ubfhzero - \Pbfh(\ubfzero)}^2_{\Lbf^2(\Omega)} \\
&\qquad\overset{\eqref{monotonicity}}{\leq}
\int_{\Omega}(\Acal(\ubfhzero+\ubfhl) - \Acal(\Pbfh(\ubfzero) + \ubfhl))
\cdot(\ubfhzero - \Pbfh(\ubfzero)) \dxbf \\
&\qquad
    \overset{\eqref{ubftilde-ubfh}, \eqref{false-Galerkin}, \eqref{steps-galerkin}}{=}
\int_{\Omega}(\Acal(\ubfzero + \ubfl) - \Acal(\Pbfh(\ubfzero)+\ubfhl))
\cdot (\ubfhzero - \Pbfh(\ubfzero)) \dxbf \\
&\qquad\qquad\qquad\qquad
    + \int_{\Omega} \nablaboldh(\Icalk \pop - \pop)
    \cdot (\ubfhzero-\Pbfh(\ubfzero) )  \dxbf.
\end{split}
\end{equation}
Proceeding as in the proof
of~\eqref{strong-convergence-ph-ubfhl},
$\Acal(\Pbfh(\ubfzero) + \ubfhl)$ converges strongly
in $\Lbf^{\alphaprime}(\Omega)$
to $\Acal(\ubfzero + \ubfl)$;
$\ubfhzero$ converges weakly
in $\Lbf^{\alpha}(\Omega)$ to~$\ubfzero$,
cf. Lemma~\ref{lemma:weak-conv-flux-Lalpha};
$\Pbfh\ubfzero$ converges strongly
in $\Lbf^{\alpha}(\Omega)$ to~$\ubfzero$,
cf. \eqref{eq:strong-convergence-Piboldkmo}.
Therefore, we can pass to the limit
for $\h$ going to $0$ in the first
term on the right-hand side
of~\eqref{Acal-uhz-uhl-Phuhz-uhl} and get~$0$.
On the other hand,
$\Pbfh(\ubfzero)$ converges strongly
in $\Lbf^{\alpha}(\Omega)$ to~$\ubfzero$,
cf. \eqref{eq:strong-convergence-Piboldkmo};
$\nablaboldh\Icalk \pop$ converges strongly
in $\Lbf^{\alphaprime}(\Omega)$
to $\nablaboldh \pop$,
cf.~\eqref{eq:strong-convergence-Icalk}.
Therefore, we can pass to the limit
for~$\h$ going to~$0$ in the second
term on the right-hand side
of~\eqref{Acal-uhz-uhl-Phuhz-uhl} and get~$0$.
Therefore, we can pass to the limit
in~\eqref{Acal-uhz-uhl-Phuhz-uhl}
and the assertion follows for~$k$ larger than~$1$.

The case~$k=1$ is dealt with analogously
and no pressure term appears;
cf.~\eqref{true-Galerkin} and~\cite{Girault-Wheeler:2008}.
\end{proof}

Define~$\wbfh$ as
\begin{equation} \label{def:wbfh-local}
\wbfh := \nablaboldh \ph.
\end{equation}
We have that
\begin{itemize}
    \item $\wbfh$ is uniformly bounded in $\Lbf^{\alphaprime}(\Omega)$
    due to Proposition \ref{proposition:discrete-well-posedness-enriched},
    whence there exists $\wbf$ in $\Lbf^{\alphaprime}(\Omega)$ such that
    \begin{equation} \label{weak-convergence-whw-alphaprime}
    \lim_{\h\to 0}
    \wbfh
    = \wbf
    \quad \text{ weakly in }\Lbf^\alphaprime(\Omega);
    \end{equation}
    \item $\ph$ is uniformly bounded in $\Lbf^{(\alphaprime)^*}(\Omega)$
    due to Corollary \ref{corollary:Sobolev-inequalities-CR}
    and Proposition \ref{proposition:discrete-well-posedness-enriched},
    whence
    there exists~$\qop$ in $\L^{(\alphaprime)^*}(\Omega)$ such that
    \begin{equation} \label{weak-convergence-ph-alphaprimestar}
    \lim_{\h\to 0}
    \ph
    = \qop
    \quad \text{ weakly in }\L^{(\alphaprime)^*}(\Omega).
    \end{equation}
\end{itemize}
We show the weak convergence in $\Lbf^{\alphaprime}(\Omega)$
of the gradient of the potential variable.
\begin{proposition} \label{prop:weak-conv-pot-Lalphaprime}
Let $(\ubf,\pop)$ and $(\ubfh,\ph)$
for a mesh~$\Tauh$ be the solutions
to~\eqref{splitted-variational-formulation}
and~\eqref{splitted-discrete-formulation}, respectively,
and let the~\eqref{extra-ass-convergence} be valid.
Then, for~$\wbfh$ as in~\eqref{def:wbfh-local},
we have
\begin{equation*}
    \lim_{\h\to 0} \wbfh= \nablabold \pop \quad \text{ weakly in } \LalphaprimeOmegad.
\end{equation*}
\end{proposition}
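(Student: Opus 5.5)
We identify the weak limit $\wbf$ in~\eqref{weak-convergence-whw-alphaprime} in two steps: first we show that $\wbf=\nablabold\qop$, with $\qop$ the weak limit in~\eqref{weak-convergence-ph-alphaprimestar}, so that $\qop$ belongs to $\W^{1,\alphaprime}(\Omega)$ with the correct boundary behaviour; then we show $\nablabold\qop=\Acal(\ubf)=\nablabold\pop$ and $\qop=\pop$. Uniqueness of the limit then upgrades subsequence convergence to convergence of the whole sequence.

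\emph{Step 1 (the limit is a gradient).} Take $\boldsymbol\varphi$ in $\mathcal C^\infty(\overline\Omega)^2$ with $\boldsymbol\varphi\cdot\nbfOmega=0$ on $\GammaD$ (or on $\Gamma$ if $\GammaN=\Gamma$). Integrating by parts elementwise gives
\[
\int_\Omega \wbfh\cdot\boldsymbol\varphi\dxbf
= -\int_\Omega \ph\,\dive\boldsymbol\varphi\dxbf
+ \sum_{\F\in\Fcalh}\int_\F \jump{\ph}\,\boldsymbol\varphi\cdot\nbfF\ds .
\]
Since $\jump{\ph}$ has zero moments up to order $k-1$ on interior facets, we can replace $\boldsymbol\varphi\cdot\nbfF$ by $\boldsymbol\varphi\cdot\nbfF-\PizFcalhkmo(\boldsymbol\varphi\cdot\nbfF)$ there. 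The jump term is then bounded by $\h\,\SemiNorm{\boldsymbol\varphi}_{\Wbf^{1,\infty}}$ times $\sum_\F\Norm{\jump{\ph}}_{\L^1(\F)}$. In turn, $\Norm{\jump{\ph}}_{\L^1(\F)}\lesssim \hF\Norm{\nablaboldh\ph}_{\Lbf^1(\omegaF)}$ by elementwise trace and Poincar\'e inequalities applied to $\ph-\PizFcalhz\jump{\ph}$, which is the same mechanism as in Proposition~\ref{proposition:Sobolev-inequalities}. Hence the jump term is $\mathcal O(\h)\,\Norm{\wbfh}_{\Lbf^{\alphaprime}}\to0$. On Dirichlet facets the jump tends to $\gD$ in moments, which produces the boundary datum in the limit. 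Passing to the limit with \eqref{weak-convergence-whw-alphaprime}--\eqref{weak-convergence-ph-alphaprimestar} yields
\[
\int_\Omega\wbf\cdot\boldsymbol\varphi=-\int_\Omega\qop\,\dive\boldsymbol\varphi+\langle \gD,\boldsymbol\varphi\cdot\nbfOmega\rangle_{\GammaD}.
\]
Therefore $\wbf=\nablabold\qop$ and $\qop\in\QcalgD$. In the pure Neumann case the zero average is preserved under weak convergence.

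\emph{Step 2 (identification of the limit).} Testing~\eqref{discrete-formulation:first-eq} with $\vbfh=\PiboldzTauhkmo\vbf$ for $\vbf$ in $\Lbf^\alpha(\Omega)$ gives
\[
\int_\Omega\Acal(\ubfh)\cdot\PiboldzTauhkmo\vbf+\int_\Omega\wbfh\cdot\PiboldzTauhkmo\vbf=0 .
\]
By Lemma~\ref{lemma:strong-convergence-Ph-Lalpha}, $\PiboldzTauhkmo\vbf\to\vbf$ strongly in $\Lbf^\alpha$, so the second term tends to $\int\nablabold\qop\cdot\vbf$.

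\emph{Main obstacle: the nonlinear term.} For the first term we need $\Acal(\ubfh)\rightharpoonup\Acal(\ubf)$ weakly in $\Lbf^{\alphaprime}$, but we only have strong convergence of $\ubfh$ in $\Lbf^2$ (Lemma~\ref{lemma:strong-conv-flux-L2}). This is where~\eqref{extra-ass-convergence} enters. First, the sequence $\Acal(\ubfh)$ is bounded in $\Lbf^{\alphaprime}$ by~\eqref{bound-Acal-Lalphaptprime} and~\eqref{apriori-ubfh}. Second, by~\eqref{estimate-Acal} and H\"older's inequality,
\[
\Norm{|\ubfh|^{\alpha-2}\ubfh-|\ubf|^{\alpha-2}\ubf}_{\Lbf^1}\lesssim \Norm{\ubfh-\ubf}_{\Lbf^2}\bigl(\Norm{\ubfh}_{\Lbf^{2(\alpha-2)}}^{\alpha-2}+\Norm{\ubf}_{\Lbf^{2(\alpha-2)}}^{\alpha-2}\bigr).
\]
Since $\alpha<4$ implies $2(\alpha-2)<\alpha$, the bracket is bounded, so the difference tends to $0$ in $\Lbf^1$. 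The Darcy part converges as well, because $\Kbbmo\in\Lbu^{\alpha/(\alpha-2)}$ and $\ubfh\rightharpoonup\ubf$ weakly in $\Lbf^\alpha$. Convergence in $\Lbf^1$ combined with boundedness in $\Lbf^{\alphaprime}$ gives weak convergence in $\Lbf^{\alphaprime}$, after testing first with $\vbf\in\Lbf^\infty$ and then arguing by density.

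\emph{Conclusion.} Passing to the limit gives $\nablabold\qop=-\Acal(\ubf)$, i.e., the sign convention of~\eqref{variational-formulation:first-eq}, which reads $\nablabold\pop=-\Acal(\ubf)$ and is consistent with~\eqref{nablap-Acal} up to that convention. Since $\qop$ and $\pop$ both lie in $\QcalgD$ with the same gradient, and the Dirichlet trace or the zero mean fixes the constant, we obtain $\qop=\pop$. This proves the claim.
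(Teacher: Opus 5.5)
Your proposal is correct and follows the paper's two-step route. First, integration by parts with vanishing interior jump terms shows that the weak limit is a gradient; you estimate the jumps directly, the paper through the lifting $\cbfh(\varphibold)$. Second, testing \eqref{discrete-formulation:first-eq} with $\PiboldzTauhkmo\vbf$, together with the $\Lbf^2(\Omega)$ convergence of $\ubfh$ and $\alpha<4$, identifies the limit with $\nablabold\pop$. You pass to the limit in the nonlinear term via $\Lbf^1$ convergence plus $\Lbf^{\alphaprime}$ boundedness of $\Acal(\ubfh)$, the paper via H\"older's inequality on test functions in $\Lbf^{\frac{2\alpha}{4-\alpha}}(\Omega)$; this is the same mechanism.

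The only slips are in the boundary identification of Step~1, which the statement does not need, since Step~2 alone gives $\wbf=\nablabold\pop$. The test fields should satisfy $\varphibold\cdot\nbfOmega=0$ on $\GammaN$ rather than on $\GammaD$; otherwise the unconstrained Neumann-facet traces of $\ph$ survive and no $\gD$ term appears. Also, the facet bound is $\Norm{\jump{\ph}}_{\L^1(\F)}\lesssim\Norm{\nablaboldh\ph}_{\Lbf^1(\omegaF)}$, without the factor $\hF$; this still yields the $\mathcal O(\h)$ decay you state.
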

\begin{proof}
We split the proof into two steps.
\paragraph*{Step 1: proving that the limit is the gradient
of a function~$\qop$.}
An integration by parts yields
\small\begin{equation} \label{duality-wh-varphi}
\begin{split}
\int_\Omega \wbfh \cdot \varphibold \dxbf
    \overset{\eqref{def:wbfh-local}}{:=}
    \sum_{\E \in \Tauh} \int_\E \nablabold \ph \cdot \varphibold \dxbf
= \int_{\FcalhI} \jump{\ph} (\varphibold\cdot \nbfF) \ds
    - \int_\Omega \ph \dive \varphibold \dxbf
    \qquad
    \forall \varphibold \in [\mathcal{C}_0^\infty(\Omega)]^2.
\end{split}
\end{equation}\normalsize
Standard polynomial approximation (PA) results yield
\begin{equation*}
\Big\vert \int_{\FcalhI}\jump{\ph}(\varphibold\cdot\nbfF)\ds \Big\vert
\overset{\text{(IBP)}, \eqref{def-cbfh}}{=}
\Big\vert \int_{\Omega}\cbfh(\varphibold)\cdot\nablaboldh\ph \dxbf \Big\vert
\overset{\text{(PA)}, \eqref{int-chnablaqh-leq-unablaqh}}{\lesssim}
\h \SemiNorm{\varphibold}_{\Wbf^{1,\alpha}(\Omega)}
    \Norm{\nablaboldh \ph}_{\Lbf^{\alphaprime}(\Omega)}.
\end{equation*}
Hence, the left-hand side of the above display
tends to zero when~$\h$ goes to zero,
which combined with~\eqref{weak-convergence-whw-alphaprime}
and~\eqref{weak-convergence-ph-alphaprimestar}
allows for passing to the limit in~\eqref{duality-wh-varphi}
and obtain
\begin{equation*}
\int_\Omega \wbf \cdot \varphibold  \dxbf
= - \int_\Omega \qop \dive \varphibold \dxbf,
\end{equation*}
i.e.,
$\wbf = \nablabold \qop$ and $\qop$ belongs to $\W^{1,\alphaprime}(\Omega)$.
\paragraph*{Step 2: proving that~$\qop$ equals~$\pop$.}
Given~$\vbf$ in~$\Lbf^{\frac{2\alpha}{4-\alpha}}(\Omega)$, 
taking~$\PiboldzTauhkmo\vbf$ as a test function
in~\eqref{discrete-formulation:first-eq} gives
\begin{equation} \label{eq:int-wbfh-varphi}
\begin{split}
\int_\Omega \wbfh \cdot \PiboldzTauhkmo\vbf \dxbf
& \overset{\eqref{def:wbfh-local}}{:=}
    \int_{\Tauh}\nablaboldh\ph\cdot\PiboldzTauhkmo\vbf\dxbf
\overset{\eqref{discrete-formulation:first-eq}}{=}
    -\int_\Omega \Acal(\ubfh)\cdot\PiboldzTauhkmo\vbf\dxbf \\
&= -\int_\Omega (\Acal(\ubfh)-\Acal(\ubf))\cdot\PiboldzTauhkmo\vbf\dxbf
+ \int_\Omega\Acal(\ubf)\cdot\PiboldzTauhkmo\vbf\dxbf \\
&\overset{\eqref{nablap-Acal}}{=} -\int_\Omega (\Acal(\ubfh)-\Acal(\ubf))\cdot\PiboldzTauhkmo\vbf\dxbf
+ \int_\Omega \nablabold \pop\cdot\PiboldzTauhkmo\vbf\dxbf.
\end{split}
\end{equation}
Using \eqref{estimate-Acal} in the above display
together with H\"older's inequality applied twice
with exponents
$(\alpha/(\alpha-2), 2, 2\alpha/(4-\alpha))$,
which is an admissible choice
since~$\alpha$ is in $(2,4)$ due to~\eqref{extra-ass-convergence},
give
\begin{equation} \label{eq:convergence-nablaph-Pikmo}
\begin{split}
&   \left| \int_\Omega (\Acal(\ubfh)-   
    \Acal(\ubf))\cdot\PiboldzTauhkmo\vbf\dxbf \right| 
    \leq \frac{\mu}{\rho}\Norm{\Kbbmo}_{\Lbu^{\frac{\alpha}{\alphamt}}(\Omega)}
    \Norm{\ubf-\ubfh}_{\Lbf^2(\Omega)}
    \Norm{\PiboldzTauhkmo\vbf}_{\Lbf^{\frac{2\alpha}{4-\alpha}}(\Omega)} \\
&   \qquad + \CBL\frac{\beta}{\rho}
    \Norm{\ubf-\ubfh}_{\Lbf^2(\Omega)}(\Norm{\ubfh}_{\Lbf^{\alpha}(\Omega)}^{\alphamt}
    +\Norm{\ubf}_{\Lbf^{\alpha}(\Omega)}^{\alphamt})
    \Norm{\PiboldzTauhkmo\vbf}_{\Lbf^{\frac{2\alpha}{4-\alpha}}(\Omega)}.
\end{split}
\end{equation}
The strong convergence in $\Lbf^2(\Omega)$ of $\ubfh$ to $\ubf$,
cf. Lemma~\ref{lemma:strong-conv-flux-L2},
and the strong convergence
in~$\Lbf^{\frac{2\alpha}{4-\alpha}}(\Omega)$
of~$\PiboldzTauhkmo\vbf$~to $\vbf$,
cf.~\eqref{eq:strong-convergence-Piboldkmo},
imply that the left-hand side of \eqref{eq:convergence-nablaph-Pikmo}
tends to zero when $\h$ goes to zero.
The assertion follows
passing to the limit in \eqref{eq:int-wbfh-varphi}
and using the uniqueness of the solution of~\eqref{splitted-variational-formulation}.
\end{proof}

\subsection{Strong convergence in the natural norms}
    \label{subsection:strong-convergence}
Next,
we show the strong convergence of $\ubfh$ in $\Lbf^{\alpha}(\Omega)$.
\begin{theorem} \label{theorem:strong-conv-flux-Lalpha}
Let $(\ubf,\p)$ and $(\ubfh,\ph)$
for a mesh~$\Tauh$
be the solutions to~\eqref{variational-formulation}
and~\eqref{discrete-formulation}, respectively,
and let~\eqref{extra-ass-convergence} be valid.
Assume that $\Kbb^{-1}$ is in $\Lbu^{\frac{2\alpha}{\alpha-2}}(\Omega)$.
Then, we have
\begin{equation*}
    \lim_{\h\to 0} \ubfh= \ubf \quad \text{ in } \LalphaOmegad.
\end{equation*}
\end{theorem}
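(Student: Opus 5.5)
The proof rests on strong monotonicity of the Forchheimer term in $\Lbf^\alpha$. For $\alpha\ge 2$ there is a constant $c_\alpha>0$ such that
\[
(|\vbf|^{\alpha-2}\vbf-|\wbf|^{\alpha-2}\wbf)\cdot(\vbf-\wbf)\ge c_\alpha|\vbf-\wbf|^\alpha ,
\]
and $\Kbbmo$ is positive definite. Together these give
\[
\frac{\beta}{\rho}c_\alpha\Norm{\ubfh-\ubf}^\alpha_{\LalphaOmegad}
\le \int_\Omega(\Acal(\ubfh)-\Acal(\ubf))\cdot(\ubfh-\ubf)\dxbf .
\]
It therefore suffices to show that the right-hand side tends to zero. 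We expand it as
\[
\int_\Omega\Acal(\ubfh)\cdot\ubfh-\int_\Omega\Acal(\ubfh)\cdot\ubf-\int_\Omega\Acal(\ubf)\cdot(\ubfh-\ubf).
\]
The third term tends to zero by Lemma~\ref{lemma:weak-conv-flux-Lalpha}, since $\Acal(\ubf)\in\Lbf^{\alphaprime}(\Omega)$. For the second term, we use $\Acal(\ubfh)=-\PiboldzTauhkmo$-projected pressure gradient. More precisely, \eqref{discrete-formulation:first-eq} gives $\PiboldzTauhkmo\Acal(\ubfh)=-\nablaboldh\ph$. A cleaner route is to show $\Acal(\ubfh)\rightharpoonup\Acal(\ubf)$ weakly in $\Lbf^{\alphaprime}(\Omega)$ and then pass to the limit against the fixed $\ubf$. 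Weak convergence holds as follows: $\Acal(\ubfh)$ is bounded in $\Lbf^{\alphaprime}$ by \eqref{bound-Acal-Lalphaptprime} and \eqref{apriori-ubfh}, and $\ubfh\to\ubf$ in $\Lbf^2$ by Lemma~\ref{lemma:strong-conv-flux-L2}, hence a.e.\ along subsequences. Pointwise continuity of $\Acal$ combined with boundedness then identifies the weak limit as $\Acal(\ubf)$, and uniqueness gives convergence of the whole sequence.

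The main step is the energy term $\int_\Omega\Acal(\ubfh)\cdot\ubfh$. We test \eqref{discrete-formulation:first-eq} with $\vbfh=\ubfh$ and \eqref{constraint} with $\qh=\ph-\Icalk\pop$ (or a discrete lifting of $\gD$ when $\GammaD\neq\emptyset$). This gives
\[
\int_\Omega\Acal(\ubfh)\cdot\ubfh=-\int_\Omega\nablaboldh\ph\cdot\ubfh
= \int_\Omega b\,\ph-\int_{\GammaN}\gN\ph-\int_\Omega\ubfh\cdot\nablaboldh\Icalk\pop+\big(\text{data term}\big).
\]
We then pass to the limit term by term. The term $\int_\Omega\ubfh\cdot\nablaboldh\Icalk\pop\to\int_\Omega\ubf\cdot\nablabold\pop$ by weak–strong convergence, using Lemma~\ref{lemma:weak-conv-flux-Lalpha} and \eqref{eq:strong-convergence-Icalk} with $s'=\alphaprime$. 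The volume term $\int_\Omega b\,\ph\to\int_\Omega b\,\pop$ follows from \eqref{weak-convergence-ph-alphaprimestar}, after identifying the weak limit $\qop$ with $\pop$ as in Proposition~\ref{prop:weak-conv-pot-Lalphaprime}.

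The boundary term $\int_{\GammaN}\gN\ph$ is the real obstacle. It requires compactness of traces of $\ph$, and Corollary~\ref{corollary:Sobolev-inequalities-CR} gives only boundedness in $\L^{(\alphaprime)^\sharp}(\Gamma)$. I would proceed in three steps:
\begin{enumerate}
\item Extract a weak limit of the traces of $\ph$ in $\L^{(\alphaprime)^\sharp}(\GammaN)$.
\item Identify this limit with the trace of $\pop$. To do so, apply the integration by parts \eqref{duality-wh-varphi} with test fields $\varphibold\in\Wbf^{1,\alpha}(\Omega)$ that do not vanish on $\GammaN$. The interior jump contributions vanish as in Step~1 of Proposition~\ref{prop:weak-conv-pot-Lalphaprime}, using the $\cbfh$ bound \eqref{int-chnablaqh-leq-unablaqh}. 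The boundary facet contributions yield $\int_\Gamma \ph\,\varphibold\cdot\nbfOmega$ up to moment defects that are likewise controlled by \eqref{int-chnablaqh-leq-unablaqh}.
\item Use that $\gN$ is fixed, so weak convergence of the traces suffices.
\end{enumerate}
Combining these limits with \eqref{continuous-constraint} tested against $\pop$ (appropriately shifted by the Dirichlet lifting) yields $\int_\Omega\Acal(\ubfh)\cdot\ubfh\to-\int_\Omega\nablabold\pop\cdot\ubf=\int_\Omega\Acal(\ubf)\cdot\ubf$.

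Summing the three terms gives a right-hand side of the monotonicity inequality tending to zero, which proves the claim. The assumption $\Kbbmo\in\Lbu^{\frac{2\alpha}{\alpha-2}}$ and \eqref{extra-ass-convergence} enter where the linear part $\Kbbmo\ubfh$ must converge suitably in $\Lbf^{\alphaprime}$. They are also needed to control it against $\ubfh-\ubf$ using only $\Lbf^2$ convergence: by Hölder with exponents $(\frac{2\alpha}{\alpha-2},2,\alpha)$, since $\frac{\alpha-2}{2\alpha}+\frac12+\frac1\alpha=1$. This is useful if the a.e.\ identification above is replaced by a direct estimate.
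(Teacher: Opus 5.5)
Your proof is correct in outline but takes a different route from the paper. The paper tests \eqref{discrete-formulation:first-eq} with $\ubfh$. It replaces $\int_\Omega\nablaboldh\ph\cdot\ubfh$ by $\int_\Omega\nablaboldh\ph\cdot\ubfhl$, using $\ubfh-\ubfhl\in\Dbfcalhkmo$ with $\ubfhl=\Pbfh(\ubfl)$, and passes to the limit by pairing the weak convergence of $\nablaboldh\ph$ (Proposition~\ref{prop:weak-conv-pot-Lalphaprime}) with the strong convergence of $\Pbfh(\ubfl)$. The Darcy term is handled through $\Kbbmo\in\Lbu^{\frac{2\alpha}{\alpha-2}}(\Omega)$ and the $\Lbf^2$ convergence. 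This yields $\Norm{\ubfh}_{\LalphaOmegad}\to\Norm{\ubf}_{\LalphaOmegad}$, and strong convergence then follows implicitly from weak convergence and uniform convexity of $\LalphaOmegad$.

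You close instead with strong monotonicity of $\vbf\mapsto|\vbf|^{\alpha-2}\vbf$, which is self-contained and quantitative, and you never isolate the linear part. The cross term $\int_\Omega\Acal(\ubfh)\cdot\ubf$ follows from a.e. convergence plus boundedness in $\LalphaprimeOmegad$. More directly, it follows from $\int_\Omega\Acal(\ubfh)\cdot\PiboldzTauhkmo\ubf=-\int_\Omega\nablaboldh\ph\cdot\ubf$, from $\Norm{\ubf-\PiboldzTauhkmo\ubf}_{\LalphaOmegad}\to0$, and from Proposition~\ref{prop:weak-conv-pot-Lalphaprime}. The energy term only involves $\int_\Omega\nablaboldh\ph\cdot\ubfh$. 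Hence, contrary to your closing paragraph, your argument never uses $\Kbbmo\in\Lbu^{\frac{2\alpha}{\alpha-2}}(\Omega)$; $\Kbbmo\in\Lbu^{\frac{\alpha}{\alpha-2}}(\Omega)$ suffices, which is a real gain.

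The step you call the real obstacle is avoidable, and avoiding it is essentially the paper's mechanism. $\Pbfh(\ubfl)$ satisfies the same discrete constraint \eqref{constraint} as $\ubfh$. So with $e_h:=\ph-\Icalk\pop\in\Qcalhzerok$ (up to a constant in the pure Neumann case) you get $\int_\Omega\ubfh\cdot\nablaboldh e_h=\int_\Omega\Pbfh(\ubfl)\cdot\nablaboldh e_h\to0$. This holds because $\Pbfh(\ubfl)\to\ubfl$ strongly in $\LalphaOmegad$, while $\nablaboldh e_h\rightharpoonup0$ weakly in $\LalphaprimeOmegad$ by Proposition~\ref{prop:weak-conv-pot-Lalphaprime} and \eqref{eq:strong-convergence-Icalk}. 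No volume or trace limits of $\ph$ are needed.

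If you keep your trace route, two details need repair.
\begin{enumerate}
\item Proposition~\ref{prop:weak-conv-pot-Lalphaprime} only identifies $\nablabold\qop=\nablabold\pop$, so $\qop=\pop+c$. For pure Neumann conditions $c$ is harmless, by the zero mean or the compatibility condition. For mixed conditions the defect $c\,(\int_\Omega b-\int_{\GammaN}\gN)$ does not vanish in general, so you must prove $c=0$. One way is to run your Step~2 on $\GammaD$ as well, using that the facet moments of $\ph$ there coincide with those of $\gD$.
\item There are no moment defects on boundary facets: the boundary contribution of the integration by parts is exactly $\int_\Gamma\ph\,\varphibold\cdot\nbfOmega$. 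Only the interior jumps need an estimate, and that follows from $\jump{\ph}$ having zero mean on interior facets.
\end{enumerate}
Finally, the terms $\int_\Omega b\,\pop$ and $\int_{\GammaN}\gN\pop$ cancel in the limit. So testing \eqref{continuous-constraint} with a shifted $\pop$ is unnecessary.
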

\begin{proof}
We have
\begin{equation*}
\begin{split}
&    0 \overset{\eqref{discrete-formulation:first-eq}}{=} 
\frac{\mu}{\rho} \int_{\Omega}(\Kbbmo)\ubfh \cdot \ubfh \dxbf
        + \frac{\beta}{\rho} \int_{\Omega}\EucNorm{\ubfh}^{\alpha} \dxbf
        + \int_{\Omega}\nablaboldh \ph \cdot \ubfh \dxbf \\
&\quad\overset{(\ubfhzero \in \Dbfcalhkmo)}{=}
\frac{\mu}{\rho} \int_{\Omega}(\Kbbmo)\ubfh \cdot \ubfh \dxbf
        + \frac{\beta}{\rho} \int_{\Omega}\EucNorm{\ubfh}^{\alpha} \dxbf
        + \int_{\Omega}\nablaboldh \ph \cdot \ubfhl \dxbf.
\end{split}
\end{equation*}
Using
the fact that $\Kbb^{-1}$ is in
$\Lbu^{\frac{2\alpha}{\alpha-2}}(\Omega)$,
the strong convergence
of $\ubfh$ to~$\ubf$ in $\Lbf^2(\Omega)$,
see Lemma~\ref{lemma:strong-conv-flux-L2},
and the weak convergence
of $\ubfh$ to~$\ubf$ in $\Lbf^\alpha(\Omega)$,
see Lemma~\ref{lemma:weak-conv-flux-Lalpha},
for the first term on the left-hand side,
the weak convergence of $\nablaboldh \ph$
to $\nabla \pop$ in $\Lbf^{\alphaprime}(\Omega)$,
see Proposition~\ref{prop:weak-conv-pot-Lalphaprime},
and the strong convergence of $\ubfhl$
to $\ubfl$
in $\Lbf^\alpha(\Omega)$,
see Lemma~\ref{lemma:strong-convergence-Ph-Lalpha},
for the third term on the left-hand side,
and passing to the limit~$\h$ going to~$0$
entail
\begin{equation} \label{first-equation:with-limit}
\begin{split}
&\frac{\mu}{\rho} \int_{\Omega}(\Kbbmo)\ubf \cdot \ubf \dxbf
        + \lim_{h\to0}\frac{\beta}{\rho} \int_{\Omega}\EucNorm{\ubfh}^{\alpha} \dxbf
        + \int_{\Omega}\nablabold \pop \cdot \ubf \dxbf
= 0.
\end{split}
\end{equation}
Taking $\ubf = \ubfzero + \ubfl$ as test function $\vbf$
in the continuous formulation \eqref{variational-formulation:first-eq}
gives
\begin{equation} \label{first-equation:without-limit}
\frac{\mu}{\rho} \int_{\Omega}(\Kbbmo) \ubf \cdot \ubf \dxbf
        + \frac{\beta}{\rho} \int_{\Omega}\EucNorm{\ubf}^{\alpha} \dxbf
        + \int_{\Omega}\nablabold \pop \cdot \ubf \dxbf
        = 0
\end{equation}
Taking the difference of~\eqref{first-equation:with-limit}
and~\eqref{first-equation:without-limit} yields
\begin{equation*}
\lim_{h\to0}\Norm{\ubfh}_{\Lbf^\alpha(\Omega)}^\alpha
= \Norm{\ubf}_{\Lbf^\alpha(\Omega)}^\alpha.
\end{equation*}
\end{proof}

Next,
we show the strong convergence of $\nablaboldh \ph$ in $\Lbf^{\alphaprime}(\Omega)$.
\begin{theorem} \label{theorem:strong-conv-pot-Lalphaprime}
Let $(\ubf,\pop)$ and $(\ubfh,\ph)$
for a mesh~$\Tauh$
be the solutions to~\eqref{variational-formulation}
and~\eqref{discrete-formulation}, respectively,
and let~\eqref{extra-ass-convergence} be valid.
Assume that $\Kbb^{-1}$ is in
$\Lbu^{\frac{2\alpha}{\alpha-2}}(\Omega)$.
Then, we have
\begin{equation*}
    \lim_{\h\to 0} \nablaboldh \ph= \nablabold \pop \quad \text{ in } \LalphaprimeOmegad.
\end{equation*}
\end{theorem}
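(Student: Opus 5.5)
We will not try to show norm convergence of $\nablaboldh \ph$ directly from weak convergence, since $\Lbf^{\alphaprime}(\Omega)$ norm convergence is not easily accessible for the broken gradients. Instead we exploit the first discrete equation~\eqref{discrete-formulation:first-eq}, which identifies $\nablaboldh\ph$ with $-\Acal(\ubfh)$ up to the projection onto $\Vbfcalhkmo$, together with the continuous identity~\eqref{nablap-Acal}, $\nablabold\pop=-\Acal(\ubf)$ (sign as in~\eqref{variational-formulation:first-eq}). Since $\nablaboldh\ph\in\Pbf_{k-1}(\Tauh)=\Vbfcalhkmo$, testing~\eqref{discrete-formulation:first-eq} with arbitrary $\vbfh$ gives the pointwise identity
\begin{equation*}
\nablaboldh\ph = -\PiboldzTauhkmo \Acal(\ubfh).
\end{equation*}
Hence we split
\begin{equation*}
\nablaboldh\ph-\nablabold\pop
= -\PiboldzTauhkmo\big(\Acal(\ubfh)-\Acal(\ubf)\big)
  -\big(\PiboldzTauhkmo\Acal(\ubf)-\Acal(\ubf)\big).
\end{equation*}

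The second term tends to zero in $\Lbf^{\alphaprime}(\Omega)$ by~\eqref{eq:strong-convergence-Piboldkmo}, applied with exponent $\alphaprime$: the density argument there only uses $\Lbf^{\alphaprime}$-stability of $\PiboldzTauhkmo$, which holds on shape-regular meshes for every Lebesgue exponent. For the first term we use the same stability to reduce the claim to $\Acal(\ubfh)\to\Acal(\ubf)$ in $\Lbf^{\alphaprime}(\Omega)$. By~\eqref{estimate-Acal} and H\"older's inequality with exponents $(\alpha/(\alpha-2),\alpha)$ on the pieces $|\Kbbmo|\,|\ubfh-\ubf|$ and $|\ubfh-\ubf|(|\ubfh|^{\alpha-2}+|\ubf|^{\alpha-2})$, we obtain
\begin{equation*}
\Norm{\Acal(\ubfh)-\Acal(\ubf)}_{\Lbf^{\alphaprime}(\Omega)}
\lesssim \Big(\Norm{\Kbbmo}_{\Lbu^{\frac{\alpha}{\alpha-2}}(\Omega)}
 + \Norm{\ubfh}_{\LalphaOmegad}^{\alpha-2}+\Norm{\ubf}_{\LalphaOmegad}^{\alpha-2}\Big)
 \Norm{\ubfh-\ubf}_{\LalphaOmegad}.
\end{equation*}
The bracket is uniformly bounded by~\eqref{apriori-ubfh} and~\eqref{bound-u-Lalphapt}. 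The last factor vanishes by Theorem~\ref{theorem:strong-conv-flux-Lalpha}, which is exactly where~\eqref{extra-ass-convergence} and $\Kbb^{-1}\in\Lbu^{\frac{2\alpha}{\alpha-2}}(\Omega)$ enter.

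The main point requiring care is that Theorem~\ref{theorem:strong-conv-flux-Lalpha} only yields $\Norm{\ubfh}_{\LalphaOmegad}\to\Norm{\ubf}_{\LalphaOmegad}$, together with weak convergence from Lemma~\ref{lemma:weak-conv-flux-Lalpha}. We therefore first invoke the uniform convexity of $\Lbf^{\alpha}(\Omega)$ (Radon--Riesz property) to upgrade to strong convergence $\ubfh\to\ubf$ in $\LalphaOmegad$. All the above holds along the subsequence of Remark~\ref{remark:weak-convergences}. Uniqueness of the limit, from Proposition~\ref{proposition:well-posedness} and Proposition~\ref{prop:weak-conv-pot-Lalphaprime}, then gives convergence of the whole sequence by the usual subsequence argument.
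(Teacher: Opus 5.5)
Your proof is correct, but it takes a different route from the paper's.

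\textbf{The paper's route.} It subtracts the two first equations and splits $\ph-\pop=(\ph-\Icalk\pop)+(\Icalk\pop-\pop)$. It then uses the discrete inf-sup condition~\eqref{discrete-inf-sup} to bound $\bfrakh\Norm{\nablaboldh(\ph-\Icalk\pop)}_{\Lbf^{\alphaprime}(\Omega)}$ by the interpolation error plus terms in $\Norm{\ubf-\ubfh}_{\LalphaOmegad}$. It concludes with Theorem~\ref{theorem:strong-conv-flux-Lalpha} and~\eqref{eq:strong-convergence-Icalk}.

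\textbf{Your route.} You use the inclusion $\nablaboldh\QcalhgDk\subset\Vbfcalhkmo$ to obtain the exact identity $\nablaboldh\ph=-\PiboldzTauhkmo\Acal(\ubfh)$. After that you only need $\Lbf^{\alphaprime}$-stability and approximation of $\PiboldzTauhkmo$, which hold for every Lebesgue exponent by affine scaling on simplices.

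\textbf{What your route buys.} It is shorter and avoids the constant $\bfrakh$. It gives the clean bound $\Norm{\nablabold\pop-\nablaboldh\ph}_{\Lbf^{\alphaprime}(\Omega)}\lesssim\Norm{\Acal(\ubf)-\Acal(\ubfh)}_{\Lbf^{\alphaprime}(\Omega)}+\Norm{\nablabold\pop-\PiboldzTauhkmo\nablabold\pop}_{\Lbf^{\alphaprime}(\Omega)}$. It also never uses the Crouzeix--Raviart interpolant. That is a real advantage: for even~$k$ the paper's $\Icalk$ prescribes vertex values, so by the paper's own definition it needs $\W^{1,t}$ with $t>2$, whereas here $\pop$ is only in $\W^{1,\alphaprime}(\Omega)$ with $\alphaprime<2$.

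\textbf{What the paper's route buys.} It is the generic inf-sup/quasi-optimality template that is reused in Section~\ref{section:error-estimates}. There the structure of the discrete pair enters only through~$\bfrakh$, rather than through the exact projection identity.

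Two smaller points in your favour. Making the Radon--Riesz (uniform convexity) step explicit is welcome, since the paper's proof of Theorem~\ref{theorem:strong-conv-flux-Lalpha} stops at $\Norm{\ubfh}_{\LalphaOmegad}\to\Norm{\ubf}_{\LalphaOmegad}$. You also correctly use $\nablabold\pop=-\Acal(\ubf)$, the sign that~\eqref{variational-formulation:first-eq} actually implies, rather than~\eqref{nablap-Acal} as printed.
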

\begin{proof}
Subtracting~\eqref{variational-formulation:first-eq}
and~\eqref{discrete-formulation:first-eq},
we have
\begin{equation*}
\begin{split}
\int_\Omega (\Acal(\ubf) - \Acal(\ubfh)) \cdot \vbfh \dxbf
&= \int_{\Omega} \nablaboldh(\ph - \pop) \cdot \vbfh \dxbf \\
&= \int_{\Omega} \nablaboldh(\ph - \Icalk \pop) \cdot \vbfh \dxbf
+ \int_{\Omega} \nablaboldh(\Icalk \pop - \pop) \cdot \vbfh \dxbf
\qquad \forall \vbfh \in \Vbfcalhkmo.
\end{split}
\end{equation*} \normalsize
Using \eqref{estimate-Acal} in the above display
together with
triangle's inequality
and H\"older's inequality
with exponents $(\alpha/(\alpha-2),\alpha,\alpha)$
give
\small
\begin{equation} \label{ph-Icalkph-tilde}
\begin{split}
\left| \int_\Omega \nablaboldh(\ph - \Icalk \pop) \cdot \vbfh \dxbf \right|
    &\leq \left| \int_\Omega \nablaboldh(\pop - \Icalk \pop) \cdot \vbfh \dxbf \right|
            + \left|\int_\Omega (\Acal(\ubf) - \Acal(\ubfh))\cdot \vbfh \dxbf \right|  \\
&   \leq \left| \int_\Omega \nablaboldh(\pop - \Icalk \pop) 
        \cdot \vbfh \dxbf \right|
        + \frac{\mu}{\rho}\Norm{\Kbbmo}_{\Lbu^{\frac{\alpha}{\alpha-2}}(\Omega)}
        \Norm{\ubf-\ubfh}_{\Lbf^\alpha(\Omega)}
        \Norm{\vbfh}_{\Lbf^\alpha(\Omega)} \\
& \qquad + \CBL\frac{\beta}{\rho}
        \Norm{\ubf-\ubfh}_{\Lbf^\alpha(\Omega)}
        (\Norm{\ubfh}_{\Lbf^{\alpha}(\Omega)}^{\alphamt}
        +\Norm{\ubf}_{\Lbf^{\alpha}(\Omega)}^{\alphamt})
        \Norm{\vbfh}_{\Lbf^\alpha(\Omega)}.
\end{split}
\end{equation}
\normalsize
We deduce
\begin{equation*} \label{ph-Icalkph-tilde-norm}
\begin{split}
&   \bfrakh \Norm{\nablaboldh(\ph - \Icalk \pop)}_{\Lbf^\alphaprime(\Omega)}
    \overset{\eqref{discrete-inf-sup}}{\le}
    \sup_{\vbfh \in \Vbfcalhkmo} 
    \frac{\int_\Omega \vbfh \cdot \nablaboldh(\ph - \Icalk \pop) \dxbf}
    {\Norm{\vbfh}_{\Lbf^\alpha(\Omega)}} \\
&\overset{\eqref{ph-Icalkph-tilde},\Vbfcalhkmo \subset \Vbfcal}{\leq}
\sup_{\vbf \in \Vbfcal}
    \frac{\int_\Omega \vbf \cdot \nablaboldh(\pop - \Icalk \pop) \dxbf }
    {\Norm{\vbf}_{\Lbf^\alpha(\Omega)}}
    + \frac{\mu}{\rho}\Norm{\Kbbmo}_{\Lbu^{\frac{\alpha}{\alphamt}}(\Omega)}\Norm{\ubf-\ubfh}_{\Lbf^\alpha(\Omega)} \\
&\qquad\qquad
    + \CBL \frac{\beta}{\rho}
    \Norm{\ubf-\ubfh}_{\Lbf^\alpha(\Omega)}
    (\Norm{\ubfh}_{\Lbf^{\alpha}(\Omega)}^{\alphamt}
    +\Norm{\ubf}_{\Lbf^{\alpha}(\Omega)}^{\alphamt}).
\end{split}
\end{equation*}
\normalsize
Triangle's inequality
and the above estimate yield
\begin{equation*}
\begin{split}
\bfrakh \Norm{\nablaboldh(\pop-\ph)}_{\Lbf^\alphaprime(\Omega)}
&   \leq  \bfrakh
    \Norm{\nablaboldh(\pop - \Icalk \pop)}_{\Lbf^\alphaprime(\Omega)}
    + \bfrakh\Norm{\nablaboldh(\ph -\Icalk \ph)}_{\Lbf^\alphaprime(\Omega)} \\
&   \le (1+\bfrakh)
    \Norm{\nablaboldh(\pop - \Icalk\pop)}_{\Lbf^\alphaprime(\Omega)}
    + \frac{\mu}{\rho}\Norm{\Kbbmo}_{\Lbu^{\frac{\alpha}{\alphamt}}(\Omega)}
    \Norm{\ubf-\ubfh}_{\Lbf^\alpha(\Omega)} \\
&   \qquad 
    + \CBL \frac{\beta}{\rho} \Norm{\ubf-\ubfh}_{\Lbf^\alpha(\Omega)}
    (\Norm{\ubfh}_{\Lbf^{\alpha}(\Omega)}^{\alphamt}
    +\Norm{\ubf}_{\Lbf^{\alpha}(\Omega)}^{\alphamt}).
\end{split}
\end{equation*}
The assertion follows from combining the above estimate
with the strong convergences in~$\Lbf^\alpha(\Omega)$ of~$\ubfh$ to $\ubf$
(here we use that $\Kbb^{-1}$ is in
$\Lbu^{\frac{2\alpha}{\alpha-2}}(\Omega)$),
cf. Theorem~\ref{theorem:strong-conv-flux-Lalpha},
and in~$\Lbf^\alphaprime(\Omega)$
of~$\nablaboldh \Icalk \pop$ to~$\nablabold \pop$,
cf.~\eqref{eq:strong-convergence-Icalk}.
\end{proof}

Due to the uniqueness of the solution $(\ubf,\pop)$,
we deduce the convergence of the whole sequence.

\section{Error estimates} \label{section:error-estimates}
The goal of this section is to establish
general-order error estimates for method~\eqref{discrete-formulation}.

Given an element $\E$ in $\Tauh$,
a polynomial inverse estimate holds true:
there exists a positive constant $\CinvK$
depending only on $k$ and $\gamma$,
such that
\begin{equation} \label{inverse-estimate:divergence}
\Norm{\dive \qbfk}_{\L^{s}(\E)} 
\le \CinvK \hE^{-1} \Norm{\qbfk}_{\L^{s}(\E)}
\qquad \forall \qbfk \in \Pbf_k(\E), \, \forall \E\in\Tauh.
\end{equation} 
We show an upper bound for the error in the $\Ltwo$-norm
of the flux variable.
\begin{proposition} \label{prop:error-estimate-velocity}
Let $(\ubf,\pop)$ and $(\ubfh, \ph)$
for a mesh~$\Tauh$
be solutions to~\eqref{variational-formulation} 
and~\eqref{discrete-formulation}, respectively.
Assume that
\begin{equation*}
\begin{cases}
\ubf \in \Lbf^{4\alpha-8}(\Omega),
    \ \Kbbmo \in \Lbu^\infty(\Omega) 
    &\text{if } \alpha > 2 \\
\ubf \in \Lbf^{\frac{2\alpha}{4-\alpha}}(\Omega), \ 
    \Kbbmo \in \Lbu^{\frac{\alpha}{\alpha-2}}(\Omega)
    &\text{only if } \alpha\in(2,4),
\end{cases}
\qquad\qquad
\pop\in\H^1(\Omega)\cap\QcalgD.
\end{equation*}
Then, for all $\wbfh$ in $\Vbfcalhkmo$,
we have
\begin{equation} \label{ltwo-estimate-flux}
\begin{split}
    \Norm{\ubf-\ubfh}_{\Lbf^2(\Omega)}
        &\leq \mathfrak E(\Kbb^{-1}, \ubf,\wbfh,\alpha) \\
       & \quad +\frac{\beta}{\mu\lambdamin}(\alpha-1)\Norm{\ubf-\wbfh}_{\Lbf^4(\Omega)}
                (\Norm{\ubf}^\alphamt_{\Lbf^{4\alpha-8}(\Omega)} + \Norm{\wbfh}^\alphamt_{\Lbf^{4\alpha-8}(\Omega)}) \\
            &\quad+ \frac{\mu}{\rho\lambdamin} \CSP \max_{\E \in \Tauh}
            \CinvK \Norm{\nablaboldh(\pop-\Icalk \pop)}_{\Lbf^2(\Omega)},
\end{split}
\end{equation}
where
\begin{equation*}
\mathfrak E(\Kbb^{-1}, \ubf,\wbfh,\alpha) = 
\begin{cases}
\Big(1 + \frac{\Norm{\Kbbmo}_{\Lbf^\infty(\Omega)}}{\lambdamin}\Big) 
                \Norm{\ubf-\wbfh}_{\Lbf^2(\Omega)}
    & \text{if } \alpha > 2 \\
\Big(1 + \frac{\Norm{\Kbbmo}_{\Lbf^{\frac{\alpha}{\alpha-2}}(\Omega)}}{\lambdamin}\Big) 
        \Norm{\ubf-\wbfh}_{\Lbf^{\frac{2\alpha}{4-\alpha}}(\Omega)}
    & \text{only if } \alpha\in(2,4).
\end{cases}
\end{equation*}
The last term on the right-hand side
of~\eqref{ltwo-estimate-flux} drops if~$k=1$;
cf. \cite[Theorem~8]{Girault-Wheeler:2008} and~\eqref{true-Galerkin}.
\end{proposition}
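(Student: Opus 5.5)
The plan is a Strang/Céa-type argument built on the monotonicity \eqref{monotonicity} and the quasi-Galerkin identity \eqref{false-Galerkin}. Fix $\wbfh$ in $\Vbfcalhkmo$. Since $\ubfh-\Pbfh(\ubf)$ lies in $\Dbfcalhkmo$, as in the proof of Lemma~\ref{lemma:strong-conv-flux-L2}, I will work with a discrete comparison field $\zbf_h\in\Vbfcalhkmo$ that differs from $\wbfh$ by an element of $\Dbfcalhkmoperp$ and has the same discrete divergence as $\ubfh$. Concretely, $\zbf_h := \wbfh + \cbfh(\ubf-\wbfh)$ (or equivalently $\Pbfh$-type corrections), so that $\ubfh - \zbf_h \in \Dbfcalhkmo$. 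The bound on $\cbfh$ from Theorem~\ref{theorem:estimate-ch} is not needed if I simply choose $\wbfh$ with this property. The cleanest route is to first prove the estimate for $\wbfh$ satisfying $\ubfh-\wbfh\in\Dbfcalhkmo$, and then argue that the right-hand side is controlled for arbitrary $\wbfh$ via the orthogonal splitting of $\Vbfcalhkmo$.

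Then split $\ubf-\ubfh=(\ubf-\wbfh)+(\wbfh-\ubfh)$, with $\ebf_h:=\ubfh-\wbfh\in\Dbfcalhkmo$. Monotonicity (with $\ybf=0$) gives
\[
\frac{\mu}{\rho}\lambdamin\Norm{\ebf_h}_{\Lbf^2(\Omega)}^2\le\int_\Omega(\Acal(\ubfh)-\Acal(\wbfh))\cdot\ebf_h
=\int_\Omega(\Acal(\ubf)-\Acal(\wbfh))\cdot\ebf_h-\int_\Omega(\Acal(\ubf)-\Acal(\ubfh))\cdot\ebf_h .
\]
The last integral equals, by \eqref{false-Galerkin} and the chain \eqref{steps-galerkin}, $\int_\Omega\nablaboldh(\Icalk\pop-\pop)\cdot\ebf_h$ (up to sign). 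This is bounded by $\Norm{\nablaboldh(\pop-\Icalk\pop)}_{\Lbf^2}\Norm{\ebf_h}_{\Lbf^2}$. Alternatively, from the form $(\pop-\Icalk\pop,\diveh\ebf_h-\PikmthreeE\diveh\ebf_h)$, I would use a local Poincaré bound $\CSP\hE$ on $\pop-\Icalk\pop$ (which has vanishing facet means) and the inverse estimate \eqref{inverse-estimate:divergence}. This produces the factor $\CSP\max_\E\CinvK$, and the term vanishes for $k=1$ by \eqref{true-Galerkin}.

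For the first integral, use \eqref{estimate-Acal} pointwise:
\[
|\Acal(\ubf)-\Acal(\wbfh)|\le\tfrac{\mu}{\rho}|\Kbbmo||\ubf-\wbfh|+\CBL\tfrac\beta\rho|\ubf-\wbfh|(|\ubf|^{\alphamt}+|\wbfh|^{\alphamt}).
\]
Then apply Hölder. In the case $\Kbbmo\in\Lbu^\infty$, the linear part gives $\Norm{\Kbbmo}_\infty\Norm{\ubf-\wbfh}_{\Lbf^2}\Norm{\ebf_h}_{\Lbf^2}$. The nonlinear part, with exponents $(4,4,2)$, gives $\Norm{\ubf-\wbfh}_{\Lbf^4}(\Norm{\ubf}^{\alphamt}_{\Lbf^{4\alpha-8}}+\Norm{\wbfh}^{\alphamt}_{\Lbf^{4\alpha-8}})\Norm{\ebf_h}_{\Lbf^2}$; the constant $\CBL$ is taken as $\alpha-1$ via the mean value theorem. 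In the case $\alpha\in(2,4)$, the linear part instead uses exponents $(\alpha/(\alpha-2),2\alpha/(4-\alpha),2)$. Dividing by $\Norm{\ebf_h}_{\Lbf^2}$, dividing by $\frac\mu\rho\lambdamin$, and adding $\Norm{\ubf-\wbfh}_{\Lbf^2}$ via the triangle inequality gives the "1+" in $\mathfrak E$; in the second case I use $\Lbf^2\subset\Lbf^{2\alpha/(4-\alpha)}$ up to a domain constant.

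The main obstacle is the requirement $\ebf_h\in\Dbfcalhkmo$ for an \emph{arbitrary} $\wbfh$. I would first try to replace $\wbfh$ by $\wbfh+\cbfh(\ubf-\wbfh)$, or to reduce to the minimizer over the affine set $\{\wbfh:\ubfh-\wbfh\in\Dbfcalhkmo\}$. This requires showing that the resulting extra term, controlled by Theorem~\ref{theorem:estimate-ch}, is dominated by $\Norm{\ubf-\wbfh}$. That is the step I expect to need the most care, possibly absorbing it into the constants in \eqref{ltwo-estimate-flux}.
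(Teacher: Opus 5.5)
Once $\mathbf{e}_h:=\ubfh-\wbfh$ is known to lie in $\Dbfcalhkmo$, everything you do is the paper's argument. You use \eqref{monotonicity} with $\ybf=\zerobf$, the consistency identity \eqref{false-Galerkin}/\eqref{steps-galerkin}, and the pointwise bound \eqref{estimate-Acal}; your mean-value choice $\CBL=\alpha-1$ is exactly what produces the factor $\alpha-1$ in \eqref{ltwo-estimate-flux}. You then apply H\"older with $(4,4,2)$ resp.\ $(\frac{\alpha}{\alpha-2},\frac{2\alpha}{4-\alpha},2)$ and the triangle inequality. Your direct Cauchy--Schwarz bound of $\int_\Omega\nablaboldh(\Icalk\pop-\pop)\cdot\mathbf{e}_h$ is even simpler than the paper's inverse-estimate route. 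One minor slip: on bounded $\Omega$ one has $\Lbf^{\frac{2\alpha}{4-\alpha}}(\Omega)\subset\Lbf^2(\Omega)$, not the reverse, and the ``$1+$'' in $\mathfrak E$ then carries a factor $|\Omega|^{\frac{\alpha-2}{\alpha}}$.

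The step you flag as the main obstacle is a genuine gap, and it cannot be closed, because the estimate is false for arbitrary $\wbfh$. Since $\nablaboldh\qh\in\Pbf_{k-1}(\Tauh)$, \eqref{def-cbfh} gives $\int_\Omega\Pbfh(\ubf)\cdot\nablaboldh\qh=-\int_\Omega b\,\qh+\int_{\GammaN}\gN\,\qh$ for all $\qh\in\Qcalhzerok$, so $\ubfh-\Pbfh(\ubf)\in\Dbfcalhkmo$. Consequently the admissible comparison fields (those with $\ubfh-\zbf_h\in\Dbfcalhkmo$) form exactly $\Pbfh(\ubf)+\Dbfcalhkmo$. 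Moreover, the $\Dbfcalhkmoperp$-component of $\ubfh$ is pinned to that of $\PiboldzTauhkmo\ubf$ plus $\cbfh(\ubf)$. Projecting $\ubf-\ubfh$ onto $\Pbf_{k-1}(\Tauh)$ and then onto $\Dbfcalhkmoperp$, and using $\Lbf^2$-orthogonality, gives
\[
\Norm{\ubf-\ubfh}_{\Lbf^2(\Omega)}\ge\Norm{\cbfh(\ubf)}_{\Lbf^2(\Omega)},
\qquad
\min_{\zbf_h\in\Pbfh(\ubf)+\Dbfcalhkmo}\Norm{\ubf-\zbf_h}_{\Lbf^2(\Omega)}^2
=\Norm{\ubf-\PiboldzTauhkmo\ubf}_{\Lbf^2(\Omega)}^2+\Norm{\cbfh(\ubf)}_{\Lbf^2(\Omega)}^2 .
\]
The term $\cbfh(\ubf)$ only sees the part of $\ubf\cdot\nbfF$ on interior facets that is not captured by $\P_{k-1}(\F)$, so no Lebesgue norm of $\ubf-\wbfh$ controls it. A concrete counterexample works on a fixed mesh. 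Take $\pop=\varepsilon\chi(\mathrm{dist}/\varepsilon)\phi$, an $\varepsilon$-thin layer across one interior facet $\F$. Then $\ubf\cdot\nbfF$ on $\F$ is independent of $\varepsilon$ and can be chosen with a nonzero degree-$k$ Legendre component. At the same time $\Norm{\ubf}_{\Lbf^4(\Omega)}\to0$ and $\Norm{\pop}_{\H^1(\Omega)}\to0$, while $\Norm{\ubf}_{\Lbf^{4\alpha-8}(\Omega)}$ stays bounded. With $\wbfh=\PiboldzTauhkmo\ubf$, the right-hand side of \eqref{ltwo-estimate-flux} therefore tends to zero, but the left-hand side stays above $\Norm{\cbfh(\ubf)}_{\Lbf^2(\Omega)}>0$.

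So neither your $\cbfh(\ubf-\wbfh)$ nor any other correction can be absorbed into the constants. In fact \eqref{def-cbfh} does not even define $\cbfh(\ubf-\wbfh)$, since $\ubf-\wbfh$ in general has normal jumps and hence no Lebesgue divergence.

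The paper's own proof does not resolve this either. It inserts $\vbfh=\ubfh-\wbfh$ into \eqref{false-Galerkin} without checking that $\ubfh-\wbfh\in\Dbfcalhkmo$. What it proves, and what your ``first'' step proves, is \eqref{ltwo-estimate-flux} for $\wbfh\in\Pbfh(\ubf)+\Dbfcalhkmo$. That restricted version is all that is used afterwards: there $\wbfh=\PiboldzTauhkmo\ubf+\cbfh(\ubf)=\Pbfh(\ubf)$, and $\Norm{\cbfh(\ubf)}$ is bounded separately via Theorem~\ref{theorem:estimate-ch}. You should state and prove the result in that restricted form rather than attempt the extension.
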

\begin{proof}
We prove the assertion for
$\Kbb^{-1}$ in $\Lbu^\infty(\Omega)$
as the other case follows analogously.

For a given $\wbfh$ in $\DGkmo$,
triangle's inequality implies
\begin{equation} \label{uwh+uhwh}
     \Norm{\ubf-\ubfh}_{\Lbf^2(\Omega)}
        \leq \Norm{\ubf-\wbfh}_{\Lbf^2(\Omega)}
            + \Norm{\ubfh-\wbfh}_{\Lbf^2(\Omega)}.
\end{equation}
Choosing $\ubf$ as $\ubfl$ in \eqref{monotonicity}
and $\ubfh - \wbfh$ as $\vbfh$ in \eqref{false-Galerkin},
applying inequality \eqref{estimate-Acal},
Cauchy-Schwarz' inequality
(if $\alpha$ is in $(2,4)$,
then we use H\"older's inequality with
exponents $(\alpha/(\alpha-2),2\alpha/(4-\alpha),2)$),
and H\"older's inequality with exponents $(4,4,2)$, we have
\small
\begin{equation} \label{lambda-error}
\begin{split}
&   \frac{\mu}{\rho}\lambdamin \Norm{\ubfh - \wbfh}^2_{\Lbf^2(\Omega)}
    \overset{\eqref{monotonicity}}{\leq}
        \int_{\Omega}(\Acal(\ubfh) - \Acal(\wbfh)) \cdot (\ubfh - \wbfh) \dxbf \\
& \overset{\eqref{ubftilde-ubfh}, \eqref{false-Galerkin}, \eqref{steps-galerkin}}{=}
    \int_{\Omega}(\Acal(\ubf) - \Acal(\wbfh)) \cdot (\ubfh - \wbfh) \dxbf \\
&   \qquad\qquad\qquad
    +\sum_{\E \in \Tauh} \int_{\E} (\Icalk \pop - \pop) 
    (\dive (\ubfh-\wbfh) - \PikmthreeE \dive (\ubfh-\wbfh)) \dxbf. \\
&   \overset{\eqref{estimate-Acal}}{\leq} 
    \frac{\mu}{\rho}\Norm{\Kbbmo}_{\Lbf^\infty(\Omega)}
    \Norm{\ubf-\wbfh}_{\Lbf^2(\Omega)}
    \Norm{\ubfh-\wbfh}_{\Lbf^2(\Omega)} \\
&\qquad
    + \CBL \frac{\beta}{\rho}
    \Norm{\ubf-\wbfh}_{\Lbf^4(\Omega)}
    \Big(\Norm{\ubf}^\alphamt_{\Lbf^{4\alpha-8}(\Omega)}
        +\Norm{\wbfh}^\alphamt_{\Lbf^{4\alpha-8}(\Omega)}\Big)
        \Norm{\ubfh-\wbfh}_{\Lbf^2(\Omega)} \\
&   \qquad
    + \sum_{\E \in \Tauh}\Norm{\pop-\Icalk \pop}_{\L^{2}(\E)}
        \Norm{\dive(\ubfh - \wbfh)}_{\Lbf^2(\E)}.
\end{split}
\end{equation}
\normalsize
Using that $\pop-\Icalk \pop $
has zero average over $\partial\E$ for all $\E$ in $\Tauh$,
the polynomial inverse estimate \eqref{inverse-estimate:divergence},
Cauchy-Schwarz' inequality,
and Poincar\'e-Steklov's
inequality~\eqref{Poincare-Steklov}
imply
\begin{equation*} 
\begin{split}
    \sum_{\E \in \Tauh} \Norm{\pop-\Icalk \pop}_{\L^{2}(\E)}
                   & \Norm{\dive(\ubfh - \vbfh)}_{\Lbf^2(\E)} 
        \overset{\eqref{inverse-estimate:divergence}}{\leq}
            \sum_{\E \in \Tauh} \Norm{\pop-\Icalk \pop}_{\L^2(\E)}
             \CinvK \hK^{-1} \Norm{\ubfh- \vbfh}_{\Lbf^2(\E)} \\
        &\leq \Big((\max_{\E \in \Tauh} \CinvK)^2\sum_{\E \in \Tauh}\hK^{-2} \Norm{\pop-\Icalk \pop}_{\L^2(\E)}^2\Big)^{\frac12}
            \Big(\sum_{\E \in \Tauh} \Norm{\ubfh- \vbfh}_{\Lbf^2(\E)}^2\Big)^{\frac12} \\
        &\leq \CSP\max_{\E \in \Tauh} \CinvK\ 
            \Norm{\nablaboldh(\pop-\Icalk \pop)}_{\Lbf^2(\Omega)}
            \Norm{\ubfh - \vbfh}_{\Lbf^2(\Omega)}.
\end{split}
\end{equation*}
Combining the display above
and~\eqref{lambda-error} yields
\begin{equation*}
\begin{split}  
\Norm{\ubfh - \wbfh}_{\Lbf^2(\Omega)}
&\leq \frac{\Norm{\Kbbmo}_{\Lbf^\infty(\Omega)}}{\lambdamin}
        \Norm{\ubf-\wbfh}_{\Lbf^2(\Omega)}
        + \frac{\mu}{\rho\lambdamin} \max_{\E \in \Tauh} \CP
    \CinvK \Norm{\nablaboldh(\pop-\Icalk \pop)}_{\Lbf^2(\Omega)}\\
&   \quad
    + \CBL\frac{\beta}{\lambdamin}
        \Norm{\ubf-\wbfh}_{\Lbf^4(\Omega)}
        \Big(\Norm{\ubf}^\alphamt_{\Lbf^{4\alpha-8}(\Omega)}
        +\Norm{\wbfh}^\alphamt_{\Lbf^{4\alpha-8}(\Omega)}\Big) .
\end{split}
\end{equation*}
Inserting the above display in \eqref{uwh+uhwh},
the assertion follows.
\end{proof}

We are now in the position to derive
error estimates.
\begin{theorem}
Let $(\ubf,\pop)$ and $(\ubfh,\ph)$ be the solutions
to~\eqref{variational-formulation}
and~\eqref{discrete-formulation}, respectively.
Given~$s$ in~$\mathbb N$, assume that
\begin{equation*}
\begin{cases}
\ubf \in \Wbf^{s,4}(\Omega), \ 
\dive \ubf \in \W^{s,\frac43}(\Omega), \
\Kbbmo\in\Lbu^\infty(\Omega)
    &\text{if } \alpha>2 \\
\ubf \in \Wbf^{s,\frac{2\alpha}{4-\alpha}}(\Omega), \
\dive \ubf \in \W^{s,\frac\alpha2}(\Omega), \
\Kbbmo\in\Lbu^{\frac{\alpha}{\alpha-2}}(\Omega)
    &\text{only if } \alpha\in(2,4),
\end{cases}
\qquad
\pop \in \W^{s+1,2}(\Omega).
\end{equation*}
Denoting $\sfrak := \min\{k,s\}$,
for hidden constants only depending on~$k$,
$\GammaD$, $\Omega$ through either~$\Rfrak$ or~$\Nfrak$
in~\eqref{Omega-ass-1} and~\eqref{Omega-ass-2},
$\gamma$, $\alpha$, $\beta$, $\rho$,
$\mu$, and~$\Kbbmo$,
the following estimates hold true:
\small\begin{equation} \label{error-estimate:flux-old}
\begin{cases}
\Norm{\ubf - \ubfh}_{\Lbf^2(\Omega)}
\lesssim \h^{\sfrak}
    \big( \Norm{\ubf}_{\Wbf^{\sfrak,4}(\Omega)}
    + \Norm{\dive \ubf}_{\W^{\sfrak,\frac43}(\Omega)}
    + \Norm{\pop}_{\W^{\sfrak+1,2}(\Omega)} \big)     
        &\text{if } \alpha > 2\\
\Norm{\ubf - \ubfh}_{\Lbf^2(\Omega)}
\lesssim \h^{\sfrak}
    \big(\Norm{\ubf}_{\Wbf^{\sfrak,\frac{2\alpha}{4-\alpha}}(\Omega)}
    + \Norm{\dive \ubf}_{\W^{\sfrak,\frac\alpha2}(\Omega)}
    + \Norm{\pop}_{\W^{\sfrak+1,2}(\Omega)}  \big)
        &\text{only if } \alpha \in (2,4)
\end{cases}
\end{equation}\normalsize
and
\small\begin{equation} \label{error-estimate:potential-old}
\begin{cases}
\Norm{\nablaboldh(\pop - \ph)}_{\Lbf^{\alphaprime}(\Omega)}
\lesssim \h^{\sfrak}
    \big(\Norm{\ubf}_{\Wbf^{\sfrak,4}(\Omega)}
    + \Norm{\dive \ubf}_{\W^{\sfrak,\frac43}(\Omega)}
    + \Norm{\pop}_{\W^{\sfrak+1,2}(\Omega)} \big)       
        &\text{if } \alpha > 2\\
\Norm{\nablaboldh(\pop - \ph)}_{\Lbf^{\alphaprime}(\Omega)}
\lesssim \h^{\sfrak}
    \big(\Norm{\ubf}_{\Wbf^{\sfrak,\frac{2\alpha}{4-\alpha}}(\Omega)}
    + \Norm{\dive \ubf}_{\W^{\sfrak,
    \frac\alpha2}(\Omega)}
    + \Norm{\pop}_{\W^{\sfrak+1,2}(\Omega)}  \big)
    &\text{only if } \alpha \in (2,4).
\end{cases}
\end{equation}\normalsize
\end{theorem}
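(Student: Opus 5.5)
The flux estimate \eqref{error-estimate:flux-old} will follow from Proposition~\ref{prop:error-estimate-velocity} by choosing a suitable $\wbfh$ in $\Vbfcalhkmo$ and bounding each term on the right-hand side of \eqref{ltwo-estimate-flux} by $\h^{\sfrak}$ times Sobolev norms. The potential estimate \eqref{error-estimate:potential-old} will then follow from the argument of Theorem~\ref{theorem:strong-conv-pot-Lalphaprime}, which we make quantitative by inserting the flux estimate and the interpolation estimates \eqref{cr-interpolation:estimates}.

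\textbf{Flux.} Take $\wbfh := \PiboldRTTauhkmo \ubf$. This is admissible, since RT fields of order $k-1$ are piecewise polynomials of degree at most $k$. If we insist on $\Vbfcalhkmo=\Pbf_{k-1}(\Tauh)$, take instead $\wbfh := \Pbfh(\ubf)=\PiboldzTauhkmo\ubf+\cbfh(\ubf)$ from \eqref{def:Ph}. With this second choice, Theorem~\ref{theorem:estimate-ch} (with $\afrak=4$, respectively $\afrak = 2\alpha/(4-\alpha)$) bounds $\cbfh$ by $\Norm{\ubf-\PiboldRTTauhkmo\ubf}_{\Lbf^\afrak}+\Norm{\dive\ubf-\PizTauhkmo\dive\ubf}_{\L^{((\afrakprime)^*)'}}$. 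For $\afrak=4$, $d=2$ we have $(\afrakprime)^*=4$ and $((\afrakprime)^*)'=4/3$, which explains the $\W^{\sfrak,4/3}$ norm of $\dive\ubf$. For $\afrak=2\alpha/(4-\alpha)$ we have $\afrakprime = 2\alpha/(3\alpha-4)$, $(\afrakprime)^* = 2\alpha/(2\alpha-4)$, and $((\afrakprime)^*)' = \alpha/2$, which matches the statement. Standard approximation properties of $\PiboldzTauhkmo$, $\PiboldRTTauhkmo$, and $\PizTauhkmo$ in $\L^m$ then give
\[
\Norm{\ubf-\wbfh}_{\Lbf^{\afrak}(\Omega)}\lesssim \h^{\sfrak}\big(\Norm{\ubf}_{\Wbf^{\sfrak,\afrak}(\Omega)}+\Norm{\dive\ubf}_{\W^{\sfrak,((\afrakprime)^*)'}(\Omega)}\big).
\]
The $\Lbf^2$ and $\Lbf^4$ factors in \eqref{ltwo-estimate-flux} are controlled by the $\Lbf^4$ (respectively $\Lbf^{2\alpha/(4-\alpha)}$) bound through H\"older's inequality on the bounded domain $\Omega$.

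\textbf{Main obstacle.} The nonlinear factor $\Norm{\wbfh}_{\Lbf^{4\alpha-8}}^{\alpha-2}$ must be bounded uniformly in $\h$. This requires $\ubf\in\Lbf^{4\alpha-8}(\Omega)$ together with stability of $\wbfh$ in that norm. I would deduce this from $\ubf\in\Wbf^{s,4}(\Omega)$ via the Sobolev embedding for $s\ge1$ in $d=2$, which gives every $\L^q$ with $q<\infty$, and from the $\L^q$-stability of the projections combined with the bound on $\cbfh$. The $\alpha\in(2,4)$ case is handled similarly with the second variant of $\mathfrak E$. The pressure term in \eqref{ltwo-estimate-flux} is $\lesssim \h^{\sfrak}\Norm{\pop}_{\W^{\sfrak+1,2}}$ by \eqref{cr-interpolation:estimates} with $m=2$ and $r=s+1$. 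The constants absorb $\Norm{\ubf}_{\Lbf^{4\alpha-8}}$, which is itself bounded by the stated norms, so this constitutes the hidden dependence.

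\textbf{Potential.} Repeat the chain in the proof of Theorem~\ref{theorem:strong-conv-pot-Lalphaprime}. Use the discrete inf-sup condition \eqref{discrete-inf-sup}, write $\vbfh\cdot\nablaboldh(\ph-\Icalk\pop)$ in terms of $\Acal(\ubf)-\Acal(\ubfh)$ and $\nablaboldh(\pop-\Icalk\pop)$, and apply \eqref{estimate-Acal}. Unlike in that proof, estimate $\Acal(\ubf)-\Acal(\ubfh)$ in $\Lbf^{\alphaprime}$ with a single $\Lbf^2$ factor of $\ubf-\ubfh$. To do so, use H\"older's inequality with exponents $(2, q)$, where $1/\alphaprime=1/2+1/q$ and $q = 2\alpha/(\alpha-2)$, and bound $\Norm{(|\ubf|^{\alpha-2}+|\ubfh|^{\alpha-2})}_{\L^q}$ by higher integrability of $\ubf$ and of $\ubfh$. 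For the latter, use $\Norm{\ubfh}_{\Lbf^r} \le \Norm{\ubf}_{\Lbf^r}+\Norm{\ubf-\ubfh}_{\Lbf^r}$ with an inverse estimate applied to $\ubfh-\wbfh$, or simply use $\Kbbmo\in\Lbu^\infty$ in the first case. Finally, $\Norm{\nablaboldh(\pop-\Icalk\pop)}_{\Lbf^{\alphaprime}}\lesssim\Norm{\cdot}_{\Lbf^2}\lesssim\h^{\sfrak}\Norm{\pop}_{\W^{\sfrak+1,2}}$ because $\alphaprime<2$. Combining this with the flux estimate yields \eqref{error-estimate:potential-old}.
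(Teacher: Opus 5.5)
Your proposal follows the paper's proof. For the flux, you take $\wbfh=\Pbfh(\ubf)$ in Proposition~\ref{prop:error-estimate-velocity} and bound it with Theorem~\ref{theorem:estimate-ch}; your use of $\afrak=4$ plus H\"older for the $\Lbf^2$ factor even avoids the borderline $\afrak=2$, which the paper invokes but Theorem~\ref{theorem:estimate-ch} excludes. For the potential, you use the discrete inf-sup argument with H\"older exponents $(2,2\alpha/(\alpha-2),\alpha)$ and an inverse-estimate bound on $\Norm{\ubfh}_{\Lbf^{2\alpha}(\Omega)}$, as the paper does.

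Drop the $\PiboldRTTauhkmo\ubf$ option, however. It is not in $\Pbf_{k-1}(\Tauh)$. More importantly, the proof of Proposition~\ref{prop:error-estimate-velocity} inserts $\ubfh-\wbfh$ into \eqref{false-Galerkin}, which requires $\ubfh-\wbfh\in\Dbfcalhkmo$. The definition \eqref{def-cbfh} guarantees this for $\Pbfh(\ubf)$, but it fails for the Raviart--Thomas interpolant, so $\Pbfh(\ubf)$ is forced rather than a fallback.
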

\begin{proof}
We prove the assertion for
$\Kbb^{-1}$ in $\Lbu^\infty(\Omega)$;
the other case follows analogously
as in Proposition~\ref{prop:error-estimate-velocity}
using different Sobolev embeddings
and noting that
$\alpha/2 = (((2\alpha/(4-\alpha))')^*)'$.

\paragraph*{Flux error estimates.}
Given $\cbfh=\cbfh(\ubf)$ solution to~\eqref{def-cbfh}
with~$\ubf$ as in~\eqref{variational-formulation},
we pick~$\wbfh$ equal to
$\PiboldzTauhkmo(\ubf)+\cbfh(\ubf)$
in~\eqref{ltwo-estimate-flux} and get
\[
\begin{split}
&   \Norm{\ubf-\ubfh}_{\Lbf^2(\Omega)}
\lesssim \Big(\Norm{\ubf-\PiboldzTauhkmo \ubf}_{\Lbf^2(\Omega)} 
                + \Norm{\cbfh(\ubf)}_{\Lbf^2(\Omega)}\Big) \\
&   \quad + \Big(\Norm{\ubf-\PiboldzTauhkmo \ubf}_{\Lbf^4(\Omega)}
                    + \Norm{\cbfh(\ubf)}_{\Lbf^4(\Omega)} \Big)
           \Big(\Norm{\ubf}^\alphamt_{\Lbf^{4\alpha-8}(\Omega)} 
        + \Norm{\PiboldzTauhkmo(\ubf)+\cbfh(\ubf)}
            ^\alphamt_{\Lbf^{4\alpha-8}(\Omega)}\Big) \\
&   \quad+ \Norm{\nablaboldh(\pop-\Icalk \pop)}_{\Lbf^2(\Omega)}
    =: T_1 + T_2 \cdot T_3 + T_4.
\end{split}
\]
Standard polynomial approximation properties
and~\eqref{cbfh-bound} with $\afrak=2$, $4$,
and~$4\alpha-8$ imply
\[
\begin{split}
T_1 
& \lesssim \h^{\sfrak} 
    ( \Norm{\ubf}_{\Wbf^{\sfrak,2}(\Omega)}
    + \Norm{\dive \ubf}_{\Wbf^{\sfrak,1}(\Omega)} ),
    \qquad 
    T_2  \lesssim \h^{\sfrak} 
    ( \Norm{\ubf}_{\Wbf^{\sfrak,4}(\Omega)}
    + \Norm{\dive \ubf}_{\Wbf^{\sfrak,\frac43}(\Omega)} ) ,\\
T_3
& \lesssim \Norm{\ubf}^\alphamt_{\Lbf^{4\alpha-8}(\Omega)} 
    + \Norm{\dive\ubf}^\alphamt
    _{\Lbf^{(((4\alpha-8)')^*)'}(\Omega)} .
\end{split}
\]
Using the interpolation estimates~\eqref{cr-interpolation:estimates},
we further obtain
\[
T_4
\lesssim \h^{\sfrak} \Norm{\pop}_{\W^{\sfrak+1,2}(\Omega)} .
\]
Estimate~\eqref{error-estimate:flux-old} follows combining the above bounds.

\paragraph*{Potential error estimates.}
Subtracting \eqref{discrete-formulation:first-eq} from \eqref{variational-formulation:first-eq},
we have
\small
\begin{equation*}
\begin{split}
-\int_\Omega (\Acal(\ubfh) - \Acal(\ubf))\cdot \vbfh \dxbf 
&= \int_\Omega \nablaboldh(\ph - \pop) \cdot \vbfh \dxbf \\
&=  \int_\Omega \nablaboldh(\ph - \Icalk \pop) \cdot \vbfh \dxbf
+  \int_\Omega \nablaboldh(\Icalk \pop - \pop) \cdot \vbfh \dxbf
\qquad \forall \vbfh \in \Vbfcalhkmo.
\end{split}
\end{equation*}
\normalsize
Using~\eqref{estimate-Acal} in the above display
together with 
triangle's inequality,
Cauchy-Schwarz' inequality
(if $\Kbb^{-1}$ is in $\Lbu^{\frac{\alpha}{\alpha-2}}(\Omega)$
we rather use H\"older's inequality with
exponents $(2\alpha/(\alpha-2),2,\alpha)$),
and H\"older's inequality
with exponents $(2,2\alpha/(\alpha-2),\alpha)$
give
\begin{equation} \label{ph-Icalkph-old}
\begin{split}
& \left| \int_\Omega \vbfh \cdot \nablaboldh(\ph - \Icalk \pop) \dxbf \right|
    \leq \left| \int_\Omega  \vbfh \cdot \nablaboldh(\pop - \Icalk \pop) \dxbf \right|
            + \left|\int_\Omega (\Acal(\ubfh) - \Acal(\ubf))\cdot \vbfh \dxbf \right|  \\
& \leq \left| \int_\Omega \vbfh \cdot \nablaboldh(\pop - \Icalk \pop)  \dxbf \right|
        + \frac{\mu}{\rho}\Norm{\Kbbmo}_{\Lbu^\infty(\Omega)}
        \Norm{\ubf-\ubfh}_{\Lbf^2(\Omega)}
        \Norm{\vbfh}_{\Lbf^2(\Omega)} \\
& \qquad 
    + \CBL \frac{\beta}{\rho} \Norm{\ubf-\ubfh}_{\Lbf^2(\Omega)}
    (\Norm{\ubfh}_{\Lbf^{2\alpha}(\Omega)}^{\alphamt}
    +\Norm{\ubf}_{\Lbf^{2\alpha}(\Omega)}^{\alphamt})
    \Norm{\vbfh}_{\Lbf^\alpha(\Omega)}.
\end{split}
\end{equation}
We deduce
\begin{equation*}
\begin{split}
\bfrakh \Norm{\nablaboldh(\ph - \Icalk \pop)}_{\Lbf^\alphaprime(\Omega)}
&    \overset{\eqref{discrete-inf-sup}}{\le}
    \sup_{\vbfh \in \Vbfcalhkmo} 
    \frac{\int_\Omega \vbfh\cdot \nablaboldh(\ph - \Icalk \pop) \dxbf}
    {\Norm{\vbfh}_{\Lbf^\alpha(\Omega)}} \\
&   \overset{\eqref{ph-Icalkph-old},\Vbfcalhkmo \subset \Vbfcal}{\leq}
    \sup_{\vbf \in \Vbfcal}
    \frac{\int_\Omega \vbf \cdot \nablabold(\pop - \Icalk \pop) \dxbf }
    {\Norm{\vbf}_{\Lbf^\alpha(\Omega)}}
    + \frac{\mu}{\rho}\Norm{\Kbbmo}_{\Lbu^{\infty}(\Omega)}
    \Norm{\ubf-\ubfh}_{\Lbf^2(\Omega)} \\
&   \qquad\qquad
    + \CBL\frac{\beta}{\rho}
    \Norm{\ubf-\ubfh}_{\Lbf^2(\Omega)}
    (\Norm{\ubfh}_{\Lbf^{2\alpha}(\Omega)}^{\alphamt}
    +\Norm{\ubf}_{\Lbf^{2\alpha}(\Omega)}^{\alphamt}).
\end{split}
\end{equation*}
Triangle's inequality,
H\"older's inequality,
and the above estimate yield
\begin{equation} \label{pressure-Lalphaprime-old}
\begin{split}
\bfrakh \Norm{\nablaboldh(\pop-\ph)}_{\Lbf^\alphaprime(\Omega)}
&   \leq \bfrakh
    \Norm{\nablaboldh(\pop - \Icalk \pop)}_{\Lbf^\alphaprime(\Omega)}
    + \bfrakh \Norm{\nablaboldh(\ph -\Icalk \pop)}_{\Lbf^\alphaprime(\Omega)} \\
&   \le (1+\bfrakh)
    \Norm{\nablaboldh(\pop - \Icalk\pop)}_{\Lbf^\alphaprime(\Omega)}
    + \frac{\mu}{\rho} \Norm{\Kbbmo}_{\Lbu^\infty(\Omega)}
    \Norm{\ubf-\ubfh}_{\Lbf^2(\Omega)} \\
&\qquad + \CBL \frac{\beta}{\rho}
    \Norm{\ubf-\ubfh}_{\Lbf^2(\Omega)}
    (\Norm{\ubfh}_{\Lbf^{2\alpha}(\Omega)}^{\alphamt}
    +\Norm{\ubf}_{\Lbf^{2\alpha}(\Omega)}^{\alphamt}).
\end{split}
\end{equation}
We now show that~$\ubfh$ 
is bounded in~$\Lbf^{2\alpha}(\Omega)$
for fixed order~$k$.
Triangle's inequality,
an $\Lbf^{2\alpha}-\Lbf^2$ polynomial inverse inequality,
and the stability of the projection imply
\begin{equation*}
\begin{split}
\Norm{\ubfh}_{\Lbf^{2\alpha}(\Omega)}
\leq \Norm{\ubfh - \PiboldzTauhkmo\ubf}_{\Lbf^{2\alpha}(\Omega)}
    + \Norm{\PiboldzTauhkmo\ubf}_{\Lbf^{2\alpha}(\Omega)}
\lesssim \h^{\frac{\alpha-1}{\alpha}}\Norm{\ubf-\ubfh}_{\Lbf^2(\Omega)}
        + \Norm{\ubf}_{\Lbf^{2\alpha}(\Omega)}.
\end{split}
\end{equation*}
Since the Sobolev embedding $\Wbf^{1,4}(\Omega)$
in $\Lbf^{2\alpha}(\Omega)$ holds true,
estimate~\eqref{error-estimate:potential-old} gives
\begin{equation} \label{bound-L2alpha-norm-old}
\Norm{\ubf}_{\Lbf^{2\alpha}(\Omega)}
\lesssim \Norm{\ubf}_{\Wbf^{1,4}(\Omega)}
\qquad\qquad\Longrightarrow\qquad\qquad
\ubfh \in \Lbf^{2\alpha}(\Omega) .
\end{equation}
The assertion follows inserting \eqref{cr-interpolation:estimates},
\eqref{ltwo-estimate-flux},
and~\eqref{bound-L2alpha-norm-old} in~\eqref{pressure-Lalphaprime-old},
and noting that~$\alphaprime$ is smaller than~$2$.
\end{proof}

\begin{remark} \label{remark:galerkin-lowest-order}
In  the lowest-order counterpart of Theorem \ref{prop:error-estimate-velocity};
see \cite[Prop. 8]{Girault-Wheeler:2008},
the error estimate on the fluxes is decoupled
from the potential. Indeed,
for $k=1$ the right-hand side of \eqref{false-Galerkin} vanishes
and a Galerkin orthogonality property for the mapping $\Acal$ holds true;
thus,
in \eqref{lambda-error} the term involving the pressure does not longer appear.
This fact is confirmed on
the numerical level as well;
see Figure~\ref{fig:constant-cubic} below.
\eremk
\end{remark}

\section{Numerical experiments} \label{section:numerical-experiments}
Here, we assess the numerical performance of
method~\eqref{discrete-formulation};
to this aim, we consider two iterative schemes
in order to cope with the nonlinearity.
In what follows, we consider a slight generalization
of problem~\eqref{Darcy--Forchheimer-strong},
where the first equation admits
an inhomogeneous right-hand side~$\fbf$.

\paragraph*{Iterative scheme~1.}
We consider a \textit{standard}
fixed point iterative scheme.
We initialize the scheme constructing
an initial guess~$(\ubfhzeroup,\phzero)$
in $\Vbfcalhkmo \times \QcalhgDk$,
which is solution to the linear Darcy model problem
\begin{subequations} \label{algorithm:Darcy-step}
\begin{align}
\frac{\mu}{\rho} \int_{\Omega} (\Kbbmo)\ubfhzeroup \cdot \vbfh \dxbf
    + \int_{\Omega}\nablaboldh \phzero \cdot \vbfh \dxbf
    = \int_{\Omega} \fbf \cdot \vbfh \dxbf &
    \qquad\qquad \forall\, \vbfh \in \Vbfcalhkmo  \\
\int_{\Omega}\nablabold \qh \cdot \ubfhzeroup \dxbf
    = - \int_{\Omega} b\,\qh\dxbf
    + \int_{\GammaN} \gN \, \qh \ds&
    \qquad\qquad \forall\, \qh \in \Qcalhzerok.
\end{align}
\end{subequations}
Then, for all $\nrm\geq1$,
given the solution $(\ubfhnmo,\phnmo)$
at the ($\nrm-1$)-th step of the iterative scheme,
we solve: find $(\ubfhn,\phn)$
in $\Vbfcalhkmo \times \QcalhgDk$ such that
\small
\begin{subequations} \label{algorithm:Darcy-Forchheimer-step}
\begin{align}
    \frac{\mu}{\rho} \int_{\Omega}(\Kbbmo)\ubfhn \cdot \vbfh \dxbf
        + \frac{\beta}{\rho} \int_{\Omega}\EucNorm{\ubfhnmo}^{\alphamt} \ubfhn \cdot \vbfh \dxbf
        + \int_{\Omega}\nablabold \phn \cdot \vbfh \dxbf
        = \int_{\Omega} \fbf \cdot \vbfh \dxbf &
        \quad \forall\, \vbfh \in \DGkmo  \\
    \int_{\Omega}\nablabold \qh \cdot \ubfhn \dxbf
        = - \int_{\Omega} b\,\qh\dxbf
        + \int_{\GammaN} \gN \, \qh \ds&
        \quad \forall\, \qh \in \Qcalhzerok. 
\end{align}
\end{subequations}
\normalsize
With obvious notation for the matrices~$\Mbb$,
$\Nbbnmo$, and~$\Bbb$,
and for the vectors~$\fbf$, $\bbf$, and $\gbf$,
we set
\begin{equation*}
\Abbn:=
\begin{pmatrix}
    \Mbb + \Nbbnmo & \Bbb^T   \\
    \Bbb            & \zerobf   
\end{pmatrix}
\qquad\qquad\qquad
\rbf :=
\begin{pmatrix}
    \fbf \\
    -\bbf + \gbf
\end{pmatrix}.
\end{equation*}
In matrix form,
\eqref{algorithm:Darcy-Forchheimer-step} reads
\begin{equation*}
    \Abbn
    \begin{pmatrix}
        \ubfhn \\
        \pbfhn 
    \end{pmatrix}
    = \rbf.
\end{equation*}

\paragraph*{Iterative scheme~2.}
We further consider a \textit{relaxed}
(also known in the literature as
\textit{damped} or \textit{Mann}'s)
fixed point iterative scheme;
see, e.g., \cite[Sect.~1.2, Ch.~4]{Berinde:2007}.
The initial guess
$(\ubfhzerotilde,\phzerotilde)=(\ubfhzeroup,\phzero)$
is the solution to~\eqref{algorithm:Darcy-step}.
Then, for all $\nrm\geq1$,
given $(\ubfhnmotilde,\phnmotilde)$,
we proceed as follows:
(\emph{i}) find~$(\ubfhn,\phn)$ solution
to~\eqref{algorithm:Darcy-Forchheimer-step}
where the term $\EucNorm{\ubfhnmo}^{\alphamt}$
is replaced by~$\EucNorm{\ubfhnmotilde}^{\alphamt}$;
(\emph{ii}) given a relaxation parameter~$\omega$ in $(0,1]$, set
\begin{equation} \label{damping}
    \ubfhntilde := \omega \ubfhn + (1-\omega)\ubfhnmotilde,
    \qquad\qquad\qquad
    \phntilde=\phn.
\end{equation}
When no confusion occurs, we shall denote
either~$(\ubfhn,\phn)$
or~$(\ubfhntilde,\phntilde)$ by~$\sbfhn$.

\paragraph*{Meshes.}
We consider sequences of shape-regular, quasi-uniform,
unstructured simplicial meshes with decreasing diameter.

\paragraph*{Parameters.}
Given~$\Nbb$ the matrix associated with
$\frac{\beta}{\rho} \int_{\Omega}\EucNorm{\ubfhn}^{\alphamt} \ubfhn \cdot \vbfh \dxbf$,
we set
\begin{equation*}
\Abb :=
\begin{pmatrix}
    \Mbb + \Nbb     & \Bbb^T   \\
    \Bbb            & \zerobf   
\end{pmatrix} .
\end{equation*}
We consider the following stopping criterion
for the iterative scheme:
\begin{equation} \label{stopping-criterion}
    \Norm{\Abb \sbfhn - \rbf }_{\L^2(\Omega)} \leq \textsc{TOL}.
\end{equation}
In all experiments below,
\textsc{TOL} is set to~$10^{-8}$.
We also consider a maximum number
of iterations~$\nmax$ equal to~$2500$.
We set $\Kbb = \Ibb$, and $\mu = \rho = 1$.
We consider full Neumann boundary conditions,
i.e., set~$\GammaN = \Gamma$,
and impose the zero-mean condition for the potential
as a Lagrange multiplier.

\paragraph*{Test case~1.}
On the domain $\Omega := (-1,1)^2$,
we consider the following exact flux and potential:
\begin{equation} \label{u1}
\ubf(x,y) =
\left(\begin{array}{c}
     \sin(\pi x)  \\
     \cos(\pi y)
\end{array}\right)
\qquad\qquad\qquad
\pop(x,y) = 
\cos\big(\frac{\pi}{2}x\big)
\sin\big(\frac{\pi}{2}y\big).
\end{equation}
The corresponding data are computed accordingly:
the source term is given by
\begin{equation*}
\fbf(x,y) = \left(
\begin{array}{c}
\sin(\pi x)-\frac\pi2\sin\big(\frac{\pi}{2}x\big)\sin\big(\frac{\pi}{2}y\big)+\beta\big(\sin(\pi x)^2+\cos(\pi y)^2\big)^{\frac{\alpha-2}{2}}\sin(\pi x)  \\
\cos(\pi y)-\frac\pi2\cos\big(\frac{\pi}{2}x\big)\cos\big(\frac{\pi}{2}y\big)+\beta\big(\sin(\pi x)^2+\cos(\pi y)^2\big)^{\frac{\alpha-2}{2}}\cos(\pi y)
\end{array} \right);
\end{equation*}
the divergence constraint is given by
$b(x,y) = -2\pi\sin(\pi x)\sin(\pi y)$;
the Neumann boundary datum~$\gN$ is~$0$
on the left and right facets;
$1$ on the bottom facet; $-1$ on the top facet.

\paragraph*{Test case~2.}
On the domain $\Omega := (-1,1)^2$,
we consider the following exact flux and potential:
\begin{equation} \label{u2}
\ubf(x,y) =
\left(\begin{array}{c}
     1  \\
     -1
\end{array}\right)
\qquad\text{ and }\qquad
\pop(x,y) = x^3 + y^3.
\end{equation}
The corresponding data are computed accordingly:
the source term is given by
\begin{equation*}
\fbf(x,y) = \left(
\begin{array}{c}
1 + 2^{\frac{\alpha-2}{2}}\beta + 3x^2  \\
-1 - 2^\frac{\alpha-2}{2}\beta + 3y^2
\end{array} \right);
\end{equation*}
the divergence constraint is given by~$b(x,y) = 0$;
the Neumann boundary datum~$\gN$ is~$1$
on the right and bottom facets;
$-1$ on the left and top facets.

\paragraph*{Error measures.}
In what follows, for a fixed order~$k$,
we compute the following error measures:
\begin{equation}\label{error-measures}
E_{k-1,n}^{\ubf}
:= \frac{\Norm{\ubf-\ubfhn}_{\LtwoOmegad}}{\Norm{\ubf}_{\LtwoOmegad}}
\qquad\text{ and }\qquad
E_{k,n}^{\pop}
:= \frac{\Norm{\nablaboldh(\pop-\phn)}_{\LalphaprimeOmegad}}{\Norm{\nablabold \pop}_{\LalphaprimeOmegad}}.
\end{equation}
The same error measures are considered
when~$\ubfhn$ is replaced by~$\ubfhntilde$.

\paragraph*{Numerical results: varying the order of accuracy; fixed point scheme.} 
In Figure~\ref{fig:sincos-k}, we display
the errors in~\eqref{error-measures}
under mesh refinements
for the iterative scheme~\eqref{algorithm:Darcy-Forchheimer-step}.
We consider orders of accuracy~$k$ in~$\{1,2,3,4\}$,
$\alpha = 3$, and~$\beta=10$.
In Table~\ref{table:sincos-k},
we report the number of iterations
needed to meet the stopping criterion~\eqref{stopping-criterion}.
\begin{figure}[htbp]
\centering
\includegraphics[width=0.49\linewidth]{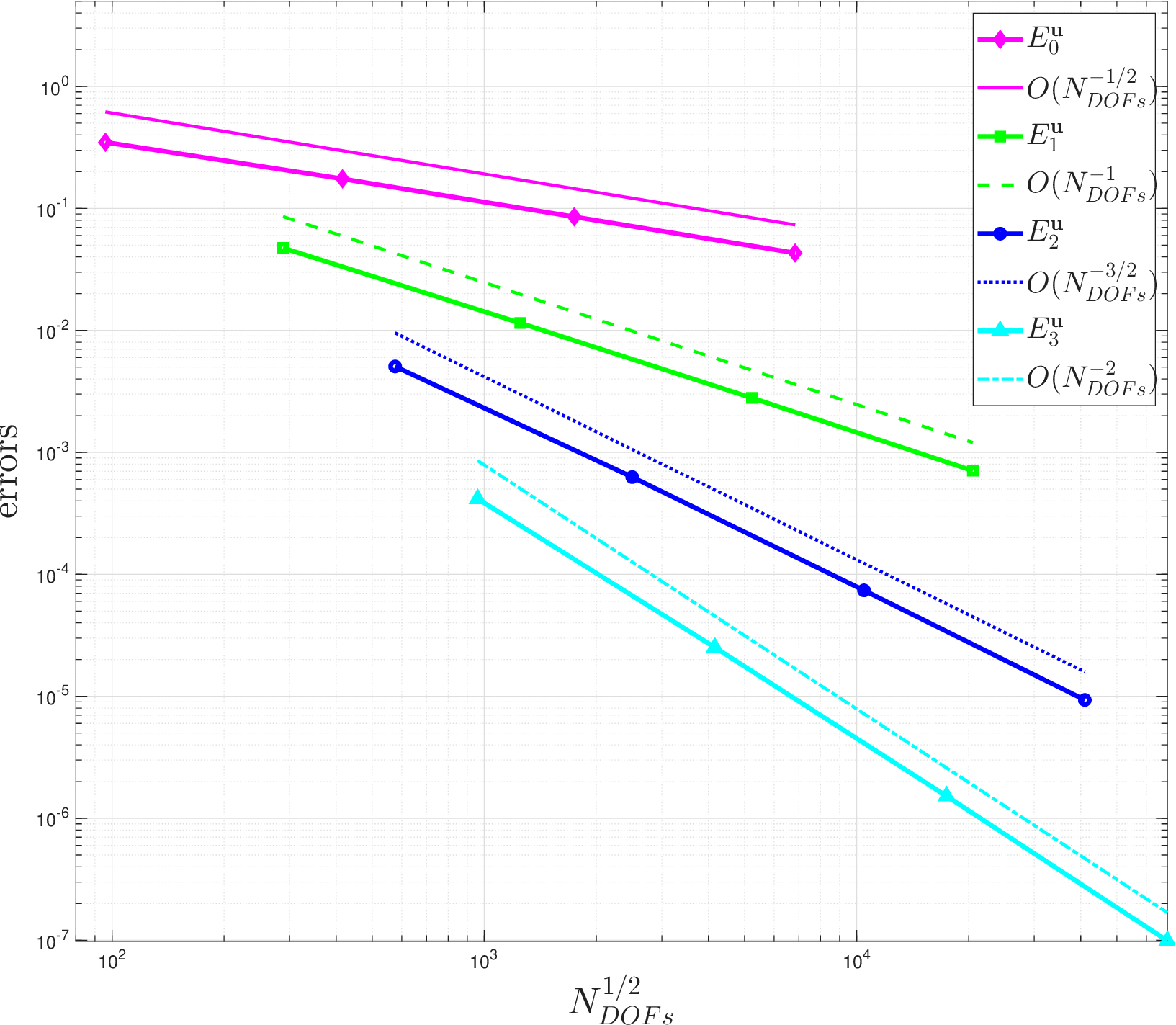}
\includegraphics[width=0.49\linewidth]{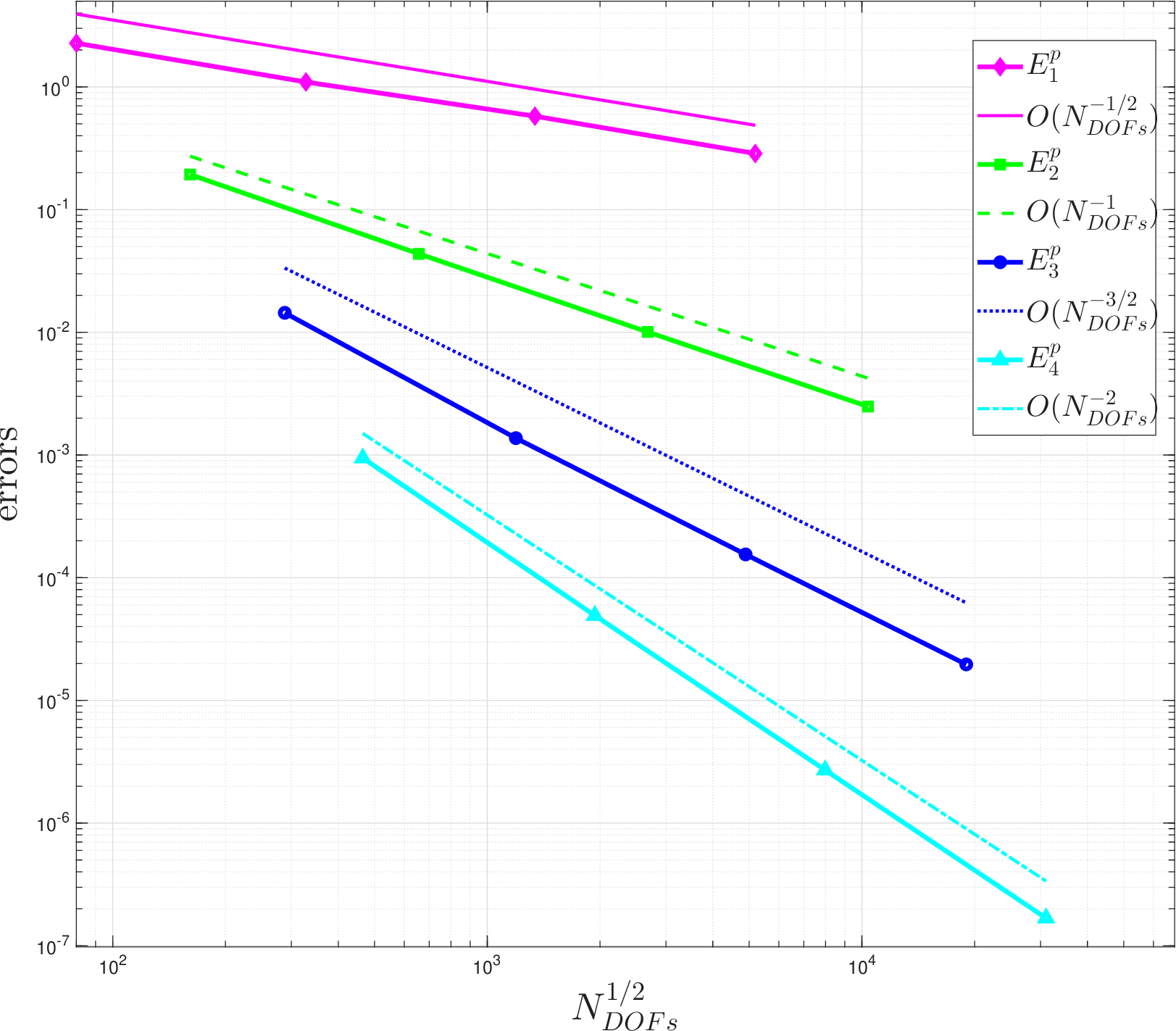}
\caption{Exact solution in~\eqref{u1};
$\h$-convergence of the flux (\emph{left-panel})
and the potential (\emph{right-panel})
for $k$ in $\{1,2,3,4\}$, $\alpha = 3$, and~$\beta=10$.
Standard fixed point
iterative scheme~\eqref{algorithm:Darcy-Forchheimer-step}.}
\label{fig:sincos-k}
\end{figure}
\begin{table}[htbp]
\centering
\begin{tabular}{lcccc}
\hline
 & h=0.5 & h=0.3 & h=0.15 & h=0.08 \\
\hline
$k=1$   & 85 & 122 & 145 & 160  \\
$k=2$   & 202 & 170 & 162 & 160 \\
$k=3$  & 162 & 167  & 166 & 166 \\
$k=4$   & 162 & 167  & 166 & 166  \\
\hline
\end{tabular}
\caption{Exact solution in~\eqref{u1}: iterations
needed to meet the stopping criterion~\eqref{stopping-criterion}
for different values of~$k$, $\alpha=3$, and~$\beta=10$.
Standard fixed point
iterative scheme~\eqref{algorithm:Darcy-Forchheimer-step}.}
\label{table:sincos-k}
\end{table}
The number of iterations seems stable
with respect to the order of accuracy~$k$
(with the exception of the lowest order case)
and to the mesh size~$\h$.

\paragraph*{Numerical results:
varying the coefficient~$\beta$ of the nonlinear term; fixed point scheme.}
In Figure~\ref{fig:sincos-beta}, we display
the errors in~\eqref{error-measures}
under mesh refinements
for the iterative scheme~\eqref{algorithm:Darcy-Forchheimer-step}.
We consider $\beta$ in $\{1,10,50,100\}$,
$k=2$, and~$\alpha = 3$.
In Table~\ref{table:sincos-beta},
we report the number of iterations
needed to meet the stopping criterion~\eqref{stopping-criterion}.
\begin{figure}[htbp]
\centering
\includegraphics[width=0.49\linewidth]{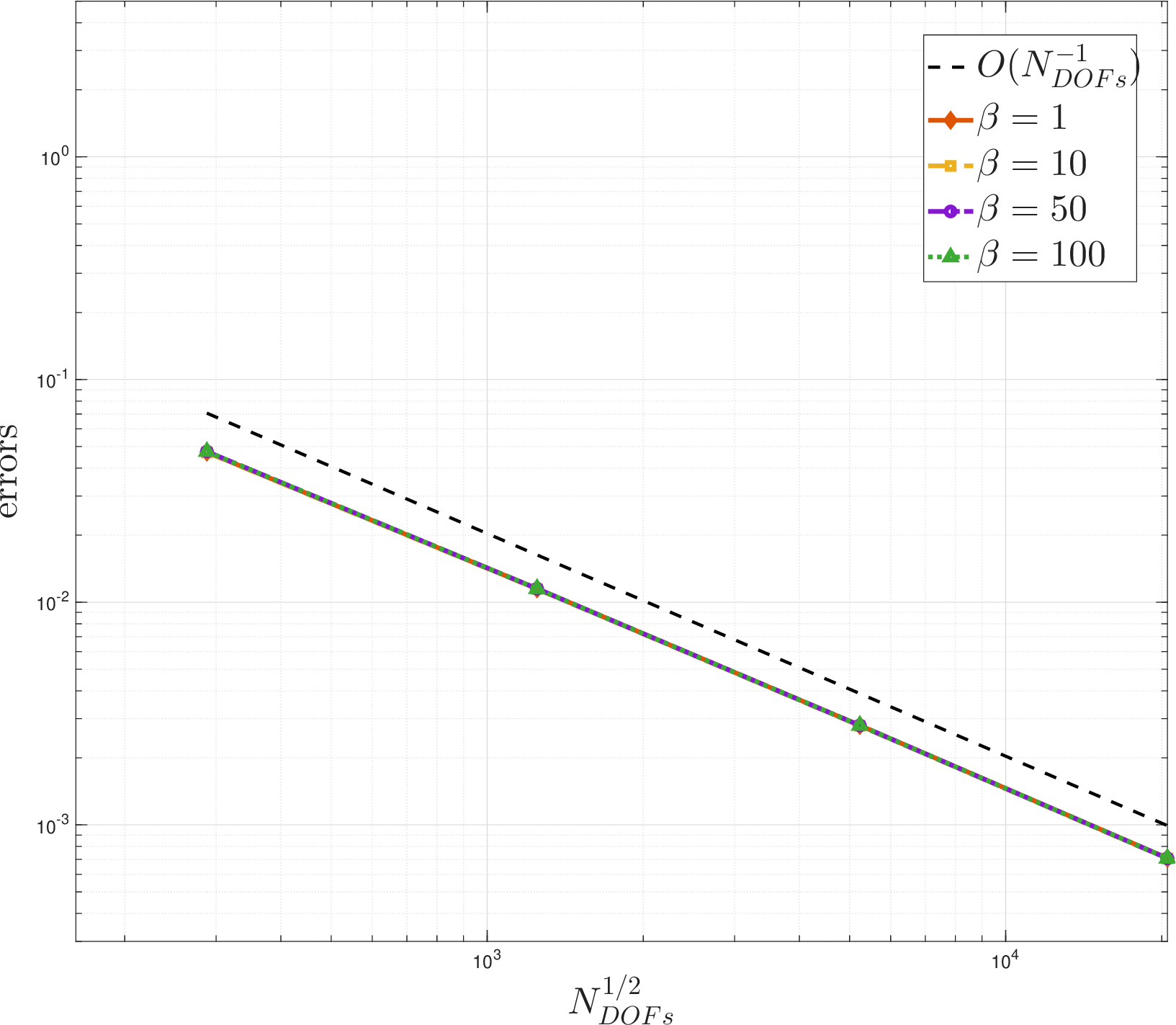}
\includegraphics[width=0.49\linewidth]{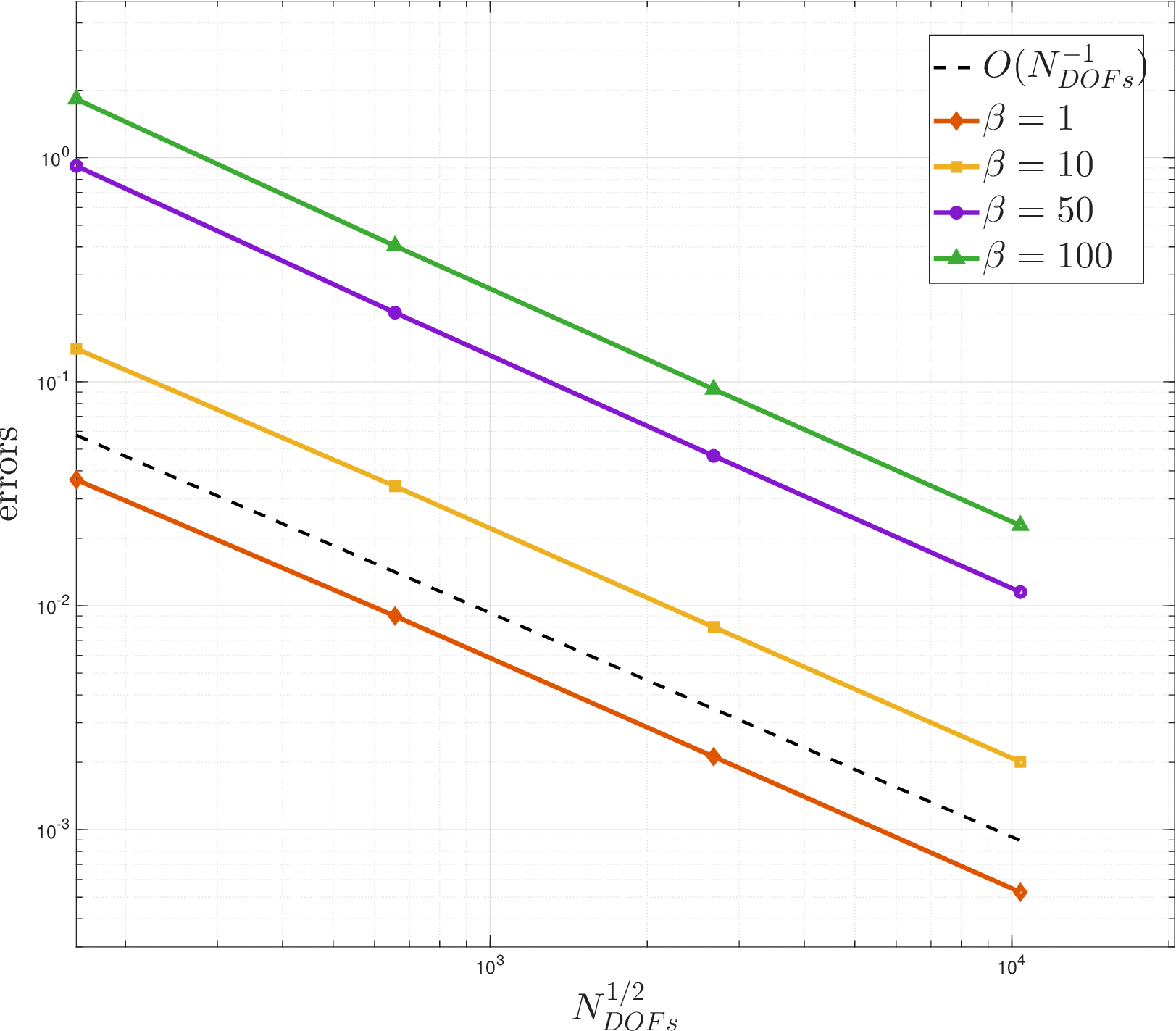}
\caption{Exact solution in~\eqref{u1};
$\h$-convergence of the flux (\emph{left-panel})
and the potential (\emph{right-panel})
for $\beta$ in $\{1,10,50,100\}$,
$k = 2$, and~$\alpha = 3$.
Standard fixed point iterative scheme~\eqref{algorithm:Darcy-Forchheimer-step}.}
\label{fig:sincos-beta}
\end{figure}
\begin{table}[htbp]
\centering
\begin{tabular}{lcccc}
\hline
 & h=0.5 & h=0.3 & h=0.15 & h=0.08 \\
\hline
$\beta=1$   & 23 & 22 & 22 & 21  \\
$\beta=10$  & 202 & 170 & 162 & 160  \\
$\beta=50$   & 913 & 753 & 770 & 745 \\
$\beta=100$   & 1604 & 1382 & 1464 & 1450 \\
\hline
\end{tabular}
\caption{Exact solution in~\eqref{u1}:
iterations
needed to meet the stopping criterion~\eqref{stopping-criterion}
for different values of~$\beta$, $k=2$, and~$\alpha=3$.
Standard fixed point
iterative scheme~\eqref{algorithm:Darcy-Forchheimer-step}.}
\label{table:sincos-beta}
\end{table}
The number of iterations grows linearly with the value of~$\beta$.

\paragraph*{Numerical results: varying the exponent~$\alpha$ of the nonlinear term;
$2<\alpha<3$; fixed point scheme.}
In Figure~\ref{fig:sincos-alpha-standardsmall}, we display
the errors in~\eqref{error-measures}
under mesh refinements
for the iterative scheme~\eqref{algorithm:Darcy-Forchheimer-step}.
We consider $\alpha$ in $\{ 2.2,2.4,2.6,2.8\}$,
$k=2$, and~$\beta=10$.
In Table~\ref{table:sincos-alpha-standardsmall},
we report the number of iterations
needed to meet the stopping criterion~\eqref{stopping-criterion}.
\begin{figure}[htbp]
\centering
\includegraphics[width=0.49\linewidth]{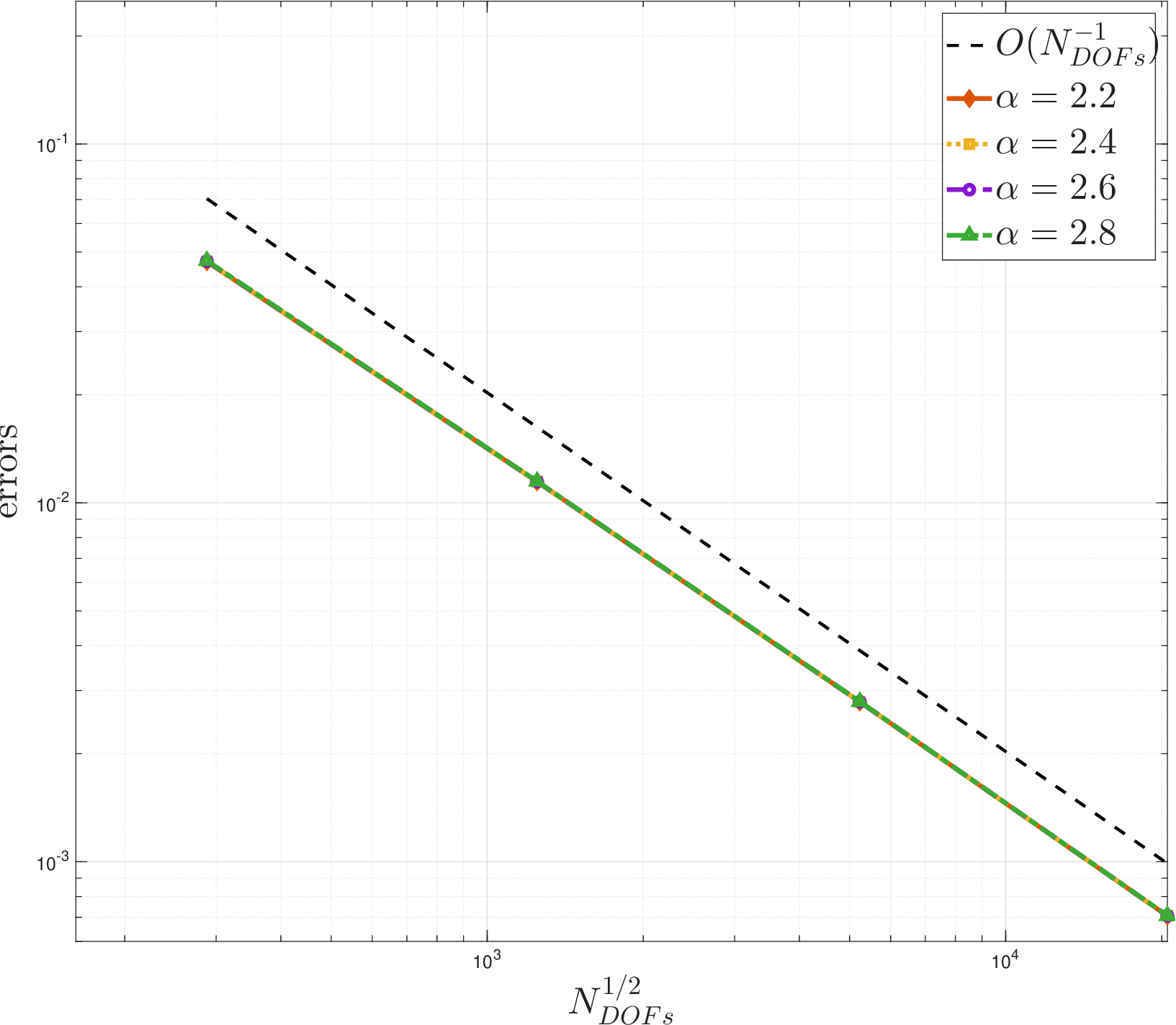}
\includegraphics[width=0.49\linewidth]{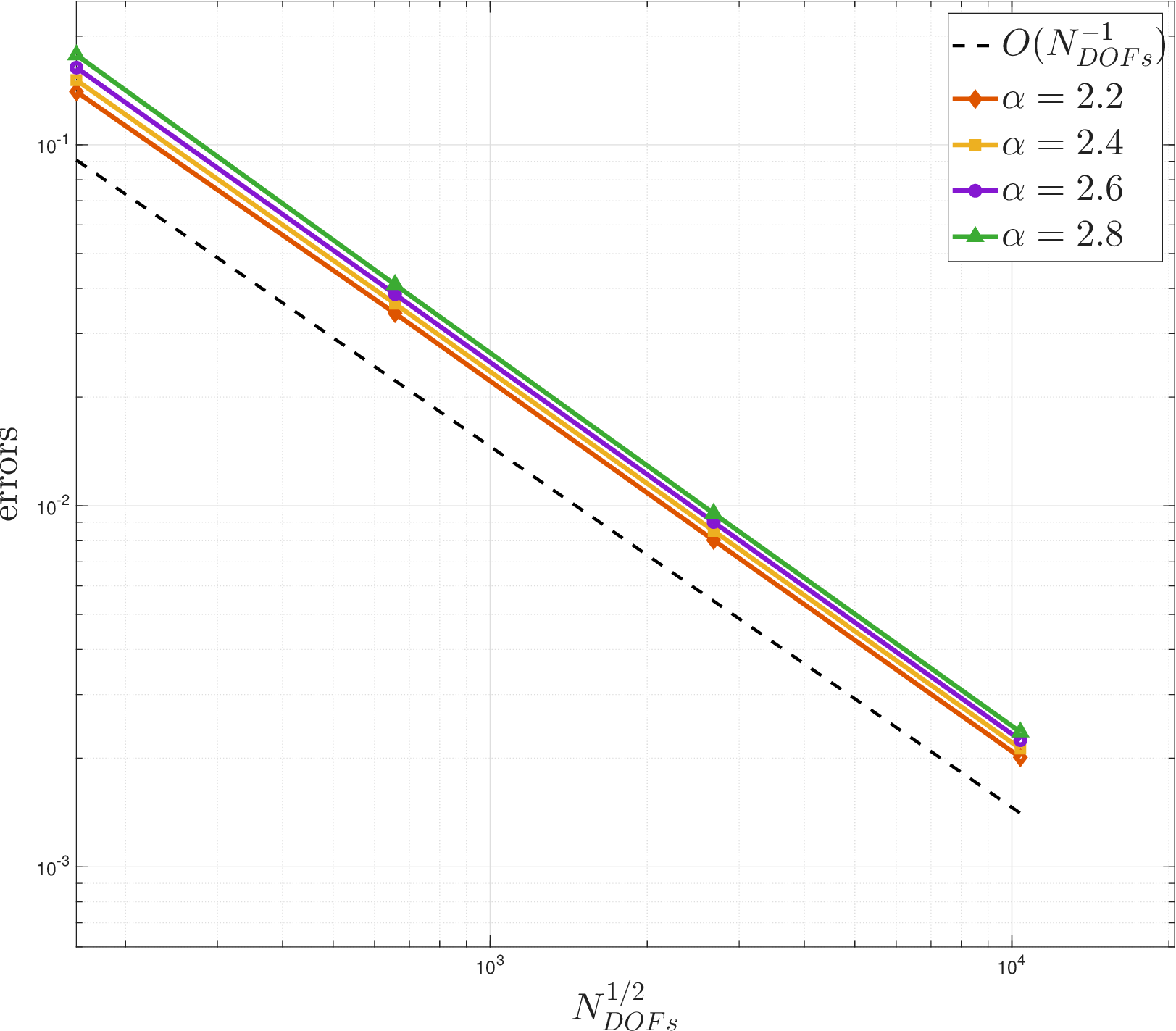}
\caption{Exact solution in~\eqref{u1};
$\h$-convergence of the flux (\emph{left-panel})
and the potential (\emph{right-panel})
for $\alpha$ in $\{2.2,2.4,2.6,2.8\}$,
$k = 2$, and~$\beta=10$.
Standard fixed point iterative scheme~\eqref{algorithm:Darcy-Forchheimer-step}.}
\label{fig:sincos-alpha-standardsmall}
\end{figure}
\begin{table}[htbp]
\centering
\begin{tabular}{lcccc}
\hline
 & h=0.5 & h=0.3 & h=0.15 & h=0.08 \\
\hline
$\alpha=2.2$  & 11 & 11 & 11 & 11  \\
$\alpha=2.4$ & 17 & 18 & 18 & 19   \\
$\alpha=2.6$  & 28 & 28  & 28 & 28 \\
$\alpha=2.8$ & 52 & 52  & 52 & 51 \\
\hline
\end{tabular}
\caption{Exact solution in~\eqref{u1}:
iterations
needed to meet the stopping criterion~\eqref{stopping-criterion}
for values of~$\alpha$ smaller than~$3$,
$k=2$, and~$\beta=10$.
Standard fixed point iterative scheme~\eqref{algorithm:Darcy-Forchheimer-step}.}
\label{table:sincos-alpha-standardsmall}
\end{table}
The number of iterations grows with~$\alpha$.
Similar results were obtained using higher order methods.

For~$\alpha$ larger than~$3$,
the standard fixed point algorithm~\eqref{algorithm:Darcy-Forchheimer-step}
fails to reach the prescribed tolerance
within the prescribed maximum number of iterations;
we do not report the results here for the sake of conciseness.
Thus, in the next paragraphs, we employ
the relaxed iterative scheme~\eqref{damping}
for such values of~$\alpha$.

\paragraph*{Numerical results: varying the exponent~$\alpha$ of the nonlinear term; $2 < \alpha \leq 4$; relaxed scheme.}
In Figure~\ref{fig:sincos-alpha-damplarge},
we display the errors in~\eqref{error-measures}
under mesh refinements for the relaxed scheme~\eqref{damping}.
We consider $\alpha$ in $\{2.2,2.4,2.6,2.8,3.2,3.4,3.6,3.8,4\}$,
$k=2$, $\beta=10$, and relaxation parameter~$\omega = 0.5$.
The corresponding number of iterations
is reported in Table~\ref{table:sincos-alpha-damplarge}.
\begin{figure}[htbp]
\centering
\includegraphics[width=0.49\linewidth]{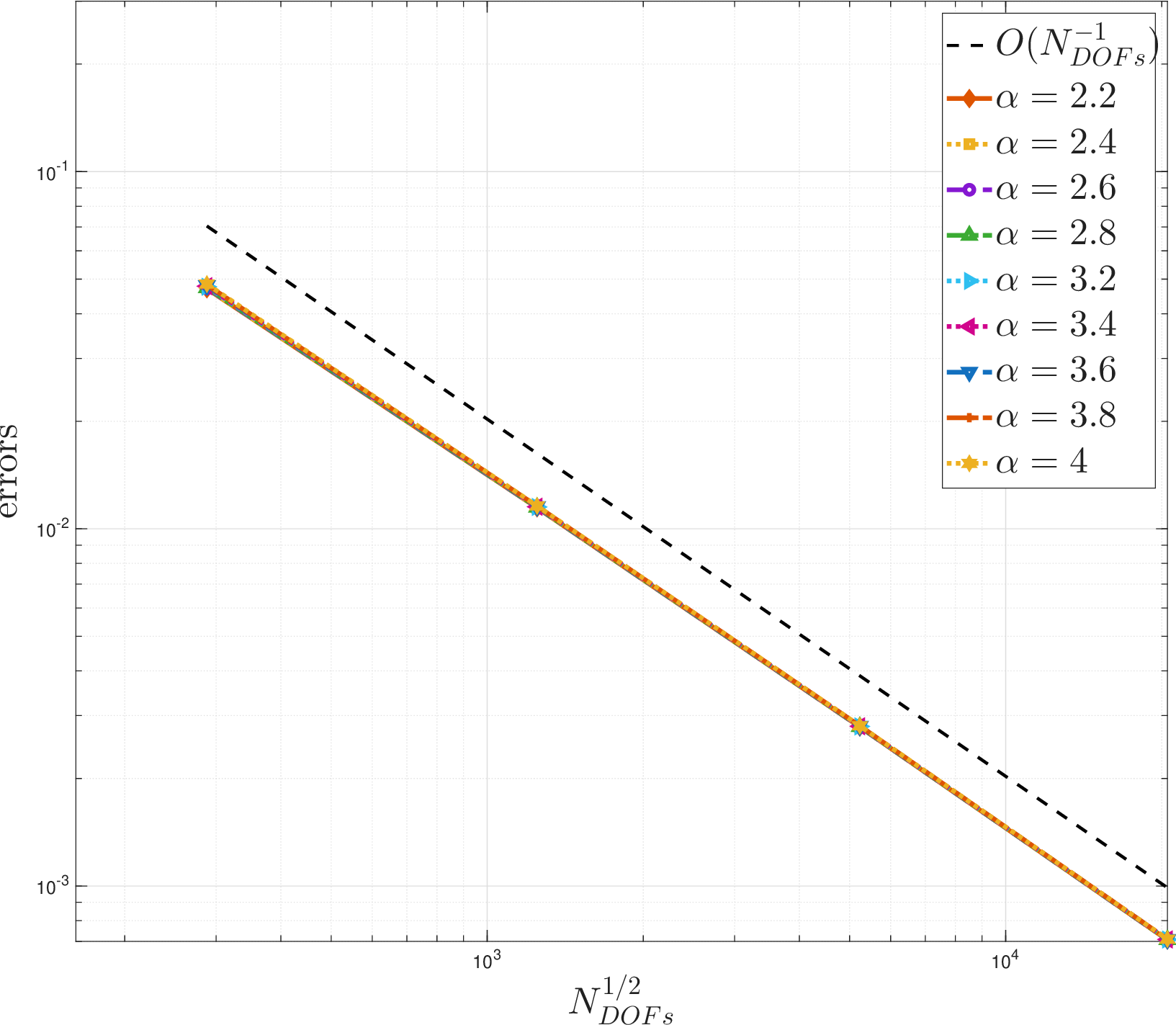}
\includegraphics[width=0.49\linewidth]{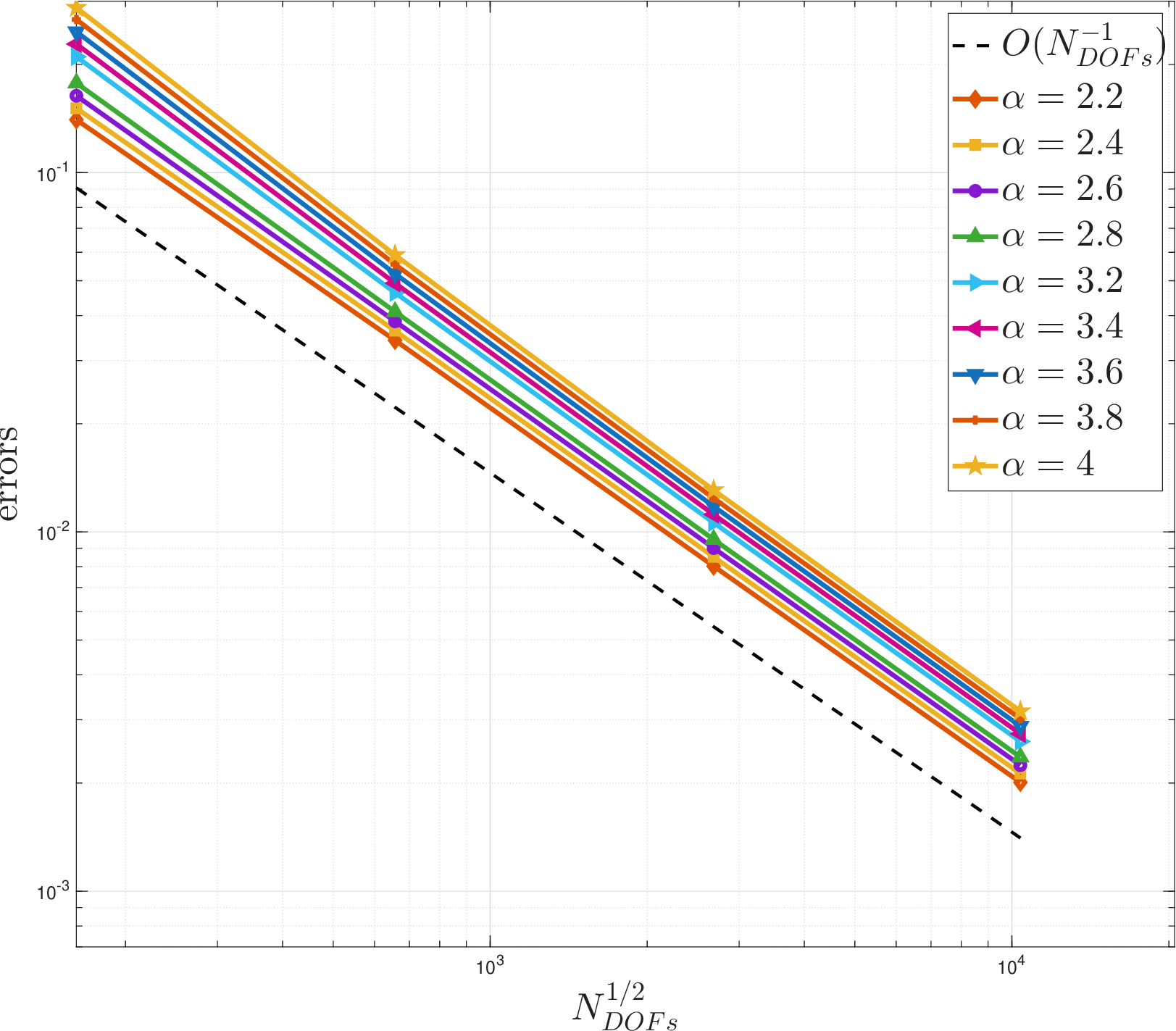}
\caption{Exact solution in~\eqref{u1};
$\h$-convergence of the flux (\emph{left-panel})
and the potential (\emph{right-panel})
for $\alpha$ in $\{2.2,2.4,2.6,2.8,3.2,3.4,3.6,3.8,4\}$,
$k = 2$, and~$\beta=10$.
Relaxed fixed point iterative scheme~\eqref{damping} with~$\omega = 0.5$.}
\label{fig:sincos-alpha-damplarge}
\end{figure}
\begin{table}[htbp]
\centering
\begin{tabular}{lcccc}
\hline
 & h=0.5 & h=0.3 & h=0.15 & h=0.08 \\
\hline
$\alpha=2.2$ & 23 & 22 & 22 & 22 \\
$\alpha=2.4$ & 22 & 22 & 22 & 22 \\
$\alpha=2.6$ & 22 & 22 & 22 & 22  \\
$\alpha=2.8$ & 22 & 22 & 22 & 22 \\
$\alpha=3.2$ & 23 & 22 & 22 & 22 \\
$\alpha=3.4$ & 22 & 22 & 22 & 22 \\
$\alpha=3.6$ & 22 & 22 & 22 & 22  \\
$\alpha=3.8$ & 23 & 23 & 23 & 23 \\
$\alpha=4$ & 23 & 23 & 23 & 23 \\
\hline
\end{tabular}
\caption{Exact solution in~\eqref{u1}:
iterations
needed to meet the stopping criterion~\eqref{stopping-criterion}
for values of~$\alpha$ between~$2$ and~$4$,
$k=2$, and~$\beta=10$.
Relaxed fixed point iterative scheme~\eqref{damping} with~$\omega = 0.5$.}
\label{table:sincos-alpha-damplarge}
\end{table}
The number of iterations remains essentially stable
as~$\alpha$ increases;
for values of~$\alpha$ smaller than~$3$,
this was instead not the case for
scheme~\eqref{algorithm:Darcy-Forchheimer-step};
cf. Table~\ref{table:sincos-alpha-standardsmall}.
For values of~$\alpha$ close to~$2$,
the relaxed version requires more iterations.

\paragraph*{Numerical results: varying the exponent~$\alpha$ of the nonlinear term;
$\alpha>4$; relaxed scheme.}
In Figure~\ref{fig:sincos-alphagrt4-damplarge}, we display
the errors in~\eqref{error-measures}
under mesh refinements for the relaxed scheme~\eqref{damping}.
We consider $\alpha$ in $\{4.2,4.4,4.6,4.8,5,5.1\}$,
$k=2$, $\beta=10$, and relaxation parameter~$\omega = 0.5$.
The corresponding number of iterations
is reported in Table~\ref{table:sincos-alphagrt4-damplarge}.
\begin{figure}[htbp]
\centering
\includegraphics[width=0.49\linewidth]{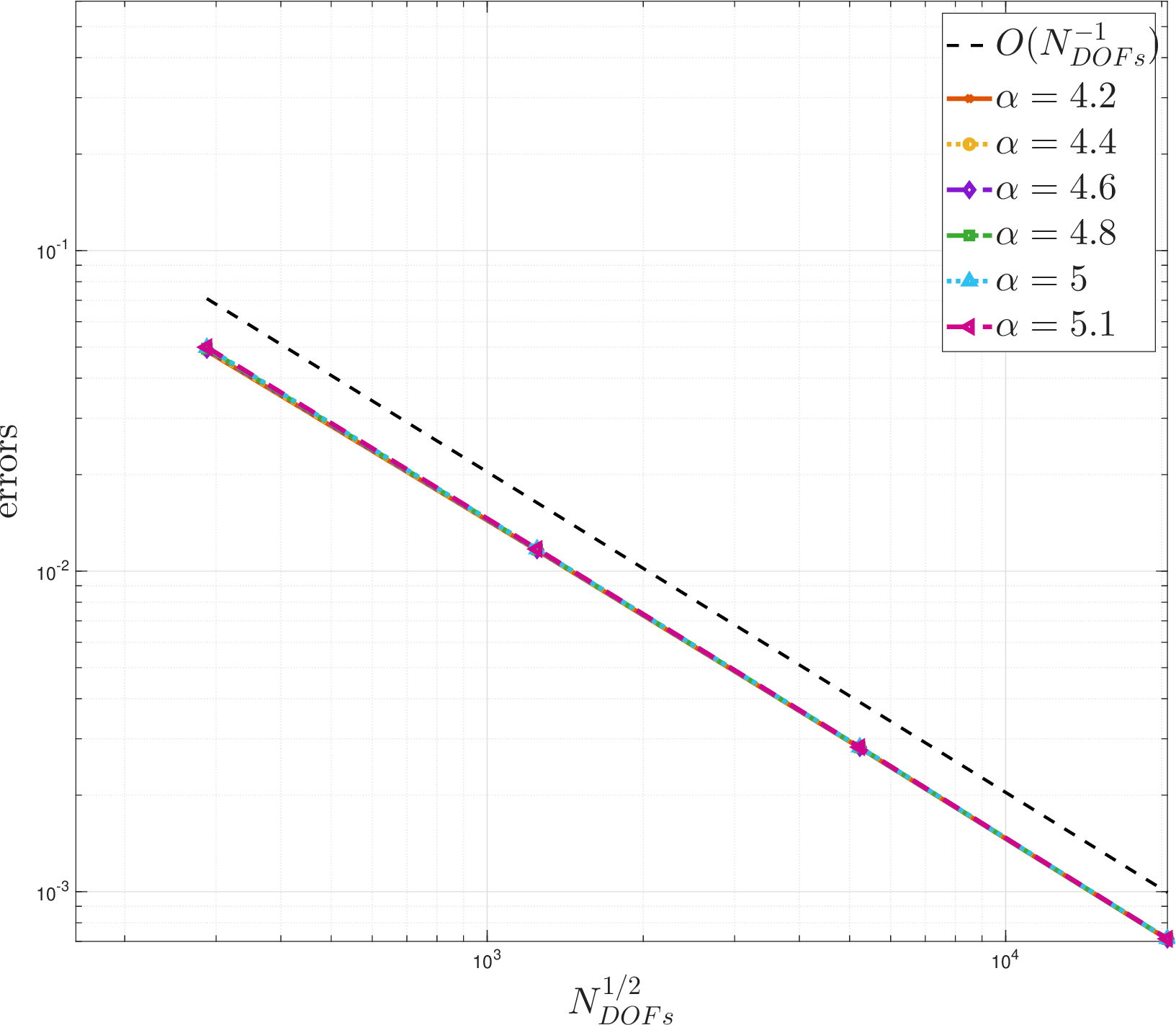}
\includegraphics[width=0.49\linewidth]{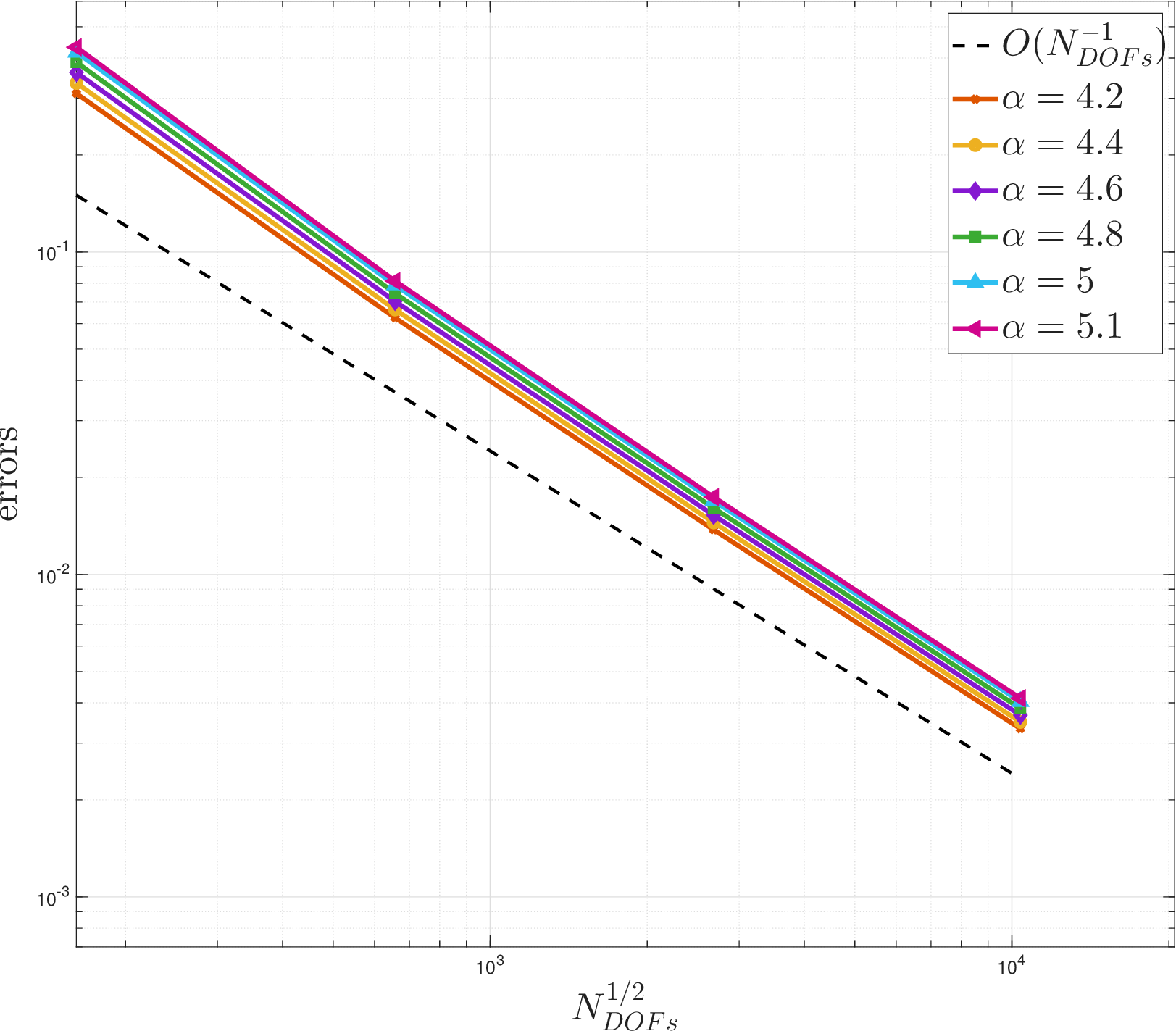}
\caption{Exact solution in~\eqref{u1};
$\h$-convergence of the flux (\emph{left-panel})
and the potential (\emph{right-panel})
for $\alpha$ in $\{4.2,4.4,4.6,4.8,5,5.1\}$,
$k = 2$, and $\beta=10$.
Relaxed fixed point iterative scheme~\eqref{damping} with~$\omega = 0.5$.}
\label{fig:sincos-alphagrt4-damplarge}
\end{figure}
\begin{table}[htbp]
\centering
\begin{tabular}{lcccc}
\hline
 & h=0.5 & h=0.3 & h=0.15 & h=0.08 \\
\hline
$\alpha=4.2$ & 25 & 25 & 25 & 25 \\
$\alpha=4.4$ & 33 & 33 & 33 & 33 \\
$\alpha=4.6$ & 50 & 45  & 47 & 47  \\
$\alpha=4.8$ & 85 & 73  & 77 & 77 \\
$\alpha=5$ & 224 & 172  & 187 & 186 \\
$\alpha=5.1$ & 934 & 715  & 774 & 2051 \\
\hline
\end{tabular}
\caption{Exact solution in~\eqref{u1}:
iterations
needed to meet the stopping criterion~\eqref{stopping-criterion}
for values of~$\alpha$ larger than~$4$,
$k=2$, and~$\beta=10$.
Relaxed fixed point iterative scheme~\eqref{damping} with~$\omega = 0.5$.}
\label{table:sincos-alphagrt4-damplarge}
\end{table}
The number of iterations grows with~$\alpha$,
differently from what was reported in Table~\ref{table:sincos-alpha-damplarge};
optimal convergence rates are observed.
Convergence is not achieved for~$\alpha$
larger than or equal to~$5.2$
within the prescribed maximum number of iterations.

\paragraph*{Numerical results: varying the relaxation parameter~$\omega$;
$\alpha>4$; relaxed scheme.}
As a first attempt to address the lack of convergence
for large~$\alpha$ observed in the previous paragraph,
we consider different choices
of the relaxation parameter~$\omega$.
We consider $\omega$
in $\{0.1,0.2,0.3,0.4,0.5,0.6,0.8\}$,
$\alpha$ in $\{4.6,5.1,6\}$,
$k=2$, and~$\beta=10$.
In Table~\ref{table:sincos-omega},
we report the number of iterations 
under mesh refinements
for the relaxed scheme~\eqref{damping}.
\begin{table}[htbp]
\centering
\small
\begin{subtable}{0.49\textwidth}
\centering
\begin{tabular}{lcccc}
\hline
 & h=0.5 & h=0.3 & h=0.15 & h=0.08 \\
\hline
$\omega=0.1$ & 145 & 143 & 140 & 139 \\
$\omega=0.2$ & 69 & 68 & 67 & 67 \\
$\omega=0.3$ & 44 & 43 & 43 & 43 \\
$\omega=0.4$ & 31 & 31 & 31 & 32 \\
$\omega=0.5$ & 50 & 45 & 47 & 47 \\
$\omega=0.6$ & $\nmax$ & $\nmax$ & $\nmax$ & $\nmax$ \\
$\omega=0.8$ & $\nmax$ & $\nmax$ & $\nmax$ & $\nmax$ \\
\hline
\end{tabular}
\caption{$\alpha=4.6$}
\end{subtable}
\hfill
\begin{subtable}{0.49\textwidth}
\centering
\begin{tabular}{lcccc}
\hline
 & h=0.5 & h=0.3 & h=0.15 & h=0.08 \\
\hline
$\omega=0.1$ & 148 & 146 & 142 & 141 \\
$\omega=0.2$ & 71 & 70 & 68 & 68 \\
$\omega=0.3$ & 45 & 44 & 44 & 44 \\
$\omega=0.4$ & 33 & 32 & 32 & 33 \\
$\omega=0.5$ & $\nmax$ & $\nmax$ & $\nmax$ & $\nmax$ \\
$\omega=0.6$ & $\nmax$ & $\nmax$ & $\nmax$ & $\nmax$ \\
$\omega=0.8$ & $\nmax$ & $\nmax$ & $\nmax$ & $\nmax$ \\
\hline
\end{tabular}
\caption{$\alpha=5.1$}
\end{subtable}

\vspace{0.5cm}

\begin{subtable}{0.7\textwidth}
\centering
\begin{tabular}{lcccc}
\hline
 & h=0.5 & h=0.3 & h=0.15 & h=0.08 \\
\hline
$\omega=0.1$ & 154 & 154 & 146 & 146 \\
$\omega=0.2$ & 73 & 73 & 70 & 70 \\
$\omega=0.3$ & 47 & 47 & 45 & 46 \\
$\omega=0.4$ & 274 & 227 & 245 & 243 \\
$\omega=0.5$ & $\nmax$ & $\nmax$ & $\nmax$ & $\nmax$ \\
$\omega=0.6$ & $\nmax$ & $\nmax$ & $\nmax$ & $\nmax$ \\
$\omega=0.8$ & $\nmax$ & $\nmax$ & $\nmax$ & $\nmax$ \\
\hline
\end{tabular}
\caption{$\alpha=6$}
\end{subtable}
\caption{Exact solution in~\eqref{u1}:
iterations for different values of~$\omega$,
$k=2$, $\beta=10$,
and~$\alpha=4.6$ (\emph{top-left panel}),
$\alpha=5.1$ (\emph{top-right panel}),
and~$\alpha=6$ (\emph{bottom panel}).
Relaxed fixed point iterative scheme~\eqref{damping}.
The entry $\nmax$ stands for ``maximum number of iterations is reached''.}
\label{table:sincos-omega}
\normalsize
\end{table}
The results suggest that a careful choice of the relaxation parameter
may affect the performance of the method,
not only by possibly optimizing the number of iterations,
but also by determining whether the method converges or not;
the optimal value of~$\omega$
may depend on the exponent of the nonlinear term~$\alpha$.
More sophisticated iterative schemes should be used
in order to avoid such a dependence
and to cope with larger~$\alpha$.

\paragraph*{Lack of consistency for higher order methods.} 
In Figure~\ref{fig:constant-cubic},
we display the errors in~\eqref{error-measures}
under mesh refinements
for the iterative scheme~\eqref{algorithm:Darcy-Forchheimer-step}.
We consider~$k=1,2$, $\alpha = 3$, and~$\beta=10$.
\begin{figure}[htbp]
\centering
\includegraphics[width=0.49\linewidth]{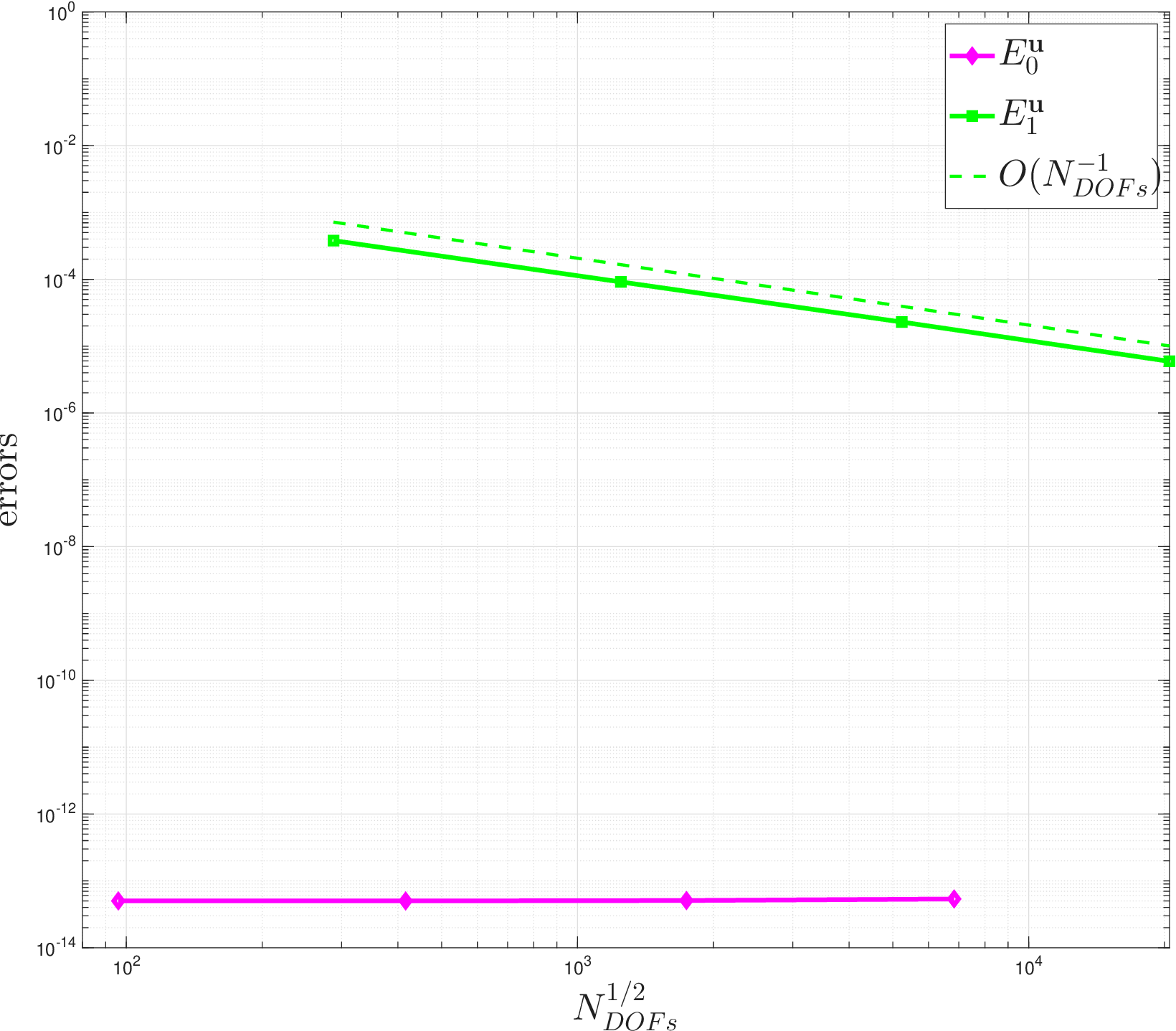}
\includegraphics[width=0.49\linewidth]{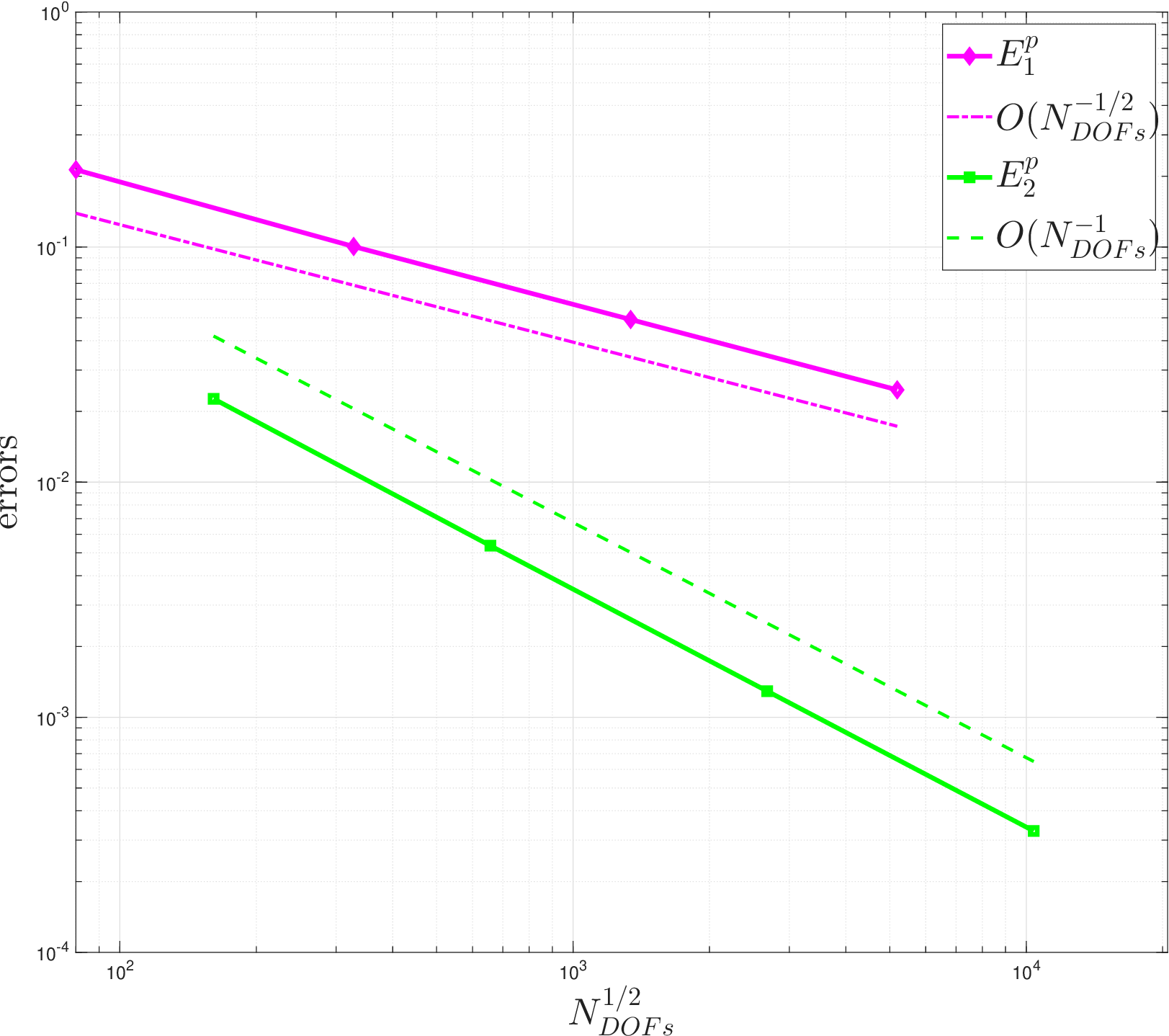}
\caption{Exact solution $u_2$ in~\eqref{u2};
$\h$-convergence of the flux (\emph{left-panel})
and the potential (\emph{right-panel})
for $k =1,2$, $\alpha=3$, and~$\beta=10$.
Standard fixed point iterative scheme~\eqref{algorithm:Darcy-Forchheimer-step}.}
    \label{fig:constant-cubic}
\end{figure}
As detailed in Remark~\ref{remark:galerkin-lowest-order},
the error estimates for the lowest order discretization
of~\eqref{discrete-formulation} are decoupled
whence the error of the fluxes is zero up to machine precision.
This is instead not the case for higher polynomial degrees;
cf. also Proposition~\ref{prop:error-estimate-velocity}.

\section{Conclusions} \label{section:conclusions}
In this work, we presented and analyzed a general-order
dual mixed nonconforming discretization
for a generalized Darcy--Forchheimer problem.
Our approach extends the framework in~\cite{Girault-Wheeler:2008},
along different directions:
\begin{itemize}
\item  ($\alpha$-2)-type Forchheimer nonlinearities
are allowed,
moving beyond the standard quadratic model;
\item the proposed method accommodates mixed,
inhomogeneous boundary conditions,
and permits permeability tensors with lower Lebesgue regularity;
\item we developed general-order schemes
based on piecewise-polynomial spaces of order $k-1$ for the fluxes
paired with Crouzeix-Raviart elements of order $k$ for the potentials;
\item based on novel Sobolev-trace inequalities
for broken spaces with constants
independent of the polynomial degree,
we established convergence to the exact solution
under low-regularity assumptions;
\item we further derived general-order error estimates
assuming additional regularity of the exact solution and data.
\end{itemize}
The theoretical findings were supported by numerical experiments
that assessed the performance of two iterative schemes
for different nonlinearity parameters $\alpha$,
orders~$k$, and parameters of the model.
While a standard fixed point scheme does not converge
for large nonlinearity parameters~$\alpha$,
a relaxed version may reach convergence
by suitably tuning the relaxation parameter.

The framework introduced in this work
lays the essential mathematical groundwork for our future objective:
the prospective development of $hp$-adaptive strategies
to efficiently discretize complex generalized Darcy--Forchheimer models.

\paragraph*{Acknowledgments.}
MB and LM have been partially funded by the
European Union (ERC, NEMESIS, project number 101115663);
views and opinions expressed are however those
of the authors only and do not necessarily reflect
those of the EU or the ERC Executive Agency.
MB and LM have been partially supported by the INdAM-GNCS project CUP E53C25002010001.
The authors are members of the Gruppo Nazionale Calcolo Scientifico-Istituto Nazionale di Alta Matematica (GNCS-INdAM).


{\footnotesize
\bibliography{bibliogr.bib}}

@article {Barrett-Liu:1993,
    AUTHOR = {Barrett, J. W. and Liu, W. B.},
     TITLE = {Finite element approximation of the {$p$}-{L}aplacian},
   JOURNAL = {Math. Comp.},
    VOLUME = {61},
      YEAR = {1993},
    NUMBER = {204},
     PAGES = {523--537}
}

@article {Burman-Hansbo:2005,
    AUTHOR = {Burman, E. and Hansbo, P.},
     TITLE = {Stabilized {C}rouzeix-{R}aviart element for the
              {D}arcy-{S}tokes problem},
   JOURNAL = {Numer. Methods Partial Differential Equations},
    VOLUME = {21},
      YEAR = {2005},
    NUMBER = {5},
     PAGES = {986--997}
}

@book {Brenner-Scott:2008,
    AUTHOR = {Brenner, S. C. and Scott, L. R.},
     TITLE = {The {M}athematical {T}heory of {F}inite {E}lement {M}ethods},
    SERIES = {Texts in Applied Mathematics},
    VOLUME = {15},
   EDITION = {Third},
 PUBLISHER = {Springer, New York},
      YEAR = {2008},
     PAGES = {xviii+397}
}

@misc{Antonietti-Bonetti-Botti:2025,
  author        = {Bonetti, S. and Botti, M. and Antonietti, P. F.},
  title         = {Conforming and discontinuous discretizations of non-isothermal {D}arcy-{F}orchheimer flows},
  year          = {2025},
  journal       = {ArXiv},
  howpublished  = {\url{http://arxiv.org/abs/2508.21630}}
}

@misc{Botti-Mascotto:2025-C,
author = {Botti, M. and Mascotto, L.},
title = {Trace inequalities for piecewise ${W}^{1,p}$ functions over general polytopic meshes},
journal={ArXiv},
year={2025},
howpublished={\url{http://arxiv.org/abs/2512.09752}}
}

@article{Botti-Mascotto:2025-B,
author = {Botti, M. and Mascotto, L.},
title = {Sobolev-{P}oincar\'e inequalities for piecewise ${W}^{1,p}$ functions over general polytopic meshes},
journal={Accepted on SIAM J. Numer. Anal.},
year={2026}
}

@book{Ern-Guermond:2021,
  title={Finite {E}lements {I}: {A}pproximation and {I}nterpolation},
  author={Ern, A. and Guermond, J.-L.},
  volume={72},
  year={2021},
  publisher={Springer Nature}
}

@article {Girault-Wheeler:2008,
    AUTHOR = {Girault, V. and Wheeler, M. F.},
     TITLE = {Numerical discretization of a {D}arcy-{F}orchheimer model},
   JOURNAL = {Numer. Math.},
    VOLUME = {110},
      YEAR = {2008},
    NUMBER = {2},
     PAGES = {161--198}
}

@incollection {Douglas-PaesLeme-Giorgi:1993,
    AUTHOR = {Douglas, Jr., J. and Paes-Leme, P. J. and Giorgi, T.},
     TITLE = {Generalized {F}orchheimer flow in porous media},
 BOOKTITLE = {Boundary value problems for partial differential equations and
              applications},
    SERIES = {RMA Res. Notes Appl. Math.},
    VOLUME = {29},
     PAGES = {99--111},
 PUBLISHER = {Masson, Paris},
      YEAR = {1993}
}

@article {Kim-Park:1999,
    AUTHOR = {Kim, M.-Y. and Park, E.-J.},
     TITLE = {Fully discrete mixed finite element approximations for non-{D}arcy flows in porous media},
   JOURNAL = {Comput. Math. Appl.},
    VOLUME = {38},
      YEAR = {1999},
    NUMBER = {11-12},
     PAGES = {113--129}
}

@article {Zhao-Chung-Park-Zhou:2021,
    AUTHOR = {Zhao, L. and Chung, E. T. and Park, E.-J. and Zhou, G.},
     TITLE = {Staggered {DG} method for coupling of the {S}tokes and {D}arcy-{F}orchheimer problems},
   JOURNAL = {SIAM J. Numer. Anal.},
    VOLUME = {59},
      YEAR = {2021},
    NUMBER = {1},
     PAGES = {1--31}
}

@article {Almonacid-Diaz-Gatica-Marquez:2020,
    AUTHOR = {Almonacid, J. A. and D\'iaz, H. S. and Gatica, G. N. and M\'arquez, A.},
     TITLE = {A fully mixed finite element method for the coupling of the {S}tokes and {D}arcy-{F}orchheimer problems},
   JOURNAL = {IMA J. Numer. Anal.},
    VOLUME = {40},
      YEAR = {2020},
    NUMBER = {2},
     PAGES = {1454--1502}
}

@article {Caucao-Gatica-Sandoval:2021,
    AUTHOR = {Caucao, S. and Gatica, G. N. and Sandoval, F.},
     TITLE = {A fully-mixed finite element method for the coupling of the {N}avier-{S}tokes and {D}arcy-{F}orchheimer equations},
   JOURNAL = {Numer. Methods Partial Differential Equations},
    VOLUME = {37},
      YEAR = {2021},
    NUMBER = {3},
     PAGES = {2550--2587}
}

@article {Rui-Pan:2017,
    AUTHOR = {Rui, H. and Pan, H.},
     TITLE = {A block-centered finite difference method for slightly compressible {D}arcy-{F}orchheimer flow in porous media},
   JOURNAL = {J. Sci. Comput.},
    VOLUME = {73},
      YEAR = {2017},
    NUMBER = {1},
     PAGES = {70--92}
}

@article {Rui-Pan:2012,
    AUTHOR = {Rui, H. and Pan, H.},
     TITLE = {A block-centered finite difference method for the {D}arcy-{F}orchheimer model},
   JOURNAL = {SIAM J. Numer. Anal.},
    VOLUME = {50},
      YEAR = {2012},
    NUMBER = {5},
     PAGES = {2612--2631}
}

@article {Zhang-Rui:2017,
    AUTHOR = {Zhang, J. and Rui, H.},
     TITLE = {A stabilized {C}rouzeix-{R}aviart element method for coupling {S}tokes and {D}arcy-{F}orchheimer flows},
   JOURNAL = {Numer. Methods Partial Differential Equations},
    VOLUME = {33},
      YEAR = {2017},
    NUMBER = {4},
     PAGES = {1070--1094}
}

@article {Wang-Rui:2015,
    AUTHOR = {Wang, Y. and Rui, H.},
     TITLE = {Stabilized {C}rouzeix-{R}aviart element for {D}arcy-{F}orchheimer model},
   JOURNAL = {Numer. Methods Partial Differential Equations},
    VOLUME = {31},
      YEAR = {2015},
    NUMBER = {5},
     PAGES = {1568--1588}
}

@article {Sayah-Semaan-Triki:2021,
    AUTHOR = {Sayah, T. and Semaan, G. and Triki, F.},
     TITLE = {Finite element methods for the {D}arcy-{F}orchheimer problem coupled with the convection-diffusion-reaction problem},
   JOURNAL = {ESAIM Math. Model. Numer. Anal.},
    VOLUME = {55},
      YEAR = {2021},
    NUMBER = {6},
     PAGES = {2643--2678}
}

@article {Amigo-Lepe-Otarola-Rivera:2025,
    AUTHOR = {Amigo, D. and Lepe, F. and Ot\'arola, E. and
              Rivera, G.},
     TITLE = {A virtual element method for a convective {B}rinkman-{F}orchheimer problem coupled with a heat equation},
   JOURNAL = {Comput. Math. Appl.},
    VOLUME = {191},
      YEAR = {2025},
     PAGES = {1--23}
}

@article{Badia-Carstensen-Martin-RuizBaier-VillaFuentes:2025,
    AUTHOR = {Badia, S. and Carstensen, C. and Martin, A. F. and Ruiz-Baier, R. and Villa-Fuentes, S.},
     TITLE = {A velocity-vorticity-pressure formulation for the steady
              {N}avier-{S}tokes-{B}rinkman-{F}orchheimer problem},
   JOURNAL = {Comput. Methods Appl. Mech. Engrg.},
  VOLUME = {447},
      YEAR = {2025},
     PAGES = {Paper No. 118343, 27}
}

@article {Triki-Sayah-Semaan:2024,
    AUTHOR = {Triki, F. and Sayah, T. and Semaan, G.},
     TITLE = {A posteriori error estimates for {D}arcy-{F}orchheimer's problem coupled with the convection-diffusion-reaction equation},
   JOURNAL = {Int. J. Numer. Anal. Model.},
    VOLUME = {21},
      YEAR = {2024},
    NUMBER = {1},
     PAGES = {65--103}
}

@article {Caucao-Gatica-Gatica:2023,
    AUTHOR = {Caucao, S. and Gatica, G. N. and Gatica, L. F.},
     TITLE = {A {B}anach spaces-based mixed finite element method for the stationary convective {B}rinkman-{F}orchheimer problem},
   JOURNAL = {Calcolo},
    VOLUME = {60},
      YEAR = {2023},
    NUMBER = {4},
     PAGES = {Paper No. 51, 32}
}

@article {Salas-Lopez-Molina:2013,
    AUTHOR = {Salas, J. J. and L\'opez, H. and Molina, B.},
     TITLE = {An analysis of a mixed finite element method for a {D}arcy-{F}orchheimer model},
   JOURNAL = {Math. Comput. Modelling},
    VOLUME = {57},
      YEAR = {2013},
    NUMBER = {9-10},
     PAGES = {2325--2338}
}

@article {Lopez-Molina-Salas:2008-2009,
    AUTHOR = {L\'opez, H. and Molina, B. and Salas, J. J.},
     TITLE = {Comparison between different numerical discretizations for a {D}arcy-{F}orchheimer model},
   JOURNAL = {Electron. Trans. Numer. Anal.},
    VOLUME = {34},
      YEAR = {2008/09},
     PAGES = {187--203}
}

@article {Park:2005,
    AUTHOR = {Park, E.-J.},
     TITLE = {Mixed finite element methods for generalized {F}orchheimer flow in porous media},
   JOURNAL = {Numer. Methods Partial Differential Equations},
    VOLUME = {21},
      YEAR = {2005},
    NUMBER = {2},
     PAGES = {213--228}
}

@article {Balci-Ortner-Storn,
    AUTHOR = {Balci, A. Kh. and Ortner, Ch. and Storn, J.},
     TITLE = {Crouzeix-{R}aviart finite element method for non-autonomous variational problems with {L}avrentiev gap},
   JOURNAL = {Numer. Math.},
    VOLUME = {151},
      YEAR = {2022},
    NUMBER = {4},
     PAGES = {779--805}
}

@article{Forchheimer:1901,
  title={Wasserbewegung durch {B}oden},
  author={Forchheimer, P. H.},
  journal={Z. Ver. Deutsch. Ing.},
  volume={45},
  pages={1782--1788},
  year={1901}
}

@article {Crouzeix-Raviart:1973,
    AUTHOR = {Crouzeix, M. and Raviart, P.-A.},
     TITLE = {Conforming and nonconforming finite element methods for solving the stationary {S}tokes equations. {I}},
   JOURNAL = {Rev. Fran\c. Automat. Informat. Rech.
              Op\'er. S\'er. Rouge},
    VOLUME = {7},
      YEAR = {1973},
     PAGES = {33--75}
}

@article{Raviart-Thomas:1977,
    AUTHOR = {Raviart, P.-A. and Thomas, J. M.},
     TITLE = {Primal hybrid finite element methods for 2nd order elliptic equations},
   JOURNAL = {Math. Comp.},
    VOLUME = {31},
      YEAR = {1977},  
    NUMBER = {138},
     PAGES = {391--413} 
}

@article {Ainsworth-Rankin:2008,
    AUTHOR = {Ainsworth, M. and Rankin, R.},
     TITLE = {Fully computable bounds for the error in nonconforming finite element approximations of arbitrary order on triangular elements},
   JOURNAL = {SIAM J. Numer. Anal.},
    VOLUME = {46},
      YEAR = {2008},
    NUMBER = {6},
     PAGES = {3207--3232}
}

@book {Showalter:1997,
    AUTHOR = {Showalter, R. E.},
     TITLE = {{M}onotone {O}perators in {B}anach {S}pace and {N}onlinear {P}artial
              {D}ifferential {E}quations},
    SERIES = {Mathematical Surveys and Monographs},
    VOLUME = {49},
 PUBLISHER = {American Mathematical Society, Providence, RI},
      YEAR = {1997}
}

@article {Baran-Stoyan:2007,
    AUTHOR = {Baran, \'A. and Stoyan, G.},
     TITLE = {Gauss-{L}egendre elements: a stable, higher order
              non-conforming finite element family},
   JOURNAL = {Computing},
    VOLUME = {79},
      YEAR = {2007},
    NUMBER = {1},
     PAGES = {1--21}
}

@article {Stoyan-Baran:2006,
    AUTHOR = {Stoyan, G. and Baran, \'A.},
     TITLE = {Crouzeix-{V}elte decompositions for higher-order finite
              elements},
   JOURNAL = {Comput. Math. Appl.},
  FJOURNAL = {Computers \& Mathematics with Applications. An International
              Journal},
    VOLUME = {51},
      YEAR = {2006},
    NUMBER = {6-7},
     PAGES = {967--986}
}

@article {Fortin-Soulie:1983,
    AUTHOR = {Fortin, M. and Soulie, M.},
     TITLE = {A nonconforming piecewise quadratic finite element on
              triangles},
   JOURNAL = {Internat. J. Numer. Methods Engrg.},
  FJOURNAL = {International Journal for Numerical Methods in Engineering},
    VOLUME = {19},
      YEAR = {1983},
    NUMBER = {4},
     PAGES = {505--520},
}

@article {Carstensen-Sauter:2022,
    AUTHOR = {Carstensen, C. and Sauter, S. A.},
     TITLE = {Critical functions and inf-sup stability of {C}rouzeix-{R}aviart elements},
   JOURNAL = {Comput. Math. Appl.},
    VOLUME = {108},
      YEAR = {2022},
     PAGES = {12--23}
}

@article{Bressan-Mascotto-Mosconi:2025,
author = {Bressan, A. and Mascotto, L. and Mosconi, M.},
title = {New {C}rouzeix-{R}aviart elements of even degree: theoretical aspects, numerical performance, and applications to the {S}tokes' equations},
journal={IMA J. Numer. Anal},
year={2026},
note={\url{https://doi.org/10.1093/imanum/draf091}}
}

@book {Berinde:2007,
    AUTHOR = {Berinde, V.},
     TITLE = {Iterative {A}pproximation of {F}ixed {P}oints},
    SERIES = {Lecture Notes in Mathematics},
    VOLUME = {1912},
   EDITION = {Second},
 PUBLISHER = {Springer, Berlin},
      YEAR = {2007},
}
\bibliographystyle{plain}

\appendix

\section{Properties of the nonlinear operator}  \label{appendix:properties-Acal}

This section is devoted with showing three properties of $\Acal(\cdot)$:
\begin{enumerate}
    \item monotonicity, in Section \ref{subsection:monotonicity-Acal};
    \item coercivity, in Section \ref{subsection:coercivity-Acal};
    \item hemi-continuity, in Section \ref{subsection:continuity-Acal}.
\end{enumerate}

\subsection{Monotonicity} \label{subsection:monotonicity-Acal}

Here,
we show that $\Acal(\cdot)$ is monotone.
\begin{proposition} \label{prop:monotonicity}
Let $\ybf$ be in $\Vbfcal$.
Then,
recalling that $\lambdamin$ is the smallest eigenvalue of~$\Kbb^{-1}$,
the following estimate holds true:
\begin{equation*} 
    \frac{\mu}{\rho}\lambdamin\Norm{\wbf-\vbf}_{\LtwoOmegad}
        \leq \int_{\Omega}(\Acal(\wbf+\ybf)-\Acal(\vbf+\ybf))\cdot(\wbf-\vbf)\dxbf
        \qquad\qquad
        \forall\ \wbf,\vbf \in \Vbfcal.
\end{equation*}
\end{proposition}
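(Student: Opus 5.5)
The plan is to prove the inequality pointwise almost everywhere in~$\Omega$ and then integrate. I will establish the squared form $\frac{\mu}{\rho}\lambdamin\Norm{\wbf-\vbf}_{\LtwoOmegad}^2$, as in~\eqref{monotonicity}. That is the version used in the rest of the paper, and it is the dimensionally consistent one: the right-hand side is quadratic in~$\wbf-\vbf$ near zero, so the unsquared version cannot hold in general.

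Set $\mathbf a:=\wbf+\ybf$ and $\mathbf b:=\vbf+\ybf$, so that $\mathbf a-\mathbf b=\wbf-\vbf$. By the definition~\eqref{Acal}, the integrand splits into a linear and a nonlinear part:
\[
(\Acal(\mathbf a)-\Acal(\mathbf b))\cdot(\mathbf a-\mathbf b)
= \frac{\mu}{\rho}\Kbbmo(\wbf-\vbf)\cdot(\wbf-\vbf)
+ \frac{\beta}{\rho}\big(\EucNorm{\mathbf a}^{\alphamt}\mathbf a-\EucNorm{\mathbf b}^{\alphamt}\mathbf b\big)\cdot(\mathbf a-\mathbf b).
\]
Before integrating, I check that both terms are in~$\L^1(\Omega)$. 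For the nonlinear term this holds because $\EucNorm{\mathbf a}^{\alphamt}\mathbf a\in\Lbf^{\alphaprime}(\Omega)$, as computed before~\eqref{bound-Acal-Lalphaptprime}, and $\mathbf a-\mathbf b\in\LalphaOmegad$. For the linear term it follows from H\"older's inequality with exponents $(\alpha/(\alpha-2),\alpha,\alpha)$, using $\Kbbmo\in\Lbu^{\frac{\alpha}{\alphamt}}(\Omega)$.

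For the linear term, positive definiteness of~$\Kbbmo$ with smallest eigenvalue~$\lambdamin$ gives pointwise
\[
\Kbbmo(\wbf-\vbf)\cdot(\wbf-\vbf)\ge\lambdamin\EucNorm{\wbf-\vbf}^2 .
\]

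The nonlinear term is the only step needing a short argument: I must show that it is pointwise nonnegative. Expanding and applying Cauchy--Schwarz to $\mathbf a\cdot\mathbf b$, whose coefficient $-(\EucNorm{\mathbf a}^{\alphamt}+\EucNorm{\mathbf b}^{\alphamt})$ is nonpositive, gives
\[
\big(\EucNorm{\mathbf a}^{\alphamt}\mathbf a-\EucNorm{\mathbf b}^{\alphamt}\mathbf b\big)\cdot(\mathbf a-\mathbf b)
\ge \EucNorm{\mathbf a}^{\alpha}+\EucNorm{\mathbf b}^{\alpha}-\big(\EucNorm{\mathbf a}^{\alphamt}+\EucNorm{\mathbf b}^{\alphamt}\big)\EucNorm{\mathbf a}\EucNorm{\mathbf b}
=\big(\EucNorm{\mathbf a}^{\alphamo}-\EucNorm{\mathbf b}^{\alphamo}\big)\big(\EucNorm{\mathbf a}-\EucNorm{\mathbf b}\big).
\]
The right-hand side is nonnegative because $t\mapsto t^{\alphamo}$ is increasing on $[0,\infty)$ for $\alpha>2$.

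Integrating the two pointwise bounds over~$\Omega$ and dropping the nonnegative nonlinear contribution yields the assertion. The factor~$\frac{\mu}{\rho}$ is already in front of the linear term, which gives exactly $\frac{\mu}{\rho}\lambdamin\Norm{\wbf-\vbf}^2_{\LtwoOmegad}$.
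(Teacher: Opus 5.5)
Your proof is correct. You are also right about the square: with $\ybf=\vbf=\zerobf$ and $\wbf=\varepsilon\zbf$, the right-hand side is $O(\varepsilon^2)$ while the unsquared left-hand side is of order $\varepsilon$, so the intended statement is \eqref{monotonicity}. The paper's own argument in fact yields $\frac{\mu}{\rho}\lambdamin\Norm{\wbf-\vbf}^2_{\LtwoOmegad}$ and only loses the square in its final display.

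Your route differs from the paper's. The paper introduces the convex energy $\Jcal(\vbf)=\frac12\frac{\mu}{\rho}\int_\Omega\Kbbmo\vbf\cdot\vbf\dxbf+\frac1\alpha\frac{\beta}{\rho}\int_\Omega\EucNorm{\vbf}^{\alpha}\dxbf$ and identifies $\Acal$ with its Gateaux derivative. By the fundamental theorem of calculus it writes the right-hand side as $\int_0^1\langle\Jcal''(\vbftilde+\theta(\wbftilde-\vbftilde))(\wbftilde-\vbftilde),\wbftilde-\vbftilde\rangle\dint{\theta}$. It then bounds the second variation from below using that $(\alpha-2)\EucNorm{\vbf}^{\alpha-4}(\vbf\cdot\wbf)^2+\EucNorm{\vbf}^{\alphamt}\EucNorm{\wbf}^2\ge0$.

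You instead prove monotonicity of $\mathbf x\mapsto\EucNorm{\mathbf x}^{\alphamt}\mathbf x$ pointwise, via Cauchy--Schwarz and the factorization $(\EucNorm{\mathbf a}^{\alphamo}-\EucNorm{\mathbf b}^{\alphamo})(\EucNorm{\mathbf a}-\EucNorm{\mathbf b})$. This is shorter and purely algebraic. It avoids the differentiability bookkeeping: the factor $\EucNorm{\vbf}^{\alpha-4}$, the separate treatment of $\Jcal''(\zerobf)$, the justification of the fundamental theorem of calculus, and the exchange of the $\theta$- and $\mathbf x$-integrals. You also check integrability of both terms, which the paper leaves implicit.

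The paper's route buys structure and reuse. $\Jcal'$ and $\Jcal''$ serve again for coercivity and hemi-continuity in the appendix. Moreover, the second-variation formula keeps the factor $\int_0^1\EucNorm{\vbftilde+\theta(\wbftilde-\vbftilde)}^{\alphamt}\dint{\theta}\,\EucNorm{\wbftilde-\vbftilde}^2$, from which a strong monotonicity bound in $\LalphaOmegad$ could be extracted. Your Cauchy--Schwarz step gives nothing whenever $\EucNorm{\mathbf a}=\EucNorm{\mathbf b}$. For the $\Lbf^2$ estimate at hand, both arguments give the same constant $\frac{\mu}{\rho}\lambdamin$.
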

\begin{proof}
We define $\Jcal: \LalphaOmegad \to \Rbb$ as
\begin{equation*}
    \Jcal(\vbf)
        := \frac12 \frac{\mu}{\rho}\int_{\Omega}(\Kbbmo)\vbf\cdot\vbf \dxbf
            + \frac{1}{\alpha}\frac{\beta}{\rho}\int_{\Omega}|\vbf|^{\alpha} \dxbf
        \qquad\qquad\forall\ \vbf \in \LalphaOmegad.
\end{equation*}
The first Gateaux derivative $\Jcal'(\vbf): \LalphaOmegad \to \LalphaprimeOmegad$ reads 
\begin{equation} \label{Jcal-prime}
\langle \Jcal'(\vbf),\wbf \rangle = 
     \frac{\mu}{\rho}\int_{\Omega} (\Kbbmo)\vbf\cdot\wbf \dxbf
    + \frac{\beta}{\rho}\int_{\Omega} |\vbf|^{\alphamt} \vbf \cdot \wbf \dxbf
    \qquad\qquad
    \forall\ \vbf,\,\wbf \in \LalphaOmegad.
\end{equation}
The second Gateaux derivative $\Jcal''(\vbf)\wbf: \LalphaOmegad \to \LalphaprimeOmegad$ reads
\small\begin{equation} \label{Jcaldue}
\begin{split}
\langle \Jcal''(\vbf)\zbf, \wbf \rangle 
    &= \frac{\mu}{\rho}\int_{\Omega}(\Kbbmo)\zbf\cdot\wbf\dxbf \\
    & \quad    + \frac{\beta}{\rho}\int_{\Omega}
            (\alpha-2)|\vbf|^{\alpha-4}(\vbf\cdot\zbf)(\vbf\cdot\wbf)
            + |\vbf|^\alphamt\zbf\cdot\wbf\dxbf
\quad \forall\ \vbf \neq \zerobf,\wbf,\zbf \in \LalphaOmegad.
\end{split}
\end{equation}\normalsize
with
\begin{equation*}
    \langle \Jcal''(\zerobf)\wbf, \zbf \rangle
        = \frac{\mu}{\rho}\int_{\Omega}(\Kbbmo)\zbf\cdot\wbf\dxbf
    \qquad \forall\ \wbf,\zbf \in \LalphaOmegad.
\end{equation*}
Let $\ybf$ be fixed in  $\LalphaOmegad$,
and $\vbf$ and $\wbf$ be in $\LalphaOmegad$.
Define $\vbftilde := \vbf + \ybf$
and $\wbftilde := \wbf + \ybf$;
$\wbftilde-\vbftilde$ coincides with $\wbf - \vbf$.
Hence,
we use the fundamental theorem of calculus and get
\begin{equation*}
\begin{split}
\int_{\Omega} (\Acal(\wbftilde)-\Acal(\vbftilde))& \cdot (\wbf-\vbf) \dxbf 
    =\int_{\Omega} (\Acal(\wbftilde)-\Acal(\vbftilde) \cdot (\wbftilde-\vbftilde)) \dxbf \\
    &\overset{\eqref{Acal},\eqref{Jcal-prime}}{=} \langle \Jcal'(\wbftilde)-\Jcal'(\vbftilde),\wbftilde-\vbftilde \rangle
    = \int_{0}^1 \langle \Jcal''(\vbftilde+\theta(\wbftilde-\vbftilde))(\wbftilde-\vbftilde),\wbftilde-\vbftilde\rangle \dint{\theta}.
\end{split}
\end{equation*}
The integral on the right-hand side
is well-defined for all $\alpha > 2$ since
\begin{equation*}
|\vbf|^{\alpha-4}(\vbf\cdot\zbf)(\vbf\cdot\wbf) 
    \leq |\vbf|^{\alpha-2} |\zbf||\wbf|.
\end{equation*}
For all $\vbf$ and $\wbf$ in $\LalphaOmegad$
\begin{equation*}
 \langle \Jcal''(\vbf)\wbf,\wbf \rangle
    = \frac{\mu}{\rho}\int_{\Omega}(\Kbbmo\wbf)\cdot\wbf\dxbf
        + \frac{\beta}{\rho}\int_{\Omega}
            (\alphamt)|\vbf|^{\alpha-4}|\vbf\cdot\wbf|^2
            + |\vbf|^\alphamt|\wbf|\dxbf
    \geq \frac{\mu}{\rho} \lambdamin\Norm{\wbf}^2_{\L^2(\Omega)}.
\end{equation*}
In particular,
we deduce
\begin{equation*}
\int_{\Omega} (\Acal(\wbftilde)-\Acal(\vbftilde)) \cdot (\wbf-\vbf) \dxbf 
    \geq \frac{\mu}{\rho}\lambdamin\Norm{\wbftilde-\vbftilde}_{\L^2(\Omega)}
    =  \frac{\mu}{\rho}\lambdamin\Norm{\wbf-\vbf}_{\L^2(\Omega)}.
\end{equation*}
\end{proof}

\subsection{Coercivity} \label{subsection:coercivity-Acal}

Here,
we show that $\Acal(\cdot)$ is coercive.

\begin{proposition} \label{prop:coercivity}
Let $\ybf$ be in $\Vbfcal$.
We have
\begin{equation*}
\lim_{\Norm{\vbf}_{\LalphaOmegad}\to +\infty}
    \Big( \frac{1}{\Norm{\vbf}_{\LalphaOmegad}} \int_{\Omega} \Acal(\vbf + \ybf) \cdot\vbf \Big)
    = + \infty.
\end{equation*}
\end{proposition}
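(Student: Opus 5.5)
The plan is to bound $\int_\Omega \Acal(\vbf+\ybf)\cdot\vbf\dxbf$ from below by a quantity behaving like $\frac{\beta}{\rho}\Norm{\vbf}_{\LalphaOmegad}^\alpha$ minus lower-order powers of $\Norm{\vbf}_{\LalphaOmegad}$. Dividing by $\Norm{\vbf}_{\LalphaOmegad}$ and sending this norm to infinity then gives the assertion, since $\alpha>2$.

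\textbf{Linear part.} I write
\[
\int_\Omega \Acal(\vbf+\ybf)\cdot\vbf\dxbf
= \int_\Omega \Acal(\vbf+\ybf)\cdot(\vbf+\ybf)\dxbf - \int_\Omega \Acal(\vbf+\ybf)\cdot\ybf\dxbf .
\]
In the first integral, positive definiteness of $\Kbbmo$ makes the linear term nonnegative; I simply drop it, since it is not needed. The nonlinear term equals $\frac{\beta}{\rho}\Norm{\vbf+\ybf}_{\LalphaOmegad}^\alpha$ exactly. For the second integral I use H\"older's inequality together with the bound \eqref{bound-Acal-Lalphaptprime}. Here I have to be careful that $\Kbbmo\in\Lbu^{\frac{\alpha}{\alpha-2}}$, so the linear part contributes $\frac{\mu}{\rho}\Norm{\Kbbmo}_{\Lbu^{\frac{\alpha}{\alphamt}}}\Norm{\vbf+\ybf}_{\LalphaOmegad}\Norm{\ybf}_{\LalphaOmegad}$, by H\"older with exponents $(\frac{\alpha}{\alpha-2},\alpha,\alpha)$. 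The nonlinear part contributes $\frac{\beta}{\rho}\Norm{\vbf+\ybf}_{\LalphaOmegad}^{\alpha-1}\Norm{\ybf}_{\LalphaOmegad}$. Setting $t:=\Norm{\vbf+\ybf}_{\LalphaOmegad}$, this yields
\[
\int_\Omega \Acal(\vbf+\ybf)\cdot\vbf\dxbf \ge \frac{\beta}{\rho}t^\alpha - \frac{\beta}{\rho}t^{\alpha-1}\Norm{\ybf}_{\LalphaOmegad} - \frac{\mu}{\rho}\Norm{\Kbbmo}_{\Lbu^{\frac{\alpha}{\alphamt}}}\,t\,\Norm{\ybf}_{\LalphaOmegad}.
\]

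\textbf{Passing to the limit.} Since $\ybf$ is fixed, the triangle inequality gives $\Norm{\vbf}_{\LalphaOmegad}-\Norm{\ybf}_{\LalphaOmegad}\le t\le\Norm{\vbf}_{\LalphaOmegad}+\Norm{\ybf}_{\LalphaOmegad}$. Hence $t\to\infty$ and $t/\Norm{\vbf}_{\LalphaOmegad}\to1$ as $\Norm{\vbf}_{\LalphaOmegad}\to\infty$. Dividing the lower bound by $\Norm{\vbf}_{\LalphaOmegad}$, the leading term behaves like $\frac{\beta}{\rho}t^{\alpha-1}\to\infty$, while the subtracted terms are $O(t^{\alpha-2})$ and $O(1)$, hence negligible. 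This proves the claim.

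\textbf{Main point of care.} The only delicate step is the linear term in $\int_\Omega\Acal(\vbf+\ybf)\cdot\ybf\dxbf$, because $\Kbbmo$ is merely in $\Lbu^{\frac{\alpha}{\alpha-2}}$ and not bounded; the H\"older triple above handles this. No lower bound involving $\lambdamin$ is required, because the $\alpha$-power term alone drives coercivity.
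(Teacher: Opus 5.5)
Your proof is correct and follows essentially the same route as the paper's. The paper also writes $\int_\Omega\Acal(\vbf+\ybf)\cdot\vbf\dxbf$ as the pairing of $\Acal(\vbftilde)$ with $\vbftilde=\vbf+\ybf$ (via $\langle\Jcal'(\vbftilde),\vbftilde\rangle$) minus its pairing with $\ybf$. It bounds the latter by H\"older's inequality with your exponents and the identity $\Norm{|\vbftilde|^{\alpha-2}\vbftilde}_{\Lbf^{\alphaprime}(\Omega)}=\Norm{\vbftilde}_{\LalphaOmegad}^{\alpha-1}$, drops the $\lambdamin$ term, and concludes from $\Norm{\vbf+\ybf}_{\LalphaOmegad}/\Norm{\vbf}_{\LalphaOmegad}\to1$.
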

\begin{proof}
Let $\vbf$ and $\wbf$ be in $\Vbfcal$;
define $\vbftilde := \vbf + \ybf$.
The definition of $\vbftilde$
yields
\begin{equation} \label{int-Acalutilde-u}
\int_{\Omega} \Acal(\vbftilde) \cdot  \vbf \dxbf
    = \int_{\Omega} \Acal(\vbftilde)\cdot \vbftilde \dxbf
        - \int_{\Omega}\Acal(\vbftilde) \cdot \ybf \dxbf
    \overset{\eqref{Acal},\eqref{Jcal-prime}}{=} \langle \Jcal'(\vbftilde), \vbftilde \rangle
        - \int_{\Omega} \Acal(\vbftilde)\cdot \ybf \dxbf.
\end{equation}
We manipulate the two terms
on the right-hand side separately.
As for the former,
we have
\begin{equation}  \label{Jcalprime-utilde}
\langle \Jcal'(\vbftilde), \vbftilde \rangle
    = \frac{\mu}{\rho}\int_{\Omega}(\Kbbmo\vbftilde)\cdot \vbftilde\dxbf
        + \frac{\beta}{\rho}\int_{\Omega}|\vbftilde|^{\alphapt}\dxbf
    \geq \frac{\mu}{\rho} \lambdamin\Norm{\vbftilde}^2_{\LtwoOmegad}
        + \frac{\beta}{\rho}\Norm{\vbftilde}^{\alpha}_{\LalphaOmegad}.
\end{equation}
As for the latter,
H\"older's inequality twice
entails
\begin{equation} \label{Acaltilde-uell}
\int_\Omega \Acal(\vbftilde)\cdot \ybf \dxbf
    \leq \frac{\mu}{\rho}   \Norm{\Kbbmo}_{\Lbu^{\alphafracalphamt}(\Omega)}
                            \Norm{\vbftilde}_{\L^\alpha(\Omega)}
                            \Norm{\ybf}_{\L^\alpha(\Omega)} 
        + \frac{\beta}{\rho}\Norm{\vbftilde}^{\alphamo}_{\LalphaOmegad}
                            \Norm{\ybf}_{\LalphaOmegad}.
\end{equation}
Inserting \eqref{Jcalprime-utilde} and \eqref{Acaltilde-uell}
in \eqref{int-Acalutilde-u}
we have
\begin{equation} \label{Acalutilde-u}
\begin{split}
\int_{\Omega} \Acal(\vbftilde)  \cdot  \ybf \dxbf
    &\geq \frac{\mu}{\rho}\lambdamin \Norm{\vbftilde}_{\LtwoOmegad}^2
            \\
    &\quad + \frac{\beta}{\rho}\Norm{\vbftilde}^{\alphamo}_{\LalphaOmegad}
            \Big(\Norm{\vbftilde}_{\LalphaOmegad}
                     - \Norm{\ybf}_{\LalphaOmegad}
                    - \frac{\mu}{\beta}\Norm{\Kbbmo}_{\Lbu^{\alphafracalphamt}(\Omega)}\frac{\Norm{\ybf}_{\LalphaOmegad}}{\Norm{\vbftilde}^{\alphamt}_{\LalphaOmegad}}\Big) \\
    &\geq \frac{\beta}{\rho}\Norm{\vbftilde}^{\alphamo}_{\LalphaOmegad}
            \Big(\Norm{\vbftilde}_{\LalphaOmegad}
                     - \Norm{\ybf}_{\LalphaOmegad}
                    - \frac{\mu}{\beta}\Norm{\Kbbmo}_{\Lbu^{\alphafracalphamt}(\Omega)}\frac{\Norm{\ybf}_{\LalphaOmegad}}{\Norm{\vbftilde}^{\alphamt}_{\LalphaOmegad}}\Big) .
\end{split}
\end{equation}
Since $\frac{\Norm{\vbftilde}_{\LalphaOmegad}}{\Norm{\vbf}_{\LalphaOmegad}} \to 1$
when $\Norm{\vbf}_{\LalphaOmegad} \to \infty$,
we have, for $\Norm{\vbftilde}^{\alphamt}_{\LalphaOmegad}$
sufficiently large,
\begin{equation*}
\frac{1}{\Norm{\vbf}_{\LalphaOmegad}}
\int_{\Omega} \Acal(\vbftilde)  \cdot  \vbf \dxbf
    \gtrsim \frac{\beta}{\rho}\Norm{\vbftilde}^{\alphamt}_{\LalphaOmegad}
        \Big(\Norm{\vbftilde}_{\LalphaOmegad}
                - \Norm{\ybf}_{\LalphaOmegad}\Big( 1-\frac{\mu}{\rho}\frac{\Norm{\Kbbmo}_{\Lbu^{\alphafracalphamt}}}{\Norm{\vbftilde}^{\alphamt}_{\LalphaOmegad}}\Big) \Big)
    \gtrsim \Norm{\vbftilde}^{\alphamt}_{\LalphaOmegad}.
\end{equation*}
\end{proof}

\subsection{Hemi-continuity} \label{subsection:continuity-Acal}

Here,
we show that the mapping $\Acal(\cdot)$ is hemi-continuous.
\begin{proposition} \label{prop:hemi-continuity}
Given $\ybf$, $\vbf$, and $\wbf$ in $\Vbfcal$,
the mapping
\begin{equation*}
	t \to \int_\Omega \Acal(\ybf + \vbf + t\,\wbf) \cdot \wbf \dxbf
\end{equation*}
is continuous from $\Rbb$ into $\Rbb$.
\end{proposition}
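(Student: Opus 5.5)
The plan is to prove continuity of
\[
\phi(t) := \int_\Omega \Acal(\ybf+\vbf+t\,\wbf)\cdot\wbf\dxbf
\]
at an arbitrary $t_0\in\Rbb$ by dominated convergence. I would treat the linear and the nonlinear parts of $\Acal$ in \eqref{Acal} separately. The linear part,
\[
\frac{\mu}{\rho}\int_\Omega \Kbbmo(\ybf+\vbf+t\,\wbf)\cdot\wbf\dxbf,
\]
is affine in~$t$. Its coefficients are finite by H\"older's inequality with exponents $(\alpha/(\alpha-2),\alpha,\alpha)$, since $\Kbbmo\in\Lbu^{\frac{\alpha}{\alpha-2}}(\Omega)$ and $\ybf,\vbf,\wbf\in\LalphaOmegad$; hence this part is trivially continuous.

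For the nonlinear part, set $\zbf_t:=\ybf+\vbf+t\,\wbf$ and consider $g_t := |\zbf_t|^{\alpha-2}\zbf_t\cdot\wbf$. The map $\xi\mapsto|\xi|^{\alpha-2}\xi$ is continuous on $\Rbb^2$ for $\alpha>2$. Therefore, for almost every $\mathbf x$, we have $g_t(\mathbf x)\to g_{t_0}(\mathbf x)$ as $t\to t_0$.

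For domination, restrict to $|t-t_0|\le1$. Then $|\zbf_t|\le |\ybf|+|\vbf|+(|t_0|+1)|\wbf|=:G$, which lies in $\L^\alpha(\Omega)$, so
\[
|g_t|\le G^{\alpha-1}|\wbf|.
\]
The bound $G^{\alpha-1}|\wbf|$ is integrable by H\"older's inequality with exponents $(\alpha',\alpha)$, because $G^{\alpha-1}\in\L^{\alpha'}(\Omega)$, exactly as in the computation preceding \eqref{bound-Acal-Lalphaptprime}. Lebesgue's dominated convergence theorem then gives $\int_\Omega g_t\to\int_\Omega g_{t_0}$.

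An alternative to dominated convergence is the quantitative route through \eqref{estimate-Acal}:
\[
|\phi(t)-\phi(t_0)|\le |t-t_0|\int_\Omega\Big(\frac{\mu}{\rho}|\Kbbmo|+\CBL\frac{\beta}{\rho}\big(|\zbf_t|^{\alpha-2}+|\zbf_{t_0}|^{\alpha-2}\big)\Big)|\wbf|^2 .
\]
The right-hand side is bounded by $C|t-t_0|$ by H\"older's inequality with exponents $(\alpha/(\alpha-2),\alpha/2)$, using the uniform $\L^\alpha$ bound on $\zbf_t$ for $|t-t_0|\le1$. This even yields local Lipschitz continuity. I would use this estimate as the main argument, since it reuses a result already established in the paper.

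I expect no real obstacle. The only point requiring care is the exponent bookkeeping: checking that $|\zbf_t|^{\alpha-2}|\wbf|^2$ and $|\Kbbmo||\wbf|^2$ are integrable under the sole assumptions \eqref{data-spaces}. The relation $\frac{\alpha-2}{\alpha}+\frac{2}{\alpha}=1$ handles both terms.
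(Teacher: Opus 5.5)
Your proof is correct. Both the dominated-convergence argument and the quantitative bound via \eqref{estimate-Acal} go through. The exponent bookkeeping you identify is exactly what is needed: $\frac{\alpha-2}{\alpha}+\frac{2}{\alpha}=1$, together with the uniform $\Lbf^\alpha$ bound on $\ybf+\vbf+t\,\wbf$ for $|t-t_0|\le 1$.

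The paper reaches the same local Lipschitz estimate by a different tool. It writes the difference $\phi(t_0)-\phi(t)$, via the fundamental theorem of calculus, as $-(t-t_0)\int_0^1\langle\Jcal''(\cdot)\wbf,\wbf\rangle\dint{\theta}$, reusing the second Gateaux derivative $\Jcal''$ introduced for the monotonicity proof. It then bounds $\langle\Jcal''(\cdot)\wbf,\wbf\rangle$ by H\"older's inequality, using $|\vbf|^{\alpha-4}(\vbf\cdot\zbf)(\vbf\cdot\wbf)\le|\vbf|^{\alpha-2}|\zbf||\wbf|$.

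Your route replaces this with the pointwise inequality \eqref{estimate-Acal}, which the paper has already established. This spares you from justifying differentiability along the line and from exchanging the $\theta$- and $\mathbf{x}$-integrals. The paper's route keeps the appendix uniform, since monotonicity and hemi-continuity both run through $\Jcal''$, and it produces an exact identity before estimating.

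Your dominated-convergence variant is the most elementary of the three. It uses only the continuity and the $(\alpha-1)$-growth of $\xi\mapsto|\xi|^{\alpha-2}\xi$, not its differentiability. The price is that it gives continuity without any rate.
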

\begin{proof}
Given $\ybf$, $\vbf$,
and~$\wbf$ in $\Lbf^{\alpha}(\Omega)$,
and defining $\vbftilde:= \vbf + \ybf$,
for all $t$ and $\tzero$ in $\Rbb$,
the fundamental theorem of calculus gives
\begin{equation*}
\begin{split}
\int_\Omega(\Acal(\vbftilde + \tzero\vbf) -  \Acal(\vbftilde+t\vbf))
\cdot \vbf \dxbf
&\overset{\eqref{Acal},\eqref{Jcal-prime}}{=}
\langle\Jcal'(\vbftilde+\tzero\vbf)-\Jcal'(\vbftilde+t\vbf), \vbf\rangle \\
&= -(t-\tzero)\int_0^1
\langle\Jcal''(\vbftilde - t\vbf-\theta(t-\tzero)\vbf)
    \vbf, \vbf \rangle \dint{\theta}.
\end{split}
\end{equation*}
H\"older's inequality in~\eqref{Jcaldue},
the fact that
$\vbftilde - t\vbf-\theta(t-\tzero)\vbf$
belongs to~$\Lbf^\alpha(\Omega)$,
and the fact that
$\Jcal''(\vbf)\wbf: \LalphaOmegad \to \LalphaprimeOmegad$
yield that the right-hand side of the above display
tends to zero as~$t$ goes to~$\tzero$.
The assertion follows.
\end{proof}

\end{document}